%Journal: Stochastic processes and their application oder Probability Theory and Related Fields
% Gliederung:
%    Kapitel 1: Introduction
%    Kapitel 2: Preliminaries
%         Stationary processes
 %        MCAR processes
% %         Conditional orthogonality
% %         Linear subspaces
% %     Kapitel 3: Causality
% %         Direct causality
% %         Indirect causality
%     Kapitel 4: Orthogonality graphs for stationary processes
% %     Kapitel 5: Markov property of orthogonality graphs
%%      Kapitel 6: Orthogonality graphs for MCAR processes
%%          Prediction of MCAR processes
%         Orthogonality graphs for causal MCAR processes (Annahmen und Definition
%         characterisation of orthogonality graphs for causal MCAR processes
 %    Appendix A: Auxiliarly results
 %        Linear subspace
%    Mein Eindruck ist, dass man für die MCAR Prozesse nicht so viele Hilfsresultate benötigt (oder?) und es schon schön wäre, Beispiele zu haben.

%\documentclass[aap]{imsart}
\documentclass[aos]{imsart} % generate file witthout "Submitted to \ldots" header
%% Packages
%\RequirePackage{amsthm,amsfonts,amssymb}
%\RequirePackage[numbers,sort]{natbib}
\RequirePackage[authoryear]{natbib}%% uncomment this for author-year citations
\RequirePackage[colorlinks,citecolor=blue,urlcolor=blue]{hyperref}
\RequirePackage{graphicx}
\usepackage[reqno]{amsmath}
\RequirePackage{amsfonts,amssymb,amsthm, mathtools}
\makeatletter
\usepackage{xcolor}
\usepackage[final]{pdfpages}
\usepackage{ragged2e}
\usepackage{array}
\usepackage{tabularx}
\usepackage{multirow}
\usepackage{rotating}
\usepackage{eurosym}
\usepackage{dsfont}
\usepackage{lmodern}
\usepackage{ulem}
\usepackage{multicol}
\usepackage{hyperref}
\usepackage{etoolbox}
\usepackage{textcomp}
\usepackage{subcaption}
\usepackage{tikz}
\usepackage{tikz-cd}
\usetikzlibrary {positioning}
\usepackage{shadethm}
\usepackage{pdfpages}
\usepackage[utf8]{inputenc}
\usepackage{csquotes}
\usepackage[T1]{fontenc}
\usepackage[german, english]{babel}
\usepackage{latexsym}
\usepackage{nccmath}
\usepackage{color}
\usepackage{comment}
\usepackage{mathrsfs}
\usepackage{clipboard}
\usepackage{autobreak}
\usepackage{bm}
\usepackage{stackengine,scalerel}
\usepackage{cleveref}
\usepackage{stmaryrd}
%\usepackage{hyperref}

%\DeclareOldFontCommand{\rm}{\normalfont\rmfamily}{\mathrm}
%\DeclareOldFontCommand{\sf}{\normalfont\sffamily}{\mathsf}
%\DeclareOldFontCommand{\tt}{\normalfont\ttfamily}{\mathtt}
%\DeclareOldFontCommand{\bf}{\normalfont\bfseries}{\mathbf}
%\DeclareOldFontCommand{\it}{\normalfont\itshape}{\mathit}
%\DeclareOldFontCommand{\sl}{\normalfont\slshape}{\@nomath\sl}
%\DeclareOldFontCommand{\sc}{\normalfont\scshape}{\@nomath\sc}
\makeatother

\numberwithin{equation}{section}

%% Theorem environment
\theoremstyle{plain} %default (text in italic)
\newtheorem{theorem}{Theorem}[section]
\newtheorem{lemma}[theorem]{Lemma}
\newtheorem{proposition}[theorem]{Proposition}
\newtheorem{corollary}[theorem]{Corollary}

\newtheorem{assumption}{Assumption}

\theoremstyle{remark}
\newtheorem{definition}[theorem]{Definition}

\newtheorem{example}[theorem]{Example}

\newtheorem{remark}[theorem]{Remark}

%%%%%%%%%%%%%%%%%%%%%%%%%%%%%%%%%%%%%%%%%%%%%%%%%%%%%%
%% define your own commands
\def\an{\text{an}}
\def\pa{\text{pa}}
\def\ne{\text{ne}}
\def\dis{\text{dis}}
\def\ch{\text{ch}}

\newcommand{\C}{\mathbb{C}} % komplexe
 % komplexe
\newcommand{\R}{\mathbb{R}} % reelle
 % rationale
\newcommand{\Z}{\mathbb{Z}} % ganze
\newcommand{\N}{\mathbb{N}} % natuerliche
\newcommand{\BE}{\mathbb{E}}% Erwartungswert
% Varianz
% Wmaß
% Kovarianz

\newcommand{\CX}{{X}}

\newcommand{\CY}{{Y}}

\newcommand{\BA}{\mathbf{A}}
\newcommand{\BB}{\mathbf{B}}
\newcommand{\BFC}{\mathbf{C}}

\newcommand{\by}{\mathbf{y}}
\newcommand{\BFE}{\mathbf{E}}
\newcommand{\BS}{\Sigma}

 %X hoch etwas
\newcommand{\HY}{{Y}}
\newcommand{\HZ}{{Z}}

\newcommand{\msep}{\bowtie_m}

\newcommand{\beao}{\begin{eqnarray*}}
\newcommand{\eeao}{\end{eqnarray*}\noindent}
\newcommand{\limm}{\underset{n \rightarrow \infty}{\text{l.i.m.}}}
\newcommand{\limhh}{\underset{h \rightarrow 0}{\text{lim\:}}}
\newcommand{\limh}{\underset{h \rightarrow 0}{\text{l.i.m.\:}}}
\newcommand{\limk}{\underset{k \rightarrow \infty}{\text{l.i.m.\:}}}

\newcommand{\uint}{\int_{-\infty}^{\infty}}

\newcommand{\dAB}{d_{AB}} %Dichteannahme
\newcommand{\inst}{\:\raisebox{2pt}{\tikz{\draw[-,densely dashed,line width = 0.5 pt](0,0) -- (5mm,0);}}\:}
\newcommand{\rarrow}{\:\raisebox{0pt}{\tikz{\draw[->,solid,line width = 0.5 pt](0,0) -- (5mm,0);}}\:}
\newcommand{\larrow}{\:\raisebox{0pt}{\tikz{\draw[->,solid,line width = 0.5 pt](5mm,0) -- (0,0);}}\:}

\newcommand{\nrarrow}{\:\raisebox{0pt}{\tikz{\draw[->,solid,line width = 0.5 pt](0,0) -- (5mm,0);
            \draw[-,solid,line width = 0.5 pt](1.5mm,-1mm) -- (2.5mm,1mm);}}\:}
\newcommand{\nrarrowzwei}{\:\raisebox{0pt}{\tikz{\draw[->,solid,line width = 0.5 pt](0,0) -- (5mm,0);
            \draw[-,solid,line width = 0.5 pt](1.5mm,-1mm) -- (2.5mm,1mm);}}}
\newcommand{\nrarrownull}{\nrarrowzwei_{0\:}}
\newcommand{\nrarrowinf}{\nrarrowzwei_{\infty \:}}
\newcommand{\nsimnull}{\nsim_{0\:}}
\newcommand{\nsiminf}{\nsim_{\infty\:}}

%%%%%%%%%%%%%%%%%%%%%%%%%%%%%%%%%%%%%%%%%%%%%%%%%%%%%%%%%%%%%%%%
%%global options
\allowdisplaybreaks %% display breaks for align
%\definecolor{darkred}{RGB}{139,0,0}
\definecolor{darkgreen}{RGB}{0,139,0}
\newcommand{\LS}[1]{{\color{purple} #1}}
\newcommand{\VF}[1]{{\color{darkgreen} #1}}

%\makeatletter
%\newcounter{savesection}
%\newcounter{apdxsection}
%\renewcommand\appendix{\par
%  \setcounter{savesection}{\value{section}}%
%  \setcounter{section}{\value{apdxsection}}%
%  \setcounter{subsection}{0}%
%  \gdef\thesection{\@Alph\c@section}}
%\newcommand\unappendix{\par
%  \setcounter{apdxsection}{\value{section}}%
%  \setcounter{section}{\value{savesection}}%
%  \setcounter{subsection}{0}%
%  }
%\makeatother

\begin{document}

\begin{frontmatter}
\title{Mixed orthogonality graphs for  \vspace*{0.2cm}  \\ \vspace*{0.2cm} continuous-time stationary  processes} %models with applications to   \\ \vspace*{0.2cm} MCAR processes}
\runtitle{Orthogonality graphs for continuous-time models}

\begin{aug}
{   \author{\fnms{Vicky} \snm{Fasen-Hartmann}\ead[label=e1]{vicky.fasen@kit.edu}\orcid{0000-0002-5758-1999}}
   \and
   \author{\fnms{Lea} \snm{Schenk}\ead[label=e2]{lea.schenk@kit.edu}\ead[label=e3]{}\orcid{0009-0009-6682-6597}}
}
%\thanksref{t1}\thankstext{t1}
\address{Institute of Stochastics, Karlsruhe Institute of Technology\\[2mm] \printead[presep={\ }]{e1,e2}}

% \address[A]{Institute of Stochastics, Karlsruhe Institute of Technology\printead[presep={,\ }]{e1}}
% \address[B]{Institute of Stochastics, Karlsruhe Institute of Technology\printead[presep={,\ }]{e2}}

%\thanksref{e3}
\thankstext{e3}{\textit{Funding:} This work is supported by the project “digiMINT”, which is a part of the “Qualitätsoffensive Lehrerbildung”, a joint initiative of the Federal Government and the Länder which aims to improve the quality of teacher training. The program is funded by the Federal Ministry of Education and Research. The authors are responsible for the content of this publication.}

%\textbf{Funding. } This project is part of the “Qualitätsoffensive Lehrerbildung”, a joint initiative of the Federal Government and the Länder which aims to improve the quality of teacher training. The programme is funded by the Federal Ministry of Education and Research. The authors are responsible for the content of this publication.

\runauthor{V. Fasen-Hartmann and L. Schenk}

\end{aug}

\begin{abstract}
In this paper, we introduce different concepts of Granger causality and contemporaneous correlation for multivariate stationary continuous-time processes to model different dependencies between the component processes. Several equivalent characterisations are given for the different definitions, in particular by orthogonal projections.
 %To visualise and analyse the different dependencies,  graphs are used where the components of the continuous-time process are represented by vertices, directed edges between the vertices indicate causal influences and undirected edges indicate contemporaneous uncorrelation between the component processes.
We then define two mixed graphs based on different definitions of Granger causality and contemporaneous correlation, the (mixed) orthogonality graph and the local (mixed) orthogonality graph. %to visualise and analyse the different dependencies in stationary continuous-time processes.
In these graphs, the components of the process are represented by vertices, directed edges between the vertices visualise Granger causal influences and undirected edges visualise contemporaneous correlation between the component processes. Further, we introduce various notions of Markov properties in analogy to \cite{EI10}, which relate paths in the graphs to different dependence structures of subprocesses, and we derive sufficient criteria for the (local) orthogonality graph to satisfy them. Finally, as an example,  for the popular multivariate continuous-time AR (MCAR) processes, we explicitly characterise the edges in the (local) orthogonality graph by the model parameters.
% The popular multivariate continuous-time AR (MCAR) processes satisfy our assumptions. For MCAR processes we show that the (local) orthogonality graphs can be characterised explicitly by the \mbox{model parameters.}
\end{abstract}

\begin{keyword}[class=MSC]
\kwd[Primary ]{62H22}
\kwd{62M10}
\kwd[; Secondary ]{62M20}
%\kwd{62M101}
\end{keyword}

\begin{keyword}
\kwd{Granger causality}
\kwd{contemporaneous correlation}
\kwd{graphs}
\kwd{Markov property}
\kwd{MCAR processes}
\kwd{linear prediction}
\end{keyword}

\end{frontmatter}
%\maketitle

\makeatletter
\renewcommand\l@subsection{\@dottedtocline{1}{1.5em}{2.9em}}
\makeatother
%{\rmfamily \tableofcontents}

%%%%%%%%%%%%%%%%%%%%%%%%%%%%%%%%%%%%%%%%%%%%%%%%%%%%%%%%%%%%%%%%%%%%%%%%%%%%%%%%%%

\section{Introduction}\label{sec:intro}
%Überschriften nur zur gedanklichen Struktur, kommen natürlich am Ende weg.

%%%%%%%%%%%%%%%%%%%%%%%%%%%%%%%%%%%%%%%%%%%%%%%%%%%%%
%\subsection*{Prozesse in stetiger Zeit und warum für stetige Zeit wichtig?}
%%%%%%%%%%%%%%%%%%%%%%%%%%%%%%%%%%%%%%%%%%%%%%%%%%%%%

In this paper, we define new notions of Granger causality and contemporaneous correlation specifically for multivariate stochastic processes in continuous time and visualise them in mixed graphs.
With the increasing interest in complex multivariate data sets and networks in diversified fields, the interest in graphical models develops rapidly, although the attempt to use graphical models for the visualisation and analysis of causal structures in stochastic models is quite old  \citep{Wright1921, Wright1934}. % in the context of linear structural equation systems.
The key advantage of graphical models is the simple and clear way to display the dependencies of stochastic processes.
%and the easy implementation on computers. This can be exploited in the study of causal structures of high-dimensional time series in \cite{EI07}.
We refer to the nice overview in \cite{Handbook:graphical} for the state of the art on the mathematical and statistical aspects of graphical models. % Graphical models are popular because they can represent the often complex dependence structure of the components of the stochastic process in a simple, clear graphical structure, and thus graphs provide a concise way to communicate the dependence structure.
In our graphical models, vertices represent the different component series  $\CY_v=(Y_v(t))_{t\in \R}$, $v\in V:=\{1,\ldots,k\}$, of an underlying continuous-time %multivariate
stochastic process $\CY_V=(Y_V(t))_{t\in \R}$. The vertices are connected with directed and undirected edges, which % visualise dynamic relations between the components and 
represent  Granger causalities and contemporaneous correlations, respectively.

%The goal of Granger causality is to understand the causal relationship between two time series $\CY_a$ and $\CY_b$.
The mathematical notion of causality was popularised by Clive W.~J.~Granger and Christopher A.~Sims. In his original work, \cite{GR69} used a linear vector autoregressive (VAR) model, whereas \cite{SI72} used a moving average (MA) model to understand the causal effects in a bivariate model; a detailed discussion of the relationships between Granger and Sims causality is given in  \cite{Kuersteiner2018}, see also \cite{DU98, Eichler:2013a}. Since then, their ideas have been extended in various ways and have been applied in diversified fields, such as neuroscience \citep{Bergmann21}, econometrics \citep{
Imbens:2022}, environmental science \citep{Cox:Popken}, genomics \citep{Heerah} and social systems \citep{Kuzma}. The recent publication of \cite{Shojaie:Fox} is an excellent review of Granger causality with its advances.

However, not every interesting relationship between two component series $\CY_a$ and $\CY_b$ is necessarily a causal relation and directed. But this does not diminish the importance of modelling such relationships. Some well-known examples are the correlation between the aggressive behaviour and the amount of time spent playing computer games each day \citep{LE11}
%the correlation between the increase in the stork population and the increase in out-of-hospital births \citep{HO04},
and the correlation between the number of infants who sleep with the light on and the number of people who develop myopia in later life \citep{ZA00}. %A widely discussed example in psychology without an underlying common cause can be found in \cite{JU69}.
To model such undirected relationships, we use contemporaneous correlation, a symmetric relation between $\CY_a$ and $\CY_b$. %In discrete time the idea is that $\CY_a$ and $\CY_b$ are contemporaneously uncorrelated if, given the amount of information provided by the past of $\CY_V$ up to time $t$, $\CY_a(t+1)$ and $\CY_b(t+1)$ are uncorrelated.

Our novel approach is to define concepts of Granger causality and contemporaneous correlation for continuous-time multivariate processes by orthogonal projections onto linear spaces generated by subprocesses, resulting in \textsl{conditional orthogonality} relations. For processes in discrete time, this attempt was already studied in \cite{FL85, DU98, EI07}.  In contrast to the other papers, \cite{EI07} even represents the conditional orthogonality relations of a discrete-time VAR process in a graph, where Granger causality models the directed influences and contemporaneous correlation the undirected influences.  An alternative approach is to use \textsl{conditional independence} relations using conditional expectations given $\sigma$-fields generated by subprocesses, see \cite{CH82, FL82, EI10} for discrete-time processes and \cite{CO96, FL96, PE12} for continuous-time processes and especially for semimartingales. \cite{CO96} propose to model undirected influences by global instantaneous causality and local instantaneous causality in continuous time, however, the results are not related to graphical models. Again, \cite{EI10} defines a graphical model for time series in discrete time representing the conditional independence relations using Granger causality for directed influences and contemporaneous conditional dependence for the undirected influences. 
%\LS{Hier sollte man beim ungerichteten vielleicht doch nochmal Eichler erwähnen}
For Gaussian random vectors, conditional independence and conditional orthogonality are equivalent, and the standard literature on graphical models for random vectors is based on conditional independence  \citep{LA04}. In non-Gaussian time series models, however, conditional expectations are much more difficult to compute than linear predictions, so we use conditional orthogonality. This is also reflected in the fact that the assumptions in \cite{EI10} to receive the Markov properties of the graphical time series models based on conditional independence are much more technical and difficult to verify than those in \cite{EI07} based on conditional orthogonality.

An extension of conditional independence is the concept of \textsl{local independence} for composable finite Markov processes of \cite{Schweder} which was generalised to semimartingales by \cite{Aalen87}. This concept has been applied to define and analyse the \textit{local independence graph}, e.g., in the context of composable finite Markov processes, point processes and physical systems in \cite{DI06, DI07, DI08, Eichler:Dahlhaus:Dueck, CO07, roysland2024graphical}. These definitions were recently taken up by \cite{Mogensen:Hansen:2020, Mogensen:Hansen:2022} who study (canonical) local independence graphs for It$\hat{\text{o}}$ processes. However, the results rely on the semimartingale property of such processes, but semimartingales do not seem to be the right tool for stationary time series models, especially for non-Gaussian models. Additionally,  \cite{Mogensen:Hansen:2022} assume continuous sample paths, which excludes Lévy-driven stochastic processes with jumps.

This paper is the first paper developing graphical models for conditional orthogonality relations of general stationary stochastic process in continuous-time. %  without any additional assumptions on the process. %In particular, we extend the
%definitions of Granger causality and contemporaneous correlation from \cite{EI07} for VAR models to continuous time.
%Therefore, the linear spaces generated by the underlying process $\CY_V$ and subprocesses must be chosen appropriately and adapted to our setting.
We also present  several equivalent characterisations of our concepts of Granger causality and contemporaneous correlation and relate them to other definitions in the literature. These definitions do not require the stationarity of $\CY_V$. Importantly, we define local versions of Granger causality and contemporaneous correlation, which are less strong.
Based on the different definitions of Granger causality and contemporaneous correlation, we then introduce two mixed graphs, the \textsl{(mixed) orthogonality graph} and the \textsl{local (mixed) orthogonality graph} for such multivariate stochastic processes in continuous time. For example, for an Ornstein-Uhlenbeck process, the two graphs may look like in \Cref{fig: Two graphical models b}. We can already see from this picture that the edges of the local orthogonality graph are also edges in the orthogonality graph.

\begin{figure}[ht]
  \begin{subfigure}{0.49\textwidth}
  \centering
    \begin{tikzpicture}[align=center,node distance=2cm and 2cm, semithick ,
		state/.style ={circle, draw,  text=black , minimum width =0.7 cm}, arrow/.style={-latex}, every loop/.style={}]

  		\node[state] (2) {2};
  		\node[state] (1) [below left of=2] {1};
  		\node[state] (3) [below right of=2] {3};

  		\path
   		(1) edge [densely dashed] node {} (3)
   		(2) edge [densely dashed] node {} (3)
            (1) edge [densely dashed] node {} (2)
            
   		(1) edge [arrow, bend left=17] node {} (3)
   		(1) edge [arrow, bend left=17] node {} (2)
            (2) edge [arrow, bend left=17] node {} (3)
   		(3) edge [arrow, bend left=17] node {} (2)
		;
 	\end{tikzpicture}  
  \caption{Orthogonality graph}
  \label{fig: Orthogonality graph a}
  \end{subfigure}
  \begin{subfigure}{0.49\textwidth}
  \centering
	\begin{tikzpicture}[align=center,node distance=2cm and 2cm, semithick ,
		state/.style ={circle, draw,  text=black , minimum width =0.7 cm}, arrow/.style={-latex}, every loop/.style={}]

  		\node[state] (2) {2};
  		\node[state] (1) [below left of=2] {1};
  		\node[state] (3) [below right of=2] {3};

  		\path
   		(1) edge [densely dashed] node {} (3)
     
   		(1) edge [arrow, bend left=17] node {} (3)
   		(2) edge [arrow, bend left=17] node {} (3)
            (3) edge [arrow, bend left=17] node {} (2)
		;
 	\end{tikzpicture}  
  \caption{Local orthogonality graph}
  \label{fig: Partial correlation graph b}
  \end{subfigure}
  \caption{In the left figure is the orthogonality graph and in right figure the local orthogonality graph of the Ornstein-Uhlenbeck process defined in \Cref{Example:Ornstein-Uhlenbeck process}.}
  \label{fig: Two graphical models b}
\end{figure}
\vspace{-0.6cm}
%certain directional and non-directional influences between them.
%%%%%%%%%%%%%%%%%%%%%%%%%%%%%%%%%%%%%%%%%%%%%%%%%%%%%
%\subsection*{Markoveigenschaften}
%%%%%%%%%%%%%%%%%%%%%%%%%%%%%%%%%%%%%%%%%%%%%%%%%%%%%
%\LS{The results of this paper allow us to derive general Granger non-causality relations between variables given only partial information, which is central to the discussion of spurious causality.} \marginpar{\LS{Satz passt meiner Meinung nach hier nicht mehr}}

%Here we already talk about the global Markov properties of graphs.

The causality structure of a graph is usually described by Markov properties. \cite{EI07, EI10} discusses Markov properties for mixed graphical models, namely the pairwise, local, block-recursive and two global Markov properties, using $m$-separation \citep{RI03} and $p$-separation \citep{Levitz}, respectively, for the global ones. For an asymmetric graph, \cite{DI08} develops and investigates an asymmetric notion of separation and discusses different levels of Markov properties. %Generalisations of these definitions are in \cite{Mogensen:Hansen:2022}. %, who study local independence graphs for It$\hat{\text{o}}$ processes.
In addition, \cite{Mogensen:Hansen:2022} show that the multivariate Ornstein-Uhlenbeck process driven by a Brownian motion is the only process that satisfies their global Markov property. As the above literature shows, the derivation of global Markov properties might be quite challenging and often it is only valid under additional or even restrictive assumptions.

In our (local) orthogonality graph, we show the pairwise, local and block-recursive Markov property 
%under additional assumptions,
and then discuss global Markov properties in both graphs. Importantly,  the orthogonality graph satisfies the global Andersson, Madigan and Perlman (AMP) Markov property \citep{AN00}, which is a sufficient criterion for conditional orthogonality. The assumptions on our orthogonality graph are quite general. We only require a stationary mean-square continuous stochastic process in continuous time {with expectation zero}, which is purely non-deterministic, with some restriction on the spectral density, which is, e.g., satisfied for Ornstein-Uhlenbeck and, more general, for continuous-time moving average (MCAR) processes.
Since the notion of $m$-separation in the AMP Markov property is strong, we present less restrictive alternatives and discuss the global Markov property of the orthogonality graph.
Although the local orthogonality graph also satisfies the pairwise, local and block-recursive Markov properties, not surprisingly stronger assumptions are required for global Markov properties.

%%%%%%%%%%%%%%%%%%%%%%%%%%%%%%%%%%%%%%%%%%%%%%%%%%%%%
%\subsection*{MCAR}
%%%%%%%%%%%%%%%%%%%%%%%%%%%%%%%%%%%%%%%%%%%%%%%%%%%%%

Finally, we derive the graphical structure of the popular multivariate continuous-time autoregressive (MCAR) processes driven by a general centred Lévy process with finite second moments, which are important extensions of their discrete-time counterparts. Different choices of the driving Lévy process and the model parameters, i.e., the parameters of the autoregressive polynomial and the covariance matrix of the driving Lévy process, allow quite flexible modelling of the margins, so MCAR processes form a broad class of processes. Special cases are the Gaussian MCAR processes, where the Brownian motion is the driving Lévy process and Ornstein-Uhlenbeck processes, which are MCAR(1) processes.
For general MCAR models, we derive that the (local) orthogonality graph is well defined and we explicitly characterise the different types of edges by the model parameters. These characterisations differ for the orthogonality and local orthogonality graph. Finally, we find analogues to the edge characterisations for vector autoregressive processes in \cite{EI07}.

Remarkably, in the case of Gaussian MCAR processes, our characterisations of
 local Granger causality and local contemporaneous correlation given by the model parameters, respectively, coincide with the characterisations of local Granger causality and local instantaneous causality in \cite{CO96}. However, our approach has several advantages. On the one hand, their theory is developed for semimartingales and several characterisations even assume continuous sample paths. But non-Gaussian Lévy-driven MCAR models have jumps and can therefore not be covered by their theory.  On the other hand, modelling the dependencies of the MCAR process in the local orthogonality graph allows to encode local Granger causalities and local contemporaneous correlations between multivariate subprocesses through the derived Markov properties. This is not content of \cite{CO96}.  Similarly, for Gaussian Ornstein-Uhlenbeck models, the local independence graph of \cite{Mogensen:Hansen:2022} coincides with our local causality graph. But their approach is based on Brownian motion driven It$\hat{\text{o}}$ processes, again excluding Lévy driven models or MCAR$(p)$ processes with $p\geq 2$.
 %But in contrast to these papers, our on (local) Granger causality and (local) contemporaneous correlation, also hold for very general non-Gaussian models which are, e.g., driven by the huge class of Lévy processes. Lévy driven MCAR processes which are not Gaussian  and hence, have jumps can not be investigated with the approaches in \cite{CO96}  and \cite{Mogensen:Hansen:2022} because both assume continuous sample paths.
 %\cite{CO96}  assume in several characterisations continuous sample paths, at least of the martingale part of their semimartingale, similarly \cite{Mogensen:Hansen:2022} assumes a classical It$\hat{\text{o}}$ processes where the driving process is a Brownian motion.
%Note that the graph Ornstein-Uhlenbeck process (GrOU) in \cite{Courgeau:Veraart:2022} is a network without defining the direction of the edges and is therefore not comparable to our model. 
To the best of our knowledge, our paper is the first on graphical properties of Lévy-driven MCAR models. It provides a generalisation of the results known from the literature to non-Gaussian processes. In \cite{VF23preb} we even develop extensions to the more general class of multivariate state space models based on the present paper, and in \cite{VF24} we present an undirected graphical model and relate it to the (local) orthogonality graph.

%%%%%%%%%%%%%%%%%%%%%%%%%%%%%%%%%%%%%%%%%%%%%%%%%%%%%%%%%%%%%%%%%%%%%%%%%%%%%%%%%%
\subsection*{Structure of the paper}
The paper is structured as follows. In \Cref{sec:prelim}, we first lay the foundation by introducing the conditional orthogonality relation as well as appropriate linear spaces generated by multivariate stochastic processes in continuous time and their properties which are important for this paper. We conclude the preliminaries with properties on mean-square differentiable stationary processes {with expectation zero}.
In Sections \ref{sec:influence} and \ref{sec:undirected_influences}, we then define, discuss, and relate different directed and undirected interactions between the component series of continuous-time stationary processes, i.e.,  Granger causality and contemporaneous correlation. This groundwork culminates in the definition of the orthogonality graph and the local orthogonality graph in \Cref{sec:path_diagrams}. For these orthogonality graphs, we prove several Markov properties. Finally, in \Cref{sec:CGMCAR}, we characterise the different graphical models for MCAR processes. The proofs of the paper are moved to the appendix.

%%%%%%%%%%%%%%%%%%%%%%%%%%%%%%%%%%%%%%%%%%%%%%%%%%%%%%%%%%%%%%%%%%%%%%%%%%%%%%%%%%
\subsection*{Notation}
Throughout the paper, $V=\{1,\ldots k\}$  and $\CY_V=(Y_V(t))_{t\in \R}$ denotes a $k$-dimensional (weakly) stationary stochastic process with expectation zero that is continuous in mean square. From now on we call the space of all real or complex $(k\times k)$-dimensional matrices $M_k(\R)$ and $M_k(\C)$, respectively. Similarly, $M_{k,d}(\R)$ and $M_{k,d}(\C)$ denote real and complex $(k\times d)$-dimensional matrices. We write $I_{k}$ for the $k$-dimensional identity matrix and $0_k$ for the $k$-dimensional zero matrix ($k\in \N$). With $\|\cdot\|$ we denote some matrix norm. The vector $e_v \in \R^k$ is the $v$-th unit vector and $\BFE_j^\top : = (0_{k\times k(j-1) }, I_{k},  0_{k\times k(p-j)}) \in M_{k \times kp}(\R)$, $j=1,\ldots,p$.
%\begin{align*}
%\BFE_j  =
%\begin{pmatrix}
%     0_{k(j-1) \times k} \\
%     I_{k} \\
%     0_{k(p-j) \times k}
%\end{pmatrix} \in M_{kp \times k}(\R), \quad j=1,\ldots,p.
%\end{align*}
For hermitian matrices \mbox{$A,B \in M_{k}(\C)$,} we write  $A \geq_L B$ if and only if $B-A$ is positive semi-definite, i.e.,~$B-A\geq 0$. Similarly, we write $A>0$ if $A$ is positive definite. Furthermore, $\sigma(A)$ are the eigenvalues of $A$. Finally, by $\text{l.i.m.}$ we denote the mean square limit.

%%%%%%%%%%%%%%%%%%%%%%%%%%%%%%%%%%%%%%%%%%%%%%%%%%%%%%%%%%%%%%%%%%%%%%%%%%%%%%%%%%
\section{Preliminaries}\label{sec:prelim}
In these preliminaries, we present some basics about the conditional orthogonality relation, such as the semi-graphoid property. Furthermore, we define the important linear spaces of this paper and give properties of mean-square differentiable stationary processes with expectation zero, which we use throughout the paper.
We start with some fundamentals on linear spaces in $L^2=L^2(\Omega, \mathcal{F}, \mathbb{P})$, the Hilbert space of square-integrable complex-valued random variables on a common probability space $(\Omega, \mathcal{F}, \mathbb{P})$. As usual, the inner product is $\langle X, Y \rangle_{L^2} = \BE[X \overline{Y}]$ for $X, Y\in L^2$ and orthogonality with respect to this inner product is denoted by $X \perp Y$. We set $\Vert X \Vert_{L^2}:=\sqrt{\langle X,X \rangle_{L^2}}$ for $X\in L^2$ and  identify random variables that are equal $\mathbb{P}$-a.s. Note that if $X_n \to_{L^2} X$ and $Y\in L^2$, then
\begin{align} \label{limit E}
    \lim_{n\to\infty}\BE(X_nY)=\BE(XY),
\end{align}
which can be shown by Cauchy-Schwarz inequality.
Further, suppose $\mathcal{L}_1$ and $\mathcal{L}_2$ are closed linear subspaces of $L^2$, where the closure is formed in the mean square. Then
\begin{align*}
\mathcal{L}_1^\bot=\{ X \in L^2: \langle X,Y \rangle_{L^2}=0 \text{ for all } Y \in \mathcal{L}_1\}
\end{align*}
is the orthogonal complement of $\mathcal{L}_1$. The sum of $\mathcal{L}_1$ and $\mathcal{L}_2$ is the linear vector space %denoted by
\begin{align*}
\mathcal{L}_1 + \mathcal{L}_2= \{ X+Y: X \in \mathcal{L}_1, Y \in \mathcal{L}_2 \}.
\end{align*}
Even when  $\mathcal{L}_1$ and $\mathcal{L}_2$ are closed subspaces, this sum may fail to be closed if both are infinite-dimensional. A classic example of this can be found in \cite{HA51}, p.~28. Hence, the closed direct sum is denoted by
\begin{align*}
\mathcal{L}_1 \vee \mathcal{L}_2=\overline{ \{ X+Y: X \in \mathcal{L}_1, Y \in \mathcal{L}_2 \}}.
\end{align*}
We further denote the orthogonal projection of $X\in L^2$ on $\mathcal{L}_1$ by $P_{\mathcal{L}_1}(X)=P_{\mathcal{L}_1}X$.
A review of the properties of orthogonal projections can be found, e.g., in \cite{WE80, BR91, LI15}.

%%%%%%%%%%%%%%%%%%%%%%%%%%%%%%%%%%%%%%%%%%%%%%%%%%%%%%%%%%%%%%%%%%%%%%%%%%%%%%%%%%
\subsection{Conditional orthogonality}
With those notations in mind, we define the conditional orthogonality relation as in \cite{EI07}, p.~347.

\begin{definition}
Let $\mathcal{L}_i$, $i=1,2,3$, be closed linear subspaces of $L^2$. Then $\mathcal{L}_1$ and $\mathcal{L}_2$ are \textsl{conditionally orthogonal} given $\mathcal{L}_3$ if 
\begin{align*}
X-P_{\mathcal{L}_3}X \perp Y-P_{\mathcal{L}_3}Y \quad \forall \:  X \in \mathcal{L}_1,\, Y \in \mathcal{L}_2.
\end{align*}
The conditional orthogonality relation is denoted by $\mathcal{L}_1 \perp \mathcal{L}_2 \: \vert \: \mathcal{L}_3$.
\end{definition}

Moreover, we summarise properties of the conditional orthogonality relation as given in \cite{EI07}, Proposition A.1.

\begin{lemma}\label{properties of conditional orthogonality}
Let $\mathcal{L}_i$, $i=1,\ldots,4$, be closed linear subspaces of $L^2$.  Then the conditional orthogonality relation defines a \textsl{semi-graphoid}, i.e., it satisfies the following properties:
\begin{enumerate}
\item[(C1)] Symmetry: $\mathcal{L}_1 \perp \mathcal{L}_2 \: \vert \: \mathcal{L}_3$ $\Rightarrow$ $\mathcal{L}_2 \perp \mathcal{L}_1 \: \vert \: \mathcal{L}_3$.
  \item[(C2)] (De-) Composition:
  $\mathcal{L}_1 \perp \mathcal{L}_2 \: \vert \: \mathcal{L}_4$ and $\mathcal{L}_1 \perp \mathcal{L}_3 \: \vert \: \mathcal{L}_4$
  $\Leftrightarrow$
  $\mathcal{L}_1 \perp \mathcal{L}_2 \vee \mathcal{L}_3 \: \vert \: \mathcal{L}_4$.
  \item[(C3)] Weak union:
   $\mathcal{L}_1 \perp  \mathcal{L}_2 \vee \mathcal{L}_3 \: \vert \: \mathcal{L}_4$
  $\Rightarrow$
  $\mathcal{L}_1 \perp  \mathcal{L}_2 \: \vert \:  \mathcal{L}_3 \vee \mathcal{L}_4$.
  \item[(C4)] Contraction:
    $\mathcal{L}_1 \perp  \mathcal{L}_2 \: \vert \: \mathcal{L}_4$ and  $\mathcal{L}_1 \perp   \mathcal{L}_3 \: \vert \: \mathcal{L}_2 \vee \mathcal{L}_4$
  $\Rightarrow$
  $\mathcal{L}_1 \perp  \mathcal{L}_2 \vee \mathcal{L}_3 \: \vert \:  \mathcal{L}_4$.
\end{enumerate}
If $(\mathcal{L}_2 \vee \mathcal{L}_4) \cap (\mathcal{L}_3 \vee \mathcal{L}_4) =  \mathcal{L}_4$ holds and $\mathcal{L}_2 \vee \mathcal{L}_3$ is separable, then the conditional orthogonality relation defines a \textsl{graphoid}, i.e., additionally we have:
\begin{itemize}
\item[(C5)] Intersection: $\mathcal{L}_1 \perp   \mathcal{L}_2 \: \vert \: \mathcal{L}_3 \vee \mathcal{L}_4$ and  $\mathcal{L}_1 \perp  \mathcal{L}_3 \: \vert \: \mathcal{L}_2 \vee \mathcal{L}_4$
  $\Rightarrow$
  $\mathcal{L}_1 \perp  \mathcal{L}_2 \vee \mathcal{L}_3 \: \vert \: \mathcal{L}_4$.
\end{itemize}
\end{lemma}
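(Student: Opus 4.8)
The plan is to verify the semi-graphoid axioms (C1)--(C4) directly from the definition of conditional orthogonality in terms of orthogonal projections, and then to obtain the graphoid axiom (C5) under the stated additional hypothesis. Throughout I would work in the Hilbert space $L^2$ and use the standard facts about orthogonal projections onto closed subspaces: idempotency, self-adjointness, linearity, and the fact that $P_{\mathcal{L}_1\vee\mathcal{L}_2}$ restricted to $\mathcal{L}_1$ or to $\mathcal{L}_2$ behaves well when the relevant subspaces are in ``general position''. A useful reformulation I would establish first is that $\mathcal{L}_1\perp\mathcal{L}_2\mid\mathcal{L}_3$ holds if and only if $P_{\mathcal{L}_3\vee\mathcal{L}_2}X = P_{\mathcal{L}_3}X$ for every $X\in\mathcal{L}_1$ (equivalently, $X - P_{\mathcal{L}_3}X \perp \mathcal{L}_2$ for all $X\in\mathcal{L}_1$, since $X-P_{\mathcal{L}_3}X\perp\mathcal{L}_3$ automatically). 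This ``residual'' characterisation makes several of the axioms transparent.

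First, (C1) symmetry is immediate from the symmetry of the orthogonality relation $\perp$ on $L^2$ and the symmetric appearance of $X$ and $Y$ in the definition. For (C2) decomposition/composition, I would use that $\mathcal{L}_2\vee\mathcal{L}_3$ is the closed span of $\mathcal{L}_2\cup\mathcal{L}_3$, so $Z-P_{\mathcal{L}_4}Z\perp W-P_{\mathcal{L}_4}W$ for all $W\in\mathcal{L}_2\vee\mathcal{L}_3$ is equivalent, by linearity and continuity of the inner product together with \eqref{limit E}, to the same statement for all $W\in\mathcal{L}_2$ and all $W\in\mathcal{L}_3$ separately; this gives both directions at once. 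For (C3) weak union, I would start from $X-P_{\mathcal{L}_4}X\perp\mathcal{L}_2\vee\mathcal{L}_3$ for $X\in\mathcal{L}_1$ and show $X-P_{\mathcal{L}_3\vee\mathcal{L}_4}X\perp\mathcal{L}_2$: since $P_{\mathcal{L}_3\vee\mathcal{L}_4}X - P_{\mathcal{L}_4}X\in\mathcal{L}_3\vee\mathcal{L}_4$, one checks it is orthogonal to $\mathcal{L}_2$ by splitting it appropriately, using that the residual $X-P_{\mathcal{L}_4}X$ is already orthogonal to all of $\mathcal{L}_2\vee\mathcal{L}_3\supseteq\mathcal{L}_3$. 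For (C4) contraction, I would combine the two hypotheses: $X-P_{\mathcal{L}_4}X\perp\mathcal{L}_2$ and $X-P_{\mathcal{L}_2\vee\mathcal{L}_4}X\perp\mathcal{L}_3$ for $X\in\mathcal{L}_1$; writing $X-P_{\mathcal{L}_4}X = (X-P_{\mathcal{L}_2\vee\mathcal{L}_4}X) + (P_{\mathcal{L}_2\vee\mathcal{L}_4}X - P_{\mathcal{L}_4}X)$ and noting the second summand lies in $\mathcal{L}_2$, hence is orthogonal to $\mathcal{L}_3$ only after we also use the first hypothesis appropriately, one deduces $X-P_{\mathcal{L}_4}X\perp\mathcal{L}_2\vee\mathcal{L}_3$, which is the claim.

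For (C5) intersection, the additional hypothesis $(\mathcal{L}_2\vee\mathcal{L}_4)\cap(\mathcal{L}_3\vee\mathcal{L}_4)=\mathcal{L}_4$ together with separability of $\mathcal{L}_2\vee\mathcal{L}_3$ is what makes the argument work; this is the step I expect to be the main obstacle. The idea is that from $X-P_{\mathcal{L}_3\vee\mathcal{L}_4}X\perp\mathcal{L}_2$ and $X-P_{\mathcal{L}_2\vee\mathcal{L}_4}X\perp\mathcal{L}_3$ for $X\in\mathcal{L}_1$, one wants $X-P_{\mathcal{L}_4}X\perp\mathcal{L}_2\vee\mathcal{L}_3$. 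One shows $P_{\mathcal{L}_2\vee\mathcal{L}_4}X$ and $P_{\mathcal{L}_3\vee\mathcal{L}_4}X$ both equal $P_{\mathcal{L}_4}X$: indeed $P_{\mathcal{L}_3\vee\mathcal{L}_4}X - P_{\mathcal{L}_2\vee\mathcal{L}_4}X$ can be shown, using the two conditional orthogonality hypotheses, to be orthogonal to both $\mathcal{L}_2$ and $\mathcal{L}_3$ and hence to lie in the intersection of the relevant spaces, which by the hypothesis collapses to $\mathcal{L}_4$; a careful bookkeeping argument (this is where separability is needed, to justify the projection manipulations and the existence of the relevant orthonormal expansions, as in \cite{Levitz} or the discrete-time analogue in \cite{EI07}) then forces both projections to coincide with $P_{\mathcal{L}_4}X$, giving the conclusion. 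Since this is precisely Proposition A.1 of \cite{EI07}, adapted to our continuous-time linear spaces, I would either cite that proof verbatim or reproduce the short Hilbert-space computations above, emphasising that none of the arguments use anything beyond the closedness (and, for (C5), separability) of the subspaces, so the discrete-time proof transfers without change.
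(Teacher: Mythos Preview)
The paper does not prove this lemma at all: it states it and cites \cite{EI07}, Proposition A.1 (with a pointer to \cite{FL85} for the general Hilbert-space setting). Your proposal recognises this explicitly at the end, so in that sense you and the paper agree: the result is quoted, not re-proved.

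Your sketch of (C1)--(C4) is correct and is essentially the standard argument via the residual characterisation $\mathcal{L}_1\perp\mathcal{L}_2\mid\mathcal{L}_3\Leftrightarrow X-P_{\mathcal{L}_3}X\perp\mathcal{L}_2$ for all $X\in\mathcal{L}_1$. One small imprecision: in (C4) you write ``the second summand lies in $\mathcal{L}_2$''; in fact $P_{\mathcal{L}_2\vee\mathcal{L}_4}X-P_{\mathcal{L}_4}X$ lies a priori only in $\mathcal{L}_2\vee\mathcal{L}_4$, but the first hypothesis gives $P_{\mathcal{L}_2\vee\mathcal{L}_4}X=P_{\mathcal{L}_4}X$ directly, so that summand is zero and the argument simplifies. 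For (C5) your outline is the right one---both hypotheses force $P_{\mathcal{L}_2\vee\mathcal{L}_3\vee\mathcal{L}_4}X$ to equal both $P_{\mathcal{L}_2\vee\mathcal{L}_4}X$ and $P_{\mathcal{L}_3\vee\mathcal{L}_4}X$, so this common value lies in $(\mathcal{L}_2\vee\mathcal{L}_4)\cap(\mathcal{L}_3\vee\mathcal{L}_4)=\mathcal{L}_4$, whence it equals $P_{\mathcal{L}_4}X$---but you are vague about where separability enters; since the paper defers entirely to \cite{EI07}, that is acceptable here.
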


%The conditional orthogonality relation allows the analysis of linear relations between the processes $\CY_A$ and $\CY_B$ \textit{given} another process $\CY_C$. 
Note that the definition of conditional orthogonality reduces to the usual orthogonality when $\mathcal{L}_3=\{0\}$.
For a more detailed discussion of the conditional orthogonality relation, we refer to \cite{FL85}, who give the above results in terms of a general Hilbert space.

\begin{remark}
If  $(\mathcal{L}_2 \vee \mathcal{L}_4) \cap (\mathcal{L}_3 \vee \mathcal{L}_4) =  \mathcal{L}_4$ holds, we say that $\mathcal{L}_2$ are $\mathcal{L}_3$ \textsl{conditionally linearly separated} by $\mathcal{L}_4$ (cf.~\citealp{EI07}, p.~348).
\end{remark}

%%%%%%%%%%%%%%%%%%%%%%%%%%%%%%%%%%%%%%%%%%%%%%%%%%%%%%%%%%%%%%%%%%%%%%%%%%%%%%%%%%
\subsection{Linear subspaces}\label{subsec:linsub}
To apply the concept of conditional orthogonality to a multivariate stochastic process $\CY_V$, where $V =\{1,\ldots, k\}$, we define suitable closed linear subspaces. 
Let $A\subseteq V$, $s,t\in [-\infty,\infty]$ and $s\leq t$. Then we define the closed linear space 
\begin{align*}
\mathcal{L}_{Y_A}(s,t) &\coloneqq \overline{\text{span}}\left\{Y_a(u):\, a\in A,\, u\in[s,t]\cap\R\right\}%=\overline{\ell_{Y_A}(s,t)},
\end{align*}
with $\mathcal{L}_{Y_A}(-\infty,-\infty) \coloneqq \mathcal{L}_{Y_A}(\infty,\infty) \coloneqq \{0\}$ 
and use the shorthands 
\begin{align*}
 \mathcal{L}_{Y_A}(t) \coloneqq \mathcal{L}_{Y_A}(-\infty,t),\quad \quad \quad
\mathcal{L}_{Y_A}(-\infty) \coloneqq \bigcap\limits_{t\in \R}  \mathcal{L}_{Y_A}(t), \quad \quad \quad  \mathcal{L}_{Y_A}  \coloneqq {\mathcal{L}_{Y_A}(-\infty,\infty).} 
\end{align*}
Sometimes we use as well the linear space
\begin{align*}
\ell_{Y_A}(s,t) &\coloneqq \text{span}\left\{Y_a(u):\, a\in A,\, u\in[s,t]\cap\R\right\},
\end{align*}
whose closure is $\mathcal{L}_{Y_A}(s,t)$.
For further discussion and properties of such linear spaces, we refer to the early works of \cite{CR61, CR64, CR71}, but also to \cite{RO67, LI15, BrockwellLindner2024}. Furthermore, in \Cref{subsec:condorth} we derive sufficient criteria for conditional linear separation and separability of these linear spaces. The next lemma provides the basic properties of these linear spaces, which we use throughout the paper. The proof is given in the Supplementary Material \ref{Sec:Suppl}.

\begin{lemma}\label{additivity in time domain}\label{additivity in index set}
Let $A,B\subseteq V$, $s,t\in \R$, $s\leq t$. Then the following statements hold:
\begin{itemize}
    \item[(a)] %\begin{align*}
$\mathcal{L}_{Y_A}(s) \vee \mathcal{L}_{Y_A}(s,t) = \mathcal{L}_{Y_A}(t) $  $\mathbb{P}$-a.s.
    \item[(b)] $\mathcal{L}_{Y_A}(s,t) \vee \mathcal{L}_{Y_B}(s,t) = \mathcal{L}_{Y_{A \cup B}}(s,t)$  $\mathbb{P}$-a.s.
    \item[(c)] $\mathcal{L}_{Y_A}(t) \vee \mathcal{L}_{Y_B}(t)= \mathcal{L}_{Y_{A \cup B}}(t)$  $\mathbb{P}$-a.s.
    \item[(d)] $\overline{\bigcup_{n\in \N} \mathcal{L}_{Y_A}(n)} = \mathcal{L}_{Y_A}$ $\mathbb{P}$-a.s.
\end{itemize}
\end{lemma}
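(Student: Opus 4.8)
The plan is to prove the four statements essentially as consequences of the definition of the closed span, using that $\overline{\text{span}}$ of a union of sets equals the closed direct sum of the corresponding closed spans. For part (b), note that the generating set $\{Y_a(u): a\in A, u\in[s,t]\cap\R\} \cup \{Y_b(u): b\in B, u\in[s,t]\cap\R\}$ is precisely $\{Y_c(u): c\in A\cup B, u\in[s,t]\cap\R\}$; taking closed spans and using the general identity $\overline{\text{span}}(S_1\cup S_2) = \overline{\text{span}}(S_1) \vee \overline{\text{span}}(S_2)$ gives the claim immediately. Part (a) is the same argument applied to the time index: $[-\infty,s]\cap\R$ together with $[s,t]\cap\R$ has union $[-\infty,t]\cap\R$ (the overlap at $\{s\}$ is harmless), so the union of the two generating sets is the generating set for $\mathcal{L}_{Y_A}(t)$. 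Part (c) follows by combining (b) with the monotonicity/continuity of the construction in the time variable, or more directly: $\bigcup_{s\in\R}\{Y_a(u): u\le t, u\ge s\}$-style reasoning is not even needed — one just observes that the generating set of $\mathcal{L}_{Y_{A\cup B}}(t)$ is the union of the generating sets of $\mathcal{L}_{Y_A}(t)$ and $\mathcal{L}_{Y_B}(t)$, so (c) is literally the $s=-\infty$ case of (b) once one checks the degenerate index conventions cause no problem.

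For part (d), I would argue by a density/approximation argument. The inclusion $\overline{\bigcup_{n\in\N}\mathcal{L}_{Y_A}(n)} \subseteq \mathcal{L}_{Y_A}$ is clear since each $\mathcal{L}_{Y_A}(n)\subseteq\mathcal{L}_{Y_A}$ and the right-hand side is closed. For the reverse inclusion, it suffices to show every generator $Y_a(u)$ with $a\in A$, $u\in\R$ lies in $\overline{\bigcup_{n\in\N}\mathcal{L}_{Y_A}(n)}$; but for any fixed $u$ there is $n\in\N$ with $u\le n$, hence $Y_a(u)\in\mathcal{L}_{Y_A}(n)\subseteq\bigcup_{n\in\N}\mathcal{L}_{Y_A}(n)$. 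Since $\mathcal{L}_{Y_A}$ is by definition the closed span of all such generators, and the closed span of a set contained in a closed subspace is contained in that subspace, we get $\mathcal{L}_{Y_A}\subseteq\overline{\bigcup_{n\in\N}\mathcal{L}_{Y_A}(n)}$. (One should note $\bigcup_{n}\mathcal{L}_{Y_A}(n)$ is an increasing union of linear subspaces, hence itself a linear subspace, so its closure is a closed linear subspace and the span operation is redundant here.)

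The only genuinely delicate points — and the ones I would be most careful about — are the boundary conventions $\mathcal{L}_{Y_A}(-\infty,-\infty) = \mathcal{L}_{Y_A}(\infty,\infty) = \{0\}$ and the interpretation of $\mathcal{L}_{Y_A}(-\infty)=\bigcap_t\mathcal{L}_{Y_A}(t)$ versus $\mathcal{L}_{Y_A}(-\infty,t)$. Since in the statement $s,t$ are taken in $\R$ (finite), these degenerate cases do not actually arise in (a)--(c), so the set-theoretic identities on the index intervals are completely routine: $[-\infty,s]\cap\R = (-\infty,s]$ and $(-\infty,s]\cup[s,t] = (-\infty,t]$. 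For (d) the relevant fact is simply that $\R = \bigcup_{n\in\N}(-\infty,n]$. Thus I expect no real obstacle; the proof is a careful bookkeeping of which set of random variables generates which space, plus the elementary Hilbert-space fact that $\overline{\text{span}}$ turns unions of generating sets into closed sums $\vee$ and that the closure of an increasing union of closed subspaces contains each of them. I would present (b) first as the cleanest case, then (a) and (c) as variations, and finish with the short two-inclusion argument for (d).
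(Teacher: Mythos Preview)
Your proposal is correct and takes essentially the same approach as the paper: both argue by showing the two inclusions, with the nontrivial direction handled by observing that the generating set of the larger space is the union of the generating sets of the two smaller spaces. The paper works this out explicitly for (a) by splitting a finite linear combination $\sum_{i,a}\gamma_{a,i}Y_a(t_i)$ according to whether $t_i\le s$ or $t_i>s$, and then declares (b)--(d) analogous; your version packages the same step into the general identity $\overline{\text{span}}(S_1\cup S_2)=\overline{\text{span}}(S_1)\vee\overline{\text{span}}(S_2)$, which is a cleaner phrasing of the identical argument.
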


\subsection{Mean-square differentiable stationary processes}
To compute the mean-square derivative of a stationary continuous-time process $\CY_V$ with expectation zero, the following result of \cite{GI04}, IV.~§3, Corollary 2 is useful; see as well \cite{BrockwellLindner2024}, Example 5.17 and  \cite{DO60}, XI.~§9, Example 1. \newpage

%The proof can be done step by step as in \cite{DO60}, XI.~§9, Example 1.

%For more details on stationary processes we refer to \cite{ CR399, CR39, CR61, CR64, CR71, KO41, DO60, RO67, BR91, MA61, GL58}. \LS{Weniger?}
%For the theory of multivariate stationary processes we refer to
%\cite{CR399, CR39, CR61, CR64, CR67, CR71, KO41, MA61}.

\begin{proposition} \label{Proposition 3.7}
Let $\CY_V$ be a %$k$-dimensional
stationary process with expectation zero, spectral density $f_{Y_VY_V}(\lambda)$, $\lambda \in \R$, and  \textsl{spectral representation}
\begin{align}\label{spectral representation of stationary process}
Y_V(t) = \uint e^{i \lambda t} \Phi_V(d\lambda), \quad t\in\R,
\end{align}
where  $\Phi_V(\lambda)=(\Phi_1(\lambda),\ldots,\Phi_k(\lambda))^\top $ is a  random measure with
\begin{align*}
\BE[\Phi_V(d\lambda)] = 0_k \in \R^k \quad \text{ and } \quad
\BE[\Phi_V(d\lambda) \overline{\Phi_V(d\mu)}^\top ] =
%= \delta_{\lambda=\mu} F_{ Y_V Y_V}(d\lambda)
\delta_{\lambda=\mu} f_{Y_VY_V}(\lambda) d\lambda.
\end{align*}
 Then
 \begin{align*}
    \limh \frac{Y_V(t)-Y_{V}(t-h)}{h}
\end{align*}
exists if and only if $\int_{-\infty}^{\infty}\lambda^2\|f_{Y_VY_V}(\lambda)\|\,d\lambda<\infty$. In this case,
\begin{align*}
    D^{(1)}Y_V(t)\coloneqq\limh \frac{Y_V(t)-Y_{V}(t-h)}{h}=\uint i\lambda e^{i \lambda t} \Phi_V(d\lambda), \quad t\in\R.
\end{align*}
%Furthermore, the limit holds also $\mathbb{P}$-a.s., i.e.,
%\begin{align*}
%     D^{(1)}Y_V(t)=\lim_{h\to 0} \frac{Y_V(t)-Y_{V}(t-h)}{h} \quad \mathbb{P}\text{-a.s.}
%\end{align*}
\end{proposition}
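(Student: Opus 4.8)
The plan is to work directly with the spectral representation and characterise mean-square convergence of the difference quotient via the associated spectral measure. First I would write the difference quotient using \eqref{spectral representation of stationary process}: since $Y_V$ is mean-square continuous with spectral representation driven by the orthogonal random measure $\Phi_V$, linearity of the stochastic integral gives
\begin{align*}
\frac{Y_V(t)-Y_V(t-h)}{h} = \uint \frac{e^{i\lambda t}-e^{i\lambda(t-h)}}{h}\,\Phi_V(d\lambda) = \uint e^{i\lambda t}\,\frac{1-e^{-i\lambda h}}{h}\,\Phi_V(d\lambda).
\end{align*}
By the Itô-type isometry for integrals against $\Phi_V$ (the second-moment property $\BE[\Phi_V(d\lambda)\overline{\Phi_V(d\mu)}^\top] = \delta_{\lambda=\mu} f_{Y_VY_V}(\lambda)\,d\lambda$), the squared $L^2$-norm of the difference quotient is $\int_{-\infty}^\infty |(1-e^{-i\lambda h})/h|^2 \,\tr\big(f_{Y_VY_V}(\lambda)\big)\,d\lambda$, or more precisely the matrix of cross-moments is $\int (1-e^{-i\lambda h})\overline{(1-e^{-i\lambda h})}/h^2 \, f_{Y_VY_V}(\lambda)\,d\lambda$.

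Next I would identify the pointwise limit of the integrand kernel: $(1-e^{-i\lambda h})/h \to i\lambda$ as $h\to 0$, and the elementary bound $|1-e^{-i\lambda h}| \le |\lambda h|$ gives $|(1-e^{-i\lambda h})/h|^2 \le \lambda^2$ uniformly in $h$. This is the crux: it lets me invoke dominated convergence against the (matrix-valued, finite-trace) measure $f_{Y_VY_V}(\lambda)\,d\lambda$ provided $\int \lambda^2 \|f_{Y_VY_V}(\lambda)\|\,d\lambda < \infty$. Under that integrability hypothesis, the candidate limit $\uint i\lambda e^{i\lambda t}\,\Phi_V(d\lambda)$ is a well-defined element of $L^2$, and
\begin{align*}
\Big\| \frac{Y_V(t)-Y_V(t-h)}{h} - \uint i\lambda e^{i\lambda t}\,\Phi_V(d\lambda)\Big\|_{L^2}^2 = \uint \Big| \frac{1-e^{-i\lambda h}}{h} - i\lambda\Big|^2 \tr\big(f_{Y_VY_V}(\lambda)\big)\,d\lambda \to 0
\end{align*}
by dominated convergence (dominating function $4\lambda^2 \tr(f_{Y_VY_V}(\lambda))$, using $|(1-e^{-i\lambda h})/h - i\lambda| \le 2|\lambda|$). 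This proves the "if" direction together with the stated formula for $D^{(1)}Y_V(t)$.

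For the converse, suppose the mean-square limit exists. Then the family $\{(Y_V(t)-Y_V(t-h))/h : h>0\}$ is Cauchy in $L^2$ as $h\to 0$, hence bounded in $L^2$-norm. By the isometry, $\int |(1-e^{-i\lambda h})/h|^2 \tr(f_{Y_VY_V}(\lambda))\,d\lambda$ stays bounded as $h\to 0$; now apply Fatou's lemma along a sequence $h_n\to 0$, using $|(1-e^{-i\lambda h_n})/h_n|^2 \to \lambda^2$ pointwise, to conclude $\int \lambda^2 \tr(f_{Y_VY_V}(\lambda))\,d\lambda < \infty$, which is equivalent to $\int \lambda^2\|f_{Y_VY_V}(\lambda)\|\,d\lambda<\infty$ since all matrix norms are equivalent and $f_{Y_VY_V}(\lambda)\ge 0$ makes its trace comparable to its norm. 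I expect the only mild technical obstacle to be bookkeeping the matrix-valued (rather than scalar) spectral measure carefully — in particular justifying the isometry in the multivariate form and the equivalence between control of $\tr(f_{Y_VY_V})$ and of $\|f_{Y_VY_V}\|$ — but since $f_{Y_VY_V}(\lambda)$ is Hermitian positive semi-definite this is routine, and one could alternatively cite \cite{GI04}, IV.~§3, Corollary 2 directly for the scalar reduction component by component.
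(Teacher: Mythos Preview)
Your argument is correct. Note, however, that the paper does not prove this proposition at all: it is stated as a known result, with the proof deferred to \cite{GI04}, IV.~\S3, Corollary~2 (see also \cite{BrockwellLindner2024}, Example~5.17 and \cite{DO60}, XI.~\S9, Example~1). Your direct spectral-isometry argument---dominated convergence for sufficiency, Fatou for necessity---is exactly the standard proof found in those references, so there is nothing to compare.
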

Obviously, by recursion, we receive as well higher derivatives. Note that for a one-dimensional process $Y=(Y(t))_{t\in \R}$, the condition $\int_{-\infty}^{\infty}\lambda^2|f_{YY}(\lambda)|\,d\lambda<\infty$ is equivalent to the existence of $c_{YY}''(0)$, where $c_{YY}(t)$, $t\in \R$, is the autocovariance function of $Y$.

\begin{remark} \label{Remark 3.6}
Suppose $\CY_v$ is mean-square differentiable for some $v\in V$. Then
\begin{align*}
    D^{(1)}Y_v(t)=\underset{h \searrow 0}{\text{l.i.m.\:}}
    \frac{Y_v(t)-Y_v(t-h)}{h}\in \mathcal{L}_{Y_v}(t).
\end{align*}
Similarly, we are able to show by induction that if $\CY_v$ is $j_v$-times mean-square differentiable, then
%\begin{align*} %\label{eq1d}
    $D^{(j_v)}Y_v(t)\in \mathcal{L}_{Y_v}(t).$
%\end{align*}
\end{remark}
For further details on stationary processes, we refer to % the numerous publications of Cramér (see, e.g., \citealp{CR39, CR61}) as well as to 
the comprehensive works of \cite{DO60, RO67, LI15, BrockwellLindner2024}.

%%%%%%%%%%%%%%%%%%%%%%%%%%%%%%%%%%%%%%%%%%%%%%%%%%%%%%%%%%%%%%%%%%%%%%%%%%%%%%%%%%

\section{Directed influences: Granger causality for stationary continuous-time processes}\label{sec:influence}
%\subsection{Different concepts of Granger causality}\label{subsec:Granger causality}
In this section, we introduce and characterise directed influences between the component series of $\CY_V$ using different concepts of causality: \textit{local Granger causality}, \textit{Granger causality} and\textit{ global Granger causality}, where global Granger non-causality implies Granger non-causality which in turn implies local Granger non-causality. In  \Cref{sec:Proof:Granger_non-causal:project}, we present the proofs of the present section.

The idea of a Granger causal influence of one component series $\CY_a$ on another component series $\CY_b$ goes back to \cite{GR69}. In discrete time, the general idea that one process $\CY_a$ is Granger non-causal for another process $\CY_b$ is based on the question of whether the prediction of $Y_b(t+1)$ based on the information available at time $t$ provided by the past and present values of $\CY_V$ is diminished by removing the information provided by the past and present values of $\CY_a$. To transfer this approach to the continuous-time setting, we need to ask what it means to predict a time step into the future. As there is no obvious approach, we present the aforementioned three different concepts, motivated by other definitions of Granger causality in the literature. The first approach is the direct generalisation of \cite{EI07}, \mbox{Definition 2.2,} to continuous-time processes, considering one time step in the future.

%\LS{This time interval $[t,t+1]$ represents any context-dependent time interval $[t,t+c]$, $c>0$. For MCAR processes, we recognize in \Cref{sec:charMCAR} that such definitions are equivalent.}

\begin{definition}\label{(linear) Granger non-causal}
Let $A,B \subseteq S \subseteq V$ and $A\cap B=\emptyset$. Then $\CY_A$ is \textsl{Granger non-causal for $\CY_B$ with respect to $\CY_S$} if, for all $t\in \R$,
\begin{align*}
\mathcal{L}_{Y_B}(t,t+1) \perp \mathcal{L}_{Y_A}(t) \: \vert \: \mathcal{L}_{Y_{S \setminus A}}(t).
\end{align*}
 We write $\CY_A \nrarrow \CY_B \: \vert \: \CY_S$.
\end{definition}

\begin{remark} \label{Remark 3.2}
        In the definition of Granger causality, we use the time step $h=1$ because this is also done for discrete-time processes in  \cite{EI07} and it is the natural choice. Of course, it is also plausible to take some step size $h>0$ and define that $\CY_A$ is Granger non-causal for $\CY_B$ with respect to $\CY_S$ by 
        \begin{align} \label{eq:Remark:3.2}
            \mathcal{L}_{Y_B}(t,t+h) \perp \mathcal{L}_{Y_A}(t) \: \vert \: \mathcal{L}_{Y_{S \setminus A}}(t) \quad \forall \, t\in\R.
        \end{align}
        The results of this paper are straightforwardly transferable to this definition, but for ease of notation we stick to $h=1$. For popular examples such as the MCAR processes, see \Cref{Remark:step size}, and state space models \citep{VF23preb}, we recognise that for different $h$ these definitions are even equivalent. 
    %\end{itemize}
\end{remark}

In the next lemma, we present some equivalent characterisations of Granger causality, for completeness the proof is given in the \mbox{Supplementary Material \ref{Sec:Suppl}.} % whose proof is given in \Cref{sec:Proof:Granger_non-causal:project}.

\begin{lemma}\label{Charakterisisierung linear granger non-causal}\label{Charakterisisierung linear granger noncausa 2l}
Let $A,B \subseteq S \subseteq V$ and $A\cap B=\emptyset$.
Then the following statements are equivalent:
\begin{itemize}
    \item[(a)]    $\CY_A \nrarrow \CY_B \: \vert \: \CY_S$
    \item[(b)] $\mathcal{L}_{Y_B}(t+1) \perp \mathcal{L}_{Y_A}(t) \: \vert \: \mathcal{L}_{Y_{S \setminus A}}(t)$ $\forall \: t \in \R.$
    \item[(c)] $\ell_{Y_B}(t,t+1) \perp \ell_{Y_A}(-\infty,t) \: \vert \: \mathcal{L}_{Y_{S \setminus A}}(t)$ $\forall \: t \in \R.$
    \item[(d)] $\ell_{Y_b}(s,s) \perp \ell_{Y_a}(s',s') \: \vert \: \mathcal{L}_{Y_{S \setminus A}}(t)$
$\forall \; a \in A$, $b \in B$, $s\in [t,t+1]$, $s'\leq t$,  $t \in \R$.
\end{itemize}
\end{lemma}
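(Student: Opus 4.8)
The plan is to prove the chain of implications $(b) \Rightarrow (a) \Rightarrow (d) \Rightarrow (c) \Rightarrow (b)$, or some convenient cycle, exploiting monotonicity of the linear spaces in their time arguments together with the semi-graphoid properties (C1)--(C4) of \Cref{properties of conditional orthogonality} and the additivity properties of \Cref{additivity in time domain}. The guiding principle throughout is that conditional orthogonality of closed linear spaces is equivalent to pairwise orthogonality of their generating elements after projection onto the conditioning space, so one can pass freely between $\mathcal{L}_{Y_\bullet}$ and $\ell_{Y_\bullet}$ and between a span and its generators, provided the conditioning space is held fixed.

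First I would observe that $(a) \Leftrightarrow (c)$ is essentially a restatement: by definition $\CY_A \nrarrow \CY_B \mid \CY_S$ means $\mathcal{L}_{Y_B}(t,t+1) \perp \mathcal{L}_{Y_A}(t) \mid \mathcal{L}_{Y_{S\setminus A}}(t)$, and since $\ell_{Y_B}(t,t+1)$ is dense in $\mathcal{L}_{Y_B}(t,t+1)$ and $\ell_{Y_A}(-\infty,t)$ is dense in $\mathcal{L}_{Y_A}(t)$, conditional orthogonality of the closures is equivalent to that of the dense subspaces — here one uses that $X \mapsto X - P_{\mathcal{L}_3}X$ is continuous and \eqref{limit E} to pass to limits in the inner product. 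Likewise $(c) \Leftrightarrow (d)$ because $\ell_{Y_B}(t,t+1) = \sum_{b\in B, s\in[t,t+1]} \ell_{Y_b}(s,s)$ and $\ell_{Y_A}(-\infty,t) = \sum_{a\in A, s'\leq t}\ell_{Y_a}(s',s')$ as (non-closed) linear spans, and conditional orthogonality of a sum of spaces given a fixed third space is equivalent to conditional orthogonality of each summand (the ``finite'' version of (C2), which needs no closure or separability hypotheses since it is just bilinearity of the inner product on the orthogonal complements).

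The substantive part is $(a) \Leftrightarrow (b)$, i.e.\ replacing $\mathcal{L}_{Y_B}(t,t+1)$ by the larger space $\mathcal{L}_{Y_B}(t+1) = \mathcal{L}_{Y_B}(t) \vee \mathcal{L}_{Y_B}(t,t+1)$ (using \Cref{additivity in time domain}(a)). The direction $(b)\Rightarrow(a)$ is immediate by monotonicity: $\mathcal{L}_{Y_B}(t,t+1) \subseteq \mathcal{L}_{Y_B}(t+1)$, and conditional orthogonality is inherited by subspaces of the first argument. For $(a)\Rightarrow(b)$ the idea is to add in the ``past of $B$'' for free. Note $\mathcal{L}_{Y_B}(t) \subseteq \mathcal{L}_{Y_{S\setminus A}}(t)$ since $B \subseteq S\setminus A$ (as $A\cap B = \emptyset$), so trivially $\mathcal{L}_{Y_B}(t) \perp \mathcal{L}_{Y_A}(t) \mid \mathcal{L}_{Y_{S\setminus A}}(t)$ — indeed, after projecting onto $\mathcal{L}_{Y_{S\setminus A}}(t)$, every element of $\mathcal{L}_{Y_B}(t)$ becomes zero. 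Combining this with the hypothesis $\mathcal{L}_{Y_B}(t,t+1) \perp \mathcal{L}_{Y_A}(t) \mid \mathcal{L}_{Y_{S\setminus A}}(t)$ via the composition property (C2) yields $\mathcal{L}_{Y_B}(t) \vee \mathcal{L}_{Y_B}(t,t+1) \perp \mathcal{L}_{Y_A}(t) \mid \mathcal{L}_{Y_{S\setminus A}}(t)$, which by \Cref{additivity in time domain}(a) is exactly $(b)$.

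The main obstacle I anticipate is bookkeeping around closures and the applicability of (C2): the composition property as stated in \Cref{properties of conditional orthogonality} is phrased for the closed direct sum $\vee$, so one must be careful that the spaces involved are genuinely closed subspaces of $L^2$ and that $\mathcal{L}_{Y_B}(t) \vee \mathcal{L}_{Y_B}(t,t+1)$ really equals $\mathcal{L}_{Y_B}(t+1)$ — which is precisely \Cref{additivity in time domain}(a), so this is already handled. The only genuinely delicate point is the density argument in $(a)\Leftrightarrow(c)$ and the finite-span reduction in $(c)\Leftrightarrow(d)$, both of which are routine once one writes out that $\langle X - P_{\mathcal{L}_3}X,\, Y - P_{\mathcal{L}_3}Y\rangle_{L^2}$ is continuous and bilinear in $(X,Y)$; no separability or conditional-linear-separation hypothesis (and hence not (C5)) is needed anywhere.
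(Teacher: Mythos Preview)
Your proposal is correct and follows essentially the same approach as the paper: the density/bilinearity arguments for $(a)\Leftrightarrow(c)\Leftrightarrow(d)$ match, and for $(a)\Rightarrow(b)$ the paper uses the identical key observation that $\mathcal{L}_{Y_B}(t)\subseteq\mathcal{L}_{Y_{S\setminus A}}(t)$ makes the past-of-$B$ orthogonality trivial, combining it with $(a)$ via a direct three-step density argument rather than invoking (C2) as a black box. The difference is purely presentational.
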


%\VF{For the definition of Granger non-causality and its characterisations the assumption of stationarity is not needed and can be neglected. Only for the derivation of the Markov properties of the mixed orthogonality graphs and therefore, for its definition, in \Cref{sec:path_diagrams} we use it.}

The stationarity assumption is not necessary for the definition of Granger causality and its characterisations and can be neglected here. We first need it in \Cref{sec:path_diagrams}, e.g., for the intersection property (C5). %of conditional orthogonality to hold. % and the derivation of the Markov properties of mixed orthogonality graphs.

\begin{remark}
 The characterisation in \Cref{Charakterisisierung linear granger noncausa 2l} (b) is analogous to \cite{EI07}, Definition 2.2. The other characterisations are useful for checking Granger non-causality. In particular, we have  shown implicitly in \Cref{Charakterisisierung linear granger noncausa 2l} (d) that
\begin{align}\label{eq:correspondenceXBXbcausality}
    \CY_A \nrarrow \CY_B \: \vert \: \CY_S
    \quad \Leftrightarrow \quad
    \CY_A \nrarrow \CY_b \: \vert \: \CY_S
    \quad \forall\; b\in B.
\end{align}
\end{remark}

 From the characterisations in \Cref{Charakterisisierung linear granger non-causal}, the idea of Granger non-causality as equality of two predictions, as given, e.g., in \cite{DU98} for discrete-time processes, is not yet clear. Therefore, we provide another characterisation of Granger non-causality using orthogonal projections. % the proof of which is also given in \Cref{sec:Proof:Granger_non-causal:project}.

\begin{theorem}\label{Charakterisierung als Gleichheit der linearen Vorhersage}
Let $A,B \subseteq S \subseteq V$ and $A\cap B=\emptyset$. Then $\CY_A$ is Granger non-causal for $\CY_B$ with respect to $\CY_S$ if  for all $h\in[0,1]$, $t\in \R$, and $b\in B$,
\begin{align*}
P_{\mathcal{L}_{Y_S}(t)}Y_b(t+h) = P_{\mathcal{L}_{Y_{S \setminus A}}(t)}Y_b(t+h) \quad \mathbb{P}\text{-a.s.}
\end{align*}
\end{theorem}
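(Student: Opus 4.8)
The statement characterises Granger non-causality $\CY_A \nrarrow \CY_B \mid \CY_S$ as equality of the two orthogonal projections $P_{\mathcal{L}_{Y_S}(t)}Y_b(t+h) = P_{\mathcal{L}_{Y_{S\setminus A}}(t)}Y_b(t+h)$ for all $h\in[0,1]$, $b\in B$. The plan is to show that this projection identity is equivalent to the conditional orthogonality in \Cref{Charakterisisierung linear granger non-causal}(b), namely $\mathcal{L}_{Y_B}(t+1) \perp \mathcal{L}_{Y_A}(t) \mid \mathcal{L}_{Y_{S\setminus A}}(t)$, and then invoke that lemma. Note first that $\mathcal{L}_{Y_{S\setminus A}}(t) \subseteq \mathcal{L}_{Y_S}(t)$, so by the tower property of orthogonal projections onto nested closed subspaces, $P_{\mathcal{L}_{Y_{S\setminus A}}(t)} = P_{\mathcal{L}_{Y_{S\setminus A}}(t)} P_{\mathcal{L}_{Y_S}(t)}$. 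Writing $X := Y_b(t+h)$, the projection identity $P_{\mathcal{L}_{Y_S}(t)}X = P_{\mathcal{L}_{Y_{S\setminus A}}(t)}X$ says precisely that $P_{\mathcal{L}_{Y_S}(t)}X$ already lies in $\mathcal{L}_{Y_{S\setminus A}}(t)$, equivalently that $P_{\mathcal{L}_{Y_S}(t)}X - P_{\mathcal{L}_{Y_{S\setminus A}}(t)}X = 0$, equivalently that $P_{\mathcal{L}_{Y_S}(t)}X \perp \mathcal{L}_{Y_S}(t) \ominus \mathcal{L}_{Y_{S\setminus A}}(t)$.

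The key algebraic step is the decomposition $\mathcal{L}_{Y_S}(t) = \mathcal{L}_{Y_{S\setminus A}}(t) \vee \mathcal{L}_{Y_A}(t)$, which follows from \Cref{additivity in index set}(c) since $S = (S\setminus A) \cup A$. Thus the orthogonal complement of $\mathcal{L}_{Y_{S\setminus A}}(t)$ inside $\mathcal{L}_{Y_S}(t)$ is the closed span of $\{Y_a(u) - P_{\mathcal{L}_{Y_{S\setminus A}}(t)}Y_a(u) : a\in A,\, u\le t\}$. Hence the projection identity for all $h\in[0,1]$ and $b\in B$ is equivalent to
\begin{align*}
X - P_{\mathcal{L}_{Y_{S\setminus A}}(t)}X \;\perp\; Y_a(u) - P_{\mathcal{L}_{Y_{S\setminus A}}(t)}Y_a(u) \qquad \forall\, a\in A,\ u\le t,\ b\in B,\ h\in[0,1],
\end{align*}
where on the left $X - P_{\mathcal{L}_{Y_{S\setminus A}}(t)}X = X - P_{\mathcal{L}_{Y_S}(t)}X + (P_{\mathcal{L}_{Y_S}(t)}X - P_{\mathcal{L}_{Y_{S\setminus A}}(t)}X)$, and the first summand $X - P_{\mathcal{L}_{Y_S}(t)}X$ is automatically orthogonal to all of $\mathcal{L}_{Y_S}(t)$, in particular to the right-hand vectors. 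So the displayed orthogonality is the same as $P_{\mathcal{L}_{Y_S}(t)}X - P_{\mathcal{L}_{Y_{S\setminus A}}(t)}X \perp \mathcal{L}_{Y_A}(t)\text{-residuals}$, which by linearity and taking closed spans is exactly the conditional orthogonality statement $\mathcal{L}_{Y_B}(t,t+1) \perp \mathcal{L}_{Y_A}(t) \mid \mathcal{L}_{Y_{S\setminus A}}(t)$; since every element of $\mathcal{L}_{Y_B}(t,t+1)$ is a mean-square limit of linear combinations of the $Y_b(t+h)$, and \eqref{limit E} lets us pass to the limit, one recovers the full conditional orthogonality, and conversely restricting to generators $Y_b(t+h)$ and $Y_a(u)$ is immediate.

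The main obstacle I expect is bookkeeping with the closed spans and the limiting argument: one must be careful that equality of projections for the \emph{generators} $Y_b(t+h)$, $h\in[0,1]$, really does capture conditional orthogonality of the whole space $\mathcal{L}_{Y_B}(t,t+1)$ (this uses that these generators densely span that space and continuity of the projection operators), and likewise that it suffices to test orthogonality against the generators $Y_a(u)$, $u\le t$, of $\mathcal{L}_{Y_A}(t)$ rather than against the closed residual span — again a density plus continuity argument, already implicitly packaged in \Cref{Charakterisisierung linear granger non-causal}(c)–(d). Once these density reductions are in place, the equivalence is a routine application of the nested-projection (tower) identity together with \Cref{additivity in index set}(c), and the theorem follows by chaining through \Cref{Charakterisisierung linear granger non-causal}(a)$\Leftrightarrow$(b).
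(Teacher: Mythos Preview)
Your proposal is correct and follows essentially the same route as the paper. The paper's proof is a two-liner: it invokes \cite{LI15}, Proposition~2.4.2, to obtain that $\mathcal{L}_{Y_B}(t,t+1)\perp\mathcal{L}_{Y_A}(t)\mid\mathcal{L}_{Y_{S\setminus A}}(t)$ is equivalent to $P_{\mathcal{L}_{Y_S}(t)}Y^B=P_{\mathcal{L}_{Y_{S\setminus A}}(t)}Y^B$ for all $Y^B\in\mathcal{L}_{Y_B}(t,t+1)$, and then reduces to the generators $Y_b(t+h)$ by linearity and continuity of projections. What you have done is simply unpack the content of that cited proposition by hand---the tower identity for nested projections, the decomposition $\mathcal{L}_{Y_S}(t)=\mathcal{L}_{Y_{S\setminus A}}(t)\vee\mathcal{L}_{Y_A}(t)$ from \Cref{additivity in index set}(c), and the fact that the $\mathcal{L}_{Y_A}(t)$-residuals densely span $\mathcal{L}_{Y_S}(t)\ominus\mathcal{L}_{Y_{S\setminus A}}(t)$---together with the same density reduction you flag at the end, which is exactly the paper's second step.
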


In other words, the information given by the past process $(Y_A(s), s\leq t)$ can be forgotten without any consequences for the optimal linear prediction of $\CY_B(t+h)$ for $h\in[0,1]$. In particular, since $\mathcal{L}_{Y_{S \setminus A}}(t) \subseteq \mathcal{L}_{Y_{S \setminus \{a\}}}(t)\subseteq \mathcal{L}_{Y_{S}}(t)$ for any $a\in A$, we receive

\begin{align} \label{eq3b}
    \CY_A \nrarrow \CY_B \: \vert \: \CY_S
    \quad \Rightarrow \quad
    \CY_a \nrarrow \CY_b \: \vert \: \CY_S
    \quad \forall\;a\in A,\, b\in B.
\end{align}

Under some additional model assumptions %, in particular stationarity,
the opposite direction is also true. However, this is the topic of \Cref{sec:path_diagrams}.

\begin{remark}
\cite{FL96}, Definition 2.1, and \cite{CO96}, Definition 1, take a different approach to define Granger non-causality in continuous-time, using the equality of conditional expectations instead of orthogonal projections, and generated $\sigma$-fields instead of generated linear spaces. %as information sets.
\cite{CO96}, Definition 2, also defines a local version of Granger causality, called local instantaneous causality, in the context of semimartingales. In Proposition 1 they further relate it to the definition of \cite{Renault:Szafarz}, who study first-order stochastic differential equations. Instead of looking at the entire prediction time interval $[t,t+1]$, \cite{CO96} examine $[t,t+h]$ as $h \rightarrow 0$ and, to get non-trivial limits, they use difference quotients. They also note that the highest existing derivative of the process must always be examined to obtain a non-trivial criterion. Therefore, in the style of their characterisation of local Granger causality and our \Cref{Charakterisierung als Gleichheit der linearen Vorhersage}, we define the following version of local Granger causality which is, as we derive in \Cref{Lemma:Granger_causality_relations}, weaker as Granger causality.
\end{remark}

\begin{comment}
\cite{FL96}, Definition 2.1, and \cite{CO96}, Definition 1, take a different approach to generalise the one time step into the future to continuous-time. Instead of looking at the entire time interval $0 \leq h \leq 1$ they examine the limit $h \rightarrow 0$. They also note that the highest existing derivative of the process must always be examined to obtain a non-trivial criterion and one has to consider increments per unit of time.
However, they discuss conditional expectations as opposed to orthogonal projections as we use them in the present paper. We, therefore, define a local version of Granger causality in terms of orthogonal projections motivated from \Cref{Charakterisierung als Gleichheit der linearen Vorhersage} as follows.
\end{comment}

\begin{definition}\label{definition (linear) local Granger non-causal}
Suppose  $\CY_v=(Y_v(t))_{t\in\R}$ is $j_v$-times mean-square differentiable but the $(j_v+1)$-derivative does not exist for  $v\in V$. The $j_v$-derivative is denoted by $D^{(j_v)}\CY_v$, where  for $j_v=0$ we define $D^{(0)} Y_v=Y_v$. Let $A,B \subseteq S \subseteq V$ and $A\cap B=\emptyset$.
Then $\CY_A$ is \textsl{local Granger non-causal} for $\CY_B$ with respect to $\CY_S$ if,  for all $t\in \R$ and $b\in B$,
\begin{align*}
&\limh P_{\mathcal{L}_{Y_S}(t)} \left(\frac{D^{(j_b)} Y_b(t+h)- D^{(j_b)} Y_b(t)}{h}\right)\\
&\quad =\limh P_{\mathcal{L}_{Y_{S \setminus A}}(t)} \left(\frac{D^{(j_b)} Y_b(t+h)- D^{(j_b)} Y_b(t)}{h}\right)
 \quad \mathbb{P}\text{-a.s}.
\end{align*}
 We write $\CY_A \nrarrownull \CY_B \: \vert \: \CY_S$.
\end{definition}

\begin{remark} ~ \label{Remark:3.7}
\begin{itemize} 
    \item[(a)]
Since $Y_b$ is by assumption not $(j_b+1)$-times mean-square differentiable, the $L^2$-limit of $(D^{(j_b)} Y_b(t+h)- D^{(j_b)} Y_b(t))/h$ does not exist. However, it is still possible that the $L^2$-limit of $$P_{\mathcal{L}_{Y_S}(t)} \left(\frac{D^{(j_b)} Y_b(t+h)- D^{(j_b)} Y_b(t)}{h} \right)
\quad \text{and}\quad
P_{\mathcal{L}_{Y_{S\backslash A}}(t)} \left(\frac{D^{(j_b)} Y_b(t+h)- D^{(j_b)} Y_b(t)}{h}\right)$$ exist and only then local Granger non-causality is possible. 
\item[(b)] Typical examples of stochastic processes satisfying the assumptions of \Cref{definition (linear) local Granger non-causal} are MCAR processes (\Cref{sec:CGMCAR})
and the more general class of state space models (\citealp{VF23preb}) but as well fractional MCAR processes (\citealp{Marquardt,CO96}).
%In, we develop that for MCAR processes the limits indeed exist. \LS{In \cite{VF23preb}, we develop that the limit also exists for more general state space models. For, such limits exist for suitable non-integer derivatives. We refer to \cite{CO96}, who deal with the latter topic in the nonlinear context.}
\end{itemize}
\end{remark}

\begin{remark}
By definition we receive
\begin{align}\label{eq:correspondenceXBXbcausality local}
    \CY_A \nrarrownull \CY_B \: \vert \: \CY_S
    \quad \Leftrightarrow \quad
\CY_A \nrarrownull \CY_b \: \vert \: \CY_S \quad \forall\; b\in B.
\end{align}
Moreover, for $a\in A$, the subset relation $\mathcal{L}_{Y_{S \setminus A}}(t) \subseteq \mathcal{L}_{Y_{S \setminus \{a\}}}(t) \subseteq \mathcal{L}_{Y_{S}}(t)$ implies
\begin{align}\label{eq3.4}
    \CY_A \nrarrownull \CY_B \: \vert \: \CY_S
    \quad \Rightarrow \quad
    \CY_a \nrarrownull \CY_b \: \vert \: \CY_S
    \quad \forall\; a\in A,b\in B.
\end{align}
Again, the opposite direction is valid under some additional assumption, see \Cref{sec:path_diagrams}.
\end{remark}

Local Granger causality implies a kind of local version of conditional orthogonality.  %; see \Cref{sec:Proof:Granger_non-causal:project} for the proof.

\begin{theorem}\label{Charakterisierung local Granger non-causal}
Suppose  $\CY_v=(Y_v(t))_{t\in\R}$ is $j_v$-times mean-square differentiable but the $(j_v+1)$-derivative does not exist for $v\in V$.  %with $j_v$-derivative denoted by $D^{(j_v)}\CY_v$ for all $v\in V$, where  for $j_v=0$ we define $D^{(0)} Y_v(t)=Y_v(t)$.
Further, let $A,B \subseteq S \subseteq V$ and $A\cap B=\emptyset$. Then $\CY_A \nrarrownull \CY_B \: \vert \: \CY_S$ implies that, for all $Y^A \in \mathcal{L}_{Y_{A}}(t)$ and $t\in \R$,
\begin{align*}
\lim_{h \rightarrow 0} \frac{1}{h} \BE \left[
\left(D^{(j_b)} Y_b(t+h) - P_{\mathcal{L}_{Y_{S \setminus A}}(t)} D^{(j_b)} Y_b(t+h)\right)
\overline{\left(Y^A - P_{\mathcal{L}_{Y_{S \setminus A}}(t)} Y^A \right)}
\right]=0.
\end{align*}
\end{theorem}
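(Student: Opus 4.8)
The plan is to unpack the definition of local Granger non-causality and turn the equality of the two mean-square limits of projections into the stated limit of a covariance. First I would fix $b \in B$ and $t \in \R$ and introduce the abbreviation
\begin{align*}
\Delta_h := \frac{D^{(j_b)}Y_b(t+h) - D^{(j_b)}Y_b(t)}{h},
\end{align*}
so that, by \Cref{definition (linear) local Granger non-causal}, the hypothesis $\CY_A \nrarrownull \CY_B \mid \CY_S$ says precisely that $P_{\mathcal{L}_{Y_S}(t)}\Delta_h$ and $P_{\mathcal{L}_{Y_{S\setminus A}}(t)}\Delta_h$ both converge in $L^2$ as $h \to 0$ and have the same limit; call it $Z \in \mathcal{L}_{Y_{S\setminus A}}(t)$. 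Consequently $P_{\mathcal{L}_{Y_S}(t)}\Delta_h - P_{\mathcal{L}_{Y_{S\setminus A}}(t)}\Delta_h \to 0$ in $L^2$.

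Next I would rewrite the quantity inside the covariance. Since $D^{(j_b)}Y_b(t) \in \mathcal{L}_{Y_b}(t) \subseteq \mathcal{L}_{Y_{S\setminus A}}(t)$ by \Cref{Remark 3.6} (as $b \in S \setminus A$), we have $P_{\mathcal{L}_{Y_{S\setminus A}}(t)} D^{(j_b)}Y_b(t) = D^{(j_b)}Y_b(t)$, and therefore
\begin{align*}
\frac{1}{h}\left(D^{(j_b)}Y_b(t+h) - P_{\mathcal{L}_{Y_{S\setminus A}}(t)}D^{(j_b)}Y_b(t+h)\right) = \Delta_h - P_{\mathcal{L}_{Y_{S\setminus A}}(t)}\Delta_h.
\end{align*}
For $Y^A \in \mathcal{L}_{Y_A}(t) \subseteq \mathcal{L}_{Y_S}(t)$, the residual $Y^A - P_{\mathcal{L}_{Y_{S\setminus A}}(t)}Y^A$ is orthogonal to $\mathcal{L}_{Y_{S\setminus A}}(t)$, so it is also orthogonal to $P_{\mathcal{L}_{Y_{S\setminus A}}(t)}\Delta_h$ and to the part of $\Delta_h$ lying outside $\mathcal{L}_{Y_S}(t)$; moreover $Y^A - P_{\mathcal{L}_{Y_{S\setminus A}}(t)}Y^A$ itself lies in $\mathcal{L}_{Y_S}(t)$. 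Using these two facts, the inner product $\bigl\langle \Delta_h - P_{\mathcal{L}_{Y_{S\setminus A}}(t)}\Delta_h,\; Y^A - P_{\mathcal{L}_{Y_{S\setminus A}}(t)}Y^A \bigr\rangle_{L^2}$ equals $\bigl\langle P_{\mathcal{L}_{Y_S}(t)}\Delta_h - P_{\mathcal{L}_{Y_{S\setminus A}}(t)}\Delta_h,\; Y^A - P_{\mathcal{L}_{Y_{S\setminus A}}(t)}Y^A \bigr\rangle_{L^2}$.

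Finally I would pass to the limit: since $P_{\mathcal{L}_{Y_S}(t)}\Delta_h - P_{\mathcal{L}_{Y_{S\setminus A}}(t)}\Delta_h \to 0$ in $L^2$ and $Y^A - P_{\mathcal{L}_{Y_{S\setminus A}}(t)}Y^A$ is a fixed element of $L^2$, Cauchy–Schwarz (or \eqref{limit E}) forces the inner product, which is exactly the expression in the theorem, to tend to $0$. The main obstacle — and the reason the projection identity above is needed rather than a direct estimate — is that $\Delta_h$ itself does \emph{not} converge in $L^2$ (that is the content of \Cref{Remark:3.7}(a)), so one cannot simply take the limit inside the covariance; the argument must be arranged so that only the \emph{difference} of the two projections, which does converge to zero, ever appears multiplied against the fixed residual vector. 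Care must also be taken to verify the orthogonality bookkeeping in the middle step, namely that each cross term not involving $P_{\mathcal{L}_{Y_S}(t)}\Delta_h$ or $P_{\mathcal{L}_{Y_{S\setminus A}}(t)}\Delta_h$ genuinely vanishes because $Y^A - P_{\mathcal{L}_{Y_{S\setminus A}}(t)}Y^A \perp \mathcal{L}_{Y_{S\setminus A}}(t)$ and $Y^A - P_{\mathcal{L}_{Y_{S\setminus A}}(t)}Y^A \in \mathcal{L}_{Y_S}(t)$.
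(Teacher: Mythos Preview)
Your proposal is correct and follows essentially the same route as the paper: both arguments hinge on (i) using $D^{(j_b)}Y_b(t)\in\mathcal{L}_{Y_{S\setminus A}}(t)$ (\Cref{Remark 3.6}) to pass from $D^{(j_b)}Y_b(t+h)$ to the difference quotient, (ii) the orthogonality of $\Delta_h-P_{\mathcal{L}_{Y_S}(t)}\Delta_h$ to $Y^A-P_{\mathcal{L}_{Y_{S\setminus A}}(t)}Y^A\in\mathcal{L}_{Y_S}(t)$, and (iii) the hypothesis that $P_{\mathcal{L}_{Y_S}(t)}\Delta_h-P_{\mathcal{L}_{Y_{S\setminus A}}(t)}\Delta_h\to 0$ in $L^2$. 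The paper organises the same computation slightly differently---first establishing $\lim_{h\to 0}\tfrac{1}{h}\BE[(D^{(j_b)}Y_b(t+h)-P_{\mathcal{L}_{Y_{S\setminus A}}(t)}D^{(j_b)}Y_b(t+h))\overline{Y^A}]=0$ and then subtracting the (identically zero) term with $P_{\mathcal{L}_{Y_{S\setminus A}}(t)}Y^A$---whereas you work with the residual $Y^A-P_{\mathcal{L}_{Y_{S\setminus A}}(t)}Y^A$ from the outset, but the ingredients and logic are identical.
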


 %Note that in \Cref{subsec:condorth} we discuss  sufficient criteria for $\mathcal{L}_{Y_S}(t)= \mathcal{L}_{Y_A}(t)+\mathcal{L}_{Y_{S\setminus A}}(t)$, where we use as well that our process $\CY_V$ is stationary.
%\LS{Hier kann man nicht zu Lindquist und muss den Beweis komplett angeben. Kürzt man also vorne, muss hier ausführlich sein. Ist vorne ausführlich, dann hier kürzbar und nur auf Grenzwert verweisen. Beweis im Anhang Korrektur lesen.}

A third concept of directed influence is to consider causality up to an arbitrary horizon. In discrete time, the concept of causality at any horizon goes back to the seminal work of \cite{SI72} and is also called Sims causality. We introduce the following definition as a generalisation of \cite{EI07}, Definition 4.4, to continuous-time processes.

\begin{definition}\label{(linear) global Granger non-causal at different horizons}
Let $A,B \subseteq S \subseteq V$ and $A\cap B=\emptyset$.
%Then $\CY_A$ is \textsl{(linear) Granger non-causal for $\CY_B$ with respect to $\CY_S$ up to horizon $h$}, $h\in \R$, if and only if
%\begin{align*}
%\mathcal{L}_{Y_B}(t,t+h) \perp \mathcal{L}_{Y_A}(t) \: \vert \: \mathcal{L}_{Y_{S \setminus A}}(t),
%\end{align*}
%for $t\in \R$. We write $\CY_A  \stackrel{h}{\nrarrow} \CY_B \: \vert \: \CY_S$.
Then $\CY_A$ is \textsl{global Granger non-causal for $\CY_B$ with respect to $\CY_S$} if, for all $h\geq 0$ and $t\in \R$,
\begin{align*}
\mathcal{L}_{Y_B}(t,t+h) \perp \mathcal{L}_{Y_A}(t) \: \vert \: \mathcal{L}_{Y_{S \setminus A}}(t).
\end{align*}
 We write $\CY_A \nrarrowinf \CY_B \: \vert \: \CY_S$.
\end{definition}
%\marginpar{Globale Definition ggf. mit Raum bis unendlich abändern}

The study of such long-run effects is a useful complement to understanding the relationship between the component series and allows us to distinguish between short-run and long-run causality.

\begin{remark}
The characterisations are similar to those for Granger causality. %In particular, $\CY_A$ is (linear) Granger non-causal for $\CY_B$ with respect to $\CY_S$ up to horizon $h$, $h \in \R$, if and only if
%\begin{align*}
%P_{\mathcal{L}_{Y_S}(t)}Y_b(t+h') = P_{\mathcal{L}_{Y_{S \setminus A}}(t)}Y_b(t+h'),
%\end{align*}
%$\mathbb{P}$-a.s. for all $0 \leq h' \leq h$, $t\in \R$ and $b\in B$.
In particular, $\CY_A$ is global Granger non-causal for $\CY_B$ with respect to $\CY_S$, if and only if, for all $h \geq 0$, $t\in \R$ and $b\in B$,
\begin{align}\label{Char xy}
P_{\mathcal{L}_{Y_S}(t)}Y_b(t+h) = P_{\mathcal{L}_{Y_{S \setminus A}}(t)}Y_b(t+h) \quad \mathbb{P}\text{-a.s.}
\end{align}
On the one hand, note that the proof is similar to the proof of \Cref{Charakterisierung als Gleichheit der linearen Vorhersage} and on the other hand, that analogue relationships as in \eqref{eq:correspondenceXBXbcausality} and \eqref{eq3b} hold. The characterisation \eqref{Char xy} is again consistent with the characerisation in \cite{DU98} for discrete-time processes and with the definition of global Granger causality in \cite{CO96}, who use generated $\sigma$-fields instead of linear spaces and conditional expectations instead of orthogonal projections. Of course, for Gaussian processes, the two definitions coincide.
\end{remark}

%%%%%%%%%%%%%%%%%%%%%%%%%%%%%%%%%%%%%%%%%%%%%%%%%%%%%%%%%%%%%%%%%%%%%%%%%%%%%%%%%%
%\subsection{Relations of the different concepts of Granger causality} \label{sec:relations_Granger_Causality}

In the following lemma, we state relations between Granger non-causality, local Granger non-causality and global Granger non-causality. %We refer to  \Cref{sec:Proof:Granger_non-causal:project} for the proof.
See again \cite{Kuersteiner2018, DU98, Eichler:2013a} for the relations between the different definitions for discrete-time processes.
%also discuss the relations between the three definitions of causality in discrete-time. Yet, their proof is based on the equality of orthogonal projections instead of  conditional orthogonality. \VF{Aber wir haben auch eine Formulierung mit Projektionen. Vermutlich hätte man, wenn man gewollt hätte, den Beweis auch mit Projektionen führen können. So finde ich den Satz missverständlich}

\begin{lemma}\label{Lemma:Granger_causality_relations}
%Let $\CY_V=(Y_V(t))_{t\in \R}$ be a $k$-dimensional process that satisfies \Cref{Assumption 1 und 2} and
Let $A,B \subseteq S \subseteq V$ and $A\cap B=\emptyset$. %, $h \in \R$.
Then the following implications hold:
\begin{itemize}
    \item[(a)]  \makebox[3.17cm][l]{$\CY_A  \nrarrowinf \CY_B \: \vert \: \CY_{S }$}
    $\Rightarrow \quad \CY_A  \nrarrow \CY_B \: \vert \: \CY_{S}.$
    %\CY_A  \stackrel{h}{\nrarrow} \CY_B \: \vert \: \CY_S \quad \text{and} \quad
\item[(b)] \makebox[3.16cm][l]{$\CY_A  \nrarrowinf \CY_{S\setminus A} \: \vert \: \CY_{S}$}
$\Leftrightarrow \quad \CY_A  \nrarrow \CY_{S\setminus A} \: \vert \: \CY_S.$
%$\CY_A  \stackrel{\infty}{\nrarrow} \CY_B \: \vert \: \CY_{S \setminus A}
%\CY_A  \stackrel{h}{\nrarrow} \CY_B \: \vert \: \CY_S \quad \text{and} \quad
%\CY_A  \nrarrow \CY_B \: \vert \: \CY_{S \setminus A}$.
%\CY_A  \nrarrow \CY_{S\setminus A} \: \vert \: \CY_S \quad \Rightarrow \quad
\item[(c)] \makebox[3.195cm][l]{$\CY_A  \nrarrow \CY_{S\setminus A} \: \vert \: \CY_S$}
$\Rightarrow \quad \CY_A  \nrarrowinf \CY_{B} \: \vert \: \CY_S.$
\item[(d)] % Suppose $\CY_V$ is not differentiable. Then
\makebox[3.15cm][l]{$\CY_A  \nrarrow \CY_B \: \vert \: \CY_S$}
$\Rightarrow \quad \CY_A  \nrarrownull \CY_B \: \vert \: \CY_S.$
\end{itemize}
\end{lemma}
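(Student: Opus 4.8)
The plan is to dispatch the four implications in turn, using the equivalent formulations from \Cref{Charakterisisierung linear granger non-causal}, the graphoid axioms (C1)--(C4) of \Cref{properties of conditional orthogonality}, and the subspace identities of \Cref{additivity in time domain}. Part (a) is immediate: the relation defining $\CY_A \nrarrowinf \CY_B \mid \CY_S$ is required for \emph{all} horizons $h \geq 0$, so specialising it to $h = 1$ gives precisely the relation defining $\CY_A \nrarrow \CY_B \mid \CY_S$. The forward implication in (b) is then the special case $B = S \setminus A$ of (a). So the real work lies in the converse of (b), in (c), and in (d).

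For the converse of (b), set $B := S\setminus A$, so $\mathcal{L}_{Y_{S\setminus A}}(t) = \mathcal{L}_{Y_B}(t)$, and recall from \Cref{Charakterisisierung linear granger non-causal}(b) that $\CY_A \nrarrow \CY_B \mid \CY_S$ is equivalent to $\mathcal{L}_{Y_B}(t+1) \perp \mathcal{L}_{Y_A}(t) \mid \mathcal{L}_{Y_B}(t)$ for all $t \in \R$. First I would prove, by induction on $n \in \N$, that $\mathcal{L}_{Y_B}(t+n) \perp \mathcal{L}_{Y_A}(t) \mid \mathcal{L}_{Y_B}(t)$ for all $t \in \R$. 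The base case $n = 1$ is the hypothesis. For the step, I would combine the inductive hypothesis at time $t$, i.e.\ $\mathcal{L}_{Y_A}(t) \perp \mathcal{L}_{Y_B}(t+n) \mid \mathcal{L}_{Y_B}(t)$ (after symmetry (C1)), with the hypothesis at time $t+n$, i.e.\ $\mathcal{L}_{Y_B}(t+n+1) \perp \mathcal{L}_{Y_A}(t+n) \mid \mathcal{L}_{Y_B}(t+n)$; from the latter, decomposition (C2) together with the inclusions $\mathcal{L}_{Y_A}(t) \subseteq \mathcal{L}_{Y_A}(t+n)$ and $\mathcal{L}_{Y_B}(t+n,t+n+1) \subseteq \mathcal{L}_{Y_B}(t+n+1)$ yields $\mathcal{L}_{Y_A}(t) \perp \mathcal{L}_{Y_B}(t+n,t+n+1) \mid \mathcal{L}_{Y_B}(t+n)$. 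Since $\mathcal{L}_{Y_B}(t) \vee \mathcal{L}_{Y_B}(t+n) = \mathcal{L}_{Y_B}(t+n)$, contraction (C4) with $\mathcal{L}_1 = \mathcal{L}_{Y_A}(t)$, $\mathcal{L}_2 = \mathcal{L}_{Y_B}(t+n)$, $\mathcal{L}_3 = \mathcal{L}_{Y_B}(t+n,t+n+1)$, $\mathcal{L}_4 = \mathcal{L}_{Y_B}(t)$ then gives $\mathcal{L}_{Y_A}(t) \perp \mathcal{L}_{Y_B}(t+n) \vee \mathcal{L}_{Y_B}(t+n,t+n+1) \mid \mathcal{L}_{Y_B}(t)$, and $\mathcal{L}_{Y_B}(t+n) \vee \mathcal{L}_{Y_B}(t+n,t+n+1) = \mathcal{L}_{Y_B}(t+n+1)$ by \Cref{additivity in time domain}(a), closing the induction. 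Finally, for arbitrary $h \geq 0$ I would pick $n \in \N$ with $n \geq h$, so $\mathcal{L}_{Y_B}(t,t+h) \subseteq \mathcal{L}_{Y_B}(t+n)$, and decomposition (C2) gives $\mathcal{L}_{Y_B}(t,t+h) \perp \mathcal{L}_{Y_A}(t) \mid \mathcal{L}_{Y_B}(t)$ for all $t$, i.e.\ $\CY_A \nrarrowinf \CY_B \mid \CY_S$. The main thing to get right here is the bookkeeping with (C1), (C2), (C4) in the inductive step; everything else is routine.

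For (c): since $A \cap B = \emptyset$ and $B \subseteq S$ we have $B \subseteq S\setminus A$, hence $\mathcal{L}_{Y_B}(t,t+h) \subseteq \mathcal{L}_{Y_{S\setminus A}}(t,t+h)$ for every $h \geq 0$. By part (b), $\CY_A \nrarrow \CY_{S\setminus A} \mid \CY_S$ is equivalent to $\CY_A \nrarrowinf \CY_{S\setminus A} \mid \CY_S$, i.e.\ $\mathcal{L}_{Y_{S\setminus A}}(t,t+h) \perp \mathcal{L}_{Y_A}(t) \mid \mathcal{L}_{Y_{S\setminus A}}(t)$ for all $h \geq 0$, $t \in \R$; decomposition (C2) then yields $\mathcal{L}_{Y_B}(t,t+h) \perp \mathcal{L}_{Y_A}(t) \mid \mathcal{L}_{Y_{S\setminus A}}(t)$ for all $h \geq 0$, $t \in \R$, which is exactly $\CY_A \nrarrowinf \CY_B \mid \CY_S$.

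For (d), write $\mathcal{K} := \mathcal{L}_{Y_{S\setminus A}}(t)$ and recall $\mathcal{L}_{Y_S}(t) = \mathcal{K} \vee \mathcal{L}_{Y_A}(t)$ by \Cref{additivity in time domain}(c). The plan rests on two observations. First, $\CY_A \nrarrow \CY_B \mid \CY_S$ forces $P_{\mathcal{L}_{Y_S}(t)} Z = P_{\mathcal{K}} Z$ for every $Z \in \mathcal{L}_{Y_B}(t,t+1)$ (the converse implication of \Cref{Charakterisierung als Gleichheit der linearen Vorhersage}): indeed $Z - P_{\mathcal{K}} Z \perp \mathcal{K}$, and for $W \in \mathcal{L}_{Y_A}(t)$ the relation $\mathcal{L}_{Y_B}(t,t+1) \perp \mathcal{L}_{Y_A}(t) \mid \mathcal{K}$ gives $\langle Z - P_{\mathcal{K}}Z,\, W \rangle_{L^2} = \langle Z - P_{\mathcal{K}}Z,\, W - P_{\mathcal{K}}W \rangle_{L^2} + \langle Z - P_{\mathcal{K}}Z,\, P_{\mathcal{K}}W \rangle_{L^2} = 0$, so $Z - P_{\mathcal{K}}Z$ is orthogonal to $\mathcal{K} + \mathcal{L}_{Y_A}(t)$, hence to $\mathcal{L}_{Y_S}(t)$, forcing $P_{\mathcal{L}_{Y_S}(t)}Z = P_{\mathcal{K}}Z$. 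Second, for $h \in (0,1]$ the iterated backward difference quotients defining $D^{(j_b)} Y_b(t+h)$ involve only values $Y_b(u)$ with $u$ in arbitrarily small left neighbourhoods of $t+h$, so $D^{(j_b)} Y_b(t+h) \in \mathcal{L}_{Y_b}(t,t+1) \subseteq \mathcal{L}_{Y_B}(t,t+1)$; for $h < 0$ one has $D^{(j_b)} Y_b(t+h) \in \mathcal{L}_{Y_b}(t+h) \subseteq \mathcal{K}$ by \Cref{Remark 3.6}. Combining, $P_{\mathcal{L}_{Y_S}(t)} D^{(j_b)} Y_b(t+h) = P_{\mathcal{K}} D^{(j_b)} Y_b(t+h)$ for all $h \neq 0$ with $|h| \leq 1$, while $D^{(j_b)} Y_b(t) \in \mathcal{L}_{Y_b}(t) \subseteq \mathcal{K}$ is fixed by both projections. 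By linearity of the projections, the two random variables appearing in \Cref{definition (linear) local Granger non-causal} therefore coincide for every such $h$, so their mean-square limits as $h \to 0$ agree (each existing precisely when the other does), which is $\CY_A \nrarrownull \CY_B \mid \CY_S$. The only nonroutine point in (d) is the claim $D^{(j_b)}Y_b(t+h) \in \mathcal{L}_{Y_B}(t,t+1)$ for $h \in (0,1]$, which I would establish by re-running the induction behind \Cref{Remark 3.6} with backward difference quotients anchored at $t+h$ and checking that all time points occurring there stay in $(t,t+1]$.
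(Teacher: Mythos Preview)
Your proof is correct and matches the paper's approach in all four parts: (a) is immediate, (b) is the same induction via the semi-graphoid axioms (C1)/(C2)/(C4) and \Cref{additivity in time domain} (only the bookkeeping of the inductive step is arranged slightly differently), (c) follows from (b) by decomposition, and (d) combines the projection identity with membership of the derivatives in the appropriate subspace. In (d) you take a small detour by restricting the projection identity to $Z\in\mathcal{L}_{Y_B}(t,t+1)$ and then needing a separate induction to show $D^{(j_b)}Y_b(t+h)\in\mathcal{L}_{Y_b}(t,t+1)$ for $h\in(0,1]$ plus a case split for $h<0$; the paper instead invokes \Cref{Charakterisisierung linear granger non-causal}(b) to obtain the projection identity on all of $\mathcal{L}_{Y_B}(t+1)$, after which \Cref{Remark 3.6} directly gives $D^{(j_b)}Y_b(t+h)\in\mathcal{L}_{Y_b}(t+h)\subseteq\mathcal{L}_{Y_B}(t+1)$ for every $h\le 1$ with no extra argument.
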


\begin{remark} The opposite direction in \Cref{Lemma:Granger_causality_relations} (a) does not hold in general. \cite{DU98}, p.~1106,  present a counterexample in discrete time and explain the lack of equivalence between Granger non-causality and global Granger non-causality as follows. If there are auxiliary components, $\CY_A$ might not help to predict $\CY_B$ given $\CY_S$ one step ahead but $\CY_A$ might help to predict $\CY_B$ given $\CY_S$ several periods ahead. For example, the values  of $\CY_A$ up to time $t$ may help to predict $\mathcal{L}_{Y_B}(t+1,t+2)$, even though they are useless to predict $\mathcal{L}_{Y_B}(t,t+1)$, because $\CY_A$ may help to predict the environment one period ahead, which in turn influences $\CY_A$ at a subsequent period.
%However, $\CY_A  \nrarrow \CY_{S\setminus A} \: \vert \: \CY_S$ implies $\CY_A  \stackrel{\infty}{\nrarrow} \CY_{S\setminus A} \: \vert \: \CY_S$ and
Therefore, it is also not surprising that we have equivalence in the case without environment in \Cref{Lemma:Granger_causality_relations} (b).
%\begin{align*}
%\CY_A  \nrarrow \CY_{S\setminus A} \: \vert \: \CY_S \quad \Leftrightarrow \quad \CY_A  \nrarrowinf \CY_{S\setminus A} \: \vert \: \CY_S.
%\end{align*}
This holds in particular for every bivariate process, i.e.,
\begin{align*}
\CY_a  \nrarrow \CY_{b} \: \vert \: \CY_{\{a,b\}} \quad \Leftrightarrow \quad \CY_a  \nrarrowinf \CY_{b} \: \vert \: \CY_{\{a,b\}}.
\end{align*}
\end{remark}

The similarities and differences between the various definitions of Granger causality can also be seen in examples, so we examine Ornstein-Uhlenbeck processes. In particular, we see that the opposite direction of \Cref{Lemma:Granger_causality_relations} (d) does not generally hold.

%that from $\CY_a \stackrel{0}{\nrarrow} \CY_b \: \vert \: \CY_V$ we can in general not follow that $\CY_a \nrarrow \CY_b \: \vert \: \CY_V$.

\begin{example} \label{Example:Ornstein-Uhlenbeck process} 
Suppose $Y_V=(Y_V(t))_{t\in \R}$ is an Ornstein-Uhlenbeck process driven by a two-sided $k$-dimensional Lévy process $(L(t))_{t\in \R}$. An one-sided Lévy process $(L(t))_{t\geq 0}$ is an $\R^k$-valued stochastic process with $L(0)=0_k$ $\mathbb{P}$-a.s., stationary and independent increments and c\`adl\`ag sample paths. Now, $L=(L(t))_{t\in \R}$ is obtained from two independent copies $(L_1(t))_{t\geq 0}$ and $(L_2(t))_{t\geq 0}$ of a one-sided Lévy process
via
%\begin{align*}
$L(t) =
L_1(t)$ if $ t \geq 0$ and
$L(t)=-\lim_{s\nearrow -t} L_2(s) $ if  $t<0.$
We assume that the Lévy process has a finite second moment with $\Sigma_L \coloneqq \BE[L(1)L(1)^\top]$ and expectation zero. % For more details on Lévy processes, see \cite{SA07}.
Suppose further that $\BA \in M_k(\R)$ with $\sigma(\BA)\subseteq (-\infty, 0) + i \R$. Then the stochastic differential equation
\begin{align*}
dY_V(t) = \BA Y_V(t) dt +dL(t)
\end{align*}
has the unique stationary %$k$-dimensional
solution $\CY_V$ given by
\begin{align*}
Y_V(t) = \int_{- \infty}^{t} e^{\BA (t-u)} dL(u), \quad t\in\R.
\end{align*}
The process $\CY_V$ is called (causal) \textsl{Ornstein-Uhlenbeck process} (cf.~\citealp{Masuda:2004}). For the Ornstein-Uhlenbeck process, we derive in \Cref{sec:CGMCAR}, in the more general context of (causal) MCAR processes, that
\begin{align*}
\CY_a \nrarrowinf \CY_b \: \vert \: \CY_V
\quad \Leftrightarrow \quad \CY_a \nrarrow \CY_b \: \vert \: \CY_V
\quad &\Leftrightarrow \quad
[\BA^{\alpha}]_{ab}=0, \quad \alpha=1,\ldots,k-1,
\\
\CY_a \nrarrownull \CY_b \: \vert \: \CY_V
\quad &\Leftrightarrow \quad
[\BA]_{ab}=0.
%\CY_a \stackrel{h}{\nrarrow} \CY_b \: \vert \: \CY_V
%\quad & \Leftrightarrow \quad
%[A^{\alpha}]_{ab}=0, \quad \alpha=1,\ldots,k-1,
%\\
\end{align*}
Of course,
\begin{align*}
   \CY_a \nrarrow \CY_b \: \vert \: \CY_V  \hspace*{0.3cm} \Rightarrow \hspace*{0.3cm} [\BA^{\alpha}]_{ab}=0, \; \alpha=1,\ldots,k-1
    \hspace*{0.3cm} \Rightarrow \hspace*{0.3cm} [\BA]_{ab}=0
    \hspace*{0.3cm} \Rightarrow \hspace*{0.3cm} \CY_a \nrarrownull \CY_b \: \vert \: \CY_V,
\end{align*}
but the opposite direction does not generally hold, an exception is the case where $\BA$ is a diagonal matrix. A specific counterexample is the Ornstein-Uhlenbeck process with 
\begin{align} \label{MM}  
   \BA=\begin{pmatrix} 
   \makebox[0.6cm]{-2} & \makebox[0.6cm]{0} & \makebox[0.6cm]{0} \\ 
   0 & -2 & 1 
   \\ 1 & 1 & -2\end{pmatrix}
   \quad \text{ and } \quad
   \Sigma_L=\begin{pmatrix} 
   \makebox[0.6cm]{1} & \makebox[0.6cm]{0} & \makebox[0.6cm]{1/2} \\ 
   0 & 1 & 0 \\ 
   1/2 & 0 & 1\end{pmatrix},
   %\BA^2=\begin{pmatrix} 
   %\makebox[0.6cm]{4} & \makebox[0.6cm]{0} & \makebox[0.6cm]{0} \\ 
   %1 & 5 & -4 \\ 
   %-4 & -4 & 5\end{pmatrix},
\end{align}
which is the underlying stochastic process of  \Cref{fig: Two graphical models b}. Here, $\CY_1 \nrarrownull \CY_2 \: \vert \: \CY_{\{1,2,3\}}$ but $\CY_1 \rarrow \CY_2 \: \vert \: \CY_{\{1,2,3\}}$.  It is clear from the example that Granger non-causality is much stronger than local Granger non-causality, and that in general there is no equivalence.
Note that the special structure of $\Sigma_L$ does not play a role in these directed influences, but the covariance structure has an impact on the undirected influences which we will define in the next section.
\end{example}

%\begin{remark}
 %   Apart from Eichler's concepts in discrete time,  there have already been other suggestions to define Granger non-causality in continuous time at any horizon.  introduce the concept of weak global Granger non-causality for real-valued stochastic processes; call it \textsl{global Granger non-causality}. However, their definition is based on conditional expectations instead of orthogonal projections and therefore a comparison of the different definitions of Granger causality is only possible in cases where the conditional expectation coincides with the projection on the linear space as, e.g., for Gaussian processes. In this case, it is possible to show that weak global Granger non-causality and Granger causality at any horizon are equivalent; for details see \cite{Schenk:PhD}.
%\end{remark}

%%%%%%%%%%%%%%%%%%%%%%%%%%%%%%%%%%%%%%%%%%%%%%%%%%%%%%%%%%%%%%%%%%%%%%%%%%%%%%%%%%

\section{Undirected influences: Contemporaneous correlation for stationary continuous-time processes} \label{sec:undirected_influences}
%%%%%%%%%%%%%%%%%%%%%%%%%%%%%%%%%%%%%%%%%%%%%%%%%%%%%%%%%%%%%%%%%%%%%%%%%%%%%%%%%%

%%%%%%%%%%%%%%%%%%%%%%%%%%%%%%%%%%%%%%%%%%%%%%%%%%%%%%%%%%%%%%%%%%%%%%%%%%%%%%%%%%
In this section, we introduce and characterise undirected influences between the component series of $\CY_V$ using different concepts of contemporaneous correlation. % and restricting ourselves again to linear influences. % although the main interest is usually in Granger causality.
The idea is simple: There is no undirected influence between $\CY_a$ and $\CY_b$, if and only if, given the amount of information provided by the past of $\CY_V$ up to time $t$, $\CY_a$ and $\CY_b$ are uncorrelated in the future.
Again, we need to specify what we mean by the future in continuous time. The first definition is a generalisation of \cite{EI07}, Definition 2.2, in discrete time, to continuous time, looking at the entire time interval $[t,t+1]$.

\begin{definition}\label{(linear) global contemporaneously uncorrelated}
Let $A,B \subseteq S \subseteq V$ and $A\cap B=\emptyset$. Then $\CY_A$ and $\CY_B$ are \textsl{contemporaneously uncorrelated with respect to $\CY_S$} if, for all $t\in \R$,
\begin{align*}
\mathcal{L}_{Y_A}(t,t+1) \perp \mathcal{L}_{Y_B}(t,t+1) \: \vert \: \mathcal{L}_{Y_{S}}(t).
\end{align*}
 We write $\CY_A \nsim \CY_B \: \vert \: \CY_S$.
\end{definition}

\begin{remark}\label{Remark 4.2}
 Similarly, as for the definition of Granger causality, we defined contemporaneous uncorrelation by using the step size $h=1$. However, it is also possible to use some arbitrary but fixed step size $h>0$ and define it via
 \begin{align} \label{eqref:Remark 4.2}
\mathcal{L}_{Y_A}(t,t+h) \perp \mathcal{L}_{Y_B}(t,t+h) \: \vert \: \mathcal{L}_{Y_{S}}(t) \quad \forall \, t\in\R.
\end{align} 
The choice of $h$ has no effect on the characterisation of the undirected influences in certain models; see \Cref{Remark:step size} for MCAR processes and \cite{VF23preb} for state space models. 
\end{remark}

Unlike Granger causality, contemporaneous correlation is symmetric, reflecting an undirected influence. By analogy with \Cref{Charakterisisierung linear granger non-causal}, we obtain the following equivalent characterisations of contemporaneous uncorrelation. Since the proof is very similar, it is not given here. Again, the stationarity assumption is not necessary for the definition of contemporaneous uncorrelation and its characterisations, it can be neglected.

\begin{lemma}\label{Charakterisisierung contemporaneously uncorrelated}
%Let $\CY_V=(Y_V(t))_{t\in \R}$ be a $k$-dimensional process that satisfies \Cref{Assumption 1 und 2} and
Let $A,B \subseteq S \subseteq V$ and $A\cap B=\emptyset$.
Then the following characterisations are equivalent:
\begin{itemize}
    \item[(a)]  $\CY_A \nsim \CY_B \: \vert \: \CY_S$.
    \item[(b)] $\mathcal{L}_{Y_A}(t+1) \perp \mathcal{L}_{Y_B}(t+1) \: \vert \: \mathcal{L}_{Y_S}(t)$  $\forall \: t \in \R.$
    \item[(c)] $\ell_{Y_A}(t,t+1) \perp \ell_{Y_B}(t,t+1) \: \vert \: \mathcal{L}_{Y_S}(t)$  $\forall \: t \in \R.$
    \item[(d)] $\ell_{Y_a}(s,s) \perp \ell_{Y_b}(s',s') \: \vert \: \mathcal{L}_{Y_S}(t)$
 $\forall\,a \in A$, $b \in B$, $s, s'\in [t,t+1]$, $t \in \R$.
\end{itemize}
\end{lemma}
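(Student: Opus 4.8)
The plan is to mimic the proof of \Cref{Charakterisisierung linear granger non-causal} (which the authors flagged as analogous) and establish the cycle of implications (b) $\Rightarrow$ (a) $\Rightarrow$ (c) $\Rightarrow$ (d) $\Rightarrow$ (b), or whichever ordering is most economical. The key structural fact throughout is that all four statements are ``$\perp$ modulo the conditioning space $\mathcal{L}_{Y_S}(t)$'', and the conditioning space is fixed across (a), (c), (d), while in (b) we instead look at the whole spaces $\mathcal{L}_{Y_A}(t+1)$, $\mathcal{L}_{Y_B}(t+1)$. So the main job is to shuffle between the ``whole past plus window'' picture and the ``window only'' picture.

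First I would record the elementary observations. The span $\ell_{Y_A}(t,t+1)$ is dense in $\mathcal{L}_{Y_A}(t,t+1)$, so by \eqref{limit E} (continuity of the inner product under mean-square limits) conditional orthogonality for the spans is equivalent to conditional orthogonality for their closures; this gives (a) $\Leftrightarrow$ (c). Likewise (c) $\Leftrightarrow$ (d) follows because $\ell_{Y_A}(t,t+1)=\sum_{a\in A}\ell_{Y_a}(t,t+1)$ is the (finite, hence already closed at the pre-closure level) linear span of the one-point spaces $\ell_{Y_a}(s,s)$ over $a\in A$, $s\in[t,t+1]$, and conditional orthogonality is bilinear in its first two arguments — equivalently, invoke \Cref{additivity in time domain}(a),(b) together with the (De-)Composition property (C2) of \Cref{properties of conditional orthogonality} to pass between $\mathcal{L}_{Y_A}(t,t+1)$ and the union of the atomic pieces. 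The only subtlety in (d) is that $s$ and $s'$ range over the \emph{same} interval $[t,t+1]$ for both $a$ and $b$, which is exactly what the symmetry of contemporaneous correlation buys us and matches the definition in (a).

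The genuinely new content is the equivalence (a) $\Leftrightarrow$ (b), i.e.\ replacing the two windows $\mathcal{L}_{Y_A}(t,t+1)$, $\mathcal{L}_{Y_B}(t,t+1)$ by the full spaces $\mathcal{L}_{Y_A}(t+1)$, $\mathcal{L}_{Y_B}(t+1)$ without changing the conditioning set $\mathcal{L}_{Y_S}(t)$. The direction (b) $\Rightarrow$ (a) is immediate from monotonicity, since $\mathcal{L}_{Y_A}(t,t+1)\subseteq \mathcal{L}_{Y_A}(t+1)$ and likewise for $B$ (and conditional orthogonality is inherited by subspaces of the first two arguments). For (a) $\Rightarrow$ (b) I would use \Cref{additivity in time domain}(a) to decompose $\mathcal{L}_{Y_A}(t+1)=\mathcal{L}_{Y_A}(t)\vee\mathcal{L}_{Y_A}(t,t+1)$ and similarly $\mathcal{L}_{Y_B}(t+1)=\mathcal{L}_{Y_B}(t)\vee\mathcal{L}_{Y_B}(t,t+1)$, then assemble the conclusion from four conditional-orthogonality statements via (C2): the $\mathcal{L}_{Y_A}(t)$–$\mathcal{L}_{Y_B}(t)$, $\mathcal{L}_{Y_A}(t)$–$\mathcal{L}_{Y_B}(t,t+1)$, and $\mathcal{L}_{Y_A}(t,t+1)$–$\mathcal{L}_{Y_B}(t)$ pairings are trivial because $\mathcal{L}_{Y_A}(t)\vee\mathcal{L}_{Y_B}(t)\subseteq \mathcal{L}_{Y_S}(t)$ (using $A,B\subseteq S$) so the relevant residuals $X-P_{\mathcal{L}_{Y_S}(t)}X$ vanish, while the $\mathcal{L}_{Y_A}(t,t+1)$–$\mathcal{L}_{Y_B}(t,t+1)$ pairing is precisely hypothesis (a). Running (C2) in the ``composition'' direction twice then yields (b).

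The main obstacle I anticipate is purely bookkeeping: being careful that the conditioning space is literally the same in every step (it is $\mathcal{L}_{Y_S}(t)$ throughout, never enlarged, which is what makes contemporaneous uncorrelation cleaner than Granger non-causality, where one conditions on $\mathcal{L}_{Y_{S\setminus A}}(t)$), and making sure that when I split $\mathcal{L}_{Y_A}(t+1)$ into $\mathcal{L}_{Y_A}(t)\vee\mathcal{L}_{Y_A}(t,t+1)$ the ``trivial'' cross terms really are trivial — this needs $\mathcal{L}_{Y_A}(t)\subseteq\mathcal{L}_{Y_S}(t)$, hence the hypothesis $A\subseteq S$, and similarly $B\subseteq S$, which is why that hypothesis appears. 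No density or separability issues arise beyond the standard continuity of the inner product, so I do not expect any analytic difficulty; the proof is a short application of \Cref{properties of conditional orthogonality} and \Cref{additivity in time domain}, which is exactly why the authors omit it.
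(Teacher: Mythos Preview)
Your proposal is correct and follows essentially the same approach the paper would take, namely adapting the proof of \Cref{Charakterisisierung linear granger non-causal}: the direction (a) $\Rightarrow$ (b) uses the decomposition $\mathcal{L}_{Y_A}(t+1)=\mathcal{L}_{Y_A}(t)\vee\mathcal{L}_{Y_A}(t,t+1)$ (and likewise for $B$) together with $\mathcal{L}_{Y_A}(t),\mathcal{L}_{Y_B}(t)\subseteq\mathcal{L}_{Y_S}(t)$, while the remaining equivalences are density and linearity arguments. One small quibble: your parenthetical ``finite, hence already closed'' in the (c) $\Leftrightarrow$ (d) step is misleading, since $\ell_{Y_a}(t,t+1)$ is spanned by uncountably many one-point spaces---but this is harmless, as the argument only needs that elements of $\ell_{Y_A}(t,t+1)$ are finite linear combinations of the $Y_a(s)$, exactly as in the paper's proof of (d) $\Rightarrow$ (c) for \Cref{Charakterisisierung linear granger non-causal}.
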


\begin{remark}\label{eq:correspondenceXAXa}
In the following, we make some remarks about \Cref{Charakterisisierung contemporaneously uncorrelated} (d).
\begin{itemize}
    \item[(a)] In \Cref{Charakterisisierung contemporaneously uncorrelated} (d), we have implicitly shown that
\begin{align*} %\label{eq:correspondenceXAXa}
    \CY_A \nsim \CY_B \: \vert \: \CY_S
    \quad \Leftrightarrow \quad
    \CY_a \nsim \CY_b \: \vert \: \CY_S
    \quad
   \forall\; a\in A,\, b\in B,
\end{align*}
which is useful for the verification of contemporaneous uncorrelation.
\item[(b)] Given our \Cref{Charakterisisierung contemporaneously uncorrelated} (d) and \cite{EI07}, Definition 2.2, it would also be plausible to define contemporaneous uncorrelation by
$\ell_{Y_a}(s,s) \perp \ell_{Y_b}(s,s) \: \vert \: \mathcal{L}_{Y_S}(t)$ \linebreak
 $\forall\,a \in A$, $b \in B$, $s\in[t,t+1]$, $t \in \R$. In this case, however, no global Markov property can be shown in the associated orthogonality graph (cf.~\Cref{sec:path_diagrams}), since the evidences rely heavily on \Cref{(linear) global contemporaneously uncorrelated} and \Cref{properties of conditional orthogonality}.
\end{itemize}
\end{remark}

Similar to Granger non-causality, a characterisation of contemporaneous uncorrelation can be given, which allows for an interpretation as the correspondence of two linear predictions.

\begin{theorem}\label{Charakterisierung als Gleichheit der linearen Vorhersage 2}
Let $A,B \subseteq S \subseteq V$ and $A\cap B=\emptyset$. Then $\CY_A$ and $\CY_B$ are contemporaneously uncorrelated with respect to $\CY_S$, if and only if, for all $b\in B$, $h\in[0,1]$, \mbox{and $t \in \R$,}
\begin{align*}
P_{\mathcal{L}_{Y_{S}}(t)\vee \mathcal{L}_{Y_{A}}(t,t+1)} Y_b(t+h)  = P_{\mathcal{L}_{Y_{S}}(t)} Y_b(t+h) \quad \mathbb{P}\text{-a.s.}
\end{align*}
 %Analogous $\CY_A$ and $\CY_B$ are contemporaneously uncorrelated with respect to $\CY_S$, if and only if
%\begin{align*}
%P_{\mathcal{L}_{Y_{S}}(t)\vee \mathcal{L}_{Y_{A}}(t,t+1)} Y_b(t+h) = P_{\mathcal{L}_{Y_{S}}(t)}Y_b(t+h)
%\end{align*}
%$\mathbb{P}$-a.s for all $b\in B$, $0\leq h \leq 1$ and $t \in \R$.
\end{theorem}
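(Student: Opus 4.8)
The plan is to reduce the statement to the characterisation of contemporaneous uncorrelation given in \Cref{Charakterisisierung contemporaneously uncorrelated}, specifically part~(b), via a projection-theoretic argument that parallels the proof of \Cref{Charakterisierung als Gleichheit der linearen Vorhersage}. First I would rewrite the conditional orthogonality relation $\mathcal{L}_{Y_A}(t+1) \perp \mathcal{L}_{Y_B}(t+1) \,\vert\, \mathcal{L}_{Y_S}(t)$ in terms of projection residuals: it says that for all $X \in \mathcal{L}_{Y_A}(t+1)$ and $Y \in \mathcal{L}_{Y_B}(t+1)$,
\begin{align*}
\langle X - P_{\mathcal{L}_{Y_S}(t)}X,\; Y - P_{\mathcal{L}_{Y_S}(t)}Y \rangle_{L^2} = 0.
\end{align*}
Using \Cref{additivity in time domain}(a),(c) I would note $\mathcal{L}_{Y_S}(t) \vee \mathcal{L}_{Y_A}(t,t+1) = \mathcal{L}_{Y_{S}}(t) \vee \mathcal{L}_{Y_A}(t+1)$ up to the obvious inclusion manipulations (since $A\subseteq S$, $\mathcal{L}_{Y_A}(t)\subseteq\mathcal{L}_{Y_S}(t)$), so the larger projection space in the statement is exactly $\mathcal{L}_{Y_S}(t)\vee\mathcal{L}_{Y_A}(t+1)$.

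The core step is the following equivalence, which I would establish as a lemma-style observation about projections onto a sum of subspaces: if $\mathcal{L}_3 \subseteq \mathcal{L}_1$ and $Z$ is any element, then $P_{\mathcal{L}_1 \vee \mathcal{L}_2} Z = P_{\mathcal{L}_1} Z$ for all $Z$ in some generating set of a space $\mathcal{M}$ if and only if $\mathcal{M} \perp \mathcal{L}_2 \,\vert\, \mathcal{L}_1$, provided the residuals behave well. Concretely, with $\mathcal{L}_1 = \mathcal{L}_{Y_S}(t)$, $\mathcal{L}_2 = \mathcal{L}_{Y_A}(t,t+1)$ (equivalently $\mathcal{L}_{Y_A}(t+1)$), and $Z = Y_b(t+h)$ ranging over $b\in B$, $h\in[0,1]$: the residual $Y_b(t+h) - P_{\mathcal{L}_1}Y_b(t+h)$ is orthogonal to $\mathcal{L}_1$ by definition of orthogonal projection, so $P_{\mathcal{L}_1\vee\mathcal{L}_2}Y_b(t+h) = P_{\mathcal{L}_1}Y_b(t+h)$ holds iff that residual is also orthogonal to $\mathcal{L}_2$, i.e., orthogonal to all of $\mathcal{L}_1\vee\mathcal{L}_2$. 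Since every residual of an element of $\mathcal{L}_2 = \mathcal{L}_{Y_A}(t+1)$ already lies in $\mathcal{L}_2 \vee \mathcal{L}_1$, orthogonality of $Y_b(t+h)-P_{\mathcal{L}_1}Y_b(t+h)$ to $\mathcal{L}_2$ is equivalent to orthogonality to the residuals $X - P_{\mathcal{L}_1}X$ for $X\in\mathcal{L}_2$, which is precisely $\mathcal{L}_{Y_b}(t,t+1) \perp \mathcal{L}_{Y_A}(t+1)\,\vert\,\mathcal{L}_{Y_S}(t)$. Finally, ranging over all $b\in B$ and $h\in[0,1]$ and invoking that $\{Y_b(t+h): b\in B, h\in[0,1]\}$ generates $\mathcal{L}_{Y_B}(t,t+1)$ (hence, with \Cref{additivity in time domain}, $\mathcal{L}_{Y_B}(t+1)$ modulo $\mathcal{L}_{Y_S}(t)$), together with the symmetry (C1) of conditional orthogonality, gives the equivalence with \Cref{Charakterisisierung contemporaneously uncorrelated}(b), completing the proof.

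The main obstacle I anticipate is bookkeeping around the conditioning space: one must be careful that projecting onto $\mathcal{L}_{Y_S}(t)\vee\mathcal{L}_{Y_A}(t,t+1)$ rather than onto $\mathcal{L}_{Y_S}(t)\vee\mathcal{L}_{Y_A}(t+1)$ makes no difference, which rests on \Cref{additivity in time domain}(a) applied to index set $A$ (giving $\mathcal{L}_{Y_A}(t)\vee\mathcal{L}_{Y_A}(t,t+1) = \mathcal{L}_{Y_A}(t+1)$) combined with $\mathcal{L}_{Y_A}(t)\subseteq\mathcal{L}_{Y_S}(t)$. A secondary subtlety is that one should pass from the statement "for all $h\in[0,1]$" to "the full space $\mathcal{L}_{Y_B}(t,t+1)$" — this is the same density/generating-set argument used implicitly in \Cref{Charakterisisierung contemporaneously uncorrelated}(d)$\Leftrightarrow$(a), relying on continuity in mean square of $\CY_B$ and \eqref{limit E} to pass orthogonality through mean-square limits. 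No new estimates are needed beyond what is already available in \Cref{sec:prelim}.
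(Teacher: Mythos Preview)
Your proposal is correct and follows essentially the same route as the paper. The paper simply invokes \cite{LI15}, Proposition~2.4.2, which packages your projection-theoretic observation (that $P_{\mathcal{L}_1\vee\mathcal{L}_2}Z=P_{\mathcal{L}_1}Z$ iff $Z-P_{\mathcal{L}_1}Z\perp\mathcal{L}_2$ iff $\{Z\}\perp\mathcal{L}_2\,\vert\,\mathcal{L}_1$) as a ready-made equivalence, and then reduces from all $Y^B\in\mathcal{L}_{Y_B}(t,t+1)$ to the generators $Y_b(t+h)$ by linearity and continuity of projections, exactly as you do; your explicit handling of $\mathcal{L}_{Y_S}(t)\vee\mathcal{L}_{Y_A}(t,t+1)=\mathcal{L}_{Y_S}(t)\vee\mathcal{L}_{Y_A}(t+1)$ via \Cref{additivity in time domain} is the only detail you spell out that the paper leaves implicit.
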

 In words, the linear prediction of the information about $\CY_B$ in the near future based on $\mathcal{L}_{Y_S}(t)$ can not be improved by adding further information about $\CY_A$ in the near future (and vice versa).
The proof is again similar to the proof of \Cref{Charakterisierung als Gleichheit der linearen Vorhersage} and we therefore skip the details.

To define a local version of contemporaneous uncorrelation, note that the characterisation in \Cref{Charakterisisierung contemporaneously uncorrelated} (b) means that for any $\HY^A\in\mathcal{L}_{Y_A}(t+1)$ and $\HY^B\in\mathcal{L}_{Y_B}(t+1)$ %we have
\begin{align} \label{MR}
    \BE\left[\left( \HY^A- P_{\mathcal{L}_{Y_{S}}(t)}\HY^A  \right) \overline{\left(\HY^B- P_{\mathcal{L}_{Y_{S}}(t)} \HY^B \right)} \right]=0.
\end{align}
So the motivation for the local version is that instead of taking all  $\HY^A\in\mathcal{L}_{Y_A}(t+1)$, we use only the highest derivative $D^{(j_a)} Y_a(t+h)$ for each $a\in A$ and consider $h\rightarrow 0$, similarly for $\mathcal{L}_{Y_B}(t+1)$. To get non-trivial limits we also have to divide by $h$.

\begin{definition}\label{Def: local contemporaneously uncorrelated}
Suppose  $Y_v=(Y_v(t))_{t\in\R}$ is $j_v$-times mean-square differentiable  but the $(j_v+1)$-derivative does not exist for  $v\in V$. %with $j_v$-derivative denoted by $D^{(j_v)}Y_v$ for all $v\in V$, where  for $j_v=0$ we define $D^{(0)} Y_v=Y_v$.
Let $A,B \subseteq S \subseteq V$ and $A\cap B=\emptyset$. Then $\CY_A$ and $\CY_B$ are \textsl{locally contemporaneously uncorrelated} with respect to $\CY_S$ if, for all $t\in \R$, $a\in A$, $b\in B$,
\begin{align*}
\limhh \frac{1}{h} \: \BE & \left[\left( D^{(j_a)} Y_a(t+h)- P_{\mathcal{L}_{Y_{S}}(t)} D^{(j_a)} Y_a(t+h) \right) \right. \\
		  &\times \left. \overline{\left( D^{(j_b)} Y_b(t+h)- P_{\mathcal{L}_{Y_{S}}(t)} D^{(j_b)} Y_b(t+h) \right)} \right]=0.
\end{align*}
We write $\CY_A \nsimnull \CY_B \: \vert \: \CY_S$.
\end{definition}

\begin{remark} \label{eq:correspondenceXAXa local}
$\mbox{}$
\begin{itemize}
\item[(a)] Due to the definition, we receive directly
\begin{align*}%\label{eq:correspondenceXAXa local}
    \CY_A \nsimnull \CY_B \: \vert \: \CY_S
    \quad \Leftrightarrow \quad
    \CY_a \nsimnull \CY_b \: \vert \: \CY_S
    \quad
   \forall\; a\in A,\, b\in B,
\end{align*}
which is useful for verifying local contemporaneous uncorrelation.
    \item[(b)] \Cref{Def: local contemporaneously uncorrelated} is similar to the characterisation of local contemporaneous uncorrelation for semimartingales in \cite{CO96}, Proposition 3, using linear predictions instead of conditional expectations and $\sigma$-fields instead of linear spaces. But \cite{CO96} assume additionally that the martingale part of the semimartingale is continuous, excluding Lévy-It$\hat{\mbox{o}}$ processes that are not Brownian motion driven, such as Lévy-driven Ornstein-Uhlenbeck processes.
    \item[(c)] To give an equivalent characterisation as an equality of projections, restrictions on the linear derivative spaces are necessary. Thus,  we do not include these characterisations here. Sufficient, however, is in any case that for all $t\in \R$, $a\in A$, $b\in B$,
    \begin{align*}
        &\limh P_{\mathcal{L}_{Y_{S}}(t)} \left( \frac{D^{(j_b)} Y_b(t+h)- D^{(j_b)} Y_b(t)}{\sqrt{h} } \right) \\
        &\quad = \limh P_{\mathcal{L}_{Y_{S}}(t) \vee \mathcal{L}_a(t,t+h)} \left( \frac{D^{(j_b)} Y_b(t+h)- D^{(j_b)} Y_b(t)}{\sqrt{h}} \right)  \quad \mathbb{P}\text{-a.s}.
    \end{align*}
\end{itemize}
\end{remark}

Finally, we introduce a global concept of contemporaneous correlation, in analogy to global Granger causality, to discuss short-run vs. long-run effects.

\begin{definition}
Let $A,B \subseteq S \subseteq V$ and $A\cap B=\emptyset$.
%Then $\CY_A$ and $\CY_B$ are \textsl{{(linear) contemporaneously uncorrelated} with respect to $\CY_S$ {up to horizon $h$}}, $h \in \R$, if and only if
%\begin{align*}
%\mathcal{L}_{Y_A}(t,t+h) \perp \mathcal{L}_{Y_B}(t,t+h) \: \vert \: \mathcal{L}_{Y_{S}}(t),
%\end{align*}
%for $t\in \R$. We write $\CY_A \stackrel{h}{\nsim} \CY_B \: \vert \: \CY_S$.
Then $\CY_A$ and $\CY_B$ are \textsl{globally contemporaneously uncorrelated} with respect to $\CY_S$ if, for $h\geq 0$ and $t\in \R$,
\begin{align*}
\mathcal{L}_{Y_A}(t,t+h) \perp \mathcal{L}_{Y_B}(t,t+h) \: \vert \: \mathcal{L}_{Y_{S}}(t).
\end{align*}
 We write $\CY_A \nsiminf \CY_B \: \vert \: \CY_S$.
\end{definition}

%\marginpar{Globale Definition ggf. mit Raum bis unendlich abändern}

\begin{remark}
Again, projections can be used to characterise the global contemporaneous uncorrelation.
%$\CY_A$ and $\CY_B$ are contemporaneously uncorrelated with respect to $\CY_S$ up to horizon $h$, $h \in \R$, if and only if
%\begin{align*}
%P_{\mathcal{L}_{Y_{S}}(t)\vee \mathcal{L}_{Y_{B}}(t,t+h)} Y_a(t+h')  = P_{\mathcal{L}_{Y_{S}}(t)} %Y_a(t+h')
%\end{align*}
%$\mathbb{P}$-a.s for all $a\in A$, $0\leq h' \leq h$ and $t \in \R$.
%$\CY_A$ and $\CY_B$ are global contemporaneously uncorrelated with respect to %$\CY_S$.
%Let $A,B \subseteq S \subseteq V$, $A\cap B=\emptyset$.
Precisely, $\CY_A$ and $\CY_B$ are {globally contemporaneously uncorrelated} with respect to $\CY_S$, if and only if,
%\begin{align*}
%\mathcal{L}_{Y_A}(t,t+h) \perp \mathcal{L}_{Y_B}(t,t+h) \: \vert \: \mathcal{L}_{Y_{S}}(t),
%\end{align*}
%for $t\in \R$. We write $\CY_A \stackrel{h}{\nsim} \CY_B \: \vert \: \CY_S$.
%Then $\CY_A$ and $\CY_B$ are \textsl{global contemporaneously uncorrelated} with respect to $\CY_S$, if and only if,
for all $b\in B$, $0\leq h' \leq h$, $h\geq 0$, and $t \in \R$
\begin{align*}
P_{\mathcal{L}_{Y_{S}}(t)\vee \mathcal{L}_{Y_{A}}(t,t+h)} Y_b(t+h')  = P_{\mathcal{L}_{Y_{S}}(t)} Y_b(t+h') \quad \mathbb{P}\text{-a.s.}
\end{align*}
The proof is similar to the proof of \Cref{Charakterisierung als Gleichheit der linearen Vorhersage 2} %only based at \Cref{Charakterisisierung contemporaneously uncorrelated}
and is therefore not included in the paper. Also, the analogue statements to \Cref{eq:correspondenceXAXa} hold.
\end{remark}

\begin{comment}
Note that the definition of contemporaneous uncorrelation in \Cref{(linear) global contemporaneously uncorrelated}
states that for any $\HY^A \in \mathcal{L}_{Y_A}(t,t+1)$ and $\HY^B \in \mathcal{L}_{Y_B}(t,t+1)$
\begin{align*}
\BE \left[\left( \HY^A- P_{\mathcal{L}_{Y_{S}}(t)} \HY^A \right)
		  \overline{\left( \HY^B- P_{\mathcal{L}_{Y_{S}}(t)} \HY^B \right)} \right]=0.
\end{align*}
\end{comment}
%%%%%%%%%%%%%%%%%%%%%%%%%%%%%%%%%%%%%%%%%%%%%%%%%%%%%%%%%%%%%%%%%%%%%%%%%%%%%%%%%%
%\subsection{Relations of the different concepts of contemporaneous uncorrelation}
It is obvious that, by definition and due to \Cref{Remark 3.6} and \eqref{MR}, the following relations between the three definitions of contemporaneous uncorrelation are valid.

\begin{lemma}\label{lemma:Beziehungen cont uncor}
Let $A,B \subseteq S \subseteq V$ and $A\cap B=\emptyset$. %, $h \in \R$.
Then the following implications hold:
\begin{itemize}
\item[(a)] \makebox[2,8cm][l]{$\CY_A \nsiminf \CY_B \: \vert \: \CY_S$} $\Rightarrow \quad \CY_A  \nsim \CY_B \: \vert \: \CY_S$.
%\text{and} \quad \CY_A  \stackrel{h}{\nsim} \CY_B \: \vert \: \CY_S,
\item[(b)] \makebox[2,78cm][l]{$\CY_A  \nsim \CY_B \: \vert \: \CY_S$} $ \Rightarrow \quad  \CY_A  \nsimnull \CY_B \: \vert \: \CY_S.$
\end{itemize}
\end{lemma}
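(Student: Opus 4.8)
The plan is to prove the two implications directly from the definitions, using the characterisations in \Cref{Charakterisisierung contemporaneously uncorrelated} and the basic facts about mean-square derivatives collected in \Cref{Proposition 3.7} and \Cref{Remark 3.6}.

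\textbf{Part (a).} Suppose $\CY_A \nsiminf \CY_B \: \vert \: \CY_S$. By definition this means $\mathcal{L}_{Y_A}(t,t+h) \perp \mathcal{L}_{Y_B}(t,t+h) \: \vert \: \mathcal{L}_{Y_S}(t)$ for all $h\ge 0$ and all $t\in\R$. Taking the single value $h=1$ yields exactly $\mathcal{L}_{Y_A}(t,t+1) \perp \mathcal{L}_{Y_B}(t,t+1) \: \vert \: \mathcal{L}_{Y_S}(t)$ for all $t\in\R$, which is \Cref{(linear) global contemporaneously uncorrelated}, i.e.\ $\CY_A \nsim \CY_B \: \vert \: \CY_S$. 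So (a) is immediate and requires no work beyond specialising the quantifier; I would state it in one line.

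\textbf{Part (b).} Assume $\CY_A \nsim \CY_B \: \vert \: \CY_S$. By \Cref{Charakterisisierung contemporaneously uncorrelated}(b) this is equivalent to $\mathcal{L}_{Y_A}(t+1) \perp \mathcal{L}_{Y_B}(t+1) \: \vert \: \mathcal{L}_{Y_S}(t)$ for all $t\in\R$; unwinding the definition of conditional orthogonality, this says that \eqref{MR} holds for every $\HY^A\in\mathcal{L}_{Y_A}(t+1)$ and $\HY^B\in\mathcal{L}_{Y_B}(t+1)$. Fix $t\in\R$, $a\in A$, $b\in B$, and $h>0$. By \Cref{Remark 3.6}, $D^{(j_a)}Y_a(t+h)\in\mathcal{L}_{Y_a}(t+h)\subseteq\mathcal{L}_{Y_A}(t+1)$ whenever $h\le 1$ (and similarly for $b$), so \eqref{MR} applies with $\HY^A = D^{(j_a)}Y_a(t+h)$ and $\HY^B = D^{(j_b)}Y_b(t+h)$, giving
\begin{align*}
\BE\left[\left(D^{(j_a)}Y_a(t+h) - P_{\mathcal{L}_{Y_S}(t)}D^{(j_a)}Y_a(t+h)\right)\overline{\left(D^{(j_b)}Y_b(t+h) - P_{\mathcal{L}_{Y_S}(t)}D^{(j_b)}Y_b(t+h)\right)}\right]=0
\end{align*}
for all $0<h\le 1$. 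Dividing by $h$ keeps the expression identically zero, hence the limit as $h\searrow 0$ is zero, which is precisely the defining condition of $\CY_A \nsimnull \CY_B \: \vert \: \CY_S$ in \Cref{Def: local contemporaneously uncorrelated}. Finally, by \Cref{eq:correspondenceXAXa local}(a) (or equivalently the pairwise reduction already noted), proving the condition for all pairs $a\in A$, $b\in B$ is enough to conclude $\CY_A \nsimnull \CY_B \: \vert \: \CY_S$.

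\textbf{Expected main obstacle.} There is essentially no deep obstacle — both implications follow by specialising quantifiers and inserting the highest-derivative elements into already-established orthogonality relations. The only point requiring a little care is the bookkeeping in (b): one must make sure that the derivative random variables $D^{(j_a)}Y_a(t+h)$ genuinely lie in the closed span $\mathcal{L}_{Y_A}(t+1)$ for small $h$ (which is exactly \Cref{Remark 3.6} together with $[t,t+h]\subseteq(-\infty,t+1]$), and that the cross-covariance identity \eqref{MR} is available for all such elements, not merely generators — but this is guaranteed by the closedness built into \Cref{Charakterisisierung contemporaneously uncorrelated}(b) and the continuity of the inner product \eqref{limit E}. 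I would therefore present the proof compactly, spending one sentence on (a) and a short paragraph on (b).
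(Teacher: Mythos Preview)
Your proposal is correct and follows exactly the route the paper indicates: part (a) is immediate by specialising $h=1$, and part (b) inserts $D^{(j_a)}Y_a(t+h)$, $D^{(j_b)}Y_b(t+h)$ into \eqref{MR} using \Cref{Remark 3.6}, which is precisely what the paper means when it says the lemma is ``obvious by definition and due to \Cref{Remark 3.6} and \eqref{MR}''. One cosmetic point: the definition uses a two-sided limit $h\to 0$, whereas you restrict to $h\searrow 0$; this is harmless since for $h\le 0$ the derivative elements still lie in $\mathcal{L}_{Y_A}(t+1)$ (indeed in $\mathcal{L}_{Y_S}(t)$), so the same argument applies verbatim.
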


%\VF{Was ist mit der umgekehrten Richtung in (a)?}
%\VF{fraktionale CARMA Prozesse in Erwägung ziehen, wo nicht aller Äquivalenzen vermutlich funktionieren, oder was für ein Gegenbeispiel haben Dufour und Renault?}

The similarities and differences between the various definitions again become apparent when looking at examples. In particular, we derive that the opposite direction in \Cref{lemma:Beziehungen cont uncor} (b) does not hold in general.

\begin{example}
Suppose $\CY_V$  is the Ornstein-Uhlenbeck process as defined in \Cref{Example:Ornstein-Uhlenbeck process} with $\BA$ and $\Sigma_L$ as in \eqref{MM}. Then we derive in \Cref{sec:CGMCAR} that
\begin{align*}
\CY_a \nsiminf \CY_b \: \vert \: \CY_V
\quad  \Leftrightarrow \quad
\CY_a \nsim \CY_b \: \vert \: \CY_V
\quad & \Leftrightarrow \quad
[\BA^{\alpha} \BS_L (\BA^\top )^\beta]_{ab}=0, \quad \alpha, \beta=0,\ldots,k-1, \\
%\intertext{and}
\CY_a \nsimnull \CY_b \: \vert \: \CY_V
\quad & \Leftrightarrow \quad
[\BS_L]_{ab}=0.
%\CY_a \stackrel{h}{\ninst} \CY_b \: \vert \: \CY_V
%\quad & \Leftrightarrow \quad
%[A^{\alpha} \BS_L (A^\top )^\beta]_{ab}=0, \quad \alpha, \beta=0,\ldots,k-1,
%\\
\end{align*}
Of course, we obtain
\begin{align*}
\CY_a \nsim \CY_b \: \vert \: \CY_V
\quad  &\Rightarrow \quad
[\BA^{\alpha} \BS_L (\BA^\top )^\beta]_{ab}=0, \quad \alpha, \beta=0,\ldots,k-1, 
\quad  \Rightarrow \quad
[\BS_L]_{ab}=0 \\ \quad  &\Rightarrow \quad
\CY_a \nsimnull \CY_b \: \vert \: \CY_V,
\end{align*}
but the opposite direction does not generally hold, in turn, an exception is the case where $\BA$ is a diagonal matrix. A specific counterexample is again the Ornstein-Uhlenbeck process from \Cref{Example:Ornstein-Uhlenbeck process}, which we see in \Cref{fig: Two graphical models b}. Here, $\CY_1 \nsimnull \CY_2 \: \vert \: \CY_{\{1,2,3\}}$ but $\CY_1 \sim \CY_2 \: \vert \: \CY_{\{1,2,3\}}$.
\end{example}

%%%%%%%%%%%%%%%%%%%%%%%%%%%%%%%%%%%%%%%%%%%%%%%%%%%%%%%%%%%%%%%%%%%%%%%%%%%%%%%%%%

\section{Orthogonality graphs for stationary  continuous-time processes}\label{sec:path_diagrams}
In this section, we introduce graphical models for stationary, mean-square continuous processes $\CY_V=(Y_V(t))_{t\in \R}$. These graphical models visualise directed as well as undirected relations between the different component series $\CY_v=(Y_v(t))_{t\in \R}$, $v=1,\ldots,k$. The vertices represent the different component series $\CY_v$, $v=1,\ldots,k$, of the process. Furthermore, they are connected by directed and undirected edges, which represent certain directional and non-directional influences between them. The arising graphical models are then called (mixed) orthogonality graphs.

%%%%%%%%%%%%%%%%%%%%%%%%%%%%%%%%%%%%%%%%%%%%%%%%%%%%%%%%%%%%%%%%%%%%%%%%%%%%%%%%%%
\subsection{Separability and conditional linear separation}\label{subsec:condorth}

For the definition of the graphical models, we first ensure that the conditional orthogonality relation satisfies the property of intersection (C5) in \Cref{properties of conditional orthogonality} for suitable linear subspaces and second, we show that the missing relations in \eqref{eq3b} and \eqref{eq3.4} hold. Therefore, we investigate separability and conditional linear separation of linear spaces. %First of all, note that since $\CY_V$ is stationary and continuous in mean square, all linear spaces used in this paper are separable.
The proofs of the lemmata of this subsection are the subject of the Supplementary Material \ref{suppl:proofs_section_5}, and the proofs of the propositions and theorems are content of \Cref{Sec:Proofs:conditional orthogonality}.

\begin{lemma}\label{Separabilität}
%Let $\CY_V=(Y_V(t))_{t\in \R}$ be a $k$-dimensional process that satisfies \Cref{Assumption 1 und 2}.
Let $A \subseteq V$ and $s,t\in \R$ with $s<t$. Then $\mathcal{L}_{Y_A}$, $\mathcal{L}_{Y_A}(t)$ and $\mathcal{L}_{Y_A}(s,t)$ are separable.
\end{lemma}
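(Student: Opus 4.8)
The goal is to prove separability of the Hilbert spaces $\mathcal{L}_{Y_A}$, $\mathcal{L}_{Y_A}(t)$, and $\mathcal{L}_{Y_A}(s,t)$. The plan is to exhibit a countable dense subset in each case, exploiting the mean-square continuity of the process $\CY_V$.

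First I would treat $\mathcal{L}_{Y_A}(s,t)$ for a bounded interval. By definition, $\mathcal{L}_{Y_A}(s,t) = \overline{\operatorname{span}}\{Y_a(u): a\in A,\, u\in[s,t]\}$, so the set $\{Y_a(u): a\in A,\, u\in[s,t]\}$ is a total subset. The key observation is that mean-square continuity of $\CY_V$ means the map $u\mapsto Y_a(u)$ is continuous from $[s,t]$ (a separable metric space) into $L^2$; hence its image is a separable subset of $L^2$. Concretely, $\{Y_a(q): a\in A,\, q\in[s,t]\cap\Q\}$ is a countable set whose closure contains every $Y_a(u)$, $u\in[s,t]$, by approximating $u$ by rationals $q_n\to u$ and using $Y_a(q_n)\to_{L^2} Y_a(u)$. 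Therefore the countable set of all finite $\Q$-linear (or $\Q+i\Q$-linear, since we work over $\C$) combinations of elements of $\{Y_a(q): a\in A,\, q\in[s,t]\cap\Q\}$ is dense in $\mathcal{L}_{Y_A}(s,t)$, which establishes separability.

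Next, for $\mathcal{L}_{Y_A}(t)=\mathcal{L}_{Y_A}(-\infty,t)$ and $\mathcal{L}_{Y_A}=\mathcal{L}_{Y_A}(-\infty,\infty)$, I would reduce to the bounded case using \Cref{additivity in time domain}(d) (and its obvious analogue for $\mathcal{L}_{Y_A}(t)$ via $\overline{\bigcup_{n\in\N}\mathcal{L}_{Y_A}(-n,t)} = \mathcal{L}_{Y_A}(t)$). A countable union of separable subspaces has separable closure: take the union of the countable dense subsets $D_n$ of $\mathcal{L}_{Y_A}(-n,t)$ (resp.\ $\mathcal{L}_{Y_A}(-n,n)$), which is still countable, and its closure contains $\bigcup_n \mathcal{L}_{Y_A}(-n,t)$, hence equals the whole space. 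Alternatively, and more directly, one can simply note that $\{Y_a(q): a\in A,\, q\in\Q\cap(-\infty,t]\}$ is already a countable total set for $\mathcal{L}_{Y_A}(t)$ by the same continuity-plus-rational-approximation argument, and likewise $\{Y_a(q): a\in A,\, q\in\Q\}$ for $\mathcal{L}_{Y_A}$; then finite rational combinations give a countable dense set.

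I do not anticipate a genuine obstacle here: separability of closed spans of continuous $L^2$-valued curves indexed by separable parameter sets is a routine fact. The only mild point of care is the continuity statement — the paper's standing assumption is mean-square continuity of $\CY_V$, which gives continuity of each coordinate $u\mapsto Y_a(u)$ in $L^2$, and this is exactly what makes $[s,t]\cap\Q$ (or $\Q$) suffice. One should also remember that $A\subseteq V$ is finite, so enlarging the rational grid by the finitely many indices $a\in A$ keeps the dense set countable. I would phrase the argument once for $\mathcal{L}_{Y_A}(s,t)$ and then indicate that the remaining two cases follow verbatim with $[s,t]\cap\Q$ replaced by $(-\infty,t]\cap\Q$ and $\Q$ respectively.
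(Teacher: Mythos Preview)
Your argument is correct. The route differs from the paper's, though: the paper first invokes \cite{CR61}, Lemma~1, to obtain a countable dense subset $M_A$ of $\ell_{Y_A}(-\infty,\infty)$, then passes to the closure $\mathcal{L}_{Y_A}$, and finally obtains separability of the subspaces $\mathcal{L}_{Y_A}(t)$ and $\mathcal{L}_{Y_A}(s,t)$ by projecting $M_A$ onto them (i.e., by using that closed subspaces of a separable Hilbert space are separable). Your approach instead constructs the countable dense set directly in each space from rational time points and the mean-square continuity of $u\mapsto Y_a(u)$. Your version is more self-contained and makes the role of mean-square continuity explicit; the paper's version is shorter once the cited lemma is in hand and handles all three spaces in one stroke via projection rather than repeating the rational-approximation argument.
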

 Furthermore, we require that $\mathcal{L}_{Y_A}(t)$ and $\mathcal{L}_{Y_B}(t)$ are conditionally linearly separated by $\mathcal{L}_{Y_C}(t)$ if $t\in \R$ and $A,B,C \subseteq V$ are disjoint.
 This assumption is a lot more intricate because it is a very abstract definition and difficult to verify.
\begin{remark}
Unlike us, \cite{EI10} uses conditional independence instead of conditional orthogonality. For the associated intersection property (C5) %, going back to \cite{PE94},
\textsl{measurable conditional separation} is required, corresponding to our \textsl{conditional linear separation} assumption. There,  measurable conditional separation is also generally not valid, and sufficient assumptions are given.
\end{remark}

To better understand conditional linear separation, we introduce a sufficient criterion.

\begin{lemma} \label{Lemma lineare Separabilität}
Let $t\in \R$. Suppose that for all $A,B \subseteq V$ with $A\cap B=\emptyset$ we have
\begin{align*}
\mathcal{L}_{Y_{A}}(t) \cap \mathcal{L}_{Y_{B }}(t) =  \{0\} \quad \text{and} \quad
\mathcal{L}_{Y_{A}}(t) + \mathcal{L}_{Y_{B}}(t) = \mathcal{L}_{Y_{A}}(t) \vee \mathcal{L}_{Y_{B}}(t)\quad \mathbb{P}\text{-a.s.}
\end{align*}
 Then, for all disjoint subsets $A,B,C \subseteq V$, we get % for all $t\in \R$ and disjoint subsets $A,B,C \subseteq V$,
\begin{align*}
\mathcal{L}_{Y_{A\cup C}}(t) \cap \mathcal{L}_{Y_{B \cup C}}(t) =  \mathcal{L}_{Y_C}(t) \quad \mathbb{P}\text{-a.s.}
\end{align*}
\end{lemma}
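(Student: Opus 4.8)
The plan is to reduce the claim about the three disjoint sets $A,B,C$ to the two-set hypothesis by a bracketing argument on elements of the intersection. The inclusion $\mathcal{L}_{Y_C}(t) \subseteq \mathcal{L}_{Y_{A\cup C}}(t) \cap \mathcal{L}_{Y_{B\cup C}}(t)$ is immediate from monotonicity of the spaces in the index set, so the work is in the reverse inclusion. Take $X \in \mathcal{L}_{Y_{A\cup C}}(t) \cap \mathcal{L}_{Y_{B\cup C}}(t)$. By \Cref{additivity in index set}(c) we have $\mathcal{L}_{Y_{A\cup C}}(t) = \mathcal{L}_{Y_A}(t) \vee \mathcal{L}_{Y_C}(t)$, and by the second hypothesis (applied to the disjoint pair $A,C$) this closed sum is actually the algebraic sum $\mathcal{L}_{Y_A}(t) + \mathcal{L}_{Y_C}(t)$; likewise $\mathcal{L}_{Y_{B\cup C}}(t) = \mathcal{L}_{Y_B}(t) + \mathcal{L}_{Y_C}(t)$. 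Hence we may write $X = X_A + X_C = X_B + X_C'$ with $X_A \in \mathcal{L}_{Y_A}(t)$, $X_B \in \mathcal{L}_{Y_B}(t)$, and $X_C, X_C' \in \mathcal{L}_{Y_C}(t)$.

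The key step is then to rearrange: $X_A - X_B = X_C' - X_C \in \mathcal{L}_{Y_C}(t)$. Since $A$ and $B\cup C$ are disjoint, we may apply the hypotheses to this pair as well (or, more carefully, first note $X_A = X_B + (X_C' - X_C) \in \mathcal{L}_{Y_B}(t) + \mathcal{L}_{Y_C}(t) = \mathcal{L}_{Y_{B\cup C}}(t)$, so $X_A \in \mathcal{L}_{Y_A}(t) \cap \mathcal{L}_{Y_{B\cup C}}(t)$). Now I want to invoke $\mathcal{L}_{Y_A}(t) \cap \mathcal{L}_{Y_{B\cup C}}(t) = \{0\}$; this follows from the first hypothesis applied to the disjoint pair $A$ and $B\cup C$ (with $\mathcal{L}_{Y_{B\cup C}}(t) = \mathcal{L}_{Y_B}(t) \vee \mathcal{L}_{Y_C}(t)$, noting that the hypothesis is stated for \emph{all} disjoint pairs $A,B\subseteq V$, hence in particular for $A$ and $B\cup C$). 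Therefore $X_A = 0$, which gives $X = X_C \in \mathcal{L}_{Y_C}(t)$, completing the reverse inclusion.

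I expect the main subtlety to be bookkeeping around the direct-sum hypothesis: one must make sure that whenever the closed sum $\mathcal{L}_{Y_E}(t) \vee \mathcal{L}_{Y_F}(t)$ is replaced by the algebraic sum $\mathcal{L}_{Y_E}(t) + \mathcal{L}_{Y_F}(t)$, the pair $(E,F)$ is genuinely disjoint so that the hypothesis applies — this is why I split $C$ off rather than combining it, and it is why the step $X_A \in \mathcal{L}_{Y_A}(t)\cap \mathcal{L}_{Y_{B\cup C}}(t)$ needs the version of the hypothesis for the pair $(A, B\cup C)$. A second small point is that all identities involving the spaces hold only $\mathbb{P}$-a.s. (identifying a.s.-equal random variables), so the conclusion is correspondingly $\mathbb{P}$-a.s.; this is harmless since we work throughout in $L^2$ with the a.s.-identification already in force. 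Everything else is a routine manipulation of closed subspaces of a Hilbert space together with \Cref{additivity in index set}.
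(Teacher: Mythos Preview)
Your proof is correct and follows essentially the same approach as the paper: decompose an element of the intersection via the algebraic-sum hypothesis and then kill the $A$-part using the trivial-intersection hypothesis. The only cosmetic difference is that you apply the intersection hypothesis once to the pair $(A,B\cup C)$ to conclude $X_A=0$, whereas the paper first applies it to $(A\cup B,C)$ to get $X_A=X_B$ and then to $(A,B)$ to get $X_A=0$; both routes are equally valid.
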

The first assumption is the linear independence of the two linear spaces, the second assumption is the closedness of the sum. %The detailed proof of the assertion is given in\Cref{Sec:Proofs:conditional orthogonality}.
It makes little sense to formulate these two properties as assumptions on $\CY_V$, as they are still too abstract and difficult to verify. Therefore, we provide an easy-to-use criterion. %The problem of the closedness of the sum of two subspaces is the subject of numerous publications, see, e.g., %\cite{KO40},

\begin{assumption}\label{Assumption an Dichte}
Suppose $\CY_V$ has a %positive definite
spectral density matrix $f_{Y_V Y_V}(\cdot)>0$ and that there exists an $0<\varepsilon<1$, such that
\begin{align*}
\dAB(\lambda) \coloneqq f_{Y_AY_A}(\lambda)^{-1/2}f_{Y_AY_B}(\lambda)f_{Y_BY_B}(\lambda)^{-1}f_{Y_BY_A}(\lambda)f_{Y_AY_A}(\lambda)^{-1/2} \leq_L
(1-\varepsilon)I_{\alpha},
\end{align*}
for almost all $\lambda \in \R$ and for all disjoint subsets $A,B\subseteq V$ with $\# A=\alpha$.
\end{assumption}

For $A=\{a\}$ the function $\dAB(\lambda)$, $\lambda \in \R$, is called multiple coherence; we refer to \cite{Priestley} and \cite{BR01} for further reading.
\Cref{Assumption an Dichte} is satisfied, e.g., for stationary causal MCAR processes and in particular Ornstein-Uhlenbeck processes, for details see \Cref{sec:CGMCAR},  and for the more general family of state space models see \cite{VF23preb}. In our opinion, even fractional MCAR processes satisfy this assumption. Furthermore, the assumption is indeed sufficient for conditional linear separability.

\begin{proposition} \label{Eigenschaften der linearen Räume}
Let $\CY_V$ satisfy \Cref{Assumption an Dichte}. Then for all $t\in \R$ and disjoint subsets $A,B,C \subseteq V$, we have
 \begin{align*}
\mathcal{L}_{Y_{A}}(t) \cap \mathcal{L}_{Y_{B }}(t) & =  \{0\}, \quad
\mathcal{L}_{Y_{A}}(t) + \mathcal{L}_{Y_{B}}(t) = \mathcal{L}_{Y_{A}}(t) \vee \mathcal{L}_{Y_{B}}(t),%, \quad \mathbb{P}\text{-a.s.}
\quad \text{ and } \\
& \mathcal{L}_{Y_{A\cup C}}(t) \cap \mathcal{L}_{Y_{B \cup C}}(t) =  \mathcal{L}_{Y_C}(t) \quad \mathbb{P}\text{-a.s.}
\end{align*}
\end{proposition}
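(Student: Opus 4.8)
The plan is to deduce the three displayed assertions in sequence, obtaining the third one as a direct consequence of the first two via Lemma 4.7 (\Cref{Lemma lineare Separabilität}). Thus the real work is to establish, for disjoint $A,B\subseteq V$ and $t\in\R$, both $\mathcal{L}_{Y_A}(t)\cap\mathcal{L}_{Y_B}(t)=\{0\}$ and the closedness of the sum $\mathcal{L}_{Y_A}(t)+\mathcal{L}_{Y_B}(t)$. The natural tool here is the spectral-domain isometry: via the spectral representation \eqref{spectral representation of stationary process}, $\mathcal{L}_{Y_A}(t)$ is isometrically isomorphic to a closed subspace of a vector-valued $L^2$-space over $(\R,f_{Y_VY_V}(\lambda)\,d\lambda)$, namely the closed span of $\{\lambda\mapsto e^{i\lambda u}e_a^\top : a\in A,\ u\le t\}$ acting on the $A$-block. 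First I would set up this Kolmogorov-type isomorphism carefully (citing \cite{RO67, LI15, BrockwellLindner2024}) so that inner products $\langle X,Y\rangle_{L^2}$ become integrals $\int \varphi(\lambda)^{*} f_{Y_VY_V}(\lambda)\psi(\lambda)\,d\lambda$ of the corresponding transfer functions.

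The key quantitative step is to show that \Cref{Assumption an Dichte} forces a uniform angle between the subspaces $\mathcal{L}_{Y_A}(t)$ and $\mathcal{L}_{Y_B}(t)$. Concretely, for $X\in\mathcal{L}_{Y_A}(t)$ and $Y\in\mathcal{L}_{Y_B}(t)$ I would express $|\langle X,Y\rangle_{L^2}|$ through the cross-spectral block $f_{Y_AY_B}$ and bound it using the Cauchy--Schwarz inequality in the metric $f_{Y_AY_A}$, respectively $f_{Y_BY_B}$. The matrix $\dAB(\lambda)$ is exactly the ``squared cosine'' matrix whose largest eigenvalue controls this bound: the hypothesis $\dAB(\lambda)\le_L(1-\varepsilon)I_\alpha$ a.e.\ yields
\begin{align*}
|\langle X,Y\rangle_{L^2}| \;\le\; \sqrt{1-\varepsilon}\,\|X\|_{L^2}\,\|Y\|_{L^2}
\end{align*}
for all such $X,Y$. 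This is the standard criterion (a Friedrichs-type angle condition) guaranteeing that two closed subspaces of a Hilbert space are at positive angle, which is equivalent to $\mathcal{L}_{Y_A}(t)\cap\mathcal{L}_{Y_B}(t)=\{0\}$ together with closedness of $\mathcal{L}_{Y_A}(t)+\mathcal{L}_{Y_B}(t)$; I would invoke the relevant Hilbert-space lemma (e.g.\ from \cite{Deutsch} or prove it in two lines: the angle bound makes $P_{\mathcal{L}_{Y_A}(t)}|_{\mathcal{L}_{Y_B}(t)}$ a strict contraction, so $I-P_{\mathcal{L}_{Y_A}(t)}$ is bounded below on $\mathcal{L}_{Y_B}(t)$, forcing closedness and trivial intersection). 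Since this needs the inequality only for a \emph{single} pair $A,B$ but \Cref{Assumption an Dichte} supplies it for \emph{all} disjoint pairs, the first two displayed claims follow for every disjoint $A,B$, and then \Cref{Lemma lineare Separabilität} delivers $\mathcal{L}_{Y_{A\cup C}}(t)\cap\mathcal{L}_{Y_{B\cup C}}(t)=\mathcal{L}_{Y_C}(t)$ for disjoint $A,B,C$.

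I anticipate two technical obstacles. The first is the rigorous identification of the time-domain subspaces $\mathcal{L}_{Y_A}(t)$ with their spectral-domain images together with a clean description of which transfer functions arise (the ``past up to time $t$'' truncation corresponds, after a modulation by $e^{i\lambda t}$, to a Hardy-type half-plane condition); one must be careful that the closure operations on both sides match, which is where positivity of $f_{Y_VY_V}$ in \Cref{Assumption an Dichte} is used to ensure the change-of-metric maps are bounded with bounded inverse. The second, more delicate point is getting the Cauchy--Schwarz estimate to be \emph{uniform in $t$}: because the bound on $\dAB(\lambda)$ is a pointwise-in-$\lambda$ bound, the resulting operator-norm estimate on the ``cross-covariance'' operator is automatically $t$-independent, so this should go through, but the bookkeeping linking $\dAB$ to the actual cross-term for general elements $X,Y$ (not just single coordinates $Y_a(u)$) requires a density argument using $\ell_{Y_A}(-\infty,t)$ and \eqref{limit E}. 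Once these are in place the proof is essentially the angle-condition argument sketched above.
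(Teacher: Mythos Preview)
Your approach is correct and is essentially the paper's own argument: use the spectral representation to turn \Cref{Assumption an Dichte} into a uniform positive-angle condition between $\mathcal{L}_{Y_A}(t)$ and $\mathcal{L}_{Y_B}(t)$, invoke a standard Hilbert-space criterion for trivial intersection and closed sum, and then apply \Cref{Lemma lineare Separabilität}. The only differences are cosmetic. You phrase the angle condition as $|\langle X,Y\rangle_{L^2}|\le\sqrt{1-\varepsilon}\,\|X\|_{L^2}\|Y\|_{L^2}$; the paper instead rewrites \Cref{Assumption an Dichte} (via a Schur-complement step) as the block inequality
\[
\begin{pmatrix} f_{Y_AY_A} & f_{Y_AY_B}\\ f_{Y_BY_A} & f_{Y_BY_B}\end{pmatrix}
\;\ge_L\;
\widetilde\varepsilon\begin{pmatrix} f_{Y_AY_A} & 0\\ 0 & f_{Y_BY_B}\end{pmatrix},
\qquad (1-\widetilde\varepsilon)^2=1-\varepsilon,
\]
which when integrated against $(\varphi\;\psi)$ gives the equivalent norm lower bound $\|Y^A+Y^B\|_{L^2}^2\ge\widetilde\varepsilon(\|Y^A\|_{L^2}^2+\|Y^B\|_{L^2}^2)$, and then cites \cite{FE12}, Proposition~2.3.

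One simplification: your first anticipated obstacle is unnecessary. You never need a spectral (Hardy half-plane) description of $\mathcal{L}_{Y_A}(t)$. Since $\mathcal{L}_{Y_A}(t)\subseteq\mathcal{L}_{Y_A}$, any $Y^A\in\mathcal{L}_{Y_A}(t)$ already has a representation $\int\varphi\,\Phi_A(d\lambda)$ coming from the full space $\mathcal{L}_{Y_A}$ (\cite{RO67}, I, (7.2)); the pointwise bound on $\dAB(\lambda)$ then yields the angle estimate on all of $\mathcal{L}_{Y_A}\times\mathcal{L}_{Y_B}$, and it restricts to the subspaces for free. This also dissolves your second worry about $t$-uniformity.
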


%The proof of \Cref{Eigenschaften der linearen Räume} is given in \Cref{Sec:Proofs:conditional orthogonality}.% shows that this assumption is satisfied based on spectral representations of elements in the linear spaces.
%At this point we just make some comments on \Cref{Assumption an Dichte}.

 Recall that in \Cref{Charakterisierung local Granger non-causal} we already assume the closedness of the sum, and now \Cref{Eigenschaften der linearen Räume} gives a sufficient criterion for this property.

%\VF{To recall, the conditional linear separation that we have already assumed in \Cref{Charakterisierung local Granger non-causal} and now \Cref{Eigenschaften der linearen Räume} gives a sufficient criterion for this theorem.}

\begin{remark}
First of all, $\dAB(\lambda) \leq_{L} I_{\alpha \times \alpha}$ holds even without \Cref{Assumption an Dichte}. Indeed, suppose $\Phi_B(\cdot)$ is the random spectral measure from the spectral representation  of $\CY_B$ in \eqref{spectral representation of stationary process}, then  the spectral density matrix of
\begin{align*}
\varepsilon_{A \vert B}(t) = Y_A(t) - \uint e^{i\lambda t} f_{Y_AY_B}(\lambda)f_{Y_BY_B}(\lambda)^{-1} \Phi_B(d\lambda)
\end{align*}
is
\begin{align*}
f_{\varepsilon_{A\vert B}\varepsilon_{A\vert B}}(\lambda) =  f_{Y_AY_A}(\lambda) - f_{Y_AY_B}(\lambda) f_{Y_BY_B}(\lambda)^{-1}f_{Y_BY_A}(\lambda),
\end{align*}
and it is non-negative definite according to \cite{BR91}, p.~436. Furthermore, \Cref{Assumption an Dichte} especially forbids some purely linear relationships between the components, which can be seen as follows. Assume that $\dAB(\lambda) =I_{\alpha}$ for almost all $\lambda \in \R$. Then $f_{\varepsilon_{A\vert B}\varepsilon_{A\vert B}}(\lambda) = 0_{\alpha}$ for almost all $t\in \R$ and thus, $c_{\varepsilon_{A\vert B}\varepsilon_{A\vert B}}(t)= 0_{\alpha}$ for all $t\in \R$. Therefore, $\varepsilon_{A \vert B}(t) = 0_{\alpha}$
$\mathbb{P}$-a.s. and $Y_A(t)$ is already a linear transformation of $Y_B(t)$. Somewhat loosely, one could say that \Cref{Assumption an Dichte} not only forbids a purely linear relationship between $\CY_A$ and $\CY_B$ but already requires some kind of distance between the subprocesses due to the uniform boundedness. This also fits with  \cite{BR01}, eq.~(8.3.10), who calls the matrix function $\dAB(\lambda)$ in discrete-time a measure of the linear association of $\CY_A$ and $\CY_B$ at frequency $\lambda$.
\end{remark}

\begin{remark}
Let us compare \Cref{Assumption an Dichte} with \cite{EI07}, equation (2.1), who proposes a comparable assumption on the spectral density matrix in discrete time, also with the aim that the property of intersection (C5) is valid. \cite{EI07} demands the existence of a constant $c>1$, such that the spectral density matrix satisfies
\begin{align}\label{eq: Eichlers Dichteannahme}
\frac{1}{c}I_k \leq_L f_{Y_VY_V}(\lambda) \leq_L c I_k,
\end{align}
for all $\lambda \in [-\pi, \pi]$.
If this assumption is fulfilled, some matrix algebra calculations as in the proof of \Cref{Schranke Intervall} give that for any disjoint subsets $A, B \subseteq V$,
\begin{align*}
f_{Y_AY_A}(\lambda) - f_{Y_AY_B}(\lambda)f_{Y_BY_B}(\lambda)^{-1}f_{Y_BY_A}(\lambda) \geq_L \frac{1}{c}I_{\alpha} \geq_L \frac{1}{c^2} f_{Y_AY_A}(\lambda).
\end{align*}
Thus, on the interval $[-\pi, \pi]$ \Cref{Assumption an Dichte} is satisfied with $\varepsilon= 1/c^2$. However, \cite{EI07}'s assumption is stricter than ours since one must be able to place a diagonal matrix between $1/c^2 f_{Y_AY_A}(\lambda)$ and $f_{Y_AY_A}(\lambda) - f_{Y_AY_B}(\lambda)f_{Y_BY_B}(\lambda)^{-1}f_{Y_BY_A}(\lambda)$. We further point out that we cannot generalise \cite{EI07}'s assumption directly to continuous-time processes by assuming \eqref{eq: Eichlers Dichteannahme} for almost all $\lambda \in \R$. This requirement is too strict and, e.g., not satisfied for Ornstein-Uhlenbeck processes.
\end{remark}

\begin{comment}
\begin{remark}
From the inclusion property
\begin{align*}
\mathcal{L}_{Y_A}(- \infty) \subseteq \mathcal{L}_{Y_A}(t) \subseteq \mathcal{L}_{Y_A}, \quad t \in \R,
\end{align*}
it happens that two extreme cases may occur.
\begin{itemize}
    \item[(i)] If  $\mathcal{L}_{Y_A}(- \infty)=\mathcal{L}_{Y_A}$, the process is called \textit{deterministic}, since the remote past already contains all available information concerning the process. If the remote past is known, the future development of the process can be predicted precisely \cite[III, eq.~(2.1)]{RO67}. %that is
%\begin{align*}
%P_{\mathcal{L}_{Y_V}(t)} Y_v(t+h) = Y_v(t+h)
%\end{align*}
%$\mathbb{P}$-a.s. for all $v\in V$, $t\in \R$ and $h\geq 0$ (\cite{RO67}, III, (2.1)).
    \item[(ii)] If $\mathcal{L}_{Y_A}(- \infty)=\{ 0\}$ the process is called \textit{purely non-deterministic}. %The remote past is of no use for prediction purposes.
Loosely speaking,  any information of the process, must have entered as a new impulse at some instant in the past (\cite{CR61}, p.~251, \cite{CR71}, p.~7). From the viewpoint of  prediction, purely non-deterministic  is  equivalent to
$\underset{h \rightarrow \infty}{\text{l.i.m.\:}} P_{\mathcal{L}_{Y_V}(t)} Y_v(t+h) = 0$ $\mathbb{P}\text{-a.s.} $ for all $v\in V$ and $t\in \R$
  (\cite{RO67}, III, Theorem 2.1.).
  \end{itemize}
\end{remark}
\end{comment}

\Cref{Assumption an Dichte} now ensures, as desired, that the conditional orthogonality relation satisfies the property of intersection (C5) in \Cref{properties of conditional orthogonality} for suitable linear subspaces. \Cref{Assumption an Dichte} further provides us with the missing relations of the causality concepts in \eqref{eq3b} and \eqref{eq3.4}. %the proof  is moved to \Cref{Sec:Proofs:conditional orthogonality}.

%\LS{Mit der Differenzdefinition der lokalen Kausalität kann man Beweis in (b) nicht mehr führen}

\begin{proposition} \label{Proposition 5.6}
Let $\CY_V$ satisfy \Cref{Assumption an Dichte}. Let $A,B \subseteq S \subseteq V$ and $A\cap B=\emptyset$. Then %the following relations hold:
\begin{itemize}
    \item[(a)] \makebox[2,78cm][l]{$\CY_{A} \nrarrow \CY_B\: \vert \: \CY_S$}
    \makebox[3,5cm][l]{$\Leftrightarrow \quad \CY_{a} \nrarrow \CY_b\: \vert \: \CY_S$} $ \forall \: a\in A, \: b\in B$.
    \item[(b)] \makebox[2,76cm][l]{$\CY_{A} \nrarrownull \CY_B\: \vert \: \CY_S$}
    \makebox[3,5cm][l]{$\Leftrightarrow \quad
    \CY_{a} \nrarrownull \CY_b\: \vert \: \CY_S$} $ \forall \: a\in A, \: b\in B$.
    \item[(c)] \makebox[2,8cm][l]{$\CY_{A} \nrarrowinf \CY_B\: \vert \: \CY_S$}
    \makebox[3,5cm][l]{$ \Leftrightarrow \quad
    \CY_{a} \nrarrowinf \CY_b\: \vert \: \CY_S$} $\forall \: a\in A, \: b\in B$.
\end{itemize}
\end{proposition}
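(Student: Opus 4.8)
The ``$\Rightarrow$'' directions are already available: (a) is \eqref{eq3b}, (b) is \eqref{eq3.4}, and (c) is the analogue of \eqref{eq3b} for global Granger non-causality noted after \Cref{(linear) global Granger non-causal at different horizons}. Hence only ``$\Leftarrow$'' needs to be proved. By \eqref{eq:correspondenceXBXbcausality}, \eqref{eq:correspondenceXBXbcausality local} and the corresponding equivalence for global Granger non-causality, each statement moreover reduces to a single target vertex; that is, with $S$ and $b\in V\setminus A$ fixed, the task is to show that $\CY_a\nrarrow\CY_b\mid\CY_S$ for all $a\in A$ implies $\CY_A\nrarrow\CY_b\mid\CY_S$, and analogously for $\nrarrownull$ and $\nrarrowinf$.

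For (a) and (c) I would argue by induction on $\#A$, the case $\#A=1$ being trivial. Fix $t\in\R$ and, in case (c), some $h\ge 0$ (for (a) put $h=1$); by definition of (global) Granger non-causality it then suffices to establish the single relation $\mathcal{L}_{Y_b}(t,t+h)\perp\mathcal{L}_{Y_A}(t)\mid\mathcal{L}_{Y_{S\setminus A}}(t)$. Write $A=A'\cup\{a\}$ with $a\notin A'$. Since $S\setminus A'=(S\setminus A)\cup\{a\}$ and $S\setminus\{a\}=(S\setminus A)\cup A'$ are disjoint unions, \Cref{additivity in index set} gives $\mathcal{L}_{Y_{S\setminus A'}}(t)=\mathcal{L}_{Y_{S\setminus A}}(t)\vee\mathcal{L}_{Y_a}(t)$ and $\mathcal{L}_{Y_{S\setminus\{a\}}}(t)=\mathcal{L}_{Y_{S\setminus A}}(t)\vee\mathcal{L}_{Y_{A'}}(t)$. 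Applying the induction hypothesis to $A'$ (which inherits the pairwise assumptions) and using the hypothesis $\CY_a\nrarrow\CY_b\mid\CY_S$, we obtain, respectively,
\[
\mathcal{L}_{Y_b}(t,t+h)\perp\mathcal{L}_{Y_{A'}}(t)\mid\mathcal{L}_{Y_{S\setminus A}}(t)\vee\mathcal{L}_{Y_a}(t),\qquad \mathcal{L}_{Y_b}(t,t+h)\perp\mathcal{L}_{Y_a}(t)\mid\mathcal{L}_{Y_{S\setminus A}}(t)\vee\mathcal{L}_{Y_{A'}}(t).
\]
Now invoke the intersection property (C5) of \Cref{properties of conditional orthogonality} with $\mathcal{L}_1=\mathcal{L}_{Y_b}(t,t+h)$, $\mathcal{L}_2=\mathcal{L}_{Y_{A'}}(t)$, $\mathcal{L}_3=\mathcal{L}_{Y_a}(t)$, $\mathcal{L}_4=\mathcal{L}_{Y_{S\setminus A}}(t)$: its two prerequisites, namely $(\mathcal{L}_2\vee\mathcal{L}_4)\cap(\mathcal{L}_3\vee\mathcal{L}_4)=\mathcal{L}_4$ (here $\mathcal{L}_2\vee\mathcal{L}_4=\mathcal{L}_{Y_{S\setminus\{a\}}}(t)$, $\mathcal{L}_3\vee\mathcal{L}_4=\mathcal{L}_{Y_{S\setminus A'}}(t)$, and the three index sets $A'$, $\{a\}$, $S\setminus A$ are pairwise disjoint) and separability of $\mathcal{L}_2\vee\mathcal{L}_3=\mathcal{L}_{Y_A}(t)$, hold by \Cref{Eigenschaften der linearen Räume} and \Cref{Separabilität}, respectively. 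Hence $\mathcal{L}_{Y_b}(t,t+h)\perp\mathcal{L}_{Y_A}(t)\mid\mathcal{L}_{Y_{S\setminus A}}(t)$, which closes the induction and gives (a) (with $h=1$) and (c) (for all $h\ge 0$).

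For (b) the mechanism is different. First, iterating \Cref{Eigenschaften der linearen Räume} yields $\mathcal{L}_{Y_{S\setminus A}}(t)=\bigcap_{a\in A}\mathcal{L}_{Y_{S\setminus\{a\}}}(t)$: by induction one has $\mathcal{L}_{Y_{S\setminus\{a_1,\dots,a_k\}}}(t)\cap\mathcal{L}_{Y_{S\setminus\{a_{k+1}\}}}(t)=\mathcal{L}_{Y_{S\setminus\{a_1,\dots,a_{k+1}\}}}(t)$, decomposing the two index sets as $\{a_{k+1}\}\cup C$ and $\{a_1,\dots,a_k\}\cup C$ with $C=S\setminus\{a_1,\dots,a_{k+1}\}$. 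Now fix $t$ and set $Z^h\coloneqq h^{-1}\bigl(D^{(j_b)}Y_b(t+h)-D^{(j_b)}Y_b(t)\bigr)$ and $u_h\coloneqq P_{\mathcal{L}_{Y_S}(t)}Z^h$. By hypothesis $\CY_a\nrarrownull\CY_b\mid\CY_S$ for all $a\in A$, so $u_h\to L_0$ in $L^2$ for some $L_0$ (not depending on $a$) and $P_{\mathcal{L}_{Y_{S\setminus\{a\}}}(t)}Z^h\to L_0$. As $\mathcal{L}_{Y_{S\setminus\{a\}}}(t)$ is closed and contains the terms of the latter sequence, $L_0\in\mathcal{L}_{Y_{S\setminus\{a\}}}(t)$ for every $a\in A$, hence $L_0\in\mathcal{L}_{Y_{S\setminus A}}(t)$. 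Since $\mathcal{L}_{Y_{S\setminus A}}(t)\subseteq\mathcal{L}_{Y_S}(t)$, the composition rule for nested orthogonal projections gives $P_{\mathcal{L}_{Y_{S\setminus A}}(t)}Z^h=P_{\mathcal{L}_{Y_{S\setminus A}}(t)}u_h$, so using $P_{\mathcal{L}_{Y_{S\setminus A}}(t)}L_0=L_0$,
\[
\bigl\|P_{\mathcal{L}_{Y_{S\setminus A}}(t)}Z^h-L_0\bigr\|_{L^2}=\bigl\|P_{\mathcal{L}_{Y_{S\setminus A}}(t)}(u_h-L_0)\bigr\|_{L^2}\le\|u_h-L_0\|_{L^2}\longrightarrow 0\quad(h\to 0).
\]
Thus $\limh P_{\mathcal{L}_{Y_{S\setminus A}}(t)}Z^h=L_0=\limh P_{\mathcal{L}_{Y_S}(t)}Z^h$, which is exactly $\CY_A\nrarrownull\CY_b\mid\CY_S$.

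I expect the main obstacle to be organisational rather than conceptual: one must check at every step of the induction in (a)/(c) that the two prerequisites of the intersection property (C5) genuinely hold for the concrete subspaces in play---conditional linear separation of the conditioning spaces and separability of the relevant join---which is precisely where \Cref{Assumption an Dichte} enters, through \Cref{Eigenschaften der linearen Räume} and \Cref{Separabilität}; without this assumption the converse implications are false in general. In (b) the analogous delicate point is the iterated intersection identity $\mathcal{L}_{Y_{S\setminus A}}(t)=\bigcap_{a\in A}\mathcal{L}_{Y_{S\setminus\{a\}}}(t)$, again a consequence of \Cref{Eigenschaften der linearen Räume}; once it is available, the remainder of (b) is a short Hilbert-space computation.
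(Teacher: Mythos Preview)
Your proof is correct. For part (b) it is essentially the paper's own argument: show that the common $L^2$-limit of $P_{\mathcal{L}_{Y_S}(t)}Z^h$ lies in each $\mathcal{L}_{Y_{S\setminus\{a\}}}(t)$, use the iterated intersection identity $\bigcap_{a\in A}\mathcal{L}_{Y_{S\setminus\{a\}}}(t)=\mathcal{L}_{Y_{S\setminus A}}(t)$ coming from \Cref{Eigenschaften der linearen Räume}, and finish with the composition of nested projections.

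For parts (a) and (c), however, you take a genuinely different route. You argue by induction on $\#A$ and apply the graphoid intersection axiom (C5), staying entirely within the conditional-orthogonality calculus. The paper instead uses the projection characterisation of \Cref{Charakterisierung als Gleichheit der linearen Vorhersage}: from $\CY_a\nrarrow\CY_b\mid\CY_S$ one has $P_{\mathcal{L}_{Y_S}(t)}Y_b(t+h)=P_{\mathcal{L}_{Y_{S\setminus\{a\}}}(t)}Y_b(t+h)\in\mathcal{L}_{Y_{S\setminus\{a\}}}(t)$ for every $a\in A$, so this projection lies in $\bigcap_{a\in A}\mathcal{L}_{Y_{S\setminus\{a\}}}(t)=\mathcal{L}_{Y_{S\setminus A}}(t)$ (the very same iterated intersection identity you spell out for (b)), whence $P_{\mathcal{L}_{Y_S}(t)}Y_b(t+h)=P_{\mathcal{L}_{Y_{S\setminus A}}(t)}Y_b(t+h)$ and the converse direction of \Cref{Charakterisierung als Gleichheit der linearen Vorhersage} concludes. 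Both arguments hinge on \Cref{Assumption an Dichte} through \Cref{Eigenschaften der linearen Räume} at the same place; the paper's version is a bit shorter because it avoids the induction and the verification of the (C5) prerequisites, while your version has the appeal of deriving (a) and (c) purely from the semi-graphoid axioms plus the separation/separability inputs, without ever unpacking projections.
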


However, for the proof of the global Andersson, Madigan and Perlman (AMP) Markov property in our orthogonality graph, we require further assumptions. Any process that is wide sense stationary can be uniquely decomposed in a deterministic and a purely non-deterministic process that are mutually orthogonal (\citealp{GL58}, Theorem 1). From the point of view of applications, deterministic processes are not important. % (\cite{CR71}, p.~7).
Therefore, we assume that the given process is purely non-deterministic.

\begin{assumption}\label{Assumption purely nondeterministic of full rank}
Let $\CY_V$ be purely non-deterministic, that is $\mathcal{L}_{Y_V}(- \infty) = \{0 \}$ $\mathbb{P}$-a.s.
\end{assumption}

Necessary and sufficient conditions for processes being purely non-deterministic can be found, e.g., in \cite{GL58}, Theorem 3, \cite{RO67}, III, Theorem 2.4, \cite{MA61}, Theorem 1. Typical examples are MCAR processes and the more general class of state space models whose driving Lévy process has expectation zero.

Finally, we can deduce the following property from Assumptions \ref{Assumption an Dichte} and \ref{Assumption purely nondeterministic of full rank}, which we require for the proof of the global AMP Markov property. The property further stands in analogy to assumption (M) on $\sigma$-fields in \cite{EI10} and equation (2.4) in \cite{EI01}. Note that these assumptions are stronger than our Assumptions \ref{Assumption an Dichte} and \ref{Assumption purely nondeterministic of full rank} and quite difficult to verify.
 %of one of our orthogonality graphs. %The proof of this lemma is given in \Cref{Sec:Proofs:conditional orthogonality}.
%The proof is based on the same idea as the proof of conditional linear separability.

\begin{lemma}\label{Eigenschaft für Hilfslemma B3}
Let $\CY_V$ %be a $k$-dimensional process that satisfies
satisfy Assumptions \ref{Assumption an Dichte} and \ref{Assumption purely nondeterministic of full rank}.
Let $A \subseteq V$ and $t\in\R$.  Then
\begin{align} \label{eqasd}
\bigcap_{k \in \N} \left( \mathcal{L}_{Y_A}(t-k) \vee \mathcal{L}_{Y_{V\setminus A}}(t) \right) =\mathcal{L}_{Y_{V\setminus A}}(t) \quad \mathbb{P}\text{-a.s.}
\end{align}
\end{lemma}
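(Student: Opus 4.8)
\textbf{Proof strategy for \Cref{Eigenschaft für Hilfslemma B3}.}
The plan is to reduce the claim to the purely non-deterministic property of a suitable derived process. The inclusion $\supseteq$ in \eqref{eqasd} is trivial since $\mathcal{L}_{Y_{V\setminus A}}(t) \subseteq \mathcal{L}_{Y_A}(t-k)\vee \mathcal{L}_{Y_{V\setminus A}}(t)$ for every $k\in\N$, so the work is entirely in the inclusion $\subseteq$. First I would fix $t\in\R$ and, using stationarity, reduce to the case $t=0$; this is only notational. The key idea is that the intersection on the left measures how much of the remote past of $\CY_A$ survives after we already condition on $\mathcal{L}_{Y_{V\setminus A}}(0)$. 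So I would look at the ``innovation'' process obtained by projecting $\CY_A$ off the space generated by $\CY_{V\setminus A}$ up to the present, and show that this residual process is itself purely non-deterministic; then its remote past is trivial, which should force the intersection to collapse to $\mathcal{L}_{Y_{V\setminus A}}(0)$.

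More concretely, the key steps would be: (i) By \Cref{Eigenschaften der linearen Räume} (which uses \Cref{Assumption an Dichte}), the sum $\mathcal{L}_{Y_A}(0) + \mathcal{L}_{Y_{V\setminus A}}(0)$ is closed and equals $\mathcal{L}_{Y_V}(0)$, and $\mathcal{L}_{Y_A}(0)\cap\mathcal{L}_{Y_{V\setminus A}}(0)=\{0\}$; more generally $\mathcal{L}_{Y_A}(t-k)\vee\mathcal{L}_{Y_{V\setminus A}}(t)$ is a closed subspace of $\mathcal{L}_{Y_V}(t)$ whose elements can be written (not necessarily uniquely in a bounded way, but one has to be careful here) as a sum of a component in $\mathcal{L}_{Y_A}(t-k)$ and one in $\mathcal{L}_{Y_{V\setminus A}}(t)$. (ii) Take $X$ in the intersection on the left-hand side; write $X = P_{\mathcal{L}_{Y_{V\setminus A}}(0)}X + (X - P_{\mathcal{L}_{Y_{V\setminus A}}(0)}X)$ and set $\tilde X \coloneqq X - P_{\mathcal{L}_{Y_{V\setminus A}}(0)}X \in \mathcal{L}_{Y_{V\setminus A}}(0)^\perp \cap \mathcal{L}_{Y_V}(0)$. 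It suffices to show $\tilde X = 0$. (iii) For each $k$, since $X\in\mathcal{L}_{Y_A}(-k)\vee\mathcal{L}_{Y_{V\setminus A}}(0)$, approximate $X$ by $a_k + b_k$ with $a_k\in\ell_{Y_A}(-\infty,-k)$, $b_k\in\ell_{Y_{V\setminus A}}(-\infty,0)$, and project off $\mathcal{L}_{Y_{V\setminus A}}(0)$: this shows $\tilde X$ lies in the closure of $\{a_k - P_{\mathcal{L}_{Y_{V\setminus A}}(0)}a_k : a_k \in \ell_{Y_A}(-\infty,-k)\}$, i.e.\ in the space $\mathcal{M}(-k)$ generated by the residuals $\varepsilon_a(u) \coloneqq Y_a(u) - P_{\mathcal{L}_{Y_{V\setminus A}}(0)}Y_a(u)$ for $a\in A$, $u\le -k$. (iv) Hence $\tilde X \in \bigcap_k \mathcal{M}(-k)$, the remote past of the residual process. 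The final step is to show this remote past is $\{0\}$, which follows from \Cref{Assumption purely nondeterministic of full rank}: the residual process inherits pure non-determinism from $\CY_V$ because projecting off a fixed (present-time) space cannot create a deterministic component — more precisely, one writes $\mathcal{L}_{Y_V}(-k) \subseteq \mathcal{M}(-k) \vee \mathcal{L}_{Y_{V\setminus A}}(0)$ and uses $\bigcap_k \mathcal{L}_{Y_V}(-k) = \mathcal{L}_{Y_V}(-\infty) = \{0\}$ together with the direct-sum / linear-independence structure from step (i) to conclude $\bigcap_k \mathcal{M}(-k)=\{0\}$.

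I expect the main obstacle to be step (iii)--(iv): controlling the decomposition $X \approx a_k + b_k$ as $k\to\infty$ and making the passage to the limit rigorous, since the sum $\mathcal{L}_{Y_A}(-k) + \mathcal{L}_{Y_{V\setminus A}}(0)$ need not admit a uniformly bounded decomposition even though it is closed. The clean way around this is probably to avoid explicit decompositions altogether: instead, characterise $\mathcal{L}_{Y_A}(-k)\vee\mathcal{L}_{Y_{V\setminus A}}(0)$ via orthogonal projections and observe that, for any $X$ in the intersection, $\tilde X = X - P_{\mathcal{L}_{Y_{V\setminus A}}(0)}X$ must be orthogonal to $\mathcal{L}_{Y_{V\setminus A}}(0)$ yet lie in $\mathcal{L}_{Y_A}(-k)\vee\mathcal{L}_{Y_{V\setminus A}}(0)$ for every $k$; combined with the closed-sum property of \Cref{Eigenschaften der linearen Räume} one gets $\tilde X \in P_{\mathcal{L}_{Y_{V\setminus A}}(0)^\perp}\mathcal{L}_{Y_A}(-k)$ for every $k$, and then one only needs that $\bigcap_k P_{\mathcal{L}_{Y_{V\setminus A}}(0)^\perp}\mathcal{L}_{Y_A}(-k) = \{0\}$, which is exactly a pure-non-determinism statement for the residual/innovation process and should follow from \Cref{Assumption purely nondeterministic of full rank} via \Cref{RO67}-type prediction criteria. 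The rest is bookkeeping with \Cref{properties of conditional orthogonality} and \Cref{additivity in index set}.
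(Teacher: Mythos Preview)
Your proposal has the right ingredients but takes a detour that leaves a gap, and it misses the observation that makes the paper's argument almost trivial.

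You correctly invoke \Cref{Eigenschaften der linearen Räume} to get that $\mathcal{L}_{Y_A}(t-k) + \mathcal{L}_{Y_{V\setminus A}}(t)$ is closed and that $\mathcal{L}_{Y_A}(t-k) \cap \mathcal{L}_{Y_{V\setminus A}}(t) = \{0\}$. But then you immediately retreat to approximations (``approximate $X$ by $a_k + b_k$'') and to a residual-process argument, worrying about ``controlling the decomposition as $k\to\infty$''. There is nothing to control: the closed-sum property means every $X$ in the intersection has an \emph{exact} decomposition $X = a_k + b_k$ with $a_k \in \mathcal{L}_{Y_A}(t-k)$, $b_k \in \mathcal{L}_{Y_{V\setminus A}}(t)$, and the zero-intersection makes this decomposition \emph{unique}. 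The paper exploits this uniqueness directly: from $a_1 + b_1 = a_k + b_k$ one gets $a_1 - a_k = b_k - b_1 \in \mathcal{L}_{Y_A}(t-1) \cap \mathcal{L}_{Y_{V\setminus A}}(t) = \{0\}$, so $a_1 = a_k$ for every $k$. Hence $a_1 \in \bigcap_k \mathcal{L}_{Y_A}(t-k) \subseteq \bigcap_k \mathcal{L}_{Y_V}(t-k) = \mathcal{L}_{Y_V}(-\infty) = \{0\}$ by \Cref{Assumption purely nondeterministic of full rank}, and therefore $X = b_1 \in \mathcal{L}_{Y_{V\setminus A}}(t)$. No residual process, no projections, no limiting argument.

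Your route through $\tilde X \in \bigcap_k P_{\mathcal{L}_{Y_{V\setminus A}}(0)^\perp}\mathcal{L}_{Y_A}(-k)$ is not wrong, but the final step---that this intersection is $\{0\}$---is exactly where you hand-wave (``should follow from \Cref{Assumption purely nondeterministic of full rank} via Rozanov-type prediction criteria''). Establishing pure non-determinism of the residual process is not automatic and would itself require something like the uniqueness argument above; you would end up proving $a_k = a_1$ anyway, just hidden inside a projection. So the proposal is salvageable, but only by collapsing it to the paper's three-line argument.
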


Note that Assumptions \ref{Assumption an Dichte} and \ref{Assumption purely nondeterministic of full rank} are not necessary assumptions for the following Markov properties to hold. Sufficient and weaker assumptions are the conditional linear separation and \eqref{eqasd}, both are satisfied under Assumptions \ref{Assumption an Dichte} and \ref{Assumption purely nondeterministic of full rank}.

%%%%%%%%%%%%%%%%%%%%%%%%%%%%%%%%%%%%%%%%%%%%%%%%%%%%%%%%%%%%%%%%%%%%%%%%%%%%%%%%%%%%%%%%%%%%%%%%%%
\subsection{Introduction to (local) orthogonality graphs}\label{subsect: definition of the orthogonality graph}
Let us now visualise suitable concepts of directed and undirected influences in graphical models.  In principle, it is possible to define a graph with any of the three definitions of Granger causality and contemporaneous correlation.
However, our goal is to define a graph with concepts that are as strong as necessary, but as weak as possible, so that the usual Markov properties for mixed graphs hold. For MCAR processes, global Granger causality and Granger causality as well as global contemporaneous uncorrelation and contemporaneous uncorrelation coincide (see \Cref{sec:CGMCAR}) and therefore we do not discuss a global graph.

\begin{definition}\label{Definition orthogonality graph}
Let $\CY_V$ %be a $k$-dimensional process that satisfies
satisfy Assumptions \ref{Assumption an Dichte} and \ref{Assumption purely nondeterministic of full rank}.
\begin{itemize}
    \item[(a)]
If we define $V=\{1,\ldots,k\}$ as the vertices and the edges $E_{OG}$ via
\begin{itemize}
\item[(i)\phantom{i}] \makebox[2,5cm][l]{$a \rarrow b \notin E_{OG}$}  $\Leftrightarrow \quad \CY_a \nrarrow \CY_b \: \vert \: \CY_V$,
\item[(ii)]  \makebox[2,5cm][l]{$a \inst b \notin E_{OG}$} $\Leftrightarrow \quad \CY_a \nsim \CY_b \: \vert \: \CY_V$,
\end{itemize}
for $a,b\in V$, $a\neq b$, then $G_{OG}=(V,E_{OG})$ is called \textsl{(mixed) orthogonality graph} for $\CY_V$. \\ $\mbox{}$ \vspace*{-0.2cm}
\item[(b)] If we define $V=\{1,\ldots,k\}$ as the vertices and the edges $E_{OG}^{0}$ via
\begin{itemize}
\item[(i)\phantom{i}] \makebox[2,5cm][l]{$a \rarrow b \notin E_{OG}^{0}$} $ \Leftrightarrow \quad \CY_a \nrarrownull \CY_b \: \vert \: \CY_V$,
\item[(ii)]  \makebox[2,5cm][l]{$a \inst b \notin E_{OG}^{0}$} $ \Leftrightarrow \quad \CY_a \nsimnull \CY_b \: \vert \: \CY_V$,
\end{itemize}
for $a,b\in V$, $a\neq b$, then $G_{OG}^{0}=(V,E_{OG}^{0})$ is called \textsl{local (mixed) orthogonality graph} for $\CY_V$.
\end{itemize}
\end{definition}

In words, in both graphs each vertex $v\in V$ represents one component series $\CY_v$. Two vertices $a$ and $b$ are joined by a directed edge $a \rarrow b$ whenever $\CY_a$ is (local) Granger causal for $\CY_b$ and by an undirected edge $a \inst b$ whenever $\CY_a$ and $\CY_b$ are (locally) contemporaneously correlated given $\CY_V$. We %want to
make some remarks on those graphical models.

\begin{remark} $\mbox{}$
\begin{itemize}
    \item[(a)] The motivation for the name (local) orthogonality graph arises from the fact that both the directed and undirected edges are defined by specific (local) conditional orthogonality relations. For a concise notation, we omit the word conditional. Furthermore, the name (local) orthogonality graph is also analogous to the local independence graph (\citealp{DI06, DI07, DI08, Mogensen:Hansen:2020, Mogensen:Hansen:2022}).
    %The motivation for the name orthogonality graph comes, of course, from the  Granger causality relations in the directed edges, and also from the fact that the contemporaneous correlations used for the undirected edges are sometimes called local causalities. 
    The graphical models are further named \textsl{mixed} orthogonality graphs because they  contain two types of edges. Since we do not usually consider purely directed or undirected graphs, we omit the prefix mixed for ease of notation. Note that the orientation of the directed edge makes a difference and multiple edges of the same type and orientation are not allowed. Thus, two vertices $a$ and $b$ can be connected by up to three edges, namely $a \rarrow b$, $a \larrow b$ and $a \inst b$, as can also be seen in \Cref{fig: Two graphical models b}.
   %\VF{ Analogue to \cite{EI07}, we use a dashed line $\inst$, to connect contemporaneously correlated processes, since undirected edges $\edge$ are usually associated with non-zero entries in the inverse of the covariance matrix.}\marginpar{würde ich streichen}
    \item[(b)] The Assumptions \ref{Assumption an Dichte} and \ref{Assumption purely nondeterministic of full rank} as well as the stationarity and the mean square continuity are not necessary for the definition of the graphs, but they are essential for the usual Markov properties to hold.  Wide sense stationarity is a basic requirement, otherwise, e.g., \Cref{Assumption an Dichte} is not well-defined, which is a sufficient assumption for conditional linear separation.  %, as we have already discussed in the definition of the linear spaces.
    The mean square continuity and \Cref{Assumption an Dichte} will already be used for the first time in the proof of the local Markov property.
    %Man verwendet \Cref{Proposition 5.6} und darin conditional linear separability.
    %when we apply the property of intersection (C5) from \Cref{properties of conditional orthogonality}.
    \Cref{Assumption purely nondeterministic of full rank} is only required in the proof of the global AMP Markov property. Since we show global Markov properties for the local orthogonality graph only in special cases, \Cref{Assumption purely nondeterministic of full rank} is not necessary there.
\item[(c)] We already know that $a \rarrow b \notin E_{OG}$
directly implies $a \rarrow b \notin E_{OG}^0$
and similarly $a \inst b \notin E_{OG}$ also gives  $a \inst b \notin E_{OG}^0$. In summary, $E_{OG}^0\subseteq E_{OG}$, the graph defined by the local versions of Granger causality and contemporaneous correlation has fewer edges than the graph  $G_{OG}$ based on the classical Granger causality and contemporaneous correlation, and in general the graphs are not equal. {Again, this can be seen in \Cref{fig: Two graphical models b}.} The advantage of the graph $G_{OG}^0$ based on the local version is that it allows to model more general graphs than $G_{OG}$. 
    \item[(d)] In \Cref{Definition orthogonality graph}, we have defined the orthogonality graph and the local orthogonality graph. Of course, it is also possible to define the \textsl{global orthogonality graph} based on global Granger causality and global contemporaneous correlation, but this is not part of this work. There are various reasons for this.
    On the one hand, the sparsity structure of the global orthogonality graph is very weak. The global orthogonality graph has even more edges than the orthogonality graph and the local orthogonality graph. Moreover, the orthogonality graph already satisfies the global AMP Markov and the global Markov property, as we are going to derive later in \Cref{sec:global}. These Markov properties can easily be transferred to the global orthogonality graph, the proofs are even easier.   
    On the other hand, in specific models such as MCAR processes and state space models, Granger causality corresponds to global Granger causality, and contemporaneous correlation corresponds to contemporaneous correlation (cf.~\Cref{Remark:step size}), so that the global orthogonality graph is equal to the orthogonality graph and does not give any additional information.
    %However, whereas $G_{OG}$ satisfies the global AMP and the global Markov property it is not possible to show these for $G_{OG}^0$.
\end{itemize}
\end{remark}
%\LS{Erkennt man an den neuen Beweisen ohne (C5) noch, warum man die Separabilität braucht? Eine kurze Suche zeigt, dass wir C5 gar nicht mehr verwenden.}
%\VF{Aber Proposition 5.3 und die conditionally linearly separation geht einige mal ein. C5 direkt muss auch nicht sein (aber natürlich indirekt).}
%\LS{Hatte ich übersehen. Die continuity in mean sqare sollte man aber noch anders formulierne, da man die Separabilität von linearen Räumen nirgends mehr so explizit sieht wie es bei C5 der Fall war.}

%%%%%%%%%%%%%%%%%%%%%%%%%%%%%%%%%%%%%%%%%%%%%%%%%%%%%%%%%%%%%%%%%%%%%%%%%%%%%%%%%%%%%%%%%%%%%%%%%%
\subsection{Markov properties of (local) orthogonality graphs}\label{sec:Markprop}
The (local) orthogonality graph decodes directed and undirected relations between component series of the process $\CY_V$. Conversely, a mixed graph can be associated with a set of constraints imposed on the stochastic process $\CY_V$. Such a set of causal relations encoded by a graph is commonly known as a Markov property of the graph (cf.~\citealp{LA04, WI08}). In this section, we introduce various levels of Markov properties. %Since, in the context of time series, graphs can encode different types of relationships between the component series, we therefore speak of \textit{causal} Markov properties when talking about Granger non-\textit{causal}ity (cf.~\cite{EI10}, p.~236). 
We start with the pairwise, local and block-recursive Markov properties. We then move on to two global Markov properties, namely the global AMP Markov property and the global Markov property.

%%%%%%%%%%%%%%%%%%%%%%%%%%%%%%%%%%%%%%%%%%%%%%%%%%%%%%%%%%%%%%%%%%%%%%%%%%%%%%%%%%%%%%%%%%%%%%%%%%
\subsubsection{Pairwise, local and block-recursive Markov property}\label{subsec:pwlcbc}

Let us start with a few simple Markov properties that we expect from a graph. First of all, the \mbox{(local)}  orthogonality graph visualises pairwise relationships between the components of a process $\CY_V$ by definition, that is the pairwise Markov property.

\begin{proposition}\label{pairwise Markov property} $\mbox{}$
\begin{itemize}
    \item[(a)] Let $G_{OG}=(V,E_{OG})$ be the orthogonality graph for $\CY_V$. Then $\CY_V$ satisfies the \textsl{pairwise Markov property} with respect to $G_{OG}$, i.e., for all $a,b\in V$, $a\neq b$:
\begin{itemize}
\item[(i)\phantom{i}] \makebox[2,5cm][l]{$a \rarrow b \notin E_{OG}$}  $\Rightarrow \quad \CY_a \nrarrow \CY_b \: \vert \: \CY_V$,
\item[(ii)] \makebox[2,5cm][l]{$a \inst b \notin E_{OG}$}   $\Rightarrow \quad \CY_a \nsim \CY_b\: \vert \: \CY_V$.
 \end{itemize}  $\mbox{}$ \vspace*{-0.2cm}
    \item[(b)] Let $G^{0}_{OG}=(V,E_{OG}^{0})$ be the local orthogonality graph for $\CY_V$. Then $\CY_V$ satisfies the \textsl{pairwise Markov property} with respect to $G_{OG}^{0}$.
\end{itemize}
\end{proposition}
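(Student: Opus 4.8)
The plan is to observe that the pairwise Markov property is, by construction, nothing more than one of the two implications contained in the biconditionals that define the edge sets in \Cref{Definition orthogonality graph}; hence the proof consists solely of unwinding those definitions, and for both graphs the argument is the same.

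Concretely, for part~(a)(i) I would argue as follows: if $a \rarrow b \notin E_{OG}$, then by \Cref{Definition orthogonality graph}(a)(i) this is equivalent to $\CY_a \nrarrow \CY_b \: \vert \: \CY_V$, which is exactly the asserted conclusion. Likewise, for part~(a)(ii), $a \inst b \notin E_{OG}$ is equivalent, by \Cref{Definition orthogonality graph}(a)(ii), to $\CY_a \nsim \CY_b \: \vert \: \CY_V$. For part~(b) the same two lines apply verbatim, with $E_{OG}$ replaced by $E_{OG}^{0}$, the relation $\nrarrow$ replaced by $\nrarrownull$ and $\nsim$ replaced by $\nsimnull$, invoking \Cref{Definition orthogonality graph}(b)(i)--(ii). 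Equivalently, one may phrase the argument via the contrapositive: if $\CY_a$ fails to be (local) Granger non-causal for $\CY_b$ given $\CY_V$, then by definition the directed edge $a \rarrow b$ belongs to $E_{OG}$ (resp.\ $E_{OG}^{0}$), and symmetrically for the undirected edges.

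There is no genuine obstacle at this stage. The only point worth recording is that \Cref{Definition orthogonality graph} is well posed, i.e.\ the edge sets $E_{OG}$ and $E_{OG}^{0}$ are unambiguously determined as subsets of the possible directed and undirected edges on $V$ by the respective (local) conditional orthogonality relations, so that the defining biconditionals may legitimately be read in the direction needed here. In particular, \Cref{Assumption an Dichte} and \Cref{Assumption purely nondeterministic of full rank} are not used at this point: they enter only later, for the local and block-recursive Markov properties (through conditional linear separation and the characterisations in \Cref{Charakterisisierung linear granger non-causal}) and for the global AMP and global Markov properties (through the identity \eqref{eqasd}). Thus the pairwise Markov property is simply the trivial base case of the hierarchy of Markov properties developed in this section.
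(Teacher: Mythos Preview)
Your proposal is correct and matches the paper's treatment exactly: the paper states that the (local) orthogonality graph visualises pairwise relationships ``by definition'' and offers no further proof, since the implications in the pairwise Markov property are one direction of the defining biconditionals in \Cref{Definition orthogonality graph}. Your observation that Assumptions~\ref{Assumption an Dichte} and~\ref{Assumption purely nondeterministic of full rank} are not needed here also aligns with the paper's own remark that these assumptions first enter in the proof of the local Markov property.
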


Further, define  %As in \cite{EI09}, p.~6f, let
$\pa(a)=\{ v\in V \: \vert \: v \rarrow a \in E\}$
and $
\ne(a)=\{ v\in V \: \vert \: v \inst a \in E\}$
as the set of parents and neighbours of $a\in V$, respectively.
If we consider a vertex $a\in V$, then all vertices $b\in V\setminus(\pa(a)\cup\{a\})$ %that are not parents of $a$
are Granger non-causal for $a$, i.e., $\CY_{b} \nrarrow \CY_a\: \vert \: \CY_V$.  A direct consequence of \Cref{Proposition 5.6} (a) is then that $\CY_{V\setminus(\pa(a)\cup\{a\})} \nrarrow \CY_a\: \vert \: \CY_V$ holds. The same applies to neighbours of $a$ and the components being contemporaneously uncorrelated. Let $a\in V$ and $b \in V\setminus(\ne(a)\cup\{a\})$, then $a \inst b \notin E_{OG}$ and $\CY_{b} \nsim \CY_a\: \vert \: \CY_V$. \Cref{eq:correspondenceXAXa} yields $\CY_{V\setminus(\ne(a)\cup\{a\})} \nsim  \CY_a\: \vert \: \CY_V$.   This is the local Markov property. The same arguments work for the local orthogonality graph using  \Cref{Proposition 5.6} (b) and \Cref{eq:correspondenceXAXa local}, respectively.

\begin{proposition}\label{local Markov property} $\mbox{}$
\begin{itemize}
    \item[(a)] Let $G_{OG}=(V,E_{OG})$ be the orthogonality graph for $\CY_V$. Then $\CY_V$ satisfies the \textsl{local Markov property} with respect to $G_{OG}$, i.e., for all $a\in V$:
\begin{itemize}
\item[(i)\phantom{i}] $\CY_{V\setminus(\pa(a)\cup\{a\})} \nrarrow \CY_a\: \vert \: \CY_V$,
\item[(ii)] $\CY_{V\setminus(\ne(a)\cup\{a\})} \nsim \CY_a\: \vert \: \CY_V$.
\end{itemize}  $\mbox{}$ \vspace*{-0.2cm}
    \item[(b)] Let $G_{OG}^{0}=(V,E_{OG}^{0})$ be the local orthogonality graph for $\CY_V$. Then $\CY_V$ satisfies the \textsl{local Markov property} with respect to $G_{OG}^{0}$.
\end{itemize}
\end{proposition}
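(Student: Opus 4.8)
The plan is to read off the local Markov property from the definition of the edge sets — equivalently, from the pairwise Markov property of \Cref{pairwise Markov property} — combined with the ``combine over subsets'' results \Cref{Proposition 5.6} and \Cref{eq:correspondenceXAXa}. So I would fix a vertex $a\in V$ and argue for the orthogonality graph $G_{OG}$ first, part (b) being identical after the obvious substitutions.

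For the directed statement (a)(i), I would set $A:=V\setminus(\pa(a)\cup\{a\})$, so that $A\cap\{a\}=\emptyset$ and $A,\{a\}\subseteq V$. Every $v\in A$ satisfies $v\notin\pa(a)$, hence $v\rarrow a\notin E_{OG}$, and so $\CY_v\nrarrow\CY_a\mid\CY_V$ by the very definition of $E_{OG}$. Then \Cref{Proposition 5.6}(a), applied to the disjoint sets $A$ and $\{a\}$ inside $S=V$, upgrades these vertex-wise non-causalities to the joint relation $\CY_A\nrarrow\CY_a\mid\CY_V$, which is exactly the claimed $\CY_{V\setminus(\pa(a)\cup\{a\})}\nrarrow\CY_a\mid\CY_V$.

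For the undirected statement (a)(ii) I would proceed analogously but more cheaply: with $A:=V\setminus(\ne(a)\cup\{a\})$, every $v\in A$ gives $v\inst a\notin E_{OG}$, hence $\CY_v\nsim\CY_a\mid\CY_V$, and \Cref{eq:correspondenceXAXa} immediately yields $\CY_A\nsim\CY_a\mid\CY_V$. No additional assumption is needed here because, by \Cref{Charakterisisierung contemporaneously uncorrelated}(d), contemporaneous uncorrelation of two subprocesses is literally the conjunction of the corresponding single-vertex relations. Finally, part (b) would follow word for word after replacing $\nrarrow,\nsim,E_{OG}$ by $\nrarrownull,\nsimnull,E_{OG}^0$ and invoking \Cref{Proposition 5.6}(b) and \Cref{eq:correspondenceXAXa local}.

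The only step with real content is the passage from vertex-wise to set-valued \emph{directed} non-causality, i.e.\ \Cref{Proposition 5.6}(a); this is where the intersection property (C5) of conditional orthogonality, and thus \Cref{Assumption an Dichte}, enters. Everything else is bookkeeping, so I expect that once \Cref{Proposition 5.6} has been established the local Markov property is immediate — the potential obstacle has already been cleared there.
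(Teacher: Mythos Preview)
Your proposal is correct and essentially identical to the paper's own argument: the paper also deduces the local Markov property directly from the pairwise relations together with \Cref{Proposition 5.6}(a) for the directed part and \Cref{eq:correspondenceXAXa} for the undirected part, with part (b) handled by the local analogues \Cref{Proposition 5.6}(b) and \Cref{eq:correspondenceXAXa local}. Your observation that the only substantive step is the passage from vertex-wise to set-valued directed non-causality via \Cref{Assumption an Dichte} matches the paper's remark that this is where the assumption is first needed.
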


Furthermore, let
$
\pa(A) = \bigcup_{a\in A} \pa(a)$ and
$\ne(A) = \bigcup_{a\in A} \ne(a)$
denote the set of all parents and neighbours of vertices in $A \subseteq V$. Again, we expect components that are not parents of $A$ to be Granger non-causal for $A$ and components that are not neighbours of $A$ to be contemporaneously uncorrelated to $A$. This is the block-recursive Markov property and it also follows directly from \Cref{Proposition 5.6}, \Cref{eq:correspondenceXAXa} and   \Cref{eq:correspondenceXAXa local}.
%Again, the proof can be found in \Cref{subsec:proofsMarkprop}.

\begin{proposition}\label{block-recursive Markov property}
$\mbox{}$
\begin{itemize}
    \item[(a)] Let $G_{OG}=(V,E_{OG})$ be the orthogonality graph for $\CY_V$. Then $\CY_V$ satisfies the \textsl{block-recursive Markov property} with respect to $G_{OG}$, i.e., for all $A\subseteq V$:
\begin{itemize}
\item[(i)\phantom{i}] $\CY_{V\setminus(\pa(A)\cup A)} \nrarrow \CY_A\: \vert \: \CY_V$,
\item[(ii)] $\CY_{V\setminus(\ne(A)\cup A)} \nsim \CY_A\: \vert \: \CY_V$.
\end{itemize}  $\mbox{}$ \vspace*{-0.2cm}
    \item[(b)] Let $G_{OG}^{0}=(V,E_{OG}^{0})$ be the local orthogonality graph for $\CY_V$. Then $\CY_V$ satisfies the \textsl{block-recursive Markov property} with respect to $G_{OG}^0$.
\end{itemize}
\end{proposition}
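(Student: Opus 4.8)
The plan is to obtain both assertions as immediate consequences of the pairwise Markov property (\Cref{pairwise Markov property}), the component-to-block equivalences in \Cref{Proposition 5.6}, and the ``set-versus-singleton'' reductions recorded in \Cref{eq:correspondenceXAXa} and \Cref{eq:correspondenceXAXa local}. In effect, the block-recursive property is just the ``blocked'' version of the pairwise property, so the only thing to do is pass from pairs of vertices to pairs of vertex sets, using that Granger non-causality and contemporaneous uncorrelation with respect to $\CY_V$ can be checked componentwise.

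For part (a), fix $A\subseteq V$ and put $B:=V\setminus(\pa(A)\cup A)$. By construction $A$ and $B$ are disjoint subsets of $V$, so \Cref{Proposition 5.6}(a) reduces the claim $\CY_{B}\nrarrow\CY_A\:\vert\:\CY_V$ to showing $\CY_b\nrarrow\CY_a\:\vert\:\CY_V$ for every $b\in B$ and $a\in A$. Fix such a pair: since $b\notin\pa(A)=\bigcup_{a'\in A}\pa(a')$ we have in particular $b\notin\pa(a)$, i.e.\ $b\rarrow a\notin E_{OG}$, and $b\neq a$ because $b\in V\setminus A$; hence \Cref{pairwise Markov property}(a)(i) gives $\CY_b\nrarrow\CY_a\:\vert\:\CY_V$, as needed. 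For the undirected statement put $B:=V\setminus(\ne(A)\cup A)$; again $A$ and $B$ are disjoint subsets of $V$, and by \Cref{eq:correspondenceXAXa}(a) it suffices to show $\CY_a\nsim\CY_b\:\vert\:\CY_V$ for all $a\in A$, $b\in B$. Since $b\notin\ne(a)\subseteq\ne(A)$ we get $a\inst b\notin E_{OG}$, so \Cref{pairwise Markov property}(a)(ii) yields $\CY_a\nsim\CY_b\:\vert\:\CY_V$, which is the desired relation by symmetry of contemporaneous uncorrelation. Part (b) is proved by the very same two arguments, replacing $E_{OG}$, $\nrarrow$, $\nsim$ by $E_{OG}^{0}$, $\nrarrownull$, $\nsimnull$, and invoking \Cref{Proposition 5.6}(b), \Cref{eq:correspondenceXAXa local}(a) and \Cref{pairwise Markov property}(b) instead.

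The proof is essentially bookkeeping, so there is no genuine obstacle left at this stage — the substance sits in the results already invoked. In particular, \Cref{Proposition 5.6}, which licenses the passage from single components to component sets, is the step that really uses \Cref{Assumption an Dichte} (via conditional linear separation), whereas the pairwise Markov property is immediate from the definition of the (local) orthogonality graph. The only points requiring a moment's care are the elementary set-theoretic checks that $B$, as defined, is disjoint from $A$ and contained in $V$, so the hypotheses $A\cap B=\emptyset$ and $A,B\subseteq S=V$ of \Cref{Proposition 5.6} are met, together with the trivial inclusions $\pa(a)\subseteq\pa(A)$ and $\ne(a)\subseteq\ne(A)$ for $a\in A$, which are what let us turn ``$b$ is not a parent (resp.\ neighbour) of $A$'' into ``$b$ is not a parent (resp.\ neighbour) of $a$''.
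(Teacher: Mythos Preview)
Your proof is correct and follows essentially the same approach as the paper, which simply states that the block-recursive Markov property ``follows directly from \Cref{Proposition 5.6}, \Cref{eq:correspondenceXAXa} and \Cref{eq:correspondenceXAXa local}.'' You have spelled out exactly this: the pairwise relations coming from the definition of the graph, aggregated to sets via \Cref{Proposition 5.6} for the directed part and via the componentwise characterisations of (local) contemporaneous uncorrelation for the undirected part.
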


%\begin{proof}[Proof of \Cref{block-recursive Markov property}] $\mbox{}$\\
%(a) (i) Let $a\in A$ and $b \in V\setminus(\pa(A)\cup A)$. Then $b \rarrow a \notin E_{OG}$ and $\CY_{b} \stackrel{}{\nrarrow} \CY_a\: \vert \: \CY_V$. A conclusion of \Cref{Proposition 5.6} (a) is then
%$\CY_{V\setminus(\pa(A)\cup A)} \stackrel{}{\nrarrow} \CY_A\: \vert \: \CY_V$.\\
%(ii) For the second part, let $a\in A$ and $b \in V\setminus(\ne(A)\cup A)$. Then again $a \inst b \notin E_{OG}^{}$ and $\CY_{b} \stackrel{}{\nsim} \CY_a\: \vert \: \CY_V$. \Cref{eq:correspondenceXAXa local} yields $\CY_{V\setminus(\ne(A)\cup A)} \stackrel{}{\nsim}  \CY_A\: \vert \: \CY_V$.  \\
%(b) The proof goes on the same way as in (a).
%\end{proof}

% The following composition and decomposition properties apply
%\begin{align}\label{remark markov properties}
%\begin{split}
%\CY_{b} \nrarrow \CY_a\: \vert \: \CY_V \quad \forall \: a\in A, \: b\in B
%\quad  &\Leftrightarrow \quad
%\CY_{B} \nrarrow \CY_A\: \vert \: \CY_V, \\
%\CY_{a} \nsim \CY_b\: \vert \: \CY_V \quad \forall \: a\in A, \: b\in B
%\quad  &\Leftrightarrow \quad
%\CY_{A} \nsim \CY_B\: \vert \: \CY_V.
%\end{split}
%\end{align}

In our (local) orthogonality graph all three Markov properties are fulfilled. {Thus, for example, using the local Markov property, we can infer from \Cref{fig: Two graphical models b} that $Y_{\{2,3\}}\nrarrow Y_1 \vert Y_{\{1,2,3\}}$ and  $Y_{\{2,3\}}\nrarrownull Y_1 \vert Y_{\{1,2,3\}}$.} However, the validity of Markov properties is not self-evident. For more information, see \cite{EI10}, Theorem 2.1 and Definition 2.3, who proposes to specify graphical time series models that satisfy the block-recursive Markov property as graphical time series models.
%\LS{Ich bin hier gerade total verwirrt\ldots Bei uns ist doch die Herausforderung in der ersten Komponente von einelementigen zu mehrelementigen Mengen zu kommen. Bei \cite{EI10}, Theorem 2.1 ist es aber die zweite Komponente also die composition property. Die ist bei uns ja nach Definition im localen orthogonality graph bzw im orthogonality graph wegen (d) erfüllt. In dem Fall macht dann der Literaturverweis wenig Sinn.}
For the visualisation of the various Markov properties at more complex examples {than the one in \Cref{fig: Two graphical models b}}, we also refer to \cite{EI10}, Example 2.1.

\subsubsection{Global Markov properties for the orthogonality graph  $G_{OG}=(V,E_{OG})$} \label{sec:global}

The three Markov properties we have discussed so far only encode relations with respect to $\CY_V$. However, for a better understanding of the causal structure, we are interested in relations with respect to partial information. An intuitive analysis of orthogonality graphs suggests that paths between vertices may be associated with relations between corresponding components given only the information provided by a subprocess.
 To this end, we first introduce the global AMP Markov property of \cite{AN00}, Definition 6, which relates paths in a graph to conditional orthogonality relations between variables. We then introduce the global Markov property, which provides sufficient criteria for Granger non-causality and contemporaneous uncorrelation. As we have to make additional assumptions for the local orthogonality graph, the results for the local model are presented in the next subsection, and here we only consider the \mbox{orthogonality graph.}

Let us start with the global AMP Markov property, where for $A, B, C\subseteq V$  disjoint, the fact that $A$ and $B$ are separated given $S$ implies that $\mathcal{L}_A$ and $\mathcal{L}_B$ are conditionally orthogonal given $\mathcal{L}_S$. %Here, we want to find a separation criterion that implies conditional orthogonality relations between the component series. 
%For mixed graphs, several separation criteria combine the notion of conditional orthogonality with a purely graph-theoretic concept of separation. 
But there are two main approaches to defining separation. The first approach is based on the path-oriented criterion ''$m$-separation``. The second approach uses separation in undirected graphs by applying the operation of augmentation or moralisation to appropriate subgraphs (\citealp{EI07}, Section 3). Since the second approach to defining a global Markov property is not straightforward in the sense that the graph is modified during the test, we just discuss the concept of $m$-separation and refer to \cite{VF24}, who compare the augmented causality graph, the augmentation of the causality graph, with the path diagram, an undirected graphical model for continuous-time stationary processes. 
To define the latter, we start with some definitions from graph theory, which can be found in \cite{EI07, EI10}.

\begin{definition}
Let $G=(V, E)$ be a mixed graph. A {path} $\pi$ between two vertices $a$ and $b$ is a sequence $\pi=\langle e_1,\ldots,e_n \rangle$ of edges $e_i \in E$, such that $e_i$ is an edge between $v_{i-1}$ and $v_i$ for some sequence of vertices $a=v_0,v_1,\ldots,v_n=b$. We say that $a$ and $b$ are the {endpoints} of the path, while $v_1,\ldots,v_{n-1}$ are {intermediate vertices}. $n$ is called {length} of the path. An intermediate vertex $c$ on a path $\pi$ is said to be a {collider} on the path, if the edges preceding and succeeding $c$ on the path both have an arrowhead or a dashed tail at $c$, i.e., $\rarrow c \larrow$, $\rarrow c \inst$, $\inst c \larrow $, $\inst c \inst$. Otherwise the vertex $c$ is said to be a {non-collider} on the path. %Notice that this classification only applies to intermediate vertices, the endpoints are neither colliders nor non-colliders.
A path $\pi$ between vertices $a$ and $b$ is said to be {$m$-connecting} given a set $S$ if
\begin{itemize}
\item[(a)] every non-collider on the path is not in $S$, and
\item[(b)] every collider on the path is in $S$,
\end{itemize}
otherwise we say the path is {$m$-blocked} given $S$. If all paths between $a$ and $b$ are $m$-blocked given $S$, then $a$ and $b$ are said to be {$m$-separated} given $S$. Similarly, sets $A$ and $B$ are said to be $m$-separated in $G$ given $S$, denoted by $A \msep B \: \vert \: S  \:\: [G]$, if for every pair $a \in A$ and $b \in  B$, $a$ and $b$ are $m$-separated given $S$.
\end{definition}

The $m$-separation is the natural extension of the $d$-separation for directed graphs (cf.~\citealp{PE94})
%and the classical separation for undirected graphs ()
to mixed graphs (cf.~\citealp{RI03}),
%, \cite{RI23}, \cite{Sadeghi:Lauritzen}),
and was earlier also called $d$-separation by \cite{SP98} and \cite{KO99}. Since we consider mixed graphs, which are generally not directed, we prefer the notion of $m$-separation. {For a motivation and visualisation of the respective definitions, we also refer to these papers.} Note that condition (a) differs from the original definition of $m$-connecting paths in \cite{RI03} and takes into account that we consider paths that can intersect themselves, as in \cite{EI07}. Nevertheless, the concepts of $m$-separation here and in \cite{RI03} are equivalent. In contrast, \cite{EI10} uses another natural extension of $d$-separation, called $p$-separation and introduced by \cite{Levitz} for chain graphs, where $\inst c \inst$ is considered a non-collider. Let us present the main result, the global AMP Markov property. 
%If the sets $A$ and $B$ are $m$-separated given $C$, then $\HY^A\in \mathcal{L}_{Y_A}$ and $\HY^B \in \mathcal{L}_{Y_B}$  are uncorrelated after removing all of the (linear) information provided by $\mathcal{L}_{Y_C}$. %As desired, the concept of $m$-separation complements the concept of separation for undirected graphs. %from section \Cref{The properties of the partial correlation graph}

\begin{theorem}\label{global markov property}
Let $G_{OG}=(V,E_{OG})$ be the orthogonality graph for $\CY_V$. 
Then $\CY_V$ satisfies the \textsl{global AMP Markov property} with respect to $G_{OG}$, i.e., for all disjoint subsets $A,B,C \subseteq V$,
\begin{align*}
A \msep B \: \vert \: C  \:\: [G_{OG}] \quad \Rightarrow \quad \mathcal{L}_{Y_A} \perp \mathcal{L}_{Y_B} \: \vert \: \mathcal{L}_{Y_{C}}.
\end{align*}
\end{theorem}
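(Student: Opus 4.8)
The plan is to follow the strategy of Eichler for discrete-time VAR processes (Eichler 2007, Section 3) and adapt it to the continuous-time setting, exploiting the block-recursive Markov property already established in \Cref{block-recursive Markov property} together with the graphoid properties of conditional orthogonality from \Cref{properties of conditional orthogonality}. The AMP Markov property is typically proved by showing that the \emph{pairwise} AMP Markov property (which is essentially the pairwise Markov property of \Cref{pairwise Markov property} read in terms of the relevant augmented/moralised subgraphs) together with the graphoid axioms implies the \emph{global} one. Concretely, I would first recall that $m$-separation in the mixed graph $G_{OG}$ can be characterised via ordinary separation in a suitably moralised (augmented) undirected graph obtained from the smallest ancestral set containing $A\cup B\cup C$; this is the classical equivalence between $m$-separation and separation-after-augmentation. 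So the first step is: reduce the statement ``$A\msep B\mid C\;[G_{OG}]$'' to ``$A$ and $B$ are separated given $C$ in the augmented graph $(G_{OG})^a_{\operatorname{an}(A\cup B\cup C)}$''.

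Second, I would translate this graph-theoretic separation into a statement about conditional orthogonality of the associated linear spaces. The key tool here is an induction on $\#V$ (or on the structure of the ancestral set), of exactly the type carried out in Eichler (2007). The base of the induction uses the pairwise and local Markov properties: for a vertex $a$ with no children inside the relevant set, the local Markov property gives $\mathcal{L}_{Y_{V\setminus(\pa(a)\cup\{a\})}}\nrarrow \mathcal{L}_{Y_a}$ and $\mathcal{L}_{Y_{V\setminus(\ne(a)\cup\{a\})}}\nsim \mathcal{L}_{Y_a}$, which, after invoking the characterisations in \Cref{Charakterisisierung linear granger non-causal} and \Cref{Charakterisisierung contemporaneously uncorrelated} and the relation between the one-step spaces $\mathcal{L}_{Y_B}(t,t+1)$ and the full spaces $\mathcal{L}_{Y_B}$, yields a conditional orthogonality statement of the form $\mathcal{L}_{Y_a}\perp \mathcal{L}_{Y_{N}}\mid \mathcal{L}_{Y_{M}}$ for appropriate $M,N$. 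The inductive step then peels off one vertex at a time, using contraction (C4), weak union (C3), decomposition (C2) and crucially the intersection property (C5) — which is available precisely because \Cref{Assumption an Dichte} guarantees conditional linear separation via \Cref{Eigenschaften der linearen Räume} and separability via \Cref{Separabilität}.

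Third, and this is where the genuinely continuous-time difficulty enters, I would need the ``tail'' argument that lets one pass from conditional orthogonality given a finite-horizon past $\mathcal{L}_{Y_{V\setminus A}}(t)\vee\mathcal{L}_{Y_A}(t-k)$ to conditional orthogonality given $\mathcal{L}_{Y_{V\setminus A}}(t)$ alone. This is exactly the content of \Cref{Eigenschaft für Hilfslemma B3}, i.e.\ the identity $\bigcap_{k\in\N}(\mathcal{L}_{Y_A}(t-k)\vee\mathcal{L}_{Y_{V\setminus A}}(t))=\mathcal{L}_{Y_{V\setminus A}}(t)$, which in turn rests on \Cref{Assumption purely nondeterministic of full rank}. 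One combines this intersection-of-a-decreasing-sequence-of-subspaces fact with continuity of orthogonal projections along nested spaces: if $X-P_{\mathcal{L}_k}X\perp Y-P_{\mathcal{L}_k}Y$ for all $k$ and $\mathcal{L}_k\downarrow \mathcal{L}_\infty$, then the projections converge and the orthogonality passes to the limit. Finally, since all the conditional orthogonality relations above are stated in terms of finite-horizon spaces $\mathcal{L}_{Y_\bullet}(t)$, one lets $t\to\infty$ and uses \Cref{additivity in time domain}(d), $\overline{\bigcup_n \mathcal{L}_{Y_A}(n)}=\mathcal{L}_{Y_A}$, together with continuity of projections along \emph{increasing} nested spaces, to obtain the unconditional-horizon statement $\mathcal{L}_{Y_A}\perp\mathcal{L}_{Y_B}\mid\mathcal{L}_{Y_C}$.

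I expect the main obstacle to be the bookkeeping in the induction: matching the graph-theoretic moralisation/ancestral-set operations to the correct sequence of applications of the graphoid axioms, and in particular making sure that every application of the intersection property (C5) is legitimate, i.e.\ that the relevant spaces are conditionally linearly separated and the union is separable — this is where \Cref{Assumption an Dichte} is indispensable and where the continuous-time setting forces one to be careful, since (unlike the discrete-time VAR case) the sum of two closed subspaces need not be closed. The two limiting arguments (finite horizon $k$ for the remote past via \Cref{Eigenschaft für Hilfslemma B3}, and $t\to\infty$ via \Cref{additivity in time domain}(d)) are conceptually routine given the cited lemmata, but need to be interleaved correctly with the algebraic induction rather than applied only at the end.
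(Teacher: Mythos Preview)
Your proposal assembles most of the right ingredients—the graph-theoretic reduction via $m$-separation on ancestral sets, the tail limit $k\to\infty$ through \Cref{Eigenschaft für Hilfslemma B3}, and the horizon limit $t\to\infty$ through \Cref{additivity in time domain}(d)—and these are indeed the pieces the paper uses. But you misidentify the central induction. The paper (following Eichler) does \emph{not} peel off vertices and does \emph{not} use (C5) in the inductive step; the core lemma (\Cref{Hilfslemma B2}) is an induction over the \emph{time lag} $k$. Starting from the one-step block-recursive Markov property, one shows for the special case $C=V\setminus(A\cup B)$ that
\[
\mathcal{L}_{Y_A}(t)\perp\mathcal{L}_{Y_B}(t)\;\big|\;\mathcal{L}_{Y_{V\setminus(A\cup B)}}(t)\vee\mathcal{L}_{Y_{A\cup B}}(t-k)
\]
for every $k\in\N$, using only the semi-graphoid axioms (C1)--(C4) and repeatedly feeding in the one-step Granger non-causality and contemporaneous-uncorrelation relations at successive times $t-1,t-2,\ldots$. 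This is precisely the step you gloss over as ``the relation between the one-step spaces and the full spaces''; without it you have no mechanism that produces the conditioning space $\mathcal{L}_{Y_{V\setminus A}}(t)\vee\mathcal{L}_{Y_A}(t-k)$ you (correctly) name in your third paragraph. A vertex-peeling argument on the full spaces $\mathcal{L}_{Y_\bullet}$ cannot get off the ground, because the block-recursive Markov property only supplies \emph{one-step} relations.

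The paper's order is also reversed relative to yours: it first establishes the special case $C=V\setminus(A\cup B)$ completely (time induction $\Rightarrow$ $k$-limit $\Rightarrow$ $t$-limit, i.e.\ \Cref{Hilfslemma B2}--\Cref{Hilfslemma B4}), and only \emph{then} invokes the graph-theoretic reduction (Koster's results on $m$-separation in ancestral subgraphs, as in Eichler 2007, Theorem~3.1) to pass to arbitrary $C$. Your plan to reduce to the ancestral set first and then run an induction is not wrong in spirit, but the induction you describe is not the one that does the work.
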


In words, if the sets $A$ and $B$ are $m$-separated given $C$, then $\HY^A\in \mathcal{L}_{Y_A}$ and $\HY^B \in \mathcal{L}_{Y_B}$  are uncorrelated after removing all of the (linear) information provided by $\mathcal{L}_{Y_C}$.
 %In the last two references, Eichler proposes the term global AMP Markov property for this characteristic.
%AMP here stands for the authors Andersson, Madigan and Perlman, who introduced this concept in the context of chain graphs.
A visualisation of the global AMP Markov property at a typical mixed graph is illustrated in \cite{EI10}, Example 2.1, {which can also be found in several of his articles.}
 The proof of \Cref{global markov property} is structured into three auxiliary statements that culminate in the actual proof, see \Cref{subsec:proofAMP}. Note that in the latter we need \Cref{Assumption purely nondeterministic of full rank} for the first time.
 %Non te that in the proof of the (linear) global AMP-Markov property, or more precisely in the proof of \Cref{Hilfslemma B3}, we need \Cref{Assumption purely nondeterministic of full rank} respectively \Cref{Eigenschaft für Hilfslemma B3} for the first time.

 \begin{remark}~
% \begin{itemize}
%\item[(a)] 
Similar statements can be found, e.g., in \cite{EI01}, Theorem 4.8, \cite{EI07}, Theorem 3.1 or \cite{EI10}, Theorem 4.1. However, the graphs defined there are based on different definitions of the edges and on processes in discrete time. The definition of the undirected edges in \cite{EI10} further differs from our definition. The linear continuous-time analogue of his definition is that $\mathcal{L}_{Y_A}(t,t+1) \perp \mathcal{L}_{Y_B}(t,t+1) \: \vert \: \mathcal{L}_{Y_{S}}(t) \vee \mathcal{L}_{S\setminus(A \cup B)}(t,t+1)$. Still most of the proofs can be carried over because it makes no difference whether one adds $\mathcal{L}_{S\setminus(A \cup B)}(t,t+1)$ or not.
%\item[(b)] \VF{In \cite{VF24} the augmented causality graph is compared with the path diagram, an undirected graphical model for continuous-time stationary processes, and uses the classical separation in an undirected graph. }
%\end{itemize}
 \end{remark}

The concept of $m$-separation provides a sufficient criterion for conditional orthogonality. However, we would also like to derive sufficient graphical conditions for Granger non-causality and processes being contemporaneously uncorrelated. An obvious first idea would be to start again with $m$-separation. However, this condition is stronger than necessary. A motivating example to only consider paths that point in the ''right`` direction is provided by \cite{EI07}, p.~341. We introduce further graph-theoretic notions %as in \cite{EI07}, p.~341f,
and then provide the main result.

\begin{definition}
Let $G = (V, E)$ be a mixed graph. A path $\pi$ between vertices $a$ and $b$ is called \textsl{$\mathbf{\textit{b}}$-pointing} if it has an arrowhead at the endpoint $b$. More generally, a path $\pi$ between $A$ and $B$ is said to be \textsl{$\mathbf{\textit{B}}$-pointing} if it is $b$-pointing for some $b\in B$.
Furthermore, a path $\pi$ between vertices $a$ and $b$ is said to be \textsl{bi-pointing} if it has an arrowhead at both endpoints $a$ and $b$.
\end{definition}

\begin{theorem}\label{global Markov property}
Let $G_{OG}=(V, E_{OG})$ be the orthogonality graph for $\CY_V$. Then $\CY_V$ satisfies the \textsl{global Markov property} with respect to $G_{OG}$, i.e., for all disjoint subsets $A,B,C\subseteq V$ the following conditions hold:
\begin{itemize}
\item[(a)] If every $B$-pointing path in $G_{OG}$ between $A$ and $B$ is $m$-blocked given $B \cup C$ then
$\CY_A \nrarrow \CY_B \: \vert \: \CY_{A \cup B \cup C}$.
\item[(b)] If $a \inst b \notin E_{OG}$ for all $a\in A$ and $b\in B$, and if every bi-pointing path in $G_{OG}$ between $A$ and $B$ is $m$-blocked given $A\cup B\cup C$, then $\CY_A \nsim \CY_B \: \vert \: \CY_{A\cup B \cup C}$.
\end{itemize}
\end{theorem}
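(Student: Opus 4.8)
The plan is to reduce each claim, via the equivalent characterisations in \Cref{Charakterisisierung linear granger non-causal} and \Cref{Charakterisisierung contemporaneously uncorrelated}, to a conditional orthogonality statement for the half-line spaces, and then to prove it by induction on $n:=\#\big(V\setminus(A\cup B\cup C)\big)$ using the semi-graphoid axioms (C1)--(C5) of \Cref{properties of conditional orthogonality}. Here (C5) is available because \Cref{Assumption an Dichte} gives conditional linear separation; the correspondence identities \eqref{eq:correspondenceXBXbcausality} (and its analogue in \Cref{eq:correspondenceXAXa}) together with \Cref{Proposition 5.6} let us pass between index sets and singletons; and the residual step of the induction draws on the global AMP Markov property \Cref{global markov property} and the auxiliary property \eqref{eqasd}. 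Throughout, the argument parallels \cite{EI07}, but with half-line spaces $\mathcal{L}_{Y_A}(t)$, near-future spaces $\mathcal{L}_{Y_A}(t,t+1)$ and continuous time replacing the discrete-time time series there.

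For part (a), \Cref{Charakterisisierung linear granger non-causal}(b) shows the assertion is equivalent to $\mathcal{L}_{Y_B}(t+1)\perp\mathcal{L}_{Y_A}(t)\: \vert \:\mathcal{L}_{Y_{B\cup C}}(t)$ for all $t\in\R$. If $n=0$, i.e.\ $A\cup B\cup C=V$, then for $a\in A$, $b\in B$ the one-edge path $a\rarrow b$ has no intermediate vertex, hence is $m$-connecting given every set, so the hypothesis forces $a\rarrow b\notin E_{OG}$; the definition of $G_{OG}$, \eqref{eq:correspondenceXBXbcausality} and \Cref{Proposition 5.6}(a) then give $\CY_A\nrarrow\CY_B\: \vert \:\CY_V$. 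For $n\geq 1$, enlarge $A$ to the set $A^{\ast}\supseteq A$ of all vertices outside $B\cup C$ from which every $B$-pointing path to $B$ is $m$-blocked given $B\cup C$. If $A^{\ast}\supsetneq A$, then $(A^{\ast},B,C)$ satisfies the path hypothesis by construction and has smaller index, so induction gives $\CY_{A^{\ast}}\nrarrow\CY_B\: \vert \:\CY_{A^{\ast}\cup B\cup C}$; since $\mathcal{L}_{Y_A}(t)\subseteq\mathcal{L}_{Y_{A^{\ast}}}(t)=\mathcal{L}_{Y_A}(t)\vee\mathcal{L}_{Y_{A^{\ast}\setminus A}}(t)$ by \Cref{additivity in index set}, the decomposition property (C2) yields $\mathcal{L}_{Y_B}(t+1)\perp\mathcal{L}_{Y_A}(t)\: \vert \:\mathcal{L}_{Y_{B\cup C}}(t)$. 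If $A^{\ast}=A$, pick $d\in V\setminus(A\cup B\cup C)$: one argues that the path hypothesis survives for $(A,B,C\cup\{d\})$ (a $B$-pointing path from $A$ to $B$ that the extra conditioning could only unblock at the collider $d$ may be spliced with an $m$-connecting $B$-pointing path emanating from $d$, which the original hypothesis already precludes), applies induction to get $\CY_A\nrarrow\CY_B\: \vert \:\CY_{A\cup B\cup C\cup\{d\}}$, and then deletes $\mathcal{L}_{Y_d}(t)$ from the conditioning via the intersection property (C5), using in addition that $\CY_d\nrarrow\CY_B\: \vert \:\CY_{A\cup B\cup C\cup\{d\}}$; the latter is obtained from the inductive hypothesis after, where necessary, invoking the global AMP Markov property \Cref{global markov property} to separate $d$ from $B$.

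Part (b) proceeds in parallel via \Cref{Charakterisisierung contemporaneously uncorrelated}(b), which reduces it to $\mathcal{L}_{Y_A}(t+1)\perp\mathcal{L}_{Y_B}(t+1)\: \vert \:\mathcal{L}_{Y_{A\cup B\cup C}}(t)$ for all $t$; here "bi-pointing path" and conditioning set $A\cup B\cup C$ replace "$B$-pointing path" and $B\cup C$. For $n=0$ there is no bi-pointing path of length one, and any bi-pointing path of length $\geq 2$ contains the non-collider $v_1$ (the edge into $v_1$ from the arrowhead endpoint has a plain tail at $v_1$), which lies in $V$, so the path hypothesis is automatic; together with $a\inst b\notin E_{OG}$ for all $a\in A$, $b\in B$, the definition of $G_{OG}$ and the correspondence recorded in \Cref{eq:correspondenceXAXa} give $\CY_A\nsim\CY_B\: \vert \:\CY_V$. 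The inductive step is the symmetric analogue of (a), and the symmetry of contemporaneous uncorrelation now lets one enlarge either $A$ or $B$.

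The hard part will be the residual case $A^{\ast}=A$ of the inductive step: one must verify that the purely graph-theoretic separation hypothesis propagates under every admissible change of the conditioning set --- adding a vertex can unblock a collider path, deleting one can unblock a non-collider path --- while simultaneously keeping the time indices aligned, so that the half-line spaces $\mathcal{L}_{Y_A}(t)$ and the near-future spaces $\mathcal{L}_{Y_B}(t+1)$ (respectively $\mathcal{L}_{Y_A}(t,t+1)$ and $\mathcal{L}_{Y_B}(t,t+1)$) recombine correctly under (C1)--(C5). This is precisely where the restriction to $B$-pointing, respectively bi-pointing, paths is indispensable, and where \Cref{Assumption an Dichte} (through conditional linear separation and hence (C5)) and \Cref{Assumption purely nondeterministic of full rank} (through \eqref{eqasd} and the global AMP Markov property \Cref{global markov property}) genuinely enter.
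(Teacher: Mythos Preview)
Your inductive scheme is genuinely different from the paper's route. The paper (following \cite{EI07,EI10}) does not induct on $\#\big(V\setminus(A\cup B\cup C)\big)$; instead, for (a) it first argues graph-theoretically that the $B$-pointing hypothesis forces $A\cap\pa(B)=\emptyset$ and $A \msep \pa(B)\setminus(B\cup C)\:|\:B\cup C$, then applies a time-$t$ variant of the AMP property (proved exactly as \Cref{global markov property}, but using \Cref{Hilfslemma B3} in place of \Cref{Hilfslemma B4}) to obtain $\mathcal{L}_{Y_A}(t)\perp\mathcal{L}_{Y_{\pa(B)\setminus(B\cup C)}}(t)\:|\:\mathcal{L}_{Y_{B\cup C}}(t)$, and merges this with the block-recursive property $\CY_{V\setminus(\pa(B)\cup B)}\nrarrow\CY_B\:|\:\CY_V$ via (C1)--(C4). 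Part (b) is handled analogously through $\pa(A)\setminus S$ and $\pa(B)\setminus S$ with $S=A\cup B\cup C$. No intersection step (C5) of the type you describe is needed; the detour through parent sets is precisely what makes the argument close.

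Your residual case $A^{\ast}=A$ has a real gap: in general no single $d\in V\setminus(A\cup B\cup C)$ delivers both premises of (C5). Take $V=\{a,b,d,v\}$ with edges $a\rarrow d$, $v\rarrow d$, $v\rarrow b$, and $A=\{a\}$, $B=\{b\}$, $C=\emptyset$. The only $B$-pointing path $a\rarrow d\larrow v\rarrow b$ is blocked at the collider $d$ given $\{b\}$, so the theorem's hypothesis holds, and one checks $A^{\ast}=\{a\}=A$. Picking $d$ destroys (i): the same path becomes $m$-connecting given $\{b,d\}$. Picking $v$ destroys (ii): $v\in\pa(b)$, so the path hypothesis for $(\{v\},\{b\},\{a\})$ fails via $v\rarrow b$, and $\CY_v\nrarrow\CY_b\:|\:\CY_{\{a,b,v\}}$ is generically false. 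Your appeal to \Cref{global markov property} does not rescue this: its conclusion concerns the whole-time spaces $\mathcal{L}_{Y_{\bullet}}$ and cannot supply the asymmetric relation $\mathcal{L}_{Y_B}(t+1)\perp\mathcal{L}_{Y_d}(t)\:|\:\mathcal{L}_{Y_{A\cup B\cup C}}(t)$, and the required $m$-separation premise need not hold anyway. The paper's parent-set reduction handles this example immediately, since $\{a\}\msep\{v\}\:|\:\{b\}$ holds and the block-recursive property provides the Granger step directly.
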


A similar result in discrete time can be found in \cite{EI07}, Theorems 4.1 and 4.2, and \cite{EI10}, Theorem 4.2. For the visualisation of the global AMP Markov property at some mixed graph, we also refer to \cite{EI10}, Example 2.1.
Because of the properties of a graphoid in \Cref{properties of conditional orthogonality}, the block-recursive Markov property in \Cref{block-recursive Markov property} and \Cref{Hilfslemma B3},
 the proof can be carried out similarly as in \cite{EI07} and \cite{EI10}, respectively, and is therefore skipped.
 %However, we need to adapt the proofs to our model, which we do in \Cref{subsec:proofglobalcausalMP}.

%\begin{proposition}
%Let $G_{OG}^0=(V, E_{OG}^0)$ be the local orthogonality graph for $\CY_V$. Then $\CY_V$ satisfies the \textsl{(linear) global Markov property} with respect to $G_{OG}^0$, i.e., for all disjoint subsets $A,B,C\subseteq V$ the following conditions hold:
%\begin{itemize}
%\item[(1)] If every $B$-pointing path in $G_{OG}^0$ between $A$ and $B$ is $m$-blocked given $B \cup C$ then
%$\CY_A \stackrel{0}{\nrarrow} \CY_B \: \vert \: \CY_{A \cup B \cup C}$.
%\item[(2)] If $a \inst b \notin E_{OG}^0$ for all $a\in A$ and $b\in B$ and every bi-pointing path in $G_{OG}^0$ between $A$ and $B$ is $m$-blocked given $A\cup B\cup C$, then $\CY_A \stackrel{0}{\nsim} \CY_B \: \vert \: \CY_{A\cup B \cup C}$.
%\end{itemize}
%\end{proposition}

As a consequence of the global Markov property, we find that the $m$-separation $A \msep B \: \vert \: C \: [G_{OG}]$ is indeed too strong implying causality in both directions between $\CY_A$ and $\CY_B$ as well as their contemporaneous uncorrelation.
%As \cite{EI07}, Corrolary 4.3 noted, the sufficient conditions in \Cref{global Markov property} are fulfilled especially if $A \msep B \: \vert \: C$.
We refer to \cite{EI10}, Corollary 4.1, and \cite{EI07}, Corollary 4.3 for the proof. %who carries out the main idea and the first and second part of this argument.

\begin{corollary}\label{hinreichend global granger non-causal}
Let $G_{OG}=(V,E_{OG})$ be the orthogonality graph for $\CY_V$ and let \mbox{$A,B,C\subseteq V$} be disjoint subsets. Then $A \msep B \: \vert \: C \: [G_{OG}]$ implies
\begin{align*}
%
%\quad \Rightarrow \quad
\CY_A \nrarrow \CY_B \: \vert \: \CY_{A\cup B \cup C},  \quad
\CY_B \nrarrow \CY_A \: \vert \: \CY_{A\cup B \cup C}, \quad \text{ and } \quad
\CY_A \nsim \CY_B \: \vert \: \CY_{A\cup B \cup C}.
\end{align*}
\end{corollary}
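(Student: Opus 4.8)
The plan is to obtain all three conclusions from the global Markov property, \Cref{global Markov property}, by feeding the single hypothesis $A\msep B\:\vert\:C\:[G_{OG}]$ into both of its parts. The enabling observation is a monotonicity property of $m$-separation: enlarging a conditioning set can in general open paths by activating colliders, but this cannot happen as long as the vertices one adds all lie inside $A\cup B$. So I would first establish this observation and then read off the corollary; this is essentially the route of \cite{EI07,EI10}.

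\emph{Monotonicity step.} I claim that $A\msep B\:\vert\:C\:[G_{OG}]$ forces $A\msep B\:\vert\:S\:[G_{OG}]$ for every $S$ with $C\subseteq S\subseteq A\cup B\cup C$. Arguing by contradiction, suppose some path $\pi$ in $G_{OG}$ between a vertex of $A$ and a vertex of $B$ is $m$-connecting given $S$. Write the vertices of $\pi$ lying in $A\cup B$, in the order in which they occur along $\pi$, as $u_0,u_1,\ldots,u_m$ with $u_0\in A$ and $u_m\in B$; since $A\cap B=\emptyset$ there is an index $i$ with $u_i\in A$ and $u_{i+1}\in B$. Let $\rho$ be the sub-path of $\pi$ from $u_i$ to $u_{i+1}$. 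No intermediate vertex of $\rho$ lies in $A\cup B$, and each such vertex keeps its collider/non-collider status from $\pi$, because its two incident edges on $\rho$ are the same as on $\pi$. Hence a non-collider of $\rho$ lies outside $S$, so outside $C$, while a collider of $\rho$ lies in $S\subseteq A\cup B\cup C$ but not in $A\cup B$, so in $C$; thus $\rho$ is an $m$-connecting path given $C$ between $u_i\in A$ and $u_{i+1}\in B$, contradicting $A\msep B\:\vert\:C\:[G_{OG}]$. I expect this step to be the main obstacle — not conceptually, but because the bookkeeping for self-intersecting paths (contiguous sub-paths, possible repetitions of a vertex, and the fact that under the definition of $m$-separation used here only a collider itself, not its descendants, can reopen a path) must be handled carefully; this is exactly the subtlety flagged after the definition of $m$-separation.

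\emph{Assembling the corollary.} Taking $S$ to be $B\cup C$, $A\cup C$ and $A\cup B\cup C$ in turn, the monotonicity step shows that \emph{every} path between $A$ and $B$ is $m$-blocked given each of these sets; in particular every $B$-pointing path is $m$-blocked given $B\cup C$, every $A$-pointing path is $m$-blocked given $A\cup C$, and every bi-pointing path is $m$-blocked given $A\cup B\cup C$. Then \Cref{global Markov property}(a), applied once directly and once with the roles of $A$ and $B$ interchanged (legitimate since $m$-separation is symmetric), yields $\CY_A\nrarrow\CY_B\:\vert\:\CY_{A\cup B\cup C}$ and $\CY_B\nrarrow\CY_A\:\vert\:\CY_{A\cup B\cup C}$. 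For the undirected statement I would first observe that $a\inst b\notin E_{OG}$ for all $a\in A$, $b\in B$: an edge $a\inst b$ would be a path of length one between $a$ and $b$ with no intermediate vertices, hence $m$-connecting given $C$, which is impossible. Combining this with the fact that every bi-pointing path between $A$ and $B$ is $m$-blocked given $A\cup B\cup C$, \Cref{global Markov property}(b) gives $\CY_A\nsim\CY_B\:\vert\:\CY_{A\cup B\cup C}$, which completes the proof.
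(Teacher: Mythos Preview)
Your proof is correct and follows essentially the same route as the argument referenced in the paper (\cite{EI07}, Corollary~4.3, and \cite{EI10}, Corollary~4.1): extract from any path between $A$ and $B$ a sub-path with no intermediate vertices in $A\cup B$, observe that this sub-path is $m$-connecting given $C$ whenever the original path was $m$-connecting given the enlarged set, and then invoke \Cref{global Markov property}. Your packaging of this step as a single monotonicity lemma ($A\msep B\:\vert\:C$ implies $A\msep B\:\vert\:S$ for every $C\subseteq S\subseteq A\cup B\cup C$) is slightly cleaner than doing the three cases separately, but the underlying argument is identical.
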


\subsubsection{Global Markov properties for the local orthogonality graph $G_{OG}^{0}=(V,E_{OG}^{0})$}\label{Global Markov properties for the local orthogonality graph}

For the local orthogonality graph, the global Markov properties are, as expected, much more difficult due to the weaker definition of the edges. However, we still derive sufficient graphical conditions for local Granger non-causality and local contemporaneous uncorrelation. At least under additional assumptions, the property of $m$-separation implies local Granger non-causality in both directions between $\CY_A$ and $\CY_B$, and that they are locally contemporaneously uncorrelated. We start with a special case where $C= V \setminus (A \cup B)$. The proofs of this subsection are given in \Cref{proofslocalglobalMarkovproperty}.

\begin{proposition}\label{HilfslemmaABCistV}
Let $G_{OG}^0=(V, E_{OG}^0)$ be the local orthogonality graph for $\CY_V$ and let $A,B\subseteq V$ with $A \cap B =\emptyset$. Then
$A \msep B \: \vert \: V \setminus (A \cup B) \: [G_{OG}^0]$ implies
\begin{align*}
%
%\quad \Rightarrow \quad
\CY_A \nrarrownull \CY_B \: \vert \: \CY_{V},  \quad
\CY_B \nrarrownull \CY_A \: \vert \: \CY_{V}, \quad \text{ and } \quad
&\CY_A \nsimnull \CY_B \: \vert \: \CY_{V}.
\end{align*}
\end{proposition}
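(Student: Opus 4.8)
The plan is to read the three conclusions directly off the edge set of $G_{OG}^0$, exploiting that for the particular conditioning set $C\coloneqq V\setminus(A\cup B)$ one has $A\cup B\cup C=V$, so that $\CY_A\nrarrownull\CY_B\:\vert\:\CY_V$, $\CY_B\nrarrownull\CY_A\:\vert\:\CY_V$ and $\CY_A\nsimnull\CY_B\:\vert\:\CY_V$ are precisely the relations \emph{with respect to the full process} $\CY_V$ that the local orthogonality graph encodes. In particular, the heavy machinery of $m$-separation along long paths will not really be needed here; only the absence of direct $A$--$B$ edges is used, and this is exactly why this case is singled out before the general conditioning sets are treated.

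First I would observe that $A\msep B\:\vert\:C\:[G_{OG}^0]$ forbids any edge between a vertex $a\in A$ and a vertex $b\in B$. Indeed, an edge $a\rarrow b$, $a\larrow b$ or $a\inst b$ constitutes a path of length one whose endpoints are $a$ and $b$ and which has no intermediate vertices; such a path is vacuously $m$-connecting given every subset $S$, so its existence would contradict $A\msep B\:\vert\:C$. Hence for all $a\in A$ and $b\in B$ we have $a\rarrow b\notin E_{OG}^0$, $a\larrow b\notin E_{OG}^0$ and $a\inst b\notin E_{OG}^0$. By the definition of the local orthogonality graph (\Cref{Definition orthogonality graph}\,(b)), equivalently by the pairwise Markov property \Cref{pairwise Markov property}\,(b), these non-edges say that $\CY_a\nrarrownull\CY_b\:\vert\:\CY_V$, $\CY_b\nrarrownull\CY_a\:\vert\:\CY_V$ and $\CY_a\nsimnull\CY_b\:\vert\:\CY_V$ hold for every $a\in A$ and $b\in B$.

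It then remains to lift these pairwise statements to the sets $A$ and $B$. For the two directed relations this is exactly \Cref{Proposition 5.6}\,(b), which is available because the local orthogonality graph is by construction defined for processes satisfying \Cref{Assumption an Dichte}; it yields $\CY_A\nrarrownull\CY_B\:\vert\:\CY_V$ and $\CY_B\nrarrownull\CY_A\:\vert\:\CY_V$. For the undirected relation one invokes the equivalence recorded in \Cref{eq:correspondenceXAXa local}\,(a), which gives $\CY_A\nsimnull\CY_B\:\vert\:\CY_V$, and the proof is complete. The only point requiring care is checking that \Cref{Proposition 5.6} is indeed applicable, i.e.\ that \Cref{Assumption an Dichte} is in force, which it is by the standing hypotheses attached to $G_{OG}^0$. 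There is no genuine obstacle at this stage: the real difficulty, namely controlling colliders, non-colliders and $m$-blocking along paths of arbitrary length, is deferred to the global Markov properties of the local orthogonality graph for proper subsets $C\subsetneq V\setminus(A\cup B)$.
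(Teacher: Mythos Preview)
Your proof is correct and follows essentially the same logic as the paper's. The only cosmetic difference is that the paper invokes \cite{EI07}, Lemma~B.1, to obtain $\dis(A\cup\ch(A))\cap\dis(B\cup\ch(B))=\emptyset$ and then reads off $\ch(A)\cap B=\emptyset$, $A\cap\ch(B)=\emptyset$, $\ne(A)\cap B=\emptyset$, whereas you observe the non-existence of direct $A$--$B$ edges straight from the definition of $m$-separation for length-one paths; the subsequent lift from pairwise to setwise via \Cref{Proposition 5.6}(b) and \Cref{eq:correspondenceXAXa local}(a) is the same in both arguments (the paper leaves it implicit).
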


We consider a second special case where the block-recursive Markov property already leads to local Granger non-causality and local contemporaneous uncorrelation.

\begin{proposition}\label{LemmapaApaBinABC}
Let $G_{OG}^0=(V, E_{OG}^0)$ be the local orthogonality graph for $\CY_V$ and let $A,B,C\subseteq V$ be disjoint subsets. Suppose $\pa(A) \cup \pa(B) \subseteq A \cup B \cup C$.  Then \linebreak
$A \msep B \: \vert \: C \: [G_{OG}^0]$ implies
\begin{align*}
%
%\quad \Rightarrow \quad
\CY_A \nrarrownull \CY_B \: \vert \: \CY_{A \cup B \cup C},  \quad
\CY_B \nrarrownull \CY_A \: \vert \: \CY_{A \cup B \cup C}, \quad \text{ and } \quad
\CY_A \nsimnull \CY_B \: \vert \: \CY_{A \cup B \cup C}.
\end{align*}
\end{proposition}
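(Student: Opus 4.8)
The plan is to pass everywhere to componentwise statements (\Cref{Proposition 5.6}(b), \eqref{eq:correspondenceXBXbcausality local}, \Cref{eq:correspondenceXAXa local}) and then to transfer local (non-)causality and (non-)correlation from the conditioning process $\CY_V$ down to $\CY_S$, where $S:=A\cup B\cup C$, using that by hypothesis $V\setminus S$ contains no parent of a vertex in $A\cup B$. (If $S=V$ then $C=V\setminus(A\cup B)$ and the claim is exactly \Cref{HilfslemmaABCistV}, so I assume $V\setminus S\neq\emptyset$.) Fix $t\in\R$ and set $\Delta_h^v:=h^{-1}\bigl(D^{(j_v)}Y_v(t+h)-D^{(j_v)}Y_v(t)\bigr)$. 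Since $A\msep B\:\vert\:C\:[G_{OG}^0]$, every single-edge path between a vertex of $A$ and a vertex of $B$ is $m$-blocked given $C$; but such a path has no intermediate vertex and is therefore vacuously $m$-connecting, so there is no edge of any kind between $A$ and $B$, and \Cref{Definition orthogonality graph}(b) gives $\CY_a\nrarrownull\CY_b\:\vert\:\CY_V$, $\CY_b\nrarrownull\CY_a\:\vert\:\CY_V$ and $\CY_a\nsimnull\CY_b\:\vert\:\CY_V$ for all $a\in A$, $b\in B$.

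For the directed conclusion I would first cut off $V\setminus S$: from $\pa(B)\subseteq S$ and the block-recursive Markov property \Cref{block-recursive Markov property}(b) one obtains $\CY_{V\setminus(\pa(B)\cup B)}\nrarrownull\CY_B\:\vert\:\CY_V$, and because $V\setminus S\subseteq V\setminus(\pa(B)\cup B)$, passing to components and back via \Cref{Proposition 5.6}(b) yields $\CY_{V\setminus S}\nrarrownull\CY_b\:\vert\:\CY_V$ for every $b\in B$, i.e.\ $\limh P_{\mathcal{L}_{Y_V}(t)}\Delta_h^b=\limh P_{\mathcal{L}_{Y_S}(t)}\Delta_h^b$ with both limits existing. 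Combining the componentwise relations $\CY_a\nrarrownull\CY_b\:\vert\:\CY_V$ ($a\in A$) from the first paragraph with $\CY_{a'}\nrarrownull\CY_b\:\vert\:\CY_V$ ($a'\in V\setminus S$) once more through \Cref{Proposition 5.6}(b) gives $\CY_{A\cup(V\setminus S)}\nrarrownull\CY_b\:\vert\:\CY_V$, which, since $V\setminus\bigl(A\cup(V\setminus S)\bigr)=B\cup C$, unwinds to $\limh P_{\mathcal{L}_{Y_V}(t)}\Delta_h^b=\limh P_{\mathcal{L}_{Y_{B\cup C}}(t)}\Delta_h^b$. Chaining the two identities yields $\limh P_{\mathcal{L}_{Y_S}(t)}\Delta_h^b=\limh P_{\mathcal{L}_{Y_{B\cup C}}(t)}\Delta_h^b$, i.e.\ $\CY_A\nrarrownull\CY_b\:\vert\:\CY_S$; as this holds for all $b\in B$ and $t\in\R$, \eqref{eq:correspondenceXBXbcausality local} gives $\CY_A\nrarrownull\CY_B\:\vert\:\CY_{A\cup B\cup C}$. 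Interchanging the roles of $A$ and $B$ and using $\pa(A)\subseteq S$ gives $\CY_B\nrarrownull\CY_A\:\vert\:\CY_{A\cup B\cup C}$.

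For the undirected conclusion I would fix $a\in A$, $b\in B$, write $W_h^v:=D^{(j_v)}Y_v(t+h)$ and $\xi_h^v:=W_h^v-P_{\mathcal{L}_{Y_V}(t)}W_h^v$, and note that $D^{(j_v)}Y_v(t)\in\mathcal{L}_{Y_v}(t)\subseteq\mathcal{L}_{Y_S}(t)\subseteq\mathcal{L}_{Y_V}(t)$ (\Cref{Remark 3.6}) forces $P_{\mathcal{L}_{Y_V}(t)}W_h^v-P_{\mathcal{L}_{Y_S}(t)}W_h^v=h\,\rho_h^v$, where $\rho_h^v:=P_{\mathcal{L}_{Y_V}(t)}\Delta_h^v-P_{\mathcal{L}_{Y_S}(t)}\Delta_h^v$ satisfies $\|\rho_h^v\|_{L^2}\to0$ as $h\to0$ for $v\in\{a,b\}$, since $\CY_{V\setminus S}\nrarrownull\CY_v\:\vert\:\CY_V$ (available because $\pa(\{a,b\})\subseteq S$) makes the two projections of $\Delta_h^v$ converge to the same mean-square limit. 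Substituting $W_h^v-P_{\mathcal{L}_{Y_S}(t)}W_h^v=\xi_h^v+h\rho_h^v$ into the bilinear form of \Cref{Def: local contemporaneously uncorrelated} I would obtain
\begin{align*}
\frac{1}{h}\,\BE\!\left[\bigl(W_h^a-P_{\mathcal{L}_{Y_S}(t)}W_h^a\bigr)\overline{\bigl(W_h^b-P_{\mathcal{L}_{Y_S}(t)}W_h^b\bigr)}\right]
=\frac{1}{h}\,\BE\bigl[\xi_h^a\overline{\xi_h^b}\bigr]+\BE\bigl[\rho_h^a\overline{\xi_h^b}\bigr]+\BE\bigl[\xi_h^a\overline{\rho_h^b}\bigr]+h\,\BE\bigl[\rho_h^a\overline{\rho_h^b}\bigr],
\end{align*}
where the first term tends to $0$ because $a\inst b\notin E_{OG}^0$, i.e.\ $\CY_a\nsimnull\CY_b\:\vert\:\CY_V$, and the remaining three tend to $0$ by Cauchy--Schwarz, using $\|\xi_h^v\|_{L^2}\le\|W_h^v\|_{L^2}=\|D^{(j_v)}Y_v(0)\|_{L^2}$ (stationarity) and $\|\rho_h^v\|_{L^2}\to0$. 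This gives $\CY_a\nsimnull\CY_b\:\vert\:\CY_S$ for all $a\in A$, $b\in B$, $t\in\R$, and \Cref{eq:correspondenceXAXa local} (a) yields $\CY_A\nsimnull\CY_B\:\vert\:\CY_{A\cup B\cup C}$.

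The one genuinely non-routine point is that local Granger non-causality is monotone neither in the conditioning process nor in the cause set, so neither the passage from $\CY_V$ to $\CY_S$ nor the enlargement of the cause to $A\cup(V\setminus S)$ is automatic; both are licensed precisely by $\pa(A)\cup\pa(B)\subseteq A\cup B\cup C$ via the block-recursive Markov property together with the equivalence in \Cref{Proposition 5.6}(b) (this is also where \Cref{Assumption an Dichte} enters), and I expect this to be the place where the argument really has to be pinned down. A secondary care point is that the increments $\Delta_h^v$ themselves have no mean-square limit (\Cref{Remark:3.7}(a)), only the relevant projections do, so every displayed identity above must be read as the assertion that all the limits it mentions exist and coincide.
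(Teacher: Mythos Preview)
Your argument is correct and follows the same route as the paper: from the block-recursive Markov property and \Cref{Proposition 5.6}(b) you obtain $\CY_{A\cup(V\setminus S)}\nrarrownull\CY_B\:\vert\:\CY_V$ (equivalently $\CY_{V\setminus(B\cup C)}\nrarrownull\CY_B\:\vert\:\CY_V$) and then reduce the conditioning set from $V$ to $S=A\cup B\cup C$, while for the undirected part you combine $\CY_a\nsimnull\CY_b\:\vert\:\CY_V$ with $\limh P_{\mathcal{L}_{Y_V}(t)}\Delta_h^v=\limh P_{\mathcal{L}_{Y_S}(t)}\Delta_h^v$ for $v\in\{a,b\}$. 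The paper packages the reduction step as the separately stated left-decomposition \Cref{Leftdecomposition} and, for the undirected part, appeals to ``similar arguments as in the proof of \Cref{Charakterisierung local Granger non-causal}'', whereas you spell both steps out inline via the explicit $\xi_h^v$, $\rho_h^v$ decomposition and Cauchy--Schwarz; the content is the same.
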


\begin{remark} $\mbox{}$
\begin{itemize}
    \item[(a)] $\an(A \cup B \cup C)=A \cup B \cup C$ implies $\pa(A) \cup \pa(B) \subseteq A \cup B \cup C$. Therefore, we also have a graphical condition for causality and contemporaneous uncorrelation for ancestral subsets.
    \item[(b)] $\pa(B) \subseteq A \cup B \cup C$ is sufficient for $\CY_A \nrarrownull \CY_B \: \vert \: \CY_{A \cup B \cup C}$.
\end{itemize}
\end{remark}

For the proof of \Cref{LemmapaApaBinABC}, we need the left decomposition property of local Granger non-causality. % as given in \Cref{proofslocalglobalMarkovproperty}.

\begin{lemma}\label{Leftdecomposition}
%Let $\CY_V=(Y_V(t))_{t\in \R}$ be a $k$-dimensional process that satisfies \Cref{Assumption 1}.
Let $A, B, C, D \subseteq V$ be disjoint subsets. Then
\begin{align*}
\CY_{A \cup B} \nrarrownull \CY_C \: \vert \: \CY_{A \cup B \cup C \cup D}
\quad \Rightarrow \quad
\CY_{A} \nrarrownull \CY_C \: \vert \: \CY_{A \cup C \cup D}.
\end{align*}
\end{lemma}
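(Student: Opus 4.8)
The plan is to unwind \Cref{definition (linear) local Granger non-causal} on both sides and reduce the claim to an elementary monotonicity estimate for orthogonal projections onto a nested chain of closed subspaces of $L^2$. First I would fix $c\in C$ and $t\in\R$ and abbreviate $W_h \coloneqq h^{-1}\big(D^{(j_c)}Y_c(t+h) - D^{(j_c)}Y_c(t)\big)$. Reading the hypothesis $\CY_{A\cup B}\nrarrownull\CY_C\:\vert\:\CY_{A\cup B\cup C\cup D}$ through \Cref{definition (linear) local Granger non-causal} --- with the slots ``$A$''$=A\cup B$, ``$B$''$=C$, ``$S$''$=A\cup B\cup C\cup D$, so that ``$S\setminus A$''$=C\cup D$ by disjointness --- it says exactly that the two mean-square limits $\limh P_{\mathcal{L}_{Y_{A\cup B\cup C\cup D}}(t)}W_h$ and $\limh P_{\mathcal{L}_{Y_{C\cup D}}(t)}W_h$ exist and coincide, i.e.
\begin{align*}
P_{\mathcal{L}_{Y_{A\cup B\cup C\cup D}}(t)}W_h - P_{\mathcal{L}_{Y_{C\cup D}}(t)}W_h \longrightarrow 0 \quad\text{in } L^2 \text{ as } h\to 0 .
\end{align*}
Likewise, the conclusion $\CY_A\nrarrownull\CY_C\:\vert\:\CY_{A\cup C\cup D}$ is --- now with ``$S\setminus A$''$=C\cup D$ again --- precisely the assertion that $P_{\mathcal{L}_{Y_{A\cup C\cup D}}(t)}W_h - P_{\mathcal{L}_{Y_{C\cup D}}(t)}W_h\to 0$ in $L^2$ and that the limit exists. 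The decisive point is that the ``conditioning'' space $\mathcal{L}_{Y_{C\cup D}}(t)$ is the \emph{same} on both sides, and that by \Cref{additivity in index set} the three relevant spaces form a chain $\mathcal{L}_{Y_{C\cup D}}(t)\subseteq\mathcal{L}_{Y_{A\cup C\cup D}}(t)\subseteq\mathcal{L}_{Y_{A\cup B\cup C\cup D}}(t)$.

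The heart of the argument is the following elementary inequality, which I would establish first: for closed subspaces $\mathcal{L}_1\subseteq\mathcal{L}_2\subseteq\mathcal{L}_3$ of $L^2$ and any $W\in L^2$,
\begin{align*}
\big\|P_{\mathcal{L}_2}W - P_{\mathcal{L}_1}W\big\|_{L^2} \le \big\|P_{\mathcal{L}_3}W - P_{\mathcal{L}_1}W\big\|_{L^2}.
\end{align*}
Indeed, $P_{\mathcal{L}_3}W - P_{\mathcal{L}_1}W = \big(P_{\mathcal{L}_3}W - P_{\mathcal{L}_2}W\big) + \big(P_{\mathcal{L}_2}W - P_{\mathcal{L}_1}W\big)$, where the first summand equals $(I-P_{\mathcal{L}_2})P_{\mathcal{L}_3}W$ (using $P_{\mathcal{L}_2}P_{\mathcal{L}_3}=P_{\mathcal{L}_2}$ since $\mathcal{L}_2\subseteq\mathcal{L}_3$) and is therefore orthogonal to $\mathcal{L}_2$, while the second summand lies in $\mathcal{L}_2$; Pythagoras then gives the inequality.

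Applying this with $\mathcal{L}_1=\mathcal{L}_{Y_{C\cup D}}(t)$, $\mathcal{L}_2=\mathcal{L}_{Y_{A\cup C\cup D}}(t)$, $\mathcal{L}_3=\mathcal{L}_{Y_{A\cup B\cup C\cup D}}(t)$ and $W=W_h$, the hypothesis forces $\big\|P_{\mathcal{L}_{Y_{A\cup C\cup D}}(t)}W_h - P_{\mathcal{L}_{Y_{C\cup D}}(t)}W_h\big\|_{L^2}\to 0$; since moreover $\limh P_{\mathcal{L}_{Y_{C\cup D}}(t)}W_h$ exists by hypothesis, $P_{\mathcal{L}_{Y_{A\cup C\cup D}}(t)}W_h$ converges to the same $L^2$-limit. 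As $c\in C$ and $t\in\R$ were arbitrary, this is exactly $\CY_A\nrarrownull\CY_C\:\vert\:\CY_{A\cup C\cup D}$.

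I do not expect a genuine obstacle here. The only points needing care are the bookkeeping --- matching the sets to the slots ``$A$''/``$S$'' in \Cref{definition (linear) local Granger non-causal} so that the conditioning index set $S\setminus A$ comes out equal to $C\cup D$ on both sides, which is what makes the nested chain of subspaces appear --- and the essentially notational fact that validity of ``$\nrarrownull$'' already presupposes existence of the relevant projected $L^2$-limits, so that vanishing of the difference can be upgraded to convergence of $P_{\mathcal{L}_{Y_{A\cup C\cup D}}(t)}W_h$ itself.
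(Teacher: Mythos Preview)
Your proof is correct. Both your argument and the paper's exploit exactly the same chain $\mathcal{L}_{Y_{C\cup D}}(t)\subseteq\mathcal{L}_{Y_{A\cup C\cup D}}(t)\subseteq\mathcal{L}_{Y_{A\cup B\cup C\cup D}}(t)$ and the fact that the conditioning space $C\cup D$ is the same on both sides, but the execution differs slightly. The paper simply applies the intermediate projection $P_{\mathcal{L}_{Y_{A\cup C\cup D}}(t)}$ to the two $L^2$-limits provided by the hypothesis, using continuity of the projection together with $P_{\mathcal{L}_2}P_{\mathcal{L}_3}=P_{\mathcal{L}_2}$ and $P_{\mathcal{L}_2}P_{\mathcal{L}_1}=P_{\mathcal{L}_1}$ for nested spaces; this delivers the desired equality of limits in one line. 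You instead prove the Pythagorean monotonicity inequality $\|P_{\mathcal{L}_2}W-P_{\mathcal{L}_1}W\|_{L^2}\le\|P_{\mathcal{L}_3}W-P_{\mathcal{L}_1}W\|_{L^2}$ and squeeze. Both routes are elementary and equally valid; the paper's is marginally shorter, while yours makes the quantitative control more explicit.
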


\begin{remark}\mbox{}
\begin{itemize}
\item[(a)]  The right decomposition property, which is that $$\CY_{A} \nrarrownull \CY_{B \cup C} \: \vert \: \CY_{A \cup B \cup C \cup D} \quad \Rightarrow\quad \CY_{A} \nrarrownull \CY_B \: \vert \: \CY_{A \cup B \cup D}$$ cannot be expected. This can be explained as follows: It is possible that $\CY_{A}$ is non-causal for $\CY_{B \cup C}$ given $\CY_{A \cup B \cup C \cup D}$, since the corresponding information of $\CY_{A}$ is already present in $\CY_{C}$. However, if $\CY_{C}$ is omitted, there may be causal influence of $\CY_{A}$ on $\CY_{B}$. This topic has been addressed, e.g., by \cite{DI06} in the context of directed graphs.

\item[(b)] The lack of right decomposability is the key problem when trying to derive the global Markov property from the block-recursive Markov property. In the case that $A \cup B \cup C \subset V$, Corollary 1 and Proposition 2 of \cite{KO99} yield
\begin{align*}
A \msep B \: \vert \: C \:\: [G_{OG}^0]
&\quad \Leftrightarrow \quad
A' \msep B' \: \vert \: C \:\: [G_{OG,\an(A\cup B\cup C)}^0],
\end{align*}
for disjoint subsets $A'$ and $B'$ with $A \subseteq A'$, $B\subseteq B'$ and $A' \cup B' \cup C = \an(A\cup B\cup C)$ as in the proof of \Cref{global markov property}. According to \Cref{HilfslemmaABCistV}, we can conclude 
\begin{align*}
\CY_{A'} \nrarrownull \CY_{B'} \: \vert \: \CY_{A' \cup B' \cup C}, \quad
\CY_{B'} \nrarrownull \CY_{A'} \: \vert \: \CY_{A' \cup B' \cup C} \quad \text{ and } \quad
\CY_{A'} \nsimnull \CY_{B'} \: \vert \: \CY_{A' \cup B' \cup C},
\end{align*}
in $[G_{OG,\an(A\cup B\cup C)}^0]$. Since the definition of local Granger non-causality and local contemporaneous uncorrelation does not depend on whether we choose the subgraph with vertices in $A' \cup B' \cup C$ or the whole graph with vertices in $V$, the statements also hold for $[G_{OG}^0]$. But to obtain from this, e.g., $\CY_{A} \nrarrownull \CY_{B} \: \vert \: \CY_{A \cup B \cup C}$, we not only need the left decomposability but also the right decomposability.

\end{itemize}
\end{remark}

%\begin{remark}
%Let us summarize all the remarks made so far. $\varepsilon^*$ depends on $\varepsilon_M$ and thus on $\BFC, \BB$ and $\BS_L$ as well as the index sets $A$ and $B$. Now $\lambda^*$ depends on $\varepsilon^*$ and $k$, thus on $\BFC, \BB,\BS_L$ and $k$ as well as on the index sets $A$ and $B$. Furthermore $\varepsilon_K$ depends on the compact interval $K=[-\lambda^*, \lambda^*]$, the matrices $\BA, \BFC, \BB$ and $\BS_L$, as well as the index sets $A$ and $B$. Summarized, it depends on $\BA, \BFC, \BB, \BS_L$ and $k$  as well as the index sets $A$ and $B$.
%Therefore $\varepsilon_{AB}= \min\{\varepsilon_K, \varepsilon_M - \varepsilon^* \}$ depends on $\BA, \BFC, \BB, \BS_L$ and $k$  as well as the index sets $A$ and $B$. Finally $\varepsilon$  depends on $\BA, \BFC, \BB, \BS_L$ and $k$.
%\end{remark}
%\color{black}

%%%%%%%%%%%%%%%%%%%%%%%%%%%%%%%%%%%%%%%%%%%%%%%%%%%%%%%%%%%%%%
\section{Orthogonality graphs for MCAR processes}\label{sec:CGMCAR}

To gain a deeper understanding of the theoretical concept of a (local) orthogonality graph, we apply the graphical models to the class of causal MCAR processes. %as in  \Cref{Definition des CAR Prozesses}
%and its special case, the causal Ornstein-Uhlenbeck processes MCAR$(1)$=OU.
We not only give theoretical results but also interpret them and relate them to the results of \cite{EI07} in discrete time. First, we give a brief introduction to MCAR processes and show that they satisfy the assumptions of the (local) orthogonality graph. We then derive linear predictors of MCAR processes, which we require to characterise the edges; which is the ultimate goal of this section. The details of the proofs of this section are moved to \Cref{Appendix C}.
%%%%%%%%%%%%%%%%%%%%%%%%%%%%%%%%%%%%%%%%%%%%%%%%%%%%%%%%%%%%%%%%%%
\subsection{MCAR processes}

A multivariate $k$-dimensional  continuous-time AR (MCAR) process is a continuous-time version of the well-known vector AR (VAR) process in discrete time. % They are applied in diversified fields as, e.g., signal processing and control (cf.~\cite{GarnierWang2008, LarssonMossbergSoederstroem2006}),
%high-frequency financial econometrics (cf.~\cite{Todorov2009})  and financial mathematics (cf.~\cite{BenthKoekebakkerZakamouline2010}).
The driving process  is a $k$-dimensional  Lévy process $(L(t))_{t\in\R}$ as defined in \Cref{Example:Ornstein-Uhlenbeck process}
%which is an $\R^k$-valued stochastic process with $L(0)=0_k$ $\mathbb{P}$-a.s., stationary and independent increments and c\`adl\`ag sample paths. In the following we work with two-sided Lévy processes $L=(L(t))_{t\in \R}$ which are obtained from two independent copies $(L_1(t))_{t\geq 0}$ and $(L_2(t))_{t\geq 0}$ of a one-sided Lévy process via
%\begin{align*}
%L(t) =
%\begin{cases}
%L_1(t) &\mbox{if } t \geq 0, \\
%-\lim_{s\nearrow -t} L_2(s) & \mbox{if } t<0. %\end{cases}
%\end{align*}
and satisfies the following assumption throughout the paper.

\begin{assumption}\label{Assumption on Levy process}
The two-sided Lévy process $L=(L(t))_{t\in \R}$ satisfies $\BE L(1)=0_k$ and  $\BE \Vert L(1) \Vert^2 < \infty$ with $\BS_L=\BE[L(1)L(1)^\top ]$.
\end{assumption}

 The idea is then that a $k$-dimensional MCAR$(p)$ process  is  the solution of the stochastic differential equation
\begin{equation} \label{eq1.1}
     {P}(D)Y(t)=D L(t) \quad \mbox{ for } t\in \R,
\end{equation}
where $D$ is the differential operator with respect to $t$ and
\begin{equation} \label{Pol}
    {P}(\lambda)\coloneqq I_{k}\lambda^p+A_1\lambda^{p-1}+\cdots+A_p, \quad \lambda \in \C,
\end{equation}
 is the autoregressive  polynomial, respectively with $A_1,\ldots, A_p\in M_k(\R)$. However, this is not the formal
definition of an MCAR process, since a Lévy process is not differentiable. The formal definition of a Lévy-driven causal MCAR process used here goes back to \cite{MA07}, Definition 3.20. However, one-dimensional Gaussian CARMA
processes were already investigated by \cite{doob:1944} % in 1944
(cf.~\citealp{DO60}) and L\'{e}vy-driven CARMA processes were propagated by Peter Brockwell at the beginning of this century,  see \cite{BR14} and \cite{BrockwellLindner2024} for an overview. Very early Gaussian MCAR processes were already studied in the economics literature, e.g.,~in \cite{Harvey:Stock:1985, Harvey:Stock:1988, HarveyStock1989} and
were further explored in the well-known paper of \cite{Bergstrom:1997}.

%see as well \VF{include}
%\cite{SC12}, eq.~(3.3)ff., \cite{SC122}, eq.~(3.3)ff., \cite{BR15}, Theorem 3.2
%ggf. \cite{KE17} \cite{BA18pre}

\begin{definition}\label{Definition des CAR Prozesses}
Let $\left( L(t) \right)_{t \in \R}$ be a two sided $k$-dimensional Lévy process. %that satisfies \Cref{Assumption on Levy process}.
%with $\BE L(1)=0_k$ and  $\BE \Vert L(1) \Vert^2 < \infty$.
Further, let \mbox{$\BA \in M_{kp}(\R)$,} $p\geq 1$ with $\sigma(\BA)\subseteq (-\infty, 0) + i \R$, such that
\begin{align*}
\BA &=
\begin{pmatrix}
  0_k & I_k & 0_k & \cdots & 0_k \\
  0_k & 0_k & I_k & \ddots & \vdots \\
  \vdots &  & \ddots & \ddots & 0_k \\
  0_k & \cdots & \cdots & 0_k & I_k \\
  -A_p & -A_{p-1} & \cdots & \cdots & -A_1
\end{pmatrix},
\end{align*}
$\BB^\top  = (0_k, \ldots, 0_k, I_k) \in M_{k\times kp}(\R)$ and $\BFC = (I_k, 0_k, \ldots, 0_k) \in M_{k\times kp}(\R)$.
%\begin{align*}
%\BB=
%\begin{pmatrix}
%0_k \\
%\vdots \\
%0_k \\
%I_k
%\end{pmatrix}
%\in M_{kp\times k}(\R), \quad
%\BFC &=\begin{pmatrix}
%  I_k & 0_k & \cdots & 0_k
%\end{pmatrix} \in M_{k\times kp}(\R).
%\end{align*}
Then the process $\CY_V=(Y_V(t))_{t\in\R}$ given by $$Y_V(t) = \BFC X(t),$$ where $\CX=(X(t))_{t\in\R}$ is the unique $kp$-dimensional stationary solution  of the state equation
\begin{align} \label{state}
dX(t)= \BA X(t)dt+ \BB dL(t),
\end{align}
is called \textsl{(causal) MCAR$(p)$ process}.
\end{definition}

Indeed, if $p=1$, the MCAR(1) process corresponds to the Ornstein-Uhlenbeck process of \Cref{Example:Ornstein-Uhlenbeck process}. We summarise important properties of causal MCAR processes used in this paper. % and derived in \cite{MA07} and \cite{SC122}, respectively.
Details are given in \cite{MA07} and \cite{SC122}. 

\begin{lemma} \label{Lemma 5.2}
Let $\CY_V$ be a causal %$k$-dimensional
MCAR$(p)$ process.
% such that the driving $k$-dimensional Lévy process satisfies \Cref{Assumption on Levy process}.
%Let  $\CY_V=(Y_V(t))_{t\in \R}$  be the $k$-dimensional MCAR process as defined in \Cref{Definition des CAR Prozesses} with driving Lévy process $(L(t))_{t\geq 0}$.
Then the following results hold:
\begin{itemize}
    \item[(a)] The unique stationary solution $\CX$ of the state equation \eqref{state} has the representation
    \begin{align*}
X(t) = \int_{- \infty}^{t} e^{\BA (t-u)} \BB dL(u), \quad t\in \R,
\end{align*}
and $$X(t)=e^{\BA(t-s)}X(s)+\int_s^t e^{\BA(t-u)}\BB dL(u),\quad s,t\in\R, s<t.$$
    \item[(b)]  We denote the $j$-th $k$-block of $\CX$ by
\begin{align} \label{block}
X^{(j)}(t)=
\begin{pmatrix}
X_{(j-1)k+1}(t)\\
\vdots\\
X_{jk}(t)
\end{pmatrix}, \quad t\in\R, \: j=1,\ldots,p,
\end{align}
such that $X(t)=(X^{(1)}(t)^\top ,\ldots,X^{(p)}(t)^\top )^\top $, $t\in\R$. Suppose $\Phi_L(\cdot)$ is the $k$-dimensional random orthogonal measure of the Lévy process $L$, i.e,
\begin{align*}
\Phi_L(\left[a,b\right))=\int_{-\infty}^{\infty}\frac{e^{-i\lambda a}-e^{-i\lambda b}}{2\pi i\lambda}\, dL(\lambda),  \quad -\infty<a<b<\infty,
\end{align*}
with spectral measure
$F_L(d\lambda)=\BS_L / 2\pi \,d\lambda$ and $\BE(\Phi_L(\left[a,b\right)))=0_k$. Then
\begin{align*}
    X^{(j)}(t)=\int_{-\infty}^{\infty} e^{i\lambda t}(i\lambda)^{j-1}P(i\lambda)^{-1} \,\Phi_L(d\lambda), \quad t\in\R,
\end{align*}
and in particular, $Y_V(t)=X^{(1)}(t)=\int_{-\infty}^{\infty} e^{i\lambda t}P(i\lambda)^{-1} \,\Phi_L(d\lambda)$, $t\in\R$.
\item[(c)] The  covariance function $(c_{XX}(t))_{t\in\R}$ of $\CX$ is
\begin{align}\label{alternative representation of covariance for MCARMA}
\begin{split}
c_{XX}(t) &=c_{XX}(-t)^\top = \BE[X(t+h) \overline{X(h)}^\top ] = e^{\BA t} \Gamma(0), \quad t\geq 0,
%c_{XX}(t) &= \Gamma(t) = \BE[X(t+h) \overline{X(h)}^\top ] = \Gamma(0) e^{- \BA^\top  t}, \quad t\leq0,
\end{split}
\end{align}
where
$
\Gamma(0)  = \int_{0}^{\infty} e^{\BA u} \BB \BS_L \BB^\top  e^{\BA^\top  u}du
$
satisfies
\begin{align}\label{Dichtezusammenhang}
 \BA\Gamma(0) + \Gamma(0) \BA^\top =- \BB \BS_L \BB^\top .
\end{align}
\item[(d)] The spectral density  of the causal MCAR process $\CY_V$ is
\begin{align*}
  f_{Y_V Y_V}(\lambda)
    &=\frac{1}{2\pi} P(i\lambda)^{-1}\BS_L \left(P(-i\lambda \right)^{-1})^\top \\
    &=\frac{1}{2\pi} \BFC \left(i\lambda I_{kp}- \BA \right)^{-1} \BB \BS_L \BB^\top   \left(-i\lambda I_{kp}-\BA^\top  \right)^{-1} \BFC^\top ,\quad \lambda \in\R.
\end{align*}
\end{itemize}
\end{lemma}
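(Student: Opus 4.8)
The plan is to establish the four parts in order, since (b)--(d) all rest on the time-domain representation proved in (a); parts (a) and (c) are the classical facts about stationary Ornstein--Uhlenbeck type equations driven by a centred, square-integrable Lévy process, and (b), (d) follow by passing to the spectral domain. \emph{For part (a)}, I would first use $\sigma(\BA)\subseteq(-\infty,0)+i\R$ to obtain an exponential bound $\|e^{\BA s}\|\le c\,e^{-\delta s}$ for $s\ge0$ and some $c,\delta>0$, so that, by the Itô isometry for integrals against $L$ (using $\BE L(1)=0_k$ and $\BE\|L(1)\|^2<\infty$), the integral $X(t):=\int_{-\infty}^t e^{\BA(t-u)}\BB\,dL(u)$ converges in $L^2$ with $\|X(t)\|_{L^2}^2$ controlled by $\int_{-\infty}^t\|e^{\BA(t-u)}\BB\|^2\,du<\infty$. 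Stationarity of $\CX$ follows from stationarity of the increments of $L$; applying integration by parts to $e^{-\BA t}X(t)$ shows that $\CX$ solves $dX(t)=\BA X(t)\,dt+\BB\,dL(t)$, and integrating between $s$ and $t$ gives the second displayed identity. For uniqueness, two stationary solutions satisfy $X(t)-\widetilde X(t)=e^{\BA(t-s)}\big(X(s)-\widetilde X(s)\big)$, and letting $s\to-\infty$, using $\|e^{\BA(t-s)}\|\to0$ and the $L^2$-boundedness of the stationary difference, forces $X(t)=\widetilde X(t)$ $\mathbb P$-a.s.

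\emph{For parts (b) and (d)}, I would translate the representation in (a) into the spectral domain through the isometry that identifies $\int g(u)\,dL(u)$ with $\int\widehat g(\lambda)\,\Phi_L(d\lambda)$ for deterministic $L^2$-kernels $g$: since $\int_0^\infty e^{\BA v}e^{-i\lambda v}\,dv=(i\lambda I_{kp}-\BA)^{-1}$ by stability of $\BA$, this gives $X(t)=\int_{-\infty}^\infty e^{i\lambda t}(i\lambda I_{kp}-\BA)^{-1}\BB\,\Phi_L(d\lambda)$. It then remains to identify the $j$-th $k$-block, i.e.\ $\BFE_j^\top(i\lambda I_{kp}-\BA)^{-1}\BB=(i\lambda)^{j-1}P(i\lambda)^{-1}$, which is a purely algebraic computation using only the companion form of $\BA$; equivalently, from $\BFC(i\lambda I_{kp}-\BA)^{-1}\BB=P(i\lambda)^{-1}$ and the state relations $X^{(j+1)}=D^{(1)}X^{(j)}$ for $j<p$, one iterates \Cref{Proposition 3.7}, which multiplies the integrand by $i\lambda$ at each differentiation. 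Taking $j=1$ gives the formula for $Y_V$, and part (d) is then immediate: $\Phi_L$ has spectral measure $F_L(d\lambda)=\BS_L/(2\pi)\,d\lambda$, so $f_{Y_VY_V}(\lambda)=P(i\lambda)^{-1}\tfrac{\BS_L}{2\pi}\,\overline{P(i\lambda)^{-1}}^{\top}=\tfrac{1}{2\pi}P(i\lambda)^{-1}\BS_L\big(P(-i\lambda)^{-1}\big)^{\top}$ since $P$ has real coefficients and $\BS_L$ is real symmetric, and substituting $P(i\lambda)^{-1}=\BFC(i\lambda I_{kp}-\BA)^{-1}\BB$ yields the second representation.

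\emph{For part (c)}, I would write, for $t\ge0$, $X(t+h)=e^{\BA t}X(h)+\int_h^{t+h}e^{\BA(t+h-u)}\BB\,dL(u)$ from (a); the stochastic integral on the right is centred and orthogonal to $X(h)$ — which is built from increments of $L$ on $(-\infty,h]$, independent of those over $[h,t+h]$ — whence $c_{XX}(t)=\BE\big[X(t+h)\overline{X(h)}^{\top}\big]=e^{\BA t}\Gamma(0)$ with $\Gamma(0)=\BE\big[X(0)X(0)^{\top}\big]$. Evaluating $\Gamma(0)$ by the Itô isometry on the representation in (a) gives $\Gamma(0)=\int_0^\infty e^{\BA v}\BB\BS_L\BB^{\top}e^{\BA^{\top}v}\,dv$; the relation $c_{XX}(t)=c_{XX}(-t)^{\top}$ is the general symmetry of the covariance function of a real stationary process. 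Finally, differentiating $v\mapsto e^{\BA v}\BB\BS_L\BB^{\top}e^{\BA^{\top}v}$ gives $\BA e^{\BA v}\BB\BS_L\BB^{\top}e^{\BA^{\top}v}+e^{\BA v}\BB\BS_L\BB^{\top}e^{\BA^{\top}v}\BA^{\top}$, and integrating over $[0,\infty)$ — the integrand vanishing at $\infty$ by stability and equal to $\BB\BS_L\BB^{\top}$ at $v=0$ — yields the Lyapunov equation $\BA\Gamma(0)+\Gamma(0)\BA^{\top}=-\BB\BS_L\BB^{\top}$.

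\emph{Main obstacle.} The only step that is not a routine application of the Itô isometry and the stability of $\BA$ is the rigorous Fourier isometry used in (b) — the identification of the time-domain integral against $L$ with the spectral integral against $\Phi_L$ — together with the companion-matrix block identity. Since these facts are established in detail in \cite{MA07} and \cite{SC122}, the appendix proof can cite them and carry out only the block computation and the substitutions indicated above.
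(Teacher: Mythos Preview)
Your proposal is correct and, in fact, provides considerably more detail than the paper itself: the paper does not prove \Cref{Lemma 5.2} but simply states that ``details are given in \cite{MA07} and \cite{SC122}'' and uses the lemma as a summary of known facts. Your sketch follows exactly the standard route taken in those references---exponential stability of $\BA$ for the $L^2$-convergence and uniqueness in (a), the Fourier/spectral isometry together with the companion-matrix identity $\BFE_j^\top(i\lambda I_{kp}-\BA)^{-1}\BB=(i\lambda)^{j-1}P(i\lambda)^{-1}$ for (b) and (d), and the semigroup decomposition plus the Lyapunov argument for (c)---so there is nothing to correct.
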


We point out some more properties that we use later in the paper.

\begin{remark} \label{Remark 5.4} $\mbox{}$
\begin{itemize}
\item[(a)] If $\BS_L>0$,
then %the covariance matrix
$c_{XX}(0)>0$. %of $X(0)$ is positive definite.
Indeed, $\BB$ is of full rank and thus the assumptions of \cite{SC122}, Corollary 3.9, are satisfied.
\begin{comment}
Note
\begin{align*}
c_{X}(0) 	= \int_0^\infty e^{\BA u} \BB \BS_L \BB^\top  e^{\BA^\top  u} du
 			= \int_0^\infty e^{\BA u} MM^\top  e^{\BA^\top  u} du.
\end{align*}
Since $\BS_L$ is positive definite, $\BB$ is of full rank and thus $ \BB \BS_L \BB^\top $ positive definite, $M$ is a positive definite matrix due to \cite{HO13}, Corollary 7.2.8. %Since $M$ is of full rank, the matrix
Thus
\begin{align*}
\left(M, \BA M, \BA^2M,\ldots,\BA^{kp-1}M \right)
\end{align*}
is of full rank too and $c_{X}(0) >0$ due to \cite{BE09}, Corollary 12.6.3.
\end{comment}
\item[(b)] Since the matrix exponential is continuous, we have $c_{XX}(t) \rightarrow c_{XX}(0)$ for $t \rightarrow 0$. Now, $c_{Y_V Y_V}(\cdot)$ corresponds to the upper left $k \times k$ block of $c_{XX}(\cdot)$. Thus, $c_{Y_VY_V}(t) \rightarrow c_{Y_V Y_V}(0)$ for $t \rightarrow 0$. \cite{CR39}, Lemma 1, %Lemma \ref{Lemma and Assumption 4}
then gives that the causal MCAR process $\CY_V$ is mean-square continuous.
\end{itemize}
\end{remark}

For the definition of the local orthogonality graph and, in particular, the local Granger non-causality and the local contemporaneous uncorrelation, respectively, we need some knowledge about the existence and the description of the mean-square derivatives of the MCAR process. Therefore, we note the following.

\begin{remark} \label{Remark 5.3}
Due to the spectral representation of $X^{(j)}$ given in \eqref{block}, we directly obtain the spectral density
\begin{align*}
    f_{X^{(j)}X^{(j)}}(\lambda)=\frac{1}{2\pi} (i\lambda)^{j-1} P(i\lambda)^{-1}\BS_L(P(-i\lambda)^{-1})^\top (-i\lambda)^{j-1}, \quad \lambda \in\R.
\end{align*}
Therefore, it holds that $\int_{-\infty}^{\infty}\lambda^2 \|f_{X^{(j)}X^{(j)}}(\lambda)\|\,d\lambda <\infty$ for $j=1,\ldots,p-1$, but \linebreak $\int_{-\infty}^{\infty}\lambda^2 \|f_{X^{(p)}X^{(p)}}(\lambda)\|\,d\lambda =\infty$. Thus, a conclusion of \Cref{Proposition 3.7} is that  the process $\CX^{(j)}$ is mean-square differentiable with derivative
\begin{align} \label{eq 3.4}
    D^{(1)}X^{(j)}(t)=X^{(j+1)}(t), \quad j=1,\ldots,p-1,
\end{align}
while for $\CX^{(p)}$ the mean-square derivative does not exist. With $Y_V(t)=X^{(1)}(t)$ in mind, we receive iteratively from \eqref{eq 3.4} that $\CY_V$ is $(p-1)$-times mean-square differentiable with
\begin{align} \label{derivatives Y}
    D^{(j)}Y_V(t)=X^{(j+1)}(t), \quad j=1,\ldots,p-1,
\end{align}
but the $p$-th derivative does not exist. By the same arguments, we receive that for any component $Y_v$, $v\in V$, of $Y_V$ there is no derivative higher than $(p-1)$.
\end{remark}

\subsection{Orthogonality graph for MCAR processes}

In the following, we verify that the (local) orthogonality graph for the MCAR process is well-defined. Therefore, we have to check that the Assumptions \ref{Assumption an Dichte} and \ref{Assumption purely nondeterministic of full rank} are satisfied.

\begin{proposition}\label{Dichteannahme erfüllt}
Let $\CY_V$ be a causal %$k$-dimensional
MCAR$(p)$ process % such that the driving $k$-dimensional Lévy process satisfies \Cref{Assumption on Levy process}.
with \mbox{$\BS_L>0$.}
Then $\CY_V$ satisfies Assumptions \ref{Assumption an Dichte} and \ref{Assumption purely nondeterministic of full rank}.
\end{proposition}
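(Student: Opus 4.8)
The plan is to check \Cref{Assumption an Dichte} and \Cref{Assumption purely nondeterministic of full rank} in turn, using the explicit spectral density of \Cref{Lemma 5.2}(d). Throughout, for real $\lambda$ the matrix $P(i\lambda)$ is invertible, because the zeros of $\det P(\cdot)$ are the eigenvalues in $\sigma(\BA)\subseteq(-\infty,0)+i\R$, so $\det P(i\lambda)\neq 0$; moreover $P(-i\lambda)=\overline{P(i\lambda)}$ since $P$ has real coefficients. Hence \Cref{Lemma 5.2}(d) reads $f_{Y_VY_V}(\lambda)=\tfrac1{2\pi}P(i\lambda)^{-1}\BS_L\,\overline{P(i\lambda)^{-1}}^{\top}$, which is Hermitian and positive definite because $\BS_L>0$ and $P(i\lambda)$ is invertible; in particular every principal submatrix $f_{Y_DY_D}(\lambda)$, $D\subseteq V$, is positive definite, and $\lambda\mapsto f_{Y_VY_V}(\lambda)$ as well as $\lambda\mapsto f_{Y_AY_A}(\lambda)^{-1/2}$ are continuous. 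For disjoint $A,B\subseteq V$ one has the Schur-complement identity
\[
f_{Y_AY_A}(\lambda)-f_{Y_AY_B}(\lambda)f_{Y_BY_B}(\lambda)^{-1}f_{Y_BY_A}(\lambda)
=f_{Y_AY_A}(\lambda)^{1/2}\bigl(I_\alpha-\dAB(\lambda)\bigr)f_{Y_AY_A}(\lambda)^{1/2},
\]
and the Schur complement of the positive definite block $f_{Y_{A\cup B}Y_{A\cup B}}(\lambda)$ is itself positive definite; therefore $I_\alpha-\dAB(\lambda)>0$, i.e. the largest eigenvalue $\lambda_{\max}(\dAB(\lambda))$ is strictly less than $1$, for every $\lambda\in\R$.

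The main obstacle is to upgrade this pointwise bound to the uniform estimate $\dAB(\lambda)\leq_L(1-\varepsilon)I_\alpha$ demanded by \Cref{Assumption an Dichte}. I would do this by a compactness argument combined with an analysis at $\lambda=\pm\infty$. Since $P(i\lambda)=(i\lambda)^{p}\bigl(I_k+O(|\lambda|^{-1})\bigr)$, it follows that $|\lambda|^{2p}f_{Y_VY_V}(\lambda)\to\tfrac1{2\pi}\BS_L$ as $|\lambda|\to\infty$; as $\dAB(\lambda)$ is unchanged when the four spectral-density blocks entering its definition are multiplied by the common positive scalar $2\pi|\lambda|^{2p}$, continuity of matrix inversion and of the square root on the positive definite cone gives $\dAB(\lambda)\to[\BS_L]_{AA}^{-1/2}[\BS_L]_{AB}[\BS_L]_{BB}^{-1}[\BS_L]_{BA}[\BS_L]_{AA}^{-1/2}=:\dAB^{\infty}$ as $|\lambda|\to\infty$, and the same Schur-complement reasoning applied to $\BS_L>0$ yields $\lambda_{\max}(\dAB^{\infty})<1$. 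Thus $\lambda\mapsto\lambda_{\max}(\dAB(\lambda))$ is continuous on $\R$, strictly below $1$ at every finite point, and tends to $\lambda_{\max}(\dAB^{\infty})<1$ at $\pm\infty$, so it attains a maximum $M_{A,B}<1$. Setting $\varepsilon:=\tfrac12\bigl(1-\max_{A,B}M_{A,B}\bigr)\in(0,1)$, where the maximum ranges over the finitely many disjoint pairs $A,B\subseteq V$, one obtains $\dAB(\lambda)\leq_L M_{A,B}I_\alpha\leq_L(1-\varepsilon)I_\alpha$ for almost all $\lambda$, which is \Cref{Assumption an Dichte}.

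For \Cref{Assumption purely nondeterministic of full rank} I would use the causal moving-average representation $Y_V(t)=\BFC X(t)$ with $X(t)=\int_{-\infty}^{t}e^{\BA(t-u)}\BB\,dL(u)$ from \Cref{Lemma 5.2}(a). Because $L$ has centred, independent increments with finite variance, increments over disjoint intervals are orthogonal in $L^2$; consequently the closed linear span $\mathcal{H}_L(t)$ of all increments of $L$ in $(-\infty,t]$ has trivial remote past, $\bigcap_{t\in\R}\mathcal{H}_L(t)=\{0\}$ (any $Z$ lying in every $\mathcal{H}_L(t)$ is, for a fixed increment over $[a,b]$, orthogonal to it by choosing $t<a$, hence orthogonal to all increments and thus to $\mathcal{H}_L(0)\ni Z$, so $Z=0$). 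The stochastic integral exhibits each $X(u)$, and therefore each $Y_v(u)$, as an $L^2$-limit of finite linear combinations of increments of $L$ in $(-\infty,u]$, so $\mathcal{L}_{Y_V}(t)\subseteq\mathcal{H}_L(t)$ for all $t$, and hence $\mathcal{L}_{Y_V}(-\infty)=\bigcap_{t\in\R}\mathcal{L}_{Y_V}(t)\subseteq\bigcap_{t\in\R}\mathcal{H}_L(t)=\{0\}$. (Alternatively, one may verify the Kolmogorov-type criterion $\int_\R(1+\lambda^2)^{-1}\log\det f_{Y_VY_V}(\lambda)\,d\lambda>-\infty$, which holds since $\log\det f_{Y_VY_V}(\lambda)=\mathrm{const}-2\log|\det P(i\lambda)|$ with $\det P(i\lambda)$ a degree-$kp$ polynomial in $\lambda$ having no real zeros, and invoke \cite{RO67}, III, Theorem~2.4, or \cite{GL58}, Theorem~3.) I expect the uniform spectral bound needed for \Cref{Assumption an Dichte} to be the only genuinely delicate point; everything else is routine Schur-complement and Hilbert-space bookkeeping on top of the explicit MCAR spectral density.
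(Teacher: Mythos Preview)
Your proof is correct. Both halves follow the same high-level strategy as the paper but with somewhat cleaner implementations.

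For \Cref{Assumption an Dichte}, the paper also argues ``compact piece plus tail'': a bound on every compact interval via eigenvalue arguments (\Cref{Schranke Intervall}), and control as $|\lambda|\to\infty$ by showing convergence of $\dAB(\lambda)$ to a limiting matrix built from $\BS_L$ (\Cref{Grenzübergang}, \Cref{Schranke Grenzmatrix}). Your version is more direct in two respects. First, you exploit the scale-invariance of $\dAB(\lambda)$ and the $P(i\lambda)^{-1}$ form of the spectral density with the natural scaling $|\lambda|^{2p}$, obtaining the limit $[\BS_L]_{AA}^{-1/2}[\BS_L]_{AB}[\BS_L]_{BB}^{-1}[\BS_L]_{BA}[\BS_L]_{AA}^{-1/2}$ in one line; the paper works instead through the resolvent $(i\lambda I_{kp}-\BA)^{-1}$ and normalises by $\lambda^{2}$, leading to a limit involving $\BFC\BB\BS_L\BB^\top\BFC^\top$, which coincides with yours only for $p=1$ (for $p\ge 2$ one has $\BFC\BB=0_k$, so your scaling is the informative one). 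Second, your Schur-complement phrasing ``$I_\alpha-\dAB(\lambda)>0$'' is slightly slicker than the paper's eigenvalue-ratio bound $1-\sigma_1(\lambda)/\sigma_k(\lambda)$, though both yield the same compactness conclusion.

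For \Cref{Assumption purely nondeterministic of full rank}, your argument via the causal moving-average representation and the trivial remote past of the L\'evy increments is a standard and fully rigorous alternative to the paper's route, which computes $\|P_{\mathcal{L}_{Y_V}(t)}Y_v(t+h)\|_{L^2}^{2}=e_v^\top\BFC e^{\BA h}c_{XX}(0)e^{\BA^\top h}\BFC^\top e_v\to 0$ as $h\to\infty$ and then invokes \cite{RO67}, III, Theorem~2.1. Both are short; yours is self-contained, while the paper's re-uses the prediction formula of \Cref{Projektion für MCAR für S=V}.
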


The proof of \Cref{Assumption an Dichte}  is elaborate and is therefore presented in the Supplementary Material \ref{suppl:section 6}. However, the basic idea is simple. Note, $\BS_L>0$ results in $f_{Y_VY_V}(\cdot)>0$. On the one hand, we prove that an epsilon bound can always be found on compact intervals. On the other hand, the matrix function converges to a boundary matrix which can also be bounded. Together this then gives \Cref{Assumption an Dichte}.
The proof of \Cref{Assumption purely nondeterministic of full rank} is also given in the Supplementary Material \ref{suppl:section 6} and is based on a characterisation of purely non-deterministic processes by limits of orthogonal projections. It was expected that the MCAR$(p)$ process would satisfy this assumption since in our case the driving Lévy process has no drift term.
%Of course, since the driving Lévy process has no drift term, the MCAR$(p)$ is also purely non-deterministic and satisfies  \Cref{Assumption purely nondeterministic of full rank}.
%\LS{Den letzten Satz verstehe ich nicht und bei den Beweisen hinten führen wir ihn trotzdem? ist als Motivation gedacht, nicht als expliziter Beweis, Konnte man vorher vermuten\ldots}
Since Assumptions \ref{Assumption an Dichte} and \ref{Assumption purely nondeterministic of full rank} hold, a direct consequence of \Cref{sec:path_diagrams}  is then the following.

\begin{proposition}
Let $\CY_V$ be a causal %$k$-dimensional
MCAR$(p)$ process with % such that the driving $k$-dimensional Lévy process satisfies \Cref{Assumption on Levy process}.
%Furthermore, suppose
\mbox{$\BS_L>0$.}
 If we define $V=\{1,\ldots,k\}$ as the vertices and the edges $E_{OG}$ via
\begin{itemize}
\item[(i)\phantom{i}]   $a \rarrow b \notin E_{OG} \quad \Leftrightarrow \quad \CY_a \nrarrow \CY_b \: \vert \: \CY_V$,
\item[(ii)]  $a \inst b \notin E_{OG} \quad \Leftrightarrow \quad \CY_a \nsim \CY_b \: \vert \: \CY_V$,
\end{itemize}
for $a,b\in V$, $a\neq b$, then the orthogonality graph  $G_{OG}=(V,E_{OG})$ for the MCAR process $\CY_V$ is well-defined and satisfies
the pairwise, local, block-recursive, global AMP  and global Markov property.
\end{proposition}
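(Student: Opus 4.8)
The plan is to show that this proposition is an immediate consequence of the general theory developed in \Cref{sec:path_diagrams} together with \Cref{Dichteannahme erfüllt}; there is no genuinely new content to establish, only an assembly of the pieces. First I would collect the structural hypotheses on $\CY_V$ that the results of \Cref{sec:path_diagrams} require and check each against the MCAR setting. Weak stationarity with expectation zero holds by \Cref{Definition des CAR Prozesses}, since $\CX$ is by construction the unique stationary solution of the state equation \eqref{state} and $\BE L(1)=0_k$ by \Cref{Assumption on Levy process}, so $\BE Y_V(t)=\BFC\,\BE X(t)=0$. Mean-square continuity is exactly the conclusion of \Cref{Remark 5.4}(b). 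Finally, \Cref{Assumption an Dichte} and \Cref{Assumption purely nondeterministic of full rank} hold because $\BS_L>0$: this is precisely the statement of \Cref{Dichteannahme erfüllt}.

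Next, since \Cref{Assumption an Dichte} and \Cref{Assumption purely nondeterministic of full rank} are in force, \Cref{Definition orthogonality graph}(a) applies verbatim: the criteria (i)--(ii) there assign each ordered pair $a,b\in V$, $a\neq b$, an unambiguous status (present/absent) for the directed edge $a\rarrow b$ and the undirected edge $a\inst b$, so $E_{OG}$ is well defined and hence so is $G_{OG}=(V,E_{OG})$. This is the ``well-defined'' claim.

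Then I would verify the five Markov properties one at a time, each time simply invoking the corresponding general result with $\CY_V$ taken to be the causal MCAR$(p)$ process: the pairwise Markov property from \Cref{pairwise Markov property}(a); the local Markov property from \Cref{local Markov property}(a); the block-recursive Markov property from \Cref{block-recursive Markov property}(a); the global AMP Markov property from \Cref{global markov property} (this is where \Cref{Assumption purely nondeterministic of full rank}, available via \Cref{Dichteannahme erfüllt}, is used); and the global Markov property from \Cref{global Markov property}. Each of these theorems is stated under exactly the hypotheses verified in the first paragraph, so nothing further needs to be checked.

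The only substantive input is \Cref{Dichteannahme erfüllt}, whose proof — the epsilon bound on the spectral-density quantity $\dAB(\lambda)$ of \Cref{Assumption an Dichte}, obtained by a compactness argument on bounded frequency intervals combined with control of the limiting boundary matrix, and the verification of pure non-determinism using the projection characterisation — is carried out separately in the Supplementary Material. Consequently the present proposition presents no additional obstacle; the ``hard part'' has already been discharged in \Cref{Dichteannahme erfüllt}, and what remains is purely bookkeeping.
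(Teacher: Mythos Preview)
Your proposal is correct and matches the paper's approach exactly: the paper introduces this proposition with the sentence ``Since Assumptions \ref{Assumption an Dichte} and \ref{Assumption purely nondeterministic of full rank} hold, a direct consequence of \Cref{sec:path_diagrams} is then the following,'' treating it as an immediate corollary of \Cref{Dichteannahme erfüllt} together with the general results of \Cref{sec:path_diagrams}, with no separate proof given. Your explicit enumeration of the structural hypotheses and the individual Markov-property theorems is a faithful unpacking of that one-line justification.
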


%Therefore, we see that it is sufficient to characterise $\CY_a \nrarrow \CY_b \: \vert \: \CY_V$ and $\CY_a \nsim \CY_b \: \vert \: \CY_V$ for an MCAR process, which is topic of \Cref{sec:charMCAR}.

If we look at the local orthogonality graph, we also get the following from \Cref{sec:path_diagrams}.

\begin{proposition}
Let $\CY_V$ be a causal %$k$-dimensional
MCAR$(p)$ process with
%the driving $k$-dimensional Lévy process satisfies \Cref{Assumption on Levy process}. Furthermore, suppose
\mbox{$\BS_L>0$.}
 If we define $V=\{1,\ldots,k\}$ as the vertices and the edges $E_{OG}^0$ via
\begin{itemize}
\item[(i)\phantom{i}]   $a {\rarrow} b \notin E_{OG}^0 \quad \Leftrightarrow \quad \CY_a \nrarrownull \CY_b \: \vert \: \CY_V$,
\item[(ii)]  $a \inst b \notin E_{OG}^0 \quad \Leftrightarrow \quad \CY_a \nsimnull \CY_b \: \vert \: \CY_V$,
\end{itemize}
for $a, b\in V$, $a\neq b$, then the local orthogonality graph  $G_{OG}^0=(V, E_{OG}^{0})$ for the MCAR process $\CY_V$ is well-defined and satisfies the pairwise, local and block-recursive Markov property. Furthermore, the statements of Propositions \ref{HilfslemmaABCistV} and \ref{LemmapaApaBinABC} hold.
\end{proposition}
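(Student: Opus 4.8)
The plan is to derive the statement directly from the general theory of \Cref{sec:path_diagrams}, after checking that a causal MCAR$(p)$ process $\CY_V$ with $\BS_L>0$ satisfies all the standing hypotheses used there. First I would collect the relevant structural facts about $\CY_V$: by \Cref{Remark 5.4}(b) the process is stationary and mean-square continuous; by \Cref{Dichteannahme erfüllt} it satisfies Assumptions~\ref{Assumption an Dichte} and~\ref{Assumption purely nondeterministic of full rank}; and by \Cref{Remark 5.3} every component $\CY_v$, $v\in V$, is exactly $(p-1)$-times mean-square differentiable, i.e.\ $j_v=p-1$, with $D^{(j)}Y_V(t)=X^{(j+1)}(t)$ for $j=1,\ldots,p-1$ and no existing $p$-th derivative. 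This last point is precisely the hypothesis under which \Cref{definition (linear) local Granger non-causal} and \Cref{Def: local contemporaneously uncorrelated} are formulated, so the relations $\nrarrownull$ and $\nsimnull$ with respect to $\CY_V$ are meaningful statements for every pair $a,b\in V$ with $a\neq b$.

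With these facts in hand, the well-definedness of $G_{OG}^0$ is immediate: conditions (i) and (ii) in \Cref{Definition orthogonality graph}(b) refer only to local Granger non-causality and local contemporaneous uncorrelation of $\CY_a$ and $\CY_b$ relative to $\CY_V$, which by the previous paragraph are well-posed; hence the edge set $E_{OG}^0$, and thus $G_{OG}^0=(V,E_{OG}^0)$, is well-defined.

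For the Markov properties I would simply invoke the corresponding general results. Since $\CY_V$ satisfies Assumptions~\ref{Assumption an Dichte} and~\ref{Assumption purely nondeterministic of full rank}, \Cref{pairwise Markov property}(b), \Cref{local Markov property}(b) and \Cref{block-recursive Markov property}(b) yield, respectively, the pairwise, local and block-recursive Markov property of $\CY_V$ with respect to $G_{OG}^0$. Likewise, \Cref{HilfslemmaABCistV} and \Cref{LemmapaApaBinABC} are stated for the local orthogonality graph of any process to which \Cref{Definition orthogonality graph}(b) applies, so they hold here without modification.

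There is no genuine obstacle in this argument: the only steps that require nontrivial work — verifying the uniform spectral bound of \Cref{Assumption an Dichte} and determining the exact order of mean-square differentiability of the components — have already been carried out in \Cref{Dichteannahme erfüllt} and \Cref{Remark 5.3}. I would therefore keep the proof to a short chain of citations, taking care only to point out explicitly that the differentiability hypothesis of \Cref{definition (linear) local Granger non-causal} is satisfied, since this is what makes the local orthogonality graph well-defined in the first place.
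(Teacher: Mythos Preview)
Your proposal is correct and follows exactly the paper's approach: the paper presents this proposition as an immediate consequence of \Cref{sec:path_diagrams} (it is introduced with ``we also get the following from \Cref{sec:path_diagrams}'' and given no separate proof), relying on \Cref{Dichteannahme erfüllt} for Assumptions~\ref{Assumption an Dichte} and~\ref{Assumption purely nondeterministic of full rank} and on \Cref{Remark 5.3} and \Cref{Remark 5.4} for the remaining structural hypotheses. Your explicit observation that $j_v=p-1$ for all $v\in V$ is a useful detail the paper only makes in passing before \Cref{Erste Charakterisierung gerichtete Kante für CAR}.
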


%From this, it follows directly that we have to characterise $\CY_a \stackrel{0}{\nrarrow} \CY_b \: \vert \: \CY_V$ and $\CY_a \stackrel{0}{\nsim} \CY_b \: \vert \: \CY_V$ for an MCAR process as is done in \Cref{sec:charMCAR} as well.

\subsection{Prediction of MCAR processes}\label{subsec:predMCAR}
To characterise the different Granger causalities and contemporaneous correlations as is done, e.g., in Theorems \ref{Charakterisierung als Gleichheit der linearen Vorhersage} and \ref{Charakterisierung als Gleichheit der linearen Vorhersage 2}, respectively, we need to compute the linear predictions of the MCAR process and its derivatives on the different subspaces. % which is the aim of this subsection.
To do this, we first give a suitable representation for $Y_v(t+h)$.
\Cref{subsec:proofpredMCAR} contains all proofs of this subsection.
%We include the evidence here as it is instructive for understanding the structure of the process.

\begin{lemma}\label{Yb für MCAR}
Let $\CY_V$ be a causal %$k$-dimensional
MCAR$(p)$ process. Further, let $t\in \R$,  $h\geq 0$, and $v\in V$. %, such that the driving $k$-dimensional Lévy process satisfies \Cref{Assumption on Levy process}.
Then
\begin{align*}
Y_v(t+h)= e_v^\top  \BFC e^{\BA h} \sum_{j=1}^p \BFE_j D^{(j-1)}Y_V(t) + e_v^\top  \BFC \int_t^{t+h} e^{\BA (t+h-u)} \BB dL(u) \quad \mathbb{P}\text{-a.s.}
\end{align*}
 % Here $e_v \in \R^k$ is the $v$-th unit vector and
%\begin{align*}
%\BFE_j  =
%\begin{pmatrix}
%     0_{k(j-1) \times k} \\
%     I_{k\times k} \\
%     0_{k(p-j) \times k}
%\end{pmatrix} \in M_{kp \times k}(\R), \quad j=1,\ldots,p.
%\end{align*}
\end{lemma}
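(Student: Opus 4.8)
The idea is to start from the state-space representation $Y_V=\BFC\CX$ and the variation-of-constants formula for $\CX$ already recorded in \Cref{Lemma 5.2}(a), and then rewrite the state vector $X(t)$ through the mean-square derivatives of $\CY_V$ using \Cref{Remark 5.3}. First I would note that since $\BFC=(I_k,0_k,\ldots,0_k)$ we have $\BFC X(t)=X^{(1)}(t)=Y_V(t)$, hence $Y_v(t+h)=e_v^\top Y_V(t+h)=e_v^\top\BFC X(t+h)$ $\mathbb{P}$-a.s. Applying \Cref{Lemma 5.2}(a) with $s=t$ gives
\begin{align*}
X(t+h)=e^{\BA h}X(t)+\int_t^{t+h}e^{\BA(t+h-u)}\BB\,dL(u)\quad\mathbb{P}\text{-a.s.},
\end{align*}
so that, multiplying by $e_v^\top\BFC$ from the left,
\begin{align*}
Y_v(t+h)=e_v^\top\BFC e^{\BA h}X(t)+e_v^\top\BFC\int_t^{t+h}e^{\BA(t+h-u)}\BB\,dL(u)\quad\mathbb{P}\text{-a.s.}
\end{align*}
This already gives the second summand of the claimed formula, so it remains to rewrite $X(t)$.

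Next I would decompose $X(t)$ into its $k$-blocks. By the definition of $\BFE_j$, the matrix $\BFE_j^\top\in M_{k\times kp}(\R)$ extracts the $j$-th $k$-block, i.e.\ $\BFE_j^\top X(t)=X^{(j)}(t)$ in the notation of \eqref{block}, and since $\sum_{j=1}^p\BFE_j\BFE_j^\top=I_{kp}$ we obtain
\begin{align*}
X(t)=\sum_{j=1}^p\BFE_j\BFE_j^\top X(t)=\sum_{j=1}^p\BFE_j X^{(j)}(t).
\end{align*}
Now I invoke \Cref{Remark 5.3}: $\CY_V$ is $(p-1)$-times mean-square differentiable with $D^{(j)}Y_V(t)=X^{(j+1)}(t)$ for $j=1,\ldots,p-1$ $\mathbb{P}$-a.s., and with the convention $D^{(0)}Y_V(t)=Y_V(t)=X^{(1)}(t)$ this reads $D^{(j-1)}Y_V(t)=X^{(j)}(t)$ for $j=1,\ldots,p$ $\mathbb{P}$-a.s. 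Substituting this into the block decomposition yields $X(t)=\sum_{j=1}^p\BFE_j D^{(j-1)}Y_V(t)$ $\mathbb{P}$-a.s., and plugging this back into the displayed expression for $Y_v(t+h)$ gives exactly the assertion. All equalities used hold $\mathbb{P}$-a.s., so the final identity holds $\mathbb{P}$-a.s.

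There is essentially no serious obstacle here; the proof is a bookkeeping computation combining three facts already established (the output equation $Y_V=\BFC\CX$, the flow formula of \Cref{Lemma 5.2}(a), and the identification of the state blocks $X^{(j)}$ with the derivatives $D^{(j-1)}Y_V$ from \Cref{Remark 5.3}). The only points requiring a modicum of care are the indexing conventions: that $\BFE_j$ selects the $j$-th $k$-block so that $\sum_{j=1}^p\BFE_j\BFE_j^\top=I_{kp}$, and that the shift between $D^{(j)}$ and $X^{(j+1)}$ matches the shift between $D^{(j-1)}$ and $X^{(j)}$ used in the statement, including the $j=1$ (zeroth-derivative) case.
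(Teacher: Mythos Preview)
Your proposal is correct and takes essentially the same approach as the paper: both combine the variation-of-constants formula from \Cref{Lemma 5.2}(a) with the block decomposition $X(t)=\sum_{j=1}^p\BFE_j X^{(j)}(t)$ and the identification $X^{(j)}(t)=D^{(j-1)}Y_V(t)$ from \Cref{Remark 5.3}. Your write-up is in fact slightly more explicit about why $\sum_{j=1}^p\BFE_j\BFE_j^\top=I_{kp}$ gives the block decomposition, but the argument is otherwise identical.
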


From this representation of $Y_v(t+h)$ we conclude that on the one hand, the past $(Y_V(s), s\leq t)$ of all components and on the other hand, the future of the Lévy process $(L(t+h)-L(s), t\leq s \leq t+h)$ are relevant for $Y_v(t+h)$. %Thus, we obtain a representation
%\begin{align*}
%Y_v(t+h) = f_1(Y_V(s), s\leq t) + f_2(L(s), t\leq s \leq t+h),
%\end{align*}
%$h\geq 0$, $t\in \R$.
Based on this knowledge, we specify the orthogonal projections.

\begin{proposition}\label{Projektionen CAR berechnet}
Let $\CY_V$ be a causal %$k$-dimensional
MCAR$(p)$ process. %, such that the driving $k$-dimensional Lévy process satisfies \Cref{Assumption on Levy process}. Let
%\LS{such that the driving $k$-dimensional Lévy process satisfies \Cref{Assumption on Levy process}?}
Further, let $t\in \R$,  $h\geq 0$, $S \subseteq V$, and $v\in V$. Then
\begin{align*}
P_{\mathcal{L}_{Y_S}(t)}Y_v(t+h)
= & \: e_v^\top  \BFC e^{\BA h}  \sum_{s \in S} \sum_{j=1}^p \BFE_j e_{s} D^{(j-1)} Y_s (t) \\
  &+ e_v^\top  \BFC e^{\BA h}  \sum_{s \in V\setminus S} \sum_{j=1}^p \BFE_j e_{s} P_{\mathcal{L}_{Y_S}(t)} D^{(j-1)} Y_s (t) \quad \mathbb{P}\text{-a.s.}
%= &\sum_{s \in S} \sum_{j=1}^p e_v^\top  e^{\BA h} e_{s + k(j-1)} D^{(j-1)} Y_s (t) \\
%  &+ \sum_{s \in V\setminus S} \sum_{j=1}^p e_v^\top  e^{\BA h} e_{s + k(j-1)} P_{\mathcal{L}_{Y_S}(t)} D^{(j-1)} Y_s (t)
\end{align*}
\end{proposition}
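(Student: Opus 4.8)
The plan is to apply the orthogonal projection $P_{\mathcal{L}_{Y_S}(t)}$ to the representation of $Y_v(t+h)$ provided by \Cref{Yb für MCAR}, namely
\begin{align*}
Y_v(t+h)= e_v^\top \BFC e^{\BA h} \sum_{j=1}^p \BFE_j D^{(j-1)}Y_V(t) + e_v^\top \BFC \int_t^{t+h} e^{\BA (t+h-u)} \BB \, dL(u),
\end{align*}
and to exploit that orthogonal projections onto a fixed closed subspace are linear and continuous. I would split the argument into two parts: first show that the projection annihilates the stochastic-integral term, and then compute the projection of the first term, which is a finite linear combination of the derivatives $D^{(j-1)}Y_V(t)$, $j=1,\ldots,p$, explicitly.

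For the stochastic-integral term $I := e_v^\top \BFC \int_t^{t+h} e^{\BA(t+h-u)}\BB\, dL(u)$, I would use the moving-average representation $X(u) = \int_{-\infty}^u e^{\BA(u-r)}\BB\, dL(r)$ from \Cref{Lemma 5.2}(a) together with $Y_a(u) = e_a^\top \BFC X(u)$ to see that each generator $Y_a(u)$ of $\mathcal{L}_{Y_S}(t)$ (with $a\in S$, $u\le t$) is a mean-square limit of linear combinations of increments $L(r_2)-L(r_1)$ with $r_1 \le r_2 \le t$, hence measurable with respect to the $\sigma$-field $\mathcal{G}_t$ generated by such increments. By the independent-increments property of the Lévy process, $\mathcal{G}_t$ is independent of the increments $L(r_2)-L(r_1)$ with $t\le r_1\le r_2\le t+h$, and since $I$ is itself a mean-square limit of linear combinations of the latter increments, all of which are centred because $\BE L(1)=0_k$, one obtains $\BE[I\,\overline{Y_a(u)}] = 0$ for every $a\in S$ and $u\le t$. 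By bilinearity of the inner product and the continuity relation \eqref{limit E}, this extends to $I \perp \mathcal{L}_{Y_S}(t)$, so $P_{\mathcal{L}_{Y_S}(t)} I = 0$.

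For the first term, I would decompose the $\R^k$-valued derivative component-wise, $D^{(j-1)}Y_V(t) = \sum_{s\in V} e_s\, D^{(j-1)}Y_s(t)$, where each scalar $D^{(j-1)}Y_s(t)$ is well defined since $\CY_s$ is $(p-1)$-times mean-square differentiable by \Cref{Remark 5.3}. Linearity of $P_{\mathcal{L}_{Y_S}(t)}$ then gives
\begin{align*}
P_{\mathcal{L}_{Y_S}(t)}\Big(e_v^\top \BFC e^{\BA h} \sum_{j=1}^p \BFE_j D^{(j-1)}Y_V(t)\Big)
= e_v^\top \BFC e^{\BA h} \sum_{s\in V}\sum_{j=1}^p \BFE_j e_s\, P_{\mathcal{L}_{Y_S}(t)} D^{(j-1)}Y_s(t).
\end{align*}
For $s\in S$, \Cref{Remark 3.6} yields $D^{(j-1)}Y_s(t)\in \mathcal{L}_{Y_s}(t)\subseteq \mathcal{L}_{Y_S}(t)$, so the projection acts as the identity on these terms; for $s\in V\setminus S$ the projected term is kept. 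Splitting the sum over $s\in V$ into $s\in S$ and $s\in V\setminus S$ accordingly and adding $P_{\mathcal{L}_{Y_S}(t)} I = 0$ gives precisely the asserted identity.

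The main obstacle is the orthogonality claim $I\perp \mathcal{L}_{Y_S}(t)$: one must pass from ''$I$ is uncorrelated with every generator $Y_a(u)$`` to ''$I$ is orthogonal to the closed span``, which relies on the continuity of the inner product, and one has to keep careful track of which $\sigma$-field the past $(Y_V(u))_{u\le t}$ generates under the two-sided Lévy construction so that the independence of increments can be invoked. All remaining steps are linear algebra and bookkeeping with the matrices $\BFC$, $e^{\BA h}$ and $\BFE_j$.
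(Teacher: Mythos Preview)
Your proposal is correct and follows essentially the same route as the paper: apply $P_{\mathcal{L}_{Y_S}(t)}$ to the representation from \Cref{Yb für MCAR}, use linearity together with \Cref{Remark 3.6} to handle the derivatives $D^{(j-1)}Y_s(t)$ for $s\in S$, and argue that the stochastic-integral term projects to zero because the past $(Y_S(u))_{u\le t}$ and the increments $(L(t+h)-L(u))_{t\le u\le t+h}$ are independent. The paper states this independence of $\sigma$-fields directly (in the commented draft via \cite{MA07}, Theorem~3.12), whereas you unpack it through the moving-average representation and the independent-increments property; both lead to the same orthogonality conclusion.
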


According to \Cref{Yb für MCAR}, the basic idea of the proof is simple:  $Y_s(t)$ and its derivatives are already in $\mathcal{L}_{Y_S}(t)$ (see \Cref{Remark 3.6}) and are therefore projected onto themselves. Additionally, $\sigma(Y_S(s):\, s\leq t)$ and $\sigma(L(t+h)-L(s):\, t \leq s \leq t+h)$ are independent and thus, $e_v^\top  \BFC \int_t^{t+h} e^{\BA (t+h-u)} \BB dL(u)$ is projected on zero. % The technical aspects of the proof can be found in
%\Cref{subsec:proofpredMCAR}.

\begin{remark}\label{Projektion für MCAR für S=V}
For $S=V$ we get the explicit representation
\begin{align*}
P_{\mathcal{L}_{Y_V}(t)}Y_v(t+h)
= e_v^\top  \BFC e^{\BA h}  \sum_{s \in V} \sum_{j=1}^p \BFE_j e_{s} D^{(j-1)}Y_s(t)
= e_v^\top  \BFC e^{\BA h} X(t),
\end{align*}
as in \cite{BR15} for univariate CARMA processes. For an explicit representation in the case $S\subset V$ the methods %developed
in \cite{RO67}, III, 5, can be applied but this is quite elaborate.
\end{remark}

Next, we calculate the projections of $ D^{(p-1)}\CY_V$, which we require for the characterisation of local Granger causality and local contemporaneous correlation. %The proof is given in \Cref{subsec:proofpredMCAR}.

\begin{lemma}\label{Projektionen mit lim}
Let $\CY_V$ be a causal %$k$-dimensional
MCAR$(p)$ process. %LS{such that the driving $k$-dimensional Lévy process satisfies \Cref{Assumption on Levy process}?}
Further, let $t\in \R$,  $h\geq 0$, $S \subseteq V$, and $v\in V$. Then
\begin{align*}
& P_{\mathcal{L}_{Y_{S}}(t)} \left(D^{(p-1)} Y_v(t+h)- D^{(p-1)} Y_v(t)\right) \\
&\quad =\: e_{v}^\top  \BFE_p^\top  \left( e^{\BA h} -I_{kp} \right) \sum_{s \in S}
\sum_{j=1}^p  \BFE_j e_s  D^{(j-1)} Y_s(t) \nonumber \\
&\quad\quad\: +  e_{v}^\top  \BFE_p^\top  \left( e^{\BA h} -I_{kp} \right)
\sum_{s \in V\setminus S} \sum_{j=1}^p \BFE_j e_s  P_{\mathcal{L}_{Y_{S}}(t)}D^{(j-1)} Y_s(t)
\quad\mathbb{P}\text{-a.s.} \nonumber
\end{align*}
and
\begin{align*}
 D^{(p-1)} Y_v(t+h)- P_{\mathcal{L}_{Y_{V}}(t)} D^{(p-1)} Y_v(t+h)
= e_v^\top  \BFE_p^\top  \int_t^{t+h} e^{\BA (t+h-u)} \BB dL(u) \quad\mathbb{P}\text{-a.s.}
\end{align*}
\end{lemma}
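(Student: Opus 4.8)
The plan is to reduce everything to the state-space representation in \Cref{Lemma 5.2}(a), together with two facts recorded earlier. First, from \Cref{Remark 5.3}, $D^{(j-1)}Y_V(\tau)=X^{(j)}(\tau)=\BFE_j^\top X(\tau)$ for $j=1,\dots,p$ and all $\tau\in\R$, so that $X(\tau)=\sum_{j=1}^p\BFE_j D^{(j-1)}Y_V(\tau)=\sum_{s\in V}\sum_{j=1}^p\BFE_j e_s D^{(j-1)}Y_s(\tau)$ and, in particular, $D^{(p-1)}Y_v(\tau)=e_v^\top\BFE_p^\top X(\tau)$ $\mathbb{P}$-a.s. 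Second, from \Cref{Remark 3.6}, $D^{(j-1)}Y_s(\tau)\in\mathcal{L}_{Y_s}(\tau)\subseteq\mathcal{L}_{Y_S}(\tau)$ whenever $s\in S$ and $j\le p$. With these in hand the argument is essentially a rerun of the proof of \Cref{Projektionen CAR berechnet} with $\BFC$ replaced by $\BFE_p^\top$ and $Y_V$ replaced by its top derivative $D^{(p-1)}Y_V=X^{(p)}$.

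For the first identity I would start from \Cref{Lemma 5.2}(a), which gives $X(t+h)=e^{\BA h}X(t)+\int_t^{t+h}e^{\BA(t+h-u)}\BB\,dL(u)$, hence $D^{(p-1)}Y_v(t+h)=e_v^\top\BFE_p^\top e^{\BA h}X(t)+e_v^\top\BFE_p^\top\int_t^{t+h}e^{\BA(t+h-u)}\BB\,dL(u)$ $\mathbb{P}$-a.s. Now apply $P_{\mathcal{L}_{Y_S}(t)}$, using linearity and acting componentwise on $X(t)$. The stochastic integral over $[t,t+h]$ is projected onto $0$, because $\mathcal{L}_{Y_S}(t)\subseteq\mathcal{L}_{Y_V}(t)$ is measurable with respect to $\sigma(Y_V(s):s\le t)\subseteq\sigma(L(r):r\le t)$, which is independent of $\sigma(L(t+h)-L(s):t\le s\le t+h)$, while the integral has mean zero (this is exactly the independence/orthogonality observation used after \Cref{Projektionen CAR berechnet}). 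For the remaining term, $P_{\mathcal{L}_{Y_S}(t)}\bigl(e_v^\top\BFE_p^\top e^{\BA h}X(t)\bigr)=e_v^\top\BFE_p^\top e^{\BA h}\,P_{\mathcal{L}_{Y_S}(t)}X(t)$, and splitting $X(t)=\sum_{s\in V}\sum_{j=1}^p\BFE_j e_s D^{(j-1)}Y_s(t)$ and using that $D^{(j-1)}Y_s(t)\in\mathcal{L}_{Y_S}(t)$ for $s\in S$ yields $P_{\mathcal{L}_{Y_S}(t)}X(t)=\sum_{s\in S}\sum_{j=1}^p\BFE_j e_s D^{(j-1)}Y_s(t)+\sum_{s\in V\setminus S}\sum_{j=1}^p\BFE_j e_s P_{\mathcal{L}_{Y_S}(t)}D^{(j-1)}Y_s(t)=:W$. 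Thus $P_{\mathcal{L}_{Y_S}(t)}D^{(p-1)}Y_v(t+h)=e_v^\top\BFE_p^\top e^{\BA h}W$; putting $h=0$ gives $P_{\mathcal{L}_{Y_S}(t)}D^{(p-1)}Y_v(t)=e_v^\top\BFE_p^\top W$, and subtracting the two displays and re-inserting the definition of $W$ produces the first claimed formula.

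The second identity is just the special case $S=V$ of the computation above: there the sum over $V\setminus S$ is empty, so $W=\sum_{s\in V}\sum_{j=1}^p\BFE_j e_s D^{(j-1)}Y_s(t)=X(t)$ and hence $P_{\mathcal{L}_{Y_V}(t)}D^{(p-1)}Y_v(t+h)=e_v^\top\BFE_p^\top e^{\BA h}X(t)$ $\mathbb{P}$-a.s. Subtracting this from $D^{(p-1)}Y_v(t+h)=e_v^\top\BFE_p^\top e^{\BA h}X(t)+e_v^\top\BFE_p^\top\int_t^{t+h}e^{\BA(t+h-u)}\BB\,dL(u)$ leaves exactly $e_v^\top\BFE_p^\top\int_t^{t+h}e^{\BA(t+h-u)}\BB\,dL(u)$, which is the second assertion. (An equivalent route for the first identity would be to differentiate the formula of \Cref{Projektionen CAR berechnet} $p-1$ times in $h$, using that $P_{\mathcal{L}_{Y_S}(t)}$ is bounded linear and hence commutes with the mean-square derivative, together with the companion-matrix identity $\BFC\BA^{p-1}=\BFE_p^\top$.)

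There is no substantial obstacle in this proof. The one genuinely probabilistic input is the orthogonality of the $[t,t+h]$-increments of $L$ to $\mathcal{L}_{Y_V}(t)$, which is already the mechanism behind \Cref{Projektionen CAR berechnet}; everything else is bookkeeping with the block structure of $X$, $\BFE_j$, and $\BA$, so the main thing requiring care is simply not losing track of which block is being projected.
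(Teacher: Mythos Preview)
Your proof is correct and follows essentially the same route as the paper: express $D^{(p-1)}Y_v(t+h)$ via $e_v^\top\BFE_p^\top X(t+h)$ and the state equation, project the $X(t)$ part componentwise using \Cref{Remark 3.6}, and kill the stochastic integral by independence of the L\'evy increments from the past. The only cosmetic difference is that the paper first forms the difference $D^{(p-1)}Y_v(t+h)-D^{(p-1)}Y_v(t)$ and then projects, whereas you project each term and subtract, which is the same by linearity.
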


%%%%%%%%%%%%%%%%%%%%%%%%%%%%%%%%%%%%%%%%%%%%%%%%%%%%%%%%%%%%%%%%%%%%%%
\subsection{Characterisation of the directed and undirected influences for the  MCAR process}\label{sec:charMCAR}
%\subsubsection{The characterisation of the directed edges for the MCAR process}
In this subsection, we focus on criteria for the directed and undirected influences for causal MCAR$(p)$ processes. All proofs of this subsection are carried out in \Cref{subsec:proofcharMCAR}.
We start with a characterisation of (local) Granger causality for an MCAR process, which is well suited for interpretation and for comparison with \cite{EI07} in discrete time. The proofs %is carried out in \Cref{subsec:proofcharMCAR} and
are based on the characterisation of (local) Granger causality in \Cref{Charakterisierung als Gleichheit der linearen Vorhersage} using the orthogonal projections from \Cref{subsec:predMCAR}. Note that for the definition of local Granger causality and local contemporaneous correlation, we use that all components of $\CY_V$ are $(p-1)$-times mean square differentiable, but the $p$-th derivative does not exist (cf.~\Cref{Remark 5.3}), so that $j_v=p-1$ for any $v\in V$.

\begin{proposition}\label{Erste Charakterisierung gerichtete Kante für CAR}
Let $\CY_V$ be a causal %$k$-dimensional
MCAR$(p)$ process with
%\LS{such that the driving $k$-dimensional Lévy process satisfies \Cref{Assumption on Levy process}?}. Suppose
\mbox{$\BS_L>0$.} Further, let $a,b\in V$ and $a \neq b$. Then the following holds.
%\begin{align*}
%a \rarrow b \notin E_{OG}
%\quad \Leftrightarrow \quad
% e_b^\top  \BFC e^{\BA h} \BFE_j e_a = e_b^\top  e^{\BA h} e_{k(j-1)+a}=0 \quad \text{for} \quad 0 \leq h \leq 1, j=1,\ldots,p.
%\end{align*}
\begin{itemize}
\item[(a)]
 \makebox[2,62cm][l]{$\CY_a \nrarrow \CY_b \: \vert \: \CY_V$} $ \Leftrightarrow \quad
  \left[ \BFC e^{\BA h} \BFE_j \right]_{ba}=\left[ e^{\BA h} \right]_{b\: k(j-1)+a}=0 \quad \forall\, h\in [0,1], \: j=1,\ldots,p.$
 \item[(b)]  \makebox[2,6cm][l]{$\CY_a \nrarrownull \CY_b \: \vert \: \CY_V $} $\Leftrightarrow \quad \left[ \BFE_p^\top  \BA \BFE_j \right]_{ba} = \left[ A_j \right]_{ba} = 0 \quad \forall \,\text{$j=1,\ldots,p$.}$
\end{itemize}
%\begin{align*}
%a \rarrow b \notin E_{OG} \quad &\Leftrightarrow \quad \left[ e^{\BA h} \right]_{b, k(j-1)+a}=0 \quad \text{for} \quad 0 \leq h \leq 1, j=1,\ldots,p\\
% \quad &\Leftrightarrow \quad e_b^\top  \BFC e^{\BA h} \BFE_je_a=\bzero^\top  \quad  \text{for} \quad 0 \leq h \leq 1, j=1,\ldots,p.
%\end{align*}
\end{proposition}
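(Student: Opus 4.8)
The plan is to feed the explicit MCAR prediction formulas of \Cref{subsec:predMCAR} into the projection characterisations of (local) Granger non-causality and then reduce everything to a single linear-algebra fact. Fix $a\neq b$ in $V$, write $\mathcal{M}_t\coloneqq\mathcal{L}_{Y_{V\setminus\{a\}}}(t)$, and for $j=1,\dots,p$ set $\xi_j(t)\coloneqq D^{(j-1)}Y_a(t)-P_{\mathcal{M}_t}D^{(j-1)}Y_a(t)$. By \Cref{Remark 5.3} the process $\CY_V$ is exactly $(p-1)$-times mean-square differentiable, so $j_v=p-1$ for all $v\in V$, and by \Cref{Remark 3.6} each $D^{(j-1)}Y_a(t)$ lies in $\mathcal{L}_{Y_{\{a\}}}(t)$. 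The crucial preliminary claim is that $\xi_1(t),\dots,\xi_p(t)$ are \emph{linearly independent in $L^2$} for every fixed $t$. Indeed, $\BS_L>0$ forces $c_{XX}(0)>0$ by \Cref{Remark 5.4}, so the $kp$ coordinates of $X(t)$ are linearly independent in $L^2$; in particular so are $D^{(0)}Y_a(t),\dots,D^{(p-1)}Y_a(t)$, which by \Cref{Remark 5.3} are the coordinates $X_a(t),X_{k+a}(t),\dots,X_{k(p-1)+a}(t)$ of $X(t)$. If $\sum_j c_j\xi_j(t)=0$, then $\sum_j c_j D^{(j-1)}Y_a(t)=P_{\mathcal{M}_t}\!\big(\sum_j c_j D^{(j-1)}Y_a(t)\big)$ belongs to $\mathcal{M}_t\cap\mathcal{L}_{Y_{\{a\}}}(t)$, which is $\{0\}$ by \Cref{Eigenschaften der linearen Räume} (applicable because the MCAR process satisfies \Cref{Assumption an Dichte} by \Cref{Dichteannahme erfüllt}); hence $\sum_j c_j D^{(j-1)}Y_a(t)=0$ and all $c_j=0$.

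For part (a), by the projection characterisation of Granger non-causality (\Cref{Charakterisierung als Gleichheit der linearen Vorhersage}, one direction of which is immediate from \Cref{Charakterisisierung linear granger non-causal}(b)), $\CY_a\nrarrow\CY_b\mid\CY_V$ holds iff $P_{\mathcal{L}_{Y_V}(t)}Y_b(t+h)=P_{\mathcal{M}_t}Y_b(t+h)$ $\mathbb{P}$-a.s.\ for all $h\in[0,1]$ and $t\in\R$. Substituting \Cref{Projektion für MCAR für S=V} for the left projection and \Cref{Projektionen CAR berechnet} with $S=V\setminus\{a\}$ for the right one, the terms indexed by $s\in V\setminus\{a\}$ coincide and cancel, so the difference collapses to $\sum_{j=1}^{p}e_b^{\top}\BFC e^{\BA h}\BFE_j e_a\,\xi_j(t)=\sum_{j=1}^{p}[\BFC e^{\BA h}\BFE_j]_{ba}\,\xi_j(t)$. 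By the linear independence above this vanishes for all $h\in[0,1]$ iff $[\BFC e^{\BA h}\BFE_j]_{ba}=0$ for all $j$ and all $h\in[0,1]$; and since $\BFC=(I_k,0_k,\dots,0_k)$ while $\BFE_j e_a$ is the $(k(j-1)+a)$-th canonical basis vector of $\R^{kp}$, one has $[\BFC e^{\BA h}\BFE_j]_{ba}=[e^{\BA h}]_{b,\,k(j-1)+a}$, which is the asserted identity.

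For part (b), by \Cref{definition (linear) local Granger non-causal} with $j_b=p-1$, $\CY_a\nrarrownull\CY_b\mid\CY_V$ holds iff, for every $t$, the mean-square limits as $h\searrow 0$ of $\tfrac{1}{h}P_{\mathcal{L}_{Y_V}(t)}(D^{(p-1)}Y_b(t+h)-D^{(p-1)}Y_b(t))$ and of $\tfrac{1}{h}P_{\mathcal{M}_t}(D^{(p-1)}Y_b(t+h)-D^{(p-1)}Y_b(t))$ agree. Substituting \Cref{Projektionen mit lim} with $S=V$ and with $S=V\setminus\{a\}$: the first pre-limit equals $\tfrac{1}{h}e_b^{\top}\BFE_p^{\top}(e^{\BA h}-I_{kp})X(t)$ and converges in $L^2$ to $e_b^{\top}\BFE_p^{\top}\BA X(t)$ since $(e^{\BA h}-I_{kp})/h\to\BA$ in matrix norm; the second pre-limit converges likewise, so both limits genuinely exist here and no separate existence argument is needed. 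As in (a) the $s\in V\setminus\{a\}$ terms cancel in the difference of the two pre-limits, which therefore equals $\tfrac{1}{h}e_b^{\top}\BFE_p^{\top}(e^{\BA h}-I_{kp})\sum_{j=1}^{p}\BFE_j e_a\,\xi_j(t)$ and converges, as $h\searrow 0$, to $\sum_{j=1}^{p}[\BFE_p^{\top}\BA\BFE_j]_{ba}\,\xi_j(t)$ (the $\xi_j(t)$ being independent of $h$). Linear independence of the $\xi_j(t)$ then gives $\CY_a\nrarrownull\CY_b\mid\CY_V$ iff $[\BFE_p^{\top}\BA\BFE_j]_{ba}=0$ for all $j=1,\dots,p$; finally, from the companion structure of $\BA$ (last $k$-block row $(-A_p,-A_{p-1},\dots,-A_1)$) one has $\BFE_p^{\top}\BA\BFE_j=-A_{p-j+1}$, so, viewed as a family over $j=1,\dots,p$, these conditions are precisely $[A_j]_{ba}=0$, $j=1,\dots,p$.

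The step I expect to be the real obstacle is the linear-independence claim for the residuals $\xi_j(t)$: one has to exclude the possibility that projecting the successive mean-square derivatives $D^{(j-1)}Y_a(t)$ onto $\mathcal{L}_{Y_{V\setminus\{a\}}}(t)$ turns them into a linearly dependent (or even zero) family, and this is exactly where the hypothesis $\BS_L>0$ is used — via $c_{XX}(0)>0$ from \Cref{Remark 5.4} and the intersection identity $\mathcal{L}_{Y_{\{a\}}}(t)\cap\mathcal{L}_{Y_{V\setminus\{a\}}}(t)=\{0\}$ from \Cref{Eigenschaften der linearen Räume}. Everything else is bookkeeping with the explicit projections of \Cref{subsec:predMCAR} together with the elementary limit $\lim_{h\to 0}(e^{\BA h}-I_{kp})/h=\BA$.
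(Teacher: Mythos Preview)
Your proof is correct and follows essentially the same route as the paper: both use the explicit projection formulas of \Cref{subsec:predMCAR} together with \Cref{Charakterisierung als Gleichheit der linearen Vorhersage}, reduce the difference of projections to a linear combination indexed by $j=1,\dots,p$, and then invoke $\mathcal{L}_{Y_{\{a\}}}(t)\cap\mathcal{L}_{Y_{V\setminus\{a\}}}(t)=\{0\}$ (\Cref{Eigenschaften der linearen Räume}) and $c_{XX}(0)>0$ (\Cref{Remark 5.4}) to conclude the coefficients vanish. Your packaging of the last step as linear independence of the residuals $\xi_j(t)$ is a clean variant of the paper's direct argument, and your observation that $\BFE_p^\top\BA\BFE_j=-A_{p-j+1}$ (so the two families of conditions agree only after reindexing and a sign) is more precise than the paper's shorthand.
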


These characterisations of  (local) Granger causality are convenient since we no longer need to compute and compare orthogonal projections. Moreover, the deterministic criteria depend only on the state transition matrix $\BA$ and %the covariance matrix $\BS_L$ of the driving Lévy process, but
not on the %distribution of the
driving \mbox{Lévy process.}

Let us now move on to contemporaneous uncorrelation and also give a first characterisation specifically related to the structure of an MCAR$(p)$ process. Similar to \Cref{Erste Charakterisierung gerichtete Kante für CAR}, the proof is %is given in \Cref{subsec:proofcharMCAR} and is as well
based on the characterisation of contemporaneous uncorrelation by orthogonal projections from \Cref{subsec:predMCAR}   and \eqref{MR}.

\begin{proposition}\label{Einfache Charakterisierung ungerichtete Kante für CAR}
Let $\CY_V$ be a causal %$k$-dimensional
MCAR$(p)$ process. % \LS{such that the driving $k$-dimensional Lévy process satisfies \Cref{Assumption on Levy process}?}.
Further, let $a,b\in V$ and $a \neq b$. Then the following holds.
\begin{itemize}
\item[(a)]
$\CY_a \nsim \CY_b \: \vert \: \CY_V
\quad \Leftrightarrow \quad
\left[ \int\limits_0^{\min(h,\tilde{h})} \BFC e^{\BA (h-u)} \BB \BS_L \BB^\top  e^{\BA^\top (\tilde{h}-u)} \BFC^\top  du\right]_{ab} =0$ $\forall\,h, \tilde{h} \in [0,1].$
\item[(b)] $\CY_a \nsimnull \CY_b \: \vert \: \CY_V \:\: \Leftrightarrow \quad
\left[ \BS_L \right]_{ab} = 0.$
\end{itemize}
%\begin{align*}
%a \inst b \notin E_{OG}
%\quad \Leftrightarrow \quad
%e_a^\top  \BFC \int_0^{\min(h,\tilde{h})} e^{\BA (h-s)} \BB \BS_L \BB^\top  e^{\BA^\top (\tilde{h}-s)} ds \: \BFC^\top  e_b =0 \quad  \text{for} \quad 0\leq h, \tilde{h}\leq 1.
%\end{align*}
\end{proposition}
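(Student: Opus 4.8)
The plan is to combine the equivalent characterisations of (local) contemporaneous uncorrelation from \Cref{sec:undirected_influences} with the explicit predictor formulas of \Cref{subsec:predMCAR}. The decisive point is that, for an MCAR process, the relevant prediction residuals are explicit stochastic integrals of deterministic matrix functions against the driving Lévy process over $[t,t+h]$, so that their covariances collapse to ordinary integrals involving $\BS_L$ only.

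\textbf{Part (a).} I would start from \Cref{Charakterisisierung contemporaneously uncorrelated} (d), which says $\CY_a \nsim \CY_b \: \vert \: \CY_V$ is equivalent to $\BE[(Y_a(t+h) - P_{\mathcal{L}_{Y_V}(t)}Y_a(t+h))\overline{(Y_b(t+\tilde h) - P_{\mathcal{L}_{Y_V}(t)}Y_b(t+\tilde h))}] = 0$ for all $t\in\R$ and $h,\tilde h\in[0,1]$. By \Cref{Yb für MCAR} combined with \Cref{Projektion für MCAR für S=V}, the residual equals $e_v^\top\BFC\int_t^{t+h}e^{\BA(t+h-u)}\BB\,dL(u)$, i.e.\ only the innovation contribution on $[t,t+h]$ survives. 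Since $L$ is real with orthogonal increments satisfying $\BE[dL(u)\,dL(u')^\top] = \BS_L\,\delta_{u=u'}\,du$ — the same independence already invoked in the discussion of \Cref{Projektionen CAR berechnet} — the It\^o isometry gives that the above expectation equals $e_a^\top\int_t^{t+\min(h,\tilde h)}\BFC e^{\BA(t+h-u)}\BB\BS_L\BB^\top e^{\BA^\top(t+\tilde h-u)}\BFC^\top\,du\,e_b$; the substitution $u\mapsto u-t$ turns this into the $(a,b)$-entry of $\int_0^{\min(h,\tilde h)}\BFC e^{\BA(h-u)}\BB\BS_L\BB^\top e^{\BA^\top(\tilde h-u)}\BFC^\top\,du$, which is the stated criterion.

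\textbf{Part (b).} By \Cref{Remark 5.3} every component $\CY_v$ is $(p-1)$- but not $p$-times mean-square differentiable, so $j_v=p-1$ and \Cref{Def: local contemporaneously uncorrelated} applies with this value. The second identity in \Cref{Projektionen mit lim} gives $D^{(p-1)}Y_v(t+h) - P_{\mathcal{L}_{Y_V}(t)}D^{(p-1)}Y_v(t+h) = e_v^\top\BFE_p^\top\int_t^{t+h}e^{\BA(t+h-u)}\BB\,dL(u)$. Using again the orthogonality of the increments of $L$ and substituting $v=t+h-u$, the covariance appearing in \Cref{Def: local contemporaneously uncorrelated} equals $e_a^\top\BFE_p^\top\big(\int_0^{h}e^{\BA v}\BB\BS_L\BB^\top e^{\BA^\top v}\,dv\big)\BFE_p\,e_b$. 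Dividing by $h$ and letting $h\searrow0$, continuity of $v\mapsto e^{\BA v}\BB\BS_L\BB^\top e^{\BA^\top v}$ and the fundamental theorem of calculus yield $\tfrac1h\int_0^h e^{\BA v}\BB\BS_L\BB^\top e^{\BA^\top v}\,dv\to\BB\BS_L\BB^\top$, so in particular the limit postulated in \Cref{Def: local contemporaneously uncorrelated} exists and equals $e_a^\top\BFE_p^\top\BB\BS_L\BB^\top\BFE_p\,e_b$. Finally, the block structure $\BFE_p^\top = (0_{k\times k(p-1)},I_k)$ and $\BB^\top=(0_k,\ldots,0_k,I_k)$ gives $\BFE_p^\top\BB = I_k$, so the limit is $e_a^\top\BS_L\,e_b = [\BS_L]_{ab}$, and the equivalence follows.

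The computations are routine once the predictor formulas of \Cref{subsec:predMCAR} are in hand; the two steps that need genuine care are (i) justifying the covariance identity for the stochastic integrals, which rests on the orthogonal-increment structure of $L$ with kernel $\BS_L$ together with the independence of $\sigma(Y_V(s):s\le t)$ from the future increments of $L$ (already exploited for \Cref{Projektionen CAR berechnet}), and (ii) in part (b), interchanging the limit $h\searrow0$ with the averaging $\tfrac1h\int_0^h$, a step that simultaneously certifies that the limit required in \Cref{Def: local contemporaneously uncorrelated} actually exists. Neither step is deep, but both are where the proof could go wrong if the stochastic-integral bookkeeping is sloppy.
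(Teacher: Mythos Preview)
Your proof is correct and follows essentially the same route as the paper: compute the prediction residuals via \Cref{Projektion für MCAR für S=V} and \Cref{Projektionen mit lim}, reduce the covariance to a deterministic integral by the It\^o isometry, and in part (b) pass to the limit via the fundamental theorem of calculus. You are in fact slightly more explicit than the paper in spelling out $\BFE_p^\top\BB=I_k$ for the final simplification.
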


\begin{remark}~
\begin{itemize}
\item[(a)] \cite{CO96} investigate non-stationary Brownian motion driven MCAR processes on local Granger causality and local instantaneous causality, which are similar to our concepts of local Granger causality and local contemporaneous correlation. %However, they are defined for semimartingales and not set in the context of graphs. 
 In their Proposition 20, \cite{CO96} obtain that $Y_a$ does not locally Granger cause $Y_b$ if and only if $\left[ A_j \right]_{ba} = 0$, for $j=1,\ldots,p$, as in our \Cref{Erste Charakterisierung gerichtete Kante für CAR}. Furthermore, there is no local instantaneous causality between $Y_a$ and $Y_b$ if and only if $\left[ \BS_L \right]_{ab} = 0$, as in \Cref{Einfache Charakterisierung ungerichtete Kante für CAR} for the local orthogonality graph. Statements about local Granger causality and local instantaneous causality for subprocesses under possible partial information, as we present with the Markov properties in \Cref{sec:Markprop}, are not available there.
\item[(b)]
Furthermore, as a generalisation of \cite{DI06}, \cite{Mogensen:Hansen:2022} study the local independence graph for It$\hat{\text{o}}$ processes where the graph models the local independence structure of the underlying stochastic process; in contrast, we model local orthogonality. A special case is the Brownian motion driven Ornstein-Uhlenbeck process.
The edges of the local independence graph of a Brownian motion driven Ornstein-Uhlenbeck process (cf.~Proposition 7 in \citealp{Mogensen:Hansen:2022}) are the same as given here in Propositions \ref{Erste Charakterisierung gerichtete Kante für CAR} and \ref{Einfache Charakterisierung ungerichtete Kante für CAR}, i.e., there is no directed edge from $a$ to $b$ if and only if $[\BA]_{ba}=0$, and there is no undirected edge between $a$ and $b$ if and only if $\left[ \BS_L \right]_{ab} = 0$. Thus, in the case of a Brownian motion driven Ornstein-Uhlenbeck process, the  local independence graph and our conditional orthogonality graph coincide. 
\item[(c)] In both papers,  \cite{CO96} and \cite{Mogensen:Hansen:2022}, it is \linebreak important to have Brownian motion driven It$\hat{\text{o}}$ processes to receive the dependence structure of the underlying processes. Since for Gaussian models conditional orthogonality and conditional independence are equivalent, it is not surprising that we obtain the same edge characterisations as there for Gaussian driven Ornstein-Uhlenbeck processes. However, it will be a challenging task to extend the results in \cite{CO96} and \cite{Mogensen:Hansen:2022} to Lévy-driven It$\hat{\text{o}}$ processes. Our approach is able to fill this gap by presenting a graphical model for Lévy-driven MCAR$(p)$ processes that moves away from the Gaussian assumption and $p\geq 2$ but is still consistent with the existing literature and satisfies some Markov properties.
\end{itemize}
\end{remark}

Let us compare our results for the continuous-time multivariate AR process with the results for discrete-time vector AR (VAR) processes of \cite{EI07}, whose article provided the basis for our considerations. We start with the local orthogonality graph because the comparison is obvious there.

\begin{remark} \label{Comparison VAR local}
%
%to MCAR$(p)$ processes we rewrite the MCAR$(p)$ process $\CY_V$ and the VAR$(p)$ process $\mathcal{Z}_V$ accordingly, so that a comparison becomes possible. Define
%\beao
%    \Theta_j^C=e_b^\top  e^{\BA h} \BFE_{j}, \quad j=1,\ldots p
%    \quad \text{ and } \quad \varepsilon^C(t)=\int_{t}^{t+h} e^{\BA (t+h-s)} \BB L(ds)
%\eeao
%First, using \Cref{Yb für MCAR}  we rewrite the $b$-th component of the MCAR$(p)$ process
%as
%\begin{align*}
%Y_b(t+h) &= \sum_{j=1}^p e_b^\top  e^{\BA h} \BFE_{j} D^{(j-1)} %Y_V(t) + e_b^\top  \int_{t}^{t+h} e^{\BA (t+h-s)} \BB L(ds),\\
%&= \sum_{j=1}^p e_b^\top \Theta_jD^{(j-1)} Y_V(t) +e_b^\top  %\varepsilon^C(t).
%\end{align*}
%$0\leq h \leq 1$, $t\in \R$. Moreover, consider the
The $k$-dimensional VAR$(p)$ process  ${Z}_V=(Z_V(t))_{t\in\Z}$ is defined as
\begin{align} \label{VAR process}
Z_V(t+1) = \sum_{n=1}^p \Phi_n Z_V(t+1-n) + \varepsilon(t+1), \quad t\in\Z,
\end{align}
 where $\varepsilon=(\varepsilon(t))_{t\in\Z}$ is a $k$-dimensional white noise process with non-singular covariance matrix $\Sigma_{\varepsilon}\in M_k(\R)$ and   autoregressive coefficients $\Phi_n\in M_k(\R)$, $n=1,\ldots,p$.
Further, define the AR-polynomial $\Phi(\lambda)=I_k+\Phi_1\lambda+\ldots+\Phi_p\lambda^p$, $\lambda\in \C$, and denote by \texttt{B} the backshift operator. Then
\begin{align*}
    \Phi(\mbox{\texttt{B}})Z_V(t)&=\varepsilon(t),
%\end{align*}
\intertext{which corresponds to the idea
for an MCAR$(p)$ process to be the solution of the stochastic differential equation}
%\begin{equation*} \label{eq1.1}
     {P}(D)Y_V(t)&=D L(t),
\end{align*}
where $
    {P}(\lambda)= I_{k}\lambda^p+A_1\lambda^{p-1}+\ldots+A_p,$ $ \lambda \in \C$.
Let $G=(V, E)$ be the path diagram of $Z_V$ as defined in \cite{EI07}.
\begin{itemize}
\item[(a)] \textit{Directed edges:}  Lemma~2.3 and Definition 2.1  in \cite{EI07} state that the directed edges in the path diagram $G$ of the discrete-time VAR$(p)$ process ${Z}_V$ satisfy
\begin{align*}
Z_a \nrarrow Z_b \: \vert \: Z_V \quad &\Leftrightarrow \quad
 \: a \rarrow b \notin E \quad \Leftrightarrow \quad
\left[\Phi_j\right]_{ba} =0,\quad j=1,\ldots,p.
\intertext{However, this is again  in analogy to the characterisation of directed edges in the local orthogonality graph $G_{OG}^0$ of an MCAR$(p)$ processes  where}
%\begin{align*}
\CY_a \nrarrownull \CY_b \: \vert \: \CY_V \quad &\Leftrightarrow  \quad \: a \rarrow b \notin E_{OG}^0 \quad
\Leftrightarrow
\quad \left[A_j\right]_{ba}=  0, \quad \   j=1,\ldots,p.
\end{align*}
In summary, both continuous and discrete-time models have in common that there is no directed edge between components $a$ and $b$ if and only if the $ba$-th components of the autoregressive coefficients are zero.\\ $\mbox{}$ \vspace*{-0.3cm}

\item[(b)] \textit{Undirected edges:}   On the other hand, for the undirected edges in the path diagram $G$ of the VAR$(p)$ process ${Z}_V$, Lemma~2.3 and Definition 2.1 in \cite{EI07} give the equivalence
\begin{align*}
Z_a \nsim Z_b \: \vert \: Z_V \quad &\Leftrightarrow \quad \: a \inst b \notin E
\quad \quad\Leftrightarrow \quad
\left[\Sigma_{\varepsilon}\right]_{ab}=0.\intertext{However, this is again in analogy to the condition for the undirected edges in the local orthogonality graph $G_{OG}^0$ where}
%\begin{align*}
\CY_a \nsimnull \CY_b \: \vert \: \CY_V
\quad & \Leftrightarrow \quad \: a \inst b \notin E_{OG}^{0}
\quad \Leftrightarrow \quad
\left[ \BS_L \right]_{ab} =0.
\end{align*}
Thus, a common feature of the continuous-time and discrete-time model is that there is no undirected edge between components $a$ and $b$ if and only if the $a$-th  and $b$-th components of the driving process are uncorrelated.
\end{itemize}
\end{remark}

Next, we compare the path diagram of the VAR model with the orthogonality graph of the MCAR model. Before doing so, we need to give some interpretations for the orthogonality graph.

\begin{remark} \label{remark:interpretation:edges:MCAR}
For the purpose of interpretation of the directed and undirected edges in the orthogonality graph $G_{OG}$, recall from \Cref{Yb für MCAR} the representation of the component $Y_v$ of the MCAR process $Y_V$ as
\begin{align} \label{eq:MCAR:derivatives}
Y_v(t+h)
%&= \sum_{j=1}^p e_b^\top  \BFC e^{\BA h} \BFE_j D^{(j-1)} Y_V(t) + e_b^\top  \BFC\int_t^{t+h} e^{\BA (t+h-u)} \BB dL(u)\\
&=\sum_{j=1}^p e_v^\top \Theta_j^{(h)} D^{(j-1)} Y_V(t) +e_v^\top  \varepsilon^{(h)}(t), \quad v\in V,
%&= f_1(Y_V(s), s\leq t) + f_2(L(s), t\leq s \leq t+h)
\end{align}
with
\beao
    \Theta_j^{(h)}\coloneqq \BFC e^{\BA h} \BFE_{j}\in M_k(\R)
    \quad \text{ and } \quad \varepsilon^{(h)}(t)\coloneqq\int_{t}^{t+h} \BFC e^{\BA (t+h-u)} \BB dL(u) \in \R^k.
\eeao
\begin{itemize}
\item[(a)] \textit{Directed edges:} A direct application of \Cref{Erste Charakterisierung gerichtete Kante für CAR} gives the condition for the directed edges in the orthogonality graph $G_{OG}$ as
\begin{align} \label{eq:MCAR:directed_edges}
%a \rarrow b \notin E_{OG} \quad \Leftrightarrow \quad e_b^\top  \Theta_h^C(j)e_a=e_b^\top \BFC e^{\BA h} \BFE_{j}e_a= 0\quad \ \text{for }  0\leq h \leq 1,\, j=1,\ldots,p,
\CY_a \nrarrow \CY_b \: \vert \: \CY_V  \quad \Leftrightarrow \quad \left[\Theta_j^{(h)}\right]_{ba}= 0\quad \forall\, h\in [0,1],\, j=1,\ldots,p.
\end{align}
This means  that the components $Y_a(t)$, $D^{(1)}Y_a(t)$,\ldots, $D^{(p-1)}Y_a(t)$ in the representation of the $b$-th component $Y_b(t+h)$ vanish  due to the corresponding prefactors being zero. $Y_a(t)$ and its derivatives do not matter to predict $Y_b(t+h)$.
\item[(b)] \textit{Undirected edges:} A consequence of \Cref{Einfache Charakterisierung ungerichtete Kante für CAR} is the condition for the undirected edges in the orthogonality graph $G_{OG}$ as
\begin{align}  \label{eq:MCAR:indirected_edges}
\CY_a \nsim \CY_b \: \vert \: \CY_V
\quad \Leftrightarrow \quad
\left[  \BE[\varepsilon^{(h)}(t)\varepsilon^{(\tilde{h})}(t)^\top ] \right]_{ab}=
\left[  \BE[\varepsilon^{(h)}(0)\varepsilon^{(\tilde{h})}(0)^\top ] \right]_{ab} =0 \quad \forall\, h,\tilde{h}\in [0,1],
\end{align}
%\begin{align*}
%a \inst b \notin E_{OG}
%\quad \Leftrightarrow \quad
%e_a^\top  \BFC \int_0^{\min(h,\tilde{h})} e^{\BA (h-s)} \BB \BS_L \BB^\top  e^{\BA^\top (\tilde{h}-s)} ds \: \BFC^\top  e_b %=0 \quad  \text{for} \quad 0\leq h, \tilde{h}\leq 1.
%\end{align*}
i.e., the noise terms $e_a^\top   \varepsilon^{(h)}(t)$ and $e_b^\top   \varepsilon^{(\tilde h)}(t)$
of $Y_a(t+h)$  and $Y_b(t+\tilde h)$ are uncorrelated for any $t\geq 0$.
\end{itemize}
\end{remark}

%Now, we are able to compare the orthogonality graph of the VAR model with the orthogonality graph of the MCAR model.

\begin{remark}\label{Remark:Interpretation gerichtete kante}
The characterisations of the directed and undirected edges of the orthogonality graph in \Cref{remark:interpretation:edges:MCAR} are well suited for comparison with VAR$(p)$ processes in \cite{EI07}. The challenge here is that in representation \eqref{eq:MCAR:derivatives}  of $Y_V(t+h)$ appear 
derivatives which have to be related to appropriate differences in the discrete-time process \eqref{VAR process}.  Thus, our goal is to replace the backshifts \mbox{$Z_V(t+1-n)$,} $n=1,\ldots,p$, by appropriate differences. To do this, we define a discrete-time difference operator
iteratively by
\begin{align*}
\texttt{D}^{(1)} Z_V(t) = Z_V(t) - Z_V(t-1), \quad
\texttt{D}^{(j)} Z_V(t) = \texttt{D}^{(j-1)} \left( Z_V(t) - Z_V(t-1) \right),
\end{align*}
$j=1,\ldots,p-1$, where we set $\texttt{D}^{(0)} Z_V(t) = Z_V(t)$. Furthermore, define
\begin{align*}
    \Theta_j\coloneqq\sum_{n=j}^p   \binom{n-1}{j-1} (-1)^{j-1} \Phi_n, \quad j=1,\ldots,p.
\end{align*}
Then some direct calculations show (see the Supplementary Material \ref{suppl:section 6}) that
\begin{align} \label{eq:VAR_differences}
Z_b(t+1) % &= \sum_{j=1}^{p} \sum_{n=j}^p   \binom{n-1}{j-1} (-1)^{j-1} e_b^\top  \Phi(n) \mathcal{D}^{(j-1)} Z_V(t) + e_b^\top  \varepsilon(t+1)\\
&=\sum_{j=1}^{p} e_b^\top \Theta_j \texttt{D}^{(j-1)} Z_V(t) + e_b^\top  \varepsilon(t+1).
\end{align}
This representation is now in analogy to \eqref{eq:MCAR:derivatives} for MCAR$(p)$ processes.

%If we now compare the $b$-th component of the MCAR$(p)$ process and the VAR$(p)$ process, we recognise the analogy between the weight vectors of the $(j-1)$-th derivative respectively difference
%\begin{align*}
%e_b^\top  e^{\BA h} \BFE_{j} \quad \text{and} \quad \sum_{n=j}^p   \binom{n-1}{j-1} (-1)^{j-1} e_b^\top  \Phi(n)
%\end{align*}
%as well as the analogy between the noise terms
%\begin{align*}
%e_b^\top  \int_{t}^{t+h} e^{\BA (t+h-s)} \BB L(ds) \quad \text{and} \quad e_b^\top  \varepsilon(t+1).
%\end{align*}
\begin{itemize}
\item[(a)] \textit{Directed edges:} In the former \Cref{Comparison VAR local} we just saw that for the discrete-time VAR$(p)$  process ${Z}_V$ the directed edges in the path diagram $G$ satisfy
\begin{align*}
Z_a \nrarrow Z_b \: \vert \: Z_V
\quad \Leftrightarrow \quad
a \rarrow b \notin E
\quad \Leftrightarrow \quad
\left[\Phi_j\right]_{ba} =0,\quad j=1,\ldots,p.
\end{align*}
But
\begin{align*}
 & \left[\Phi_j \right]_{ba}=0,\quad j=1,\ldots,p \\
 &\quad \Leftrightarrow \quad    \left[ \Theta_j\right]_{ba}=\sum_{n=j}^p \binom{n-1}{j-1} (-1)^{j-1} \left[\Phi_n\right]_{ba}= 0, \quad j=1,\ldots,p.
\end{align*}
%such that  we have the equivalence
%\begin{align*}
% Z_a \nrarrow Z_b \: \vert \: Z_V \quad \Leftrightarrow \quad a \rarrow b \notin E \quad
%\Leftrightarrow \quad
% \left[\Theta_j\right]_{ba} =0, \quad j=1,\ldots,p.
%\end{align*}
However, this is again  analogous to the characterisation of directed edges in the orthogonality graph $G_{OG}$ for the MCAR$(p)$ process in  \eqref{eq:MCAR:directed_edges} where
\begin{align*}
\CY_a \nrarrow \CY_b \: \vert \: \CY_V \quad \Leftrightarrow
 \quad a \rarrow b \notin E \quad
\Leftrightarrow \quad
\left[\Theta_j^{(h)}\right]_{ba}=  0 \quad \   \forall\, h \in [0,1], \, j=1,\ldots,p .
\end{align*}
\item[(b)] \textit{Undirected edges:} For the path diagram $G$ for the VAR$(p)$ process ${Z}_V$ we have
\begin{align*}
Z_a \nsim Z_b \: \vert \: Z_V \quad \Leftrightarrow  \quad a \inst b \notin E \quad
\Leftrightarrow \quad \left[\BE [\varepsilon(0)\varepsilon(0)^\top ]\right]_{ab}=0.
\end{align*}
 %That is the noise terms $e_a^\top  \varepsilon(t+1)$ and $e_b^\top  \varepsilon(t+1)$ are uncorrelated.
Here we have the similarity to the condition \eqref{eq:MCAR:indirected_edges} for the undirected edges of the MCAR$(p)$ in the orthogonality graph $G_{OG}$
\begin{align*}
\CY_a \nsim \CY_b \: \vert \: \CY_V
\quad \Leftrightarrow \quad \: a \inst b \notin E_{OG}  \quad
\Leftrightarrow \quad
\left[  \BE[\varepsilon^{(h)}(0)\varepsilon^{(\tilde h)}(0)^\top ] \right]_{ab} =0 \quad  \forall\,h, \tilde{h}\in [0,1].
\end{align*}
\end{itemize}
\end{remark}

Since a continuous-time Ornstein-Uhlenbeck process sampled at discrete equidistant time points  is a discrete-time VAR(1) process, we study the results for an Ornstein-Uhlenbeck process in more detail and, in particular, relate them to the results for VAR models in \cite{EI07}.

\begin{remark}
Let $\CY_V$ be a causal %$k$-dimensional
Ornstein-Uhlenbeck process as given in \Cref{Example:Ornstein-Uhlenbeck process} with $\BS_L>0$.
%where the driving Lévy process has a positive definite covariance matrix $\BS_L$.
% Suppose $G_{OG}=(V,E_{OG})$ is the orthogonality graph for the causal $k$-dimensional Ornstein-Uhlenbeck  process $\CY_V$ and let $a,b\in V$, $a \neq b$.
 %Here, the characterisation of the edges in the continuous-time model reduces to:
%\begin{itemize}
%     \item $a \rarrow b \notin E_{OG} \quad \Leftrightarrow \quad \left[e^{Ah} \right]_{ba}=0$ for all $0\leq h\leq1$.
 %   \item $a \inst b \notin E_{OG}
%\quad  \Leftrightarrow \quad
%\left[  \BE[\varepsilon_h(0)\varepsilon_{\tilde h}(0)^\top ] \right]_{ab}=\left[ \int_0^{\min(h,\tilde{h})} e^{A (h-s)} \BS_L e^{A^\top (\tilde{h}-s)} ds \right]_{ab} =0$ \\
%\hspace*{5cm} for all
%$ 0\leq h, \tilde{h}\leq 1.$
%\end{itemize}
Then the continuous-time process $\CY_V$ sampled at discrete-time points of distance $h$ is a discrete-time VAR(1) process with representation
\begin{align*}
Y_V((k+1)h)&=e^{\BA h}Y_V(kh)+\int_{kh}^{(k+1)h}e^{\BA ((k+1)h-u)}\,dL(u)\\
    &=e^{\BA h} Y_V(kh)+\varepsilon^{(h)}(kh), \quad k\in\Z,
\end{align*}
 %where
 %\beao
 %   \Phi=e^{Ah} \quad &\text{ and }& \quad \varepsilon_h(hk) =\int_{kh}^{(k+1)h}e^{A((k+1)h-s)}\,L(ds).
 %\eeao
 which we denote by  $\CY_V^{(h)}=(Y_V((k+1)h))_{k\in \Z}$ and the corresponding discrete-time path diagram by $G^{(h)}=(V,E^{(h)})$.
 Then a direct conclusion of \Cref{Comparison VAR local} is that for $a,b\in V$ and $a \neq b$:
 \begin{itemize}
     \item[(a)] \makebox[2,3cm][l]{$\CY_a \nrarrow \CY_b \: \vert \: \CY_V$}
       \makebox[5,5cm][l]{$ \Rightarrow \quad \left[e^{\BA h} \right]_{ba}=0$}
       $ \Rightarrow \quad \CY_a^{(h)} \nrarrow \CY_b^{(h)} \: \vert \: \CY_V^{(h)}.$
    \item[(b)] \makebox[2,28cm][l]{$\CY_a \nsim \CY_b \: \vert \: \CY_V$}
        \makebox[5,5cm][l]{$\Rightarrow \quad \left[  \BE[\varepsilon^{(h)}(0)\varepsilon^{ (h)}(0)^\top ] \right]_{ab}=0$}
        $\Rightarrow$ \quad $ \CY_a^{(h)} \nsim \CY_b^{(h)} \: \vert \: \CY_V^{(h)}$.
\end{itemize}
%\marginpar{\LS{Warum keine Äquivalenz?}\VF{Man hat ja nur ein festes $h$. Da kann man Gegenbeispiele konstruieren}  \LS{Da das $e^{Ah}$ für alle $h$ null ist, dachte ich auch rechts hat man die Nichtkausalität für alle $h$.}\VF{Normalerweise beobachtet man den Prozess für ein festes $h$ gesampled, nicht für alle $h$}}
This means that a directed (undirected) edge $a \rarrow b \in E^{(h)}$ ($a \inst b \in E^{(h)}$) in the discrete-time model $\CY_V^{(h)}$ implies a (undirected) directed edge $a \rarrow b \in E_{OG}$ ($a \inst b \in E_{OG}$) in the continuous-time model $\CY_V$. In summary, $E^{(h)}\subseteq E_{OG}$ for every $h \in [0,1]$. We believe that this result may hold for general MCAR$(p)$ processes.
%However, the opposite direction does in general not hold. If for every $0\leq h\leq 1$,
% $a \rarrow b \notin E^{(h)}$ ($a \inst b \notin E^{(h)}$) we can in general not say anything for the linear orthogonality graph $G_{OG}=(V, E_{OG})$ of $Y_V$. Thus, most of the time $E^{(h)}$ is a real subset of $ E_{OG}$.
%there is no indirect influence between $a$ and $b$ there still might be an indirect influence between $a$ and $b$ in the continuous-time process $\CY_V$.
This phenomenon is an advantage of the orthogonality graph over the local orthogonality graph, where there is generally no relationship between the edges $E_{OG}^{(0)}$ and $E^{(h)}$.
\end{remark}

The characterisation of the directed edges in \Cref{Erste Charakterisierung gerichtete Kante für CAR} and the characterisation of the undirected edges in \Cref{Einfache Charakterisierung ungerichtete Kante für CAR} are nice for interpretation, but depend on the lags $h, \tilde{h}$. We provide simpler necessary and sufficient criteria for the directed and undirected edges, respectively, where the lags $h, \tilde{h}$ no longer play a role. 

\begin{theorem}\label{Zweite Charakterisierung gerichtete Kante für CAR}\label{Erste Charakterisierung ungerichtete Kante für CAR}
%Suppose $G_{OG}=(V,E_{OG})$ is the orthogonality graph for the causal $k$-dimensional MCAR$(p)$ process $\CY_V$ and let $a,b\in V$, $a \neq b$. Then the following hold:
Let $\CY_V$ be a causal %$k$-dimensional
MCAR$(p)$ process with
%\LS{such that the driving $k$-dimensional Lévy process satisfies \Cref{Assumption on Levy process}?}. Suppose
\mbox{$\BS_L>0$.} Further, let $a,b\in V$, $a \neq b$. Then the following holds.
\begin{itemize}
\item[(a)]
\makebox[2,15cm][l]{$\CY_a \nrarrow \CY_b \: \vert \: \CY_V$}
$\Leftrightarrow \:\,
\left[ \BFC \BA^\alpha \BFE_j \right]_{ba} = \left[ \BA^\alpha \right]_{b, k(j-1)+a}=0, \quad \alpha=1,\ldots,kp-1, \, j=1,\ldots,p.$
%\begin{align*}
%a \rarrow b \notin E_{OG} \quad \Leftrightarrow \quad
%e_b \BFC \BA^\alpha \BFE_j e_a^\top  = e_b \BA^\alpha e_{k(j-1)+a}=0 \quad \text{for} \quad \alpha=1,\ldots,kp-1, j=1,\ldots,p.
%\end{align*}
\item[(b)] \makebox[2,13cm][l]{$\CY_a \nsim \CY_b \: \vert \: \CY_V $}
$\Leftrightarrow \: \left[\BFC \BA^\alpha \BB \BS_L \BB^\top  \left(\BA^\top  \right)^\beta \BFC^\top  \right]_{ab}=0, \quad  \alpha,\beta=0,\ldots,kp-1.$
\end{itemize}
\end{theorem}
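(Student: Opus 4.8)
The plan is to derive both equivalences from the lag-dependent characterisations already established in Propositions~\ref{Erste Charakterisierung gerichtete Kante für CAR} and~\ref{Einfache Charakterisierung ungerichtete Kante für CAR}, and to eliminate the lag variables using two standard tools: the identity theorem for real-analytic functions (since $h\mapsto e^{\BA h}$ is entire, a scalar function built from it that vanishes on a set with non-empty interior has all Taylor coefficients at the origin equal to zero, and conversely), together with the Cayley--Hamilton theorem for $\BA\in M_{kp}(\R)$, which gives $\BA^{\alpha}\in\operatorname{span}\{I_{kp},\BA,\dots,\BA^{kp-1}\}$ for every $\alpha\ge0$, so that only the exponents $0,\dots,kp-1$ ever need to be tested.

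For part~(a), observe $[\BFC e^{\BA h}\BFE_j]_{ba}=[e^{\BA h}]_{b,\,k(j-1)+a}$, so by \Cref{Erste Charakterisierung gerichtete Kante für CAR}(a), $\CY_a\nrarrow\CY_b\: \vert \:\CY_V$ holds iff the real-analytic function $h\mapsto[e^{\BA h}]_{b,\,k(j-1)+a}$ vanishes on $[0,1]$ for every $j=1,\dots,p$. Taking $\alpha$-th derivatives at $h=0$, this is equivalent to $[\BA^{\alpha}]_{b,\,k(j-1)+a}=0$ for all $\alpha\ge0$ and all $j$, which by Cayley--Hamilton reduces to $\alpha=0,\dots,kp-1$. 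The exponent $\alpha=0$ gives $[I_{kp}]_{b,\,k(j-1)+a}$, which is nonzero only when $b=k(j-1)+a$; since $a,b\in\{1,\dots,k\}$ this would force $j=1$ and $a=b$, contradicting $a\ne b$, so that condition is automatically satisfied and only $\alpha=1,\dots,kp-1$ remains — exactly the stated criterion.

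For part~(b), \Cref{Einfache Charakterisierung ungerichtete Kante für CAR}(a) says $\CY_a\nsim\CY_b\: \vert \:\CY_V$ iff $G_{ab}(h,\tilde h):=\bigl[\int_0^{h}\BFC e^{\BA(h-u)}\BB\BS_L\BB^{\top}e^{\BA^{\top}(\tilde h-u)}\BFC^{\top}\,du\bigr]_{ab}=0$ for all $0\le h\le\tilde h\le1$, where $\BS_L=\BS_L^{\top}$ is used to reduce to $h\le\tilde h$. Since $\int_0^h F(h,\tilde h,u)\,du$ is entire in $(h,\tilde h)$ whenever $F$ is, $G_{ab}$ extends real-analytically to $\R^2$ and hence vanishes on that region iff it vanishes identically iff all its Taylor coefficients at the origin are zero. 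Expanding both exponentials as power series and interchanging sum and integral (legitimate by uniform convergence on compacts) yields
\begin{align*}
G_{ab}(h,\tilde h)&=\sum_{m,n\ge0}\frac{1}{m!\,n!}\bigl[\BFC\BA^{m}\BB\BS_L\BB^{\top}(\BA^{\top})^{n}\BFC^{\top}\bigr]_{ab}\,p_{m,n}(h,\tilde h),\\
p_{m,n}(h,\tilde h)&:=\int_{0}^{h}(h-u)^{m}(\tilde h-u)^{n}\,du,\qquad m,n\ge0.
\end{align*}
Each $p_{m,n}$ is homogeneous of degree $m+n+1$, so $G_{ab}\equiv0$ decouples degree by degree, and within a fixed total degree $N+1$ the polynomials $\{p_{m,n}:m+n=N\}$ are linearly independent: substituting $\tilde h=ch$ and using homogeneity, $p_{N-n,n}(1,c)=\int_0^1(1-u)^{N-n}(c-u)^n\,du$ is a polynomial in $c$ of exact degree $n$, and polynomials of pairwise distinct degrees are independent. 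Hence $G_{ab}\equiv0$ is equivalent to $[\BFC\BA^{m}\BB\BS_L\BB^{\top}(\BA^{\top})^{n}\BFC^{\top}]_{ab}=0$ for all $m,n\ge0$, and Cayley--Hamilton (applied to $\BA$ and to $\BA^{\top}$) truncates this to $m,n=0,\dots,kp-1$, which is the claim; the converse direction is immediate, since the finitely many conditions already force all Taylor coefficients of $G_{ab}$ to vanish.

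The power-series bookkeeping is routine; the one point that genuinely needs care is the linear independence of the family $\{p_{m,n}:m+n=N\}$ in part~(b), as this is precisely what lets one pass from the vanishing of the single scalar function $G_{ab}$ to the separate vanishing of each matrix entry $[\BFC\BA^{m}\BB\BS_L\BB^{\top}(\BA^{\top})^{n}\BFC^{\top}]_{ab}$ — without it one would only obtain linear relations among these numbers. A secondary technical point is checking that one may legitimately work on the single branch $h\le\tilde h$ (equivalently, that $G_{ab}$ extends analytically across the diagonal $h=\tilde h$), which is immediate from the remark above that $\int_0^h F(h,\tilde h,u)\,du$ is entire in $(h,\tilde h)$ whenever $F$ is.
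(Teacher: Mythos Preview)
Your proof is correct. Part~(a) is essentially the paper's argument: differentiate $h\mapsto e_b^\top\BFC e^{\BA h}\BFE_j e_a$ at $h=0$, and for the converse use the Cayley--Hamilton expansion $e^{\BA h}=\sum_{\alpha=0}^{kp-1}\psi_\alpha(h)\BA^\alpha$.

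Part~(b) takes a genuinely different route. The paper does not expand the integral from \Cref{Einfache Charakterisierung ungerichtete Kante für CAR}(a); instead, for the forward direction it starts from the projection characterisation in \Cref{Charakterisierung als Gleichheit der linearen Vorhersage 2}, derives $e_a^\top\BFC\bigl(c_{XX}(h-\tilde h)-e^{\BA h}c_{XX}(-\tilde h)\bigr)\BFC^\top e_b=0$, uses $c_{XX}(t)=e^{\BA t}c_{XX}(0)$ for $t\ge0$, differentiates in both variables at the origin, and closes via the Lyapunov equation $\BA c_{XX}(0)+c_{XX}(0)\BA^\top=-\BB\BS_L\BB^\top$. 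Your approach is more elementary: it stays with the integral, expands both exponentials, and reduces everything to the linear independence of the polynomials $p_{m,n}$ at each fixed total degree, never touching $c_{XX}(0)$ or the Lyapunov identity. The paper's route ties the result visibly to the second-order structure of the state process and avoids the (slightly delicate) independence check for the $p_{m,n}$; yours is self-contained and would transfer unchanged to any setting where the analogous integral formula holds. One small clarification: your reduction to $h\le\tilde h$ via $\BS_L=\BS_L^\top$ is compressed --- what that symmetry actually yields is that the original condition for $h>\tilde h$ reads $G_{ba}(\tilde h,h)=0$, so one must also ensure $G_{ba}\equiv0$ on $\{h\le\tilde h\}$; this follows because the matrix conditions are invariant under $(a,\alpha)\leftrightarrow(b,\beta)$, or more directly because once all $c_{m,n}=0$ the integrand itself vanishes identically.
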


%\begin{remark}
\begin{remark} \label{Remark:step size}
 The proof of \Cref{Erste Charakterisierung ungerichtete Kante für CAR} shows that in the definition of Granger causality and contemporaneous correlation the choice of the step size $h$ as defined in \Cref{Remark 3.2} (cf.~\eqref{eq:Remark:3.2}) and \Cref{Remark 4.2} (cf.~\eqref{eqref:Remark 4.2}), respectively, has no influence on the final characterisations of the edges in the MCAR model. For any choice $h>0$ we obtain the characterisations as in \Cref{Zweite Charakterisierung gerichtete Kante für CAR}.  In particular, it follows that Granger causality and global Granger causality as well as contemporaneous correlation and global contemporaneous correlation are equivalent for MCAR$(p)$ processes, and hence the global orthogonality graph also satisfies the different Markov properties. 
\end{remark}

We obtain the following direct conclusion from Propositions \ref{Erste Charakterisierung gerichtete Kante für CAR}, \ref{Einfache Charakterisierung ungerichtete Kante für CAR} and \Cref{Zweite Charakterisierung gerichtete Kante für CAR},  setting $\alpha=p$  in \Cref{Zweite Charakterisierung gerichtete Kante für CAR} (a) and $\alpha=\beta=p-1$  in \Cref{Zweite Charakterisierung gerichtete Kante für CAR} (b).

%\LS{Wollen: $\BFE_p^\top  \BA \BFE_j = \BFC \BA^\alpha \BFE_j$ wobei $\BFC = \BFE_1^\top  \neq \BFE_p$. $\BFC \BA = (0_k I_k 0_k \cdots 0_k)$ und $\BFC \BA \BFE_j = (0_k I_k 0_k \cdots 0_k) \BFE_j = 0_k$ bzw. $I_k$ für $j=2$. Durch das Potenzieren kommt die Zeile mit den $A_j$ in $\BA$ immer weiter nach oben. $\BFC \BA^p = (-A_p ... -A_1)$
%und $\BFC \BA^p \BFE_j = (-A_p ... -A_1) \BFE_j = -A_{k-j} = - \BFE_p^\top  \BA \BFE_{k-j}$. Genauso war die Idee beim ungerichteten, nur will man da $I_k = \BFC \BA^\alpha \BB$ und $\BFC \BA^{p-1}=(0_k...0_k I_k)$.}
%\VF{Aus irgendeinem Grund hatte ich gedacht, das nächste Resultat wäre nur für OU Prozesse und $p=1$. Dann ist es schon klar.}

\begin{corollary}
Let $\CY_V$ be a causal %$k$-dimensional
MCAR$(p)$ process with %\LS{such that the driving $k$-dimensional Lévy process satisfies \Cref{Assumption on Levy process}?} with
\mbox{$\BS_L>0$}, orthogonality graph $G_{OG}=(V,E_{OG})$, and local orthogonality graph $G_{OG}=(V,E_{OG})$. Then $E_{OG}^{(0)} \subseteq E_{OG},$ and in general the sets are not equal.
\end{corollary}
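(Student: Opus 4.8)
The plan is to extract both edge sets from the explicit characterisations already proved and to observe that the condition describing the \emph{absence} of an edge in $G_{OG}$ is, in each case, a proper strengthening of the condition describing the absence of the corresponding edge in the local orthogonality graph $G_{OG}^{0}=(V,E_{OG}^{0})$. An edge between two distinct vertices can only exist when $k\geq 2$, so I would first record that for $k=1$ both edge sets are empty and there is nothing to prove; from now on $k\geq 2$, which is precisely what guarantees that the exponent $p$ lies in the admissible range $\{1,\ldots,kp-1\}$.

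For the directed edges, fix $a\neq b$ and assume $a\rarrow b\notin E_{OG}$, i.e.\ $\CY_a\nrarrow\CY_b\:\vert\:\CY_V$. By \Cref{Zweite Charakterisierung gerichtete Kante für CAR}(a) this is equivalent to $[\BFC\BA^{\alpha}\BFE_j]_{ba}=0$ for all $\alpha=1,\ldots,kp-1$ and $j=1,\ldots,p$, and I would specialise to $\alpha=p$ (legitimate since $k\geq 2$ forces $p\leq kp-1$). The companion structure of $\BA$ gives $\BFC\BA^{p-1}=\BFE_p^{\top}$, hence $\BFC\BA^{p}=\BFE_p^{\top}\BA=(-A_p,\ldots,-A_1)$ and $\BFC\BA^{p}\BFE_j=-A_{p-j+1}$; thus the $\alpha=p$ instance says $[A_m]_{ba}=0$ for all $m=1,\ldots,p$, which by \Cref{Erste Charakterisierung gerichtete Kante für CAR}(b) is exactly $\CY_a\nrarrownull\CY_b\:\vert\:\CY_V$, i.e.\ $a\rarrow b\notin E_{OG}^{0}$. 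For the undirected edges, assume $a\inst b\notin E_{OG}$, i.e.\ $\CY_a\nsim\CY_b\:\vert\:\CY_V$; by \Cref{Zweite Charakterisierung gerichtete Kante für CAR}(b) this means $[\BFC\BA^{\alpha}\BB\BS_L\BB^{\top}(\BA^{\top})^{\beta}\BFC^{\top}]_{ab}=0$ for all $\alpha,\beta=0,\ldots,kp-1$, and specialising to $\alpha=\beta=p-1$ together with $\BFC\BA^{p-1}\BB=\BFE_p^{\top}\BB=I_k$ collapses the bracketed matrix to $\BS_L$, so the condition becomes $[\BS_L]_{ab}=0$, which by \Cref{Einfache Charakterisierung ungerichtete Kante für CAR}(b) is $\CY_a\nsimnull\CY_b\:\vert\:\CY_V$, i.e.\ $a\inst b\notin E_{OG}^{0}$. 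Taking contrapositives in both cases yields $E_{OG}^{0}\subseteq E_{OG}$.

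For the strictness of the inclusion I would simply cite the three-dimensional Ornstein-Uhlenbeck process of \Cref{Example:Ornstein-Uhlenbeck process} with $\BA,\BS_L$ as in \eqref{MM}, where $\CY_1\nrarrownull\CY_2\:\vert\:\CY_{\{1,2,3\}}$ but $\CY_1\rarrow\CY_2\:\vert\:\CY_{\{1,2,3\}}$, so that $1\rarrow 2\in E_{OG}\setminus E_{OG}^{0}$ (and, for the same parameters, $1\inst 2\in E_{OG}\setminus E_{OG}^{0}$), as already visible in \Cref{fig: Two graphical models b}. There is no substantial obstacle in this argument; the only points needing care are the companion-block identities $\BFC\BA^{p-1}=\BFE_p^{\top}$ and $\BFE_p^{\top}\BB=I_k$ and the observation that $\alpha=p$ falls within the admissible exponent range, both of which are routine.
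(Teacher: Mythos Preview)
Your proof is correct and follows exactly the route sketched in the paper: specialise $\alpha=p$ in \Cref{Zweite Charakterisierung gerichtete Kante für CAR}(a) and $\alpha=\beta=p-1$ in \Cref{Zweite Charakterisierung gerichtete Kante für CAR}(b), then invoke \Cref{Erste Charakterisierung gerichtete Kante für CAR}(b) and \Cref{Einfache Charakterisierung ungerichtete Kante für CAR}(b), with the \Cref{Example:Ornstein-Uhlenbeck process} counterexample for strictness. You supply the companion-matrix identities $\BFC\BA^{p-1}=\BFE_p^\top$ and $\BFE_p^\top\BB=I_k$ and the edge case $k=1$ explicitly, which the paper leaves implicit, but the argument is the same.
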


In particular, in the case of an Ornstein-Uhlenbeck process, the characterisation of the edges in an orthogonality graph can be reduced to the following.

\begin{comment}
Completely analogous one can show that
\begin{align*}
\CY_a \stackrel{\infty}{\nsim} \CY_b \: \vert \: \CY_V \quad \Leftrightarrow \quad \left[ \BFC \BA^\alpha \BB \BS_L \BB^\top  \left(\BA^\top  \right)^\beta \BFC^\top  \right]_{ab}=0 \quad  \text{for} \quad \alpha,\beta=0,\ldots kp-1.
\end{align*}
Thus there is no difference between contemporaneously uncorrelated component series and contemporaneously uncorrelated component series at any horizon for an MCAR$(p)$ process. Let us continue \Cref{Remark: Interpretation gerichtete kante} and give an interpretation of the characterisations.
\end{comment}

%\LS{Frage: Doch erst noch die zweite Charakterisierung angeben? So ist für mich die Reihenfolge nicht schlüssig. Vorschlag: Gerichtete Kante 1, Gerichtete Kante 2 mit Kommentar zu long run, Ungerichtete Kante 1, Ungerichtete Kante 2 mit Kommentar zu long run, Interpretation von beidem? Oder Interpretation dazwischen? Dann hört das Ganze auch nicht auf p.30 mit einem Lemma so aprupt auf.}
%\VF{Das macht Sinn. Ich hätte die Tendenz die Interpretation dazwischen zu machen.}

\begin{corollary} \label{Corollary:OU processes}
Let $\CY_V$ be a causal %$k$-dimensional
Ornstein-Uhlenbeck process with \mbox{$\BS_L>0$.}
%\LS{such that the driving $k$-dimensional Lévy process satisfies \Cref{Assumption on Levy process}?}. Suppose $\BS_L$ is positive definite and
Further, let $a,b\in V$, $a \neq b$. Then the following holds.
%Suppose $G_{OG}=(V,E_{OG})$ is the orthogonality graph for the causal $k$-dimensional Ornstein-Uhlenbeck process $\CY_V$ and let $a,b\in V$, $a \neq b$. Then the following hold:
\begin{itemize}
\item[(a)] \makebox[2,45cm][l]{$ \CY_a \nrarrow \CY_b \: \vert \: \CY_V$}
%\quad &\Leftrightarrow \quad \left[ e^{\BA h} \right]_{ba}=0 \quad \text{for} \quad 0 \leq h \leq 1, \\
\makebox[5cm][l]{$\Leftrightarrow \quad \left[ \BA^\alpha \right]_{ba}=0,$}
$\alpha=1,\ldots,k-1.$
\item[(b)]  \makebox[2,43cm][l]{$\CY_a \nsim \CY_b \: \vert \: \CY_V$}
%\quad & \Leftrightarrow \quad  \left[ \int_0^{\min(h,\tilde{h})} e^{\BA (h-s)} \BS_L e^{\BA^\top (\tilde{h}-s)} ds \right]_{ab} =0 \quad  \text{for} \quad 0\leq h, \tilde{h}\leq 1, \\
\makebox[5cm][l]{$\Leftrightarrow \quad \left[ \BA^\alpha \BS_L  \left(\BA^\top  \right)^\beta \right]_{ab}=0,$}
$\alpha,\beta=0,\ldots,k-1.$
\end{itemize}
\end{corollary}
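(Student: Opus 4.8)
The plan is to obtain \Cref{Corollary:OU processes} as the special case $p=1$ of \Cref{Zweite Charakterisierung gerichtete Kante für CAR}, using the fact (noted after \Cref{Definition des CAR Prozesses}) that a causal Ornstein-Uhlenbeck process is exactly a causal MCAR$(1)$ process, so that \Cref{Zweite Charakterisierung gerichtete Kante für CAR} applies verbatim under the standing hypothesis $\BS_L>0$.

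First I would unwind the MCAR notation for $p=1$. In this case $kp=k$, so the companion matrix $\BA$ of \Cref{Definition des CAR Prozesses} lies in $M_k(\R)$; since $P(\lambda)=I_k\lambda+A_1$ here, the companion matrix equals $-A_p=-A_1$, which coincides with the drift matrix $\BA$ in the defining equation $dY_V(t)=\BA Y_V(t)\,dt+dL(t)$ of \Cref{Example:Ornstein-Uhlenbeck process}, so no spurious sign appears. Moreover $\BB^\top=(0_k,\dots,0_k,I_k)$ and $\BFC=(I_k,0_k,\dots,0_k)$ both reduce to $I_k$, and the single block selector $\BFE_1^\top=(0_{k\times 0},I_k,0_{k\times 0})$ equals $I_k$. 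Hence $\BFC\BA^\alpha\BFE_1=\BA^\alpha$ and $\BFC\BA^\alpha\BB\BS_L\BB^\top(\BA^\top)^\beta\BFC^\top=\BA^\alpha\BS_L(\BA^\top)^\beta$.

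Then I would substitute these identities into \Cref{Zweite Charakterisierung gerichtete Kante für CAR}. Part (a) there states $\CY_a\nrarrow\CY_b\:\vert\:\CY_V \Leftrightarrow [\BFC\BA^\alpha\BFE_j]_{ba}=0$ for $\alpha=1,\dots,kp-1$ and $j=1,\dots,p$; with $p=1$ the index $j$ takes only the value $1$ and $kp-1=k-1$, which yields $[\BA^\alpha]_{ba}=0$ for $\alpha=1,\dots,k-1$, i.e.\ statement (a). Part (b) states $\CY_a\nsim\CY_b\:\vert\:\CY_V \Leftrightarrow [\BFC\BA^\alpha\BB\BS_L\BB^\top(\BA^\top)^\beta\BFC^\top]_{ab}=0$ for $\alpha,\beta=0,\dots,kp-1$; again $kp-1=k-1$, which yields $[\BA^\alpha\BS_L(\BA^\top)^\beta]_{ab}=0$ for $\alpha,\beta=0,\dots,k-1$, i.e.\ statement (b).

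There is essentially no obstacle beyond this bookkeeping; the truncation of the index ranges to $\alpha\le k-1$ (resp.\ $\alpha,\beta\le k-1$) is already the Cayley--Hamilton reduction built into \Cref{Zweite Charakterisierung gerichtete Kante für CAR}, so nothing further is needed. The only point requiring a moment of care is the identification of the companion matrix with the Ornstein-Uhlenbeck drift matrix (to rule out a sign discrepancy), which is immediate from $p=1$ as noted above.
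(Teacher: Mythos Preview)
Your proposal is correct and matches the paper's approach exactly: the paper presents this as an immediate corollary of \Cref{Zweite Charakterisierung gerichtete Kante für CAR} by setting $p=1$ (as announced in the sentence preceding the corollary), without writing out the bookkeeping you have carefully supplied. Your check that the companion matrix for $p=1$ coincides with the Ornstein--Uhlenbeck drift matrix (so that no sign discrepancy arises) and your reduction $\BB=\BFC=\BFE_1=I_k$ are precisely the details the paper leaves implicit.
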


\begin{remark}
Suppose $\BS_L$ is a diagonal matrix and $\CY_V$ is a causal Ornstein-Uhlenbeck process. Then \Cref{Corollary:OU processes}  implies that from $\CY_a \nsim \CY_b \: \vert \: \CY_V$ directly  follows $ \CY_a \nrarrow \CY_b \: \vert \: \CY_V$. Thus, a directed edge in such an orthogonality graph of an Ornstein-Uhlenbeck process induces an undirected edge.
\end{remark}

 %, which is an advantage of the local orthogonality graph.
%Both graphical models have their advantages and disadvantages. For an adequate investigation of the directed and undirected relationships between the components of a multivariate continuous-time process, it is, therefore, advisable to consider both the orthogonality graph and the local orthogonality graph.

\section{Conclusion}
In this paper, we have introduced concepts of directed and undirected influences for stochastic processes in continuous time, defined (local) orthogonality graphs, discussed their properties, and applied them to MCAR processes. The main results are as follows:
\begin{itemize}
    \item[(a)] (Local) orthogonality graphs provide a simple visualisation and a concise way to communicate directed and undirected (local) conditional orthogonality structures of the process.
    \item[(b)] (Local) orthogonality graphs are defined using the pairwise Markov property to represent the pairwise relationships between variables. However, the associated orthogonality graph can be interpreted using the global AMP Markov and the global Markov property. In this way, new Granger non-causality relations and contemporaneous uncorrelations between subprocesses can be obtained.
    \item[(c)] For MCAR models the (local) orthogonality graphs are closely related to the moving average parameters and the covariance matrix of the driving Lévy process.     Any local orthogonality graph can be constructed by an MCAR model, but this is generally not true for an orthogonality graph. However, if there is no edge in the orthogonality graph, then there is no edge in the discrete-time sampled model.
\end{itemize}

\appendix
%%%%%%%%%%%%%%%%%%%%%%%%%%%%%%%%%%%%%%%%%%%%%%%%%%%%%%%%%%%%%%%%%%%%%%%%%%%%%%%%%%%%%%%%%%%%%%%%%%
\section{Proofs of Section~\ref{sec:influence}} \label{sec:Proof:Granger_non-causal:project}
%%%%%%%%%%%%%%%%%%%%%%%%%%%%%%%%%%%%%%%%%%%%%%%%%%%%%%%%%%%%%%%%%%%%%%%%%%%%%%%%%%%%%%%%%%%%%%%%%%

%%%%%%%%%%%%%%%%%%%%%%%%%%%%%%%%%%%%%%%%%%%%%%%%%%%%%%%%%%%%%%%%%%%%%%%%%%%%%%%%%%%%%%%%%%%%%%%%%%
%\subsection{Proofs of  Section~\ref{sec:influence}}
%%%%%%%%%%%%%%%%%%%%%%%%%%%%%%%%%%%%%%%%%%%%%%%%%%%%%%%%%%%%%%%%%%%%%%%%%%%%%%%%%%%%%%%%%%%%%%%%%%
%
\begin{comment}
\begin{remark} \label{Remark:B}
We point out, that we could just rewrite the first linear space again. Then $\CY_A$ is Granger non-causal for $\CY_B$ with respect to $\CY_S$, if and only if
\begin{align*}
L_{Y_b}(t+h) \perp \mathcal{L}_{Y_A}(t) \: \vert \: \mathcal{L}_{Y_{S \setminus A}}(t),
\end{align*}
for $b \in B$, $0 \leq h \leq 1$,  $t \in \R$. Since the orthogonal projection and the expectation are linear, we obtain that $\CY_A$ is Granger non-causal for $\CY_B$ with respect to $\CY_S$, if and only if
\begin{align*}
L_{Y_B}(t+h) \perp \mathcal{L}_{Y_A}(t) \: \vert \: \mathcal{L}_{Y_{S \setminus A}}(t),
\end{align*}
for $0 \leq h \leq 1$,  $t \in \R$. This characterisation would be another natural way to define Granger non-causality as a direct generalisation from \cite{EI07}.
\end{remark}
\end{comment}

\begin{proof}[Proof of \Cref{Charakterisierung als Gleichheit der linearen Vorhersage}]
Due to \cite{LI15}, Proposition 2.4.2, $\mathcal{L}_{Y_B}(t,t+1) \perp \mathcal{L}_{Y_A}(t) \: \vert \: \mathcal{L}_{Y_{S \setminus A}}(t)$ is equivalent to $P_{\mathcal{L}_{Y_S}(t)}Y^B = P_{\mathcal{L}_{Y_{S \setminus A}}(t)}Y^B$ $\mathbb{P}$-a.s.  for all $Y^B\in \mathcal{L}_{Y_B}(t,t+1)$. Due to the linearity and continuity of orthogonal projections, this is in turn equivalent to
$P_{\mathcal{L}_{Y_S}(t)}Y_b(t+h) = P_{\mathcal{L}_{Y_{S \setminus A}}(t)}Y_b(t+h)$ $\mathbb{P}$-a.s. for all $h \in [0,1]$, $t\in \R$ and $b\in B$.
\end{proof}

\begin{proof}[Proof of \Cref{Charakterisierung local Granger non-causal}]
First assume that $\CY_A \nrarrownull \CY_B \: \vert \: \CY_S$, i.e., $\mathbb{P}$-a.s.
\begin{align}\label{eq:hilfseq1}
&\limh P_{\mathcal{L}_{Y_S}(t)} \left(\frac{D^{(j_b)} Y_b(t+h)- D^{(j_b)} Y_b(t)}{h}\right) \nonumber\\
&\quad =\limh P_{\mathcal{L}_{Y_{S \setminus A}}(t)} \left(\frac{D^{(j_b)} Y_b(t+h)- D^{(j_b)} Y_b(t)}{h}\right),
\end{align}
for all $t\in \R$ and $b\in B$. Now let $Y^A \in \mathcal{L}_{Y_{A}}(t)$, $b\in B$, and $t\in\R$. Then
as well $Y^A \in \mathcal{L}_{Y_{S}}(t)$ and $D^{(j_b)} Y_b(t+h) - P_{\mathcal{L}_{Y_{S}}(t)} D^{(j_b)} Y_b(t+h) \in \mathcal{L}_{Y_{S}}(t)^\perp$, so
%Since $D^{(j_b)} Y_b(t+h) - P_{\mathcal{L}_{Y_{S}}(t)} D^{(j_b)} Y_b(t+h) \in \mathcal{L}_{Y_{S}}(t)^\perp$ and $Y^A \in \mathcal{L}_{Y_{S}}(t)$
\begin{align*}
\frac{1}{h} \BE \left[
\left(D^{(j_b)} Y_b(t+h) - P_{\mathcal{L}_{Y_{S}}(t)} D^{(j_b)} Y_b(t+h)\right)
\overline{Y^A} \right]=0.
\end{align*}
Adding and subtracting $P_{\mathcal{L}_{Y_{S \setminus A}}(t)} D^{(j_b)} Y_b(t+h)$ in the first factor and then forming the limit gives
\begin{align} \label{A2*}
&\lim_{h \rightarrow 0} \frac{1}{h} \BE \left[
\left(D^{(j_b)} Y_b(t+h) - P_{\mathcal{L}_{Y_{S \setminus A}}(t)} D^{(j_b)} Y_b(t+h)\right)
\overline{Y^A} \right] \\
&+ \lim_{h \rightarrow 0} \frac{1}{h} \BE \left[
\left(P_{\mathcal{L}_{Y_{S \setminus A}}(t)} D^{(j_b)} Y_b(t+h) - P_{\mathcal{L}_{Y_{S}}(t)} D^{(j_b)} Y_b(t+h)\right)
\overline{Y^A} \right] =0 . \nonumber
\end{align}
Due to \Cref{Remark 3.6} and $A\cap B=\emptyset$, we already know that
$D^{(j_b)} Y_b(t) \in \mathcal{L}_{Y_{S \setminus A}}(t) \subseteq \mathcal{L}_{Y_{S}}(t)$. Then it follows together with \eqref{eq:hilfseq1} and  \eqref{limit E} that the second summand in \eqref{A2*} is zero and thus, the first summand is zero as well, i.e.,
\begin{align*}
\lim_{h \rightarrow 0} \frac{1}{h} \BE \left[
\left(D^{(j_b)} Y_b(t+h) - P_{\mathcal{L}_{Y_{S \setminus A}}(t)} D^{(j_b)} Y_b(t+h)\right)
\overline{Y^A} \right]=0.
\end{align*}
%Due to \Cref{Remark 3.6} and $A\cap B=\emptyset$, we already know that
%$D^{(j_b)} Y_b(t) \in \mathcal{L}_{Y_{S \setminus A}}(t)$. Then it follows together with \eqref{eq:hilfseq1} that
%\begin{align*}
%&\lim_{h \rightarrow 0} \frac{1}{h} \BE \left[
%\left(D^{(j_b)} Y_b(t+h) - P_{\mathcal{L}_{Y_{S \setminus A}}(t)} D^{(j_b)} Y_b(t+h)\right)
%Y^A \right]\\
%&\quad = \lim_{h \rightarrow 0} \BE \left[
%\left(\frac{D^{(j_b)} Y_b(t+h)- D^{(j_b)} Y_b(t)}{h} - P_{\mathcal{L}_{Y_{S \setminus A}}(t)} \frac{D^{(j_b)} %Y_b(t+h)- D^{(j_b)} Y_b(t)}{h}\right)
%Y^A \right]\\
%&\quad = \lim_{h \rightarrow 0} \BE \left[
%\left(\frac{D^{(j_b)} Y_b(t+h)- D^{(j_b)} Y_b(t)}{h} - P_{\mathcal{L}_{Y_{S}}(t)} \frac{D^{(j_b)} Y_b(t+h)- D^{(j_b)} %Y_b(t)}{h}\right)
%Y^A \right]\\
%&\quad = \lim_{h \rightarrow 0} \frac{1}{h} \BE \left[
%\left(D^{(j_b)} Y_b(t+h) - P_{\mathcal{L}_{Y_{S}}(t)} D^{(j_b)} Y_b(t+h)\right)
%Y^A \right]
%\end{align*}
%for all $b\in B$. Since $D^{(j_b)} Y_b(t+h) - P_{\mathcal{L}_{Y_{S}}(t)} D^{(j_b)} Y_b(t+h) \in %\mathcal{L}_{Y_{S}}(t)^\perp$ and $Y^A \in \mathcal{L}_{Y_{S}}(t)$ the right hand side is zero.
Further, $D^{(j_b)} Y_b(t+h) - P_{\mathcal{L}_{Y_{S\setminus A}}(t)} D^{(j_b)} Y_b(t+h) \in \mathcal{L}_{Y_{S \setminus A}}(t)^\perp$ and $P_{\mathcal{L}_{Y_{S \setminus A}}(t)} Y^A  \in \mathcal{L}_{Y_{S \setminus A}}(t)$ give
\begin{align*}
\frac{1}{h} \BE \left[ \left(D^{(j_b)} Y_b(t+h) - P_{\mathcal{L}_{Y_{S \setminus A}}(t)} D^{(j_b)} Y_b(t+h)\right) \overline{P_{\mathcal{L}_{Y_{S \setminus A}}(t)} Y^A} \right] =0.
\end{align*}
Adding the limit, the last two equations yield as claimed
\begin{equation*}
\lim_{h \rightarrow 0} \frac{1}{h} \BE \left[
\left(D^{(j_b)} Y_b(t+h) - P_{\mathcal{L}_{Y_{S \setminus A}}(t)} D^{(j_b)} Y_b(t+h)\right)
\overline{\left(Y^A - P_{\mathcal{L}_{Y_{S \setminus A}}(t)} Y^A \right)}
\right]=0. \qedhere
\end{equation*}
\end{proof}

\begin{proof}[Proof of \Cref{Lemma:Granger_causality_relations}]
$\mbox{}$\\
(a) This is obvious by definitions.\\
(b) The implication $\Rightarrow$ follows instantly. For the proof of $\Leftarrow$ we use mathematical induction and show that
\begin{align} \label{IV}
\mathcal{L}_{Y_{S\setminus A}}(t+k) &\perp \mathcal{L}_{Y_A}(t) \: \vert \: \mathcal{L}_{Y_{S\setminus A}}(t) \quad \forall \: t \in \R, \: k\in \N.
\end{align}
First, we note that $\CY_A  \nrarrow \CY_{S\setminus A} \: \vert \: \CY_S$ and \Cref{Charakterisisierung linear granger non-causal} (b) yield the initial case
\begin{align} \label{IA}
\mathcal{L}_{Y_{S\setminus A}}(t+1) &\perp \mathcal{L}_{Y_A}(t) \: \vert \: \mathcal{L}_{Y_{S \setminus A}}(t) \quad \forall \: t \in \R.
\end{align}
Now, replacing $t$ by $t+1$ in the induction hypothesis gives
%\begin{align*}
%\mathcal{L}_{Y_{S\setminus A}}(t+k) &\perp \mathcal{L}_{Y_A}(t) \: \vert \: \mathcal{L}_{Y_{S \setminus A}}(t), \quad \forall \: t \in \R.
%\end{align*}
%We can replace $t$ by $t+1$ and obtain
\begin{align*}
\mathcal{L}_{Y_{S\setminus A}}(t+k+1) &\perp \mathcal{L}_{Y_A}(t+1) \: \vert \: \mathcal{L}_{Y_{S \setminus A}}(t+1) \quad \forall \: t \in \R.
\end{align*}
 Since by \Cref{additivity in time domain} we have $\mathcal{L}_{Y_A}(t+1)= \mathcal{L}_{Y_A}(t) \vee \mathcal{L}_{Y_A}(t,t+1)$, the property of decomposition (C2) from \Cref{properties of conditional orthogonality} implies
\begin{align*}
\mathcal{L}_{Y_{S\setminus A}}(t+k+1) &\perp \mathcal{L}_{Y_A}(t) \: \vert \: \mathcal{L}_{Y_{S \setminus A}}(t+1) \quad \forall \: t \in \R,
\end{align*}
which is by \Cref{additivity in time domain} again
\begin{align*}
\mathcal{L}_{Y_{S\setminus A}}(t+k+1) &\perp \mathcal{L}_{Y_A}(t) \: \vert \: \mathcal{L}_{Y_{S \setminus A}}(t) \vee \mathcal{L}_{Y_{S \setminus A}}(t,t+1) \quad \forall \: t \in \R.
\end{align*}
This result together with the initial case \eqref{IA} and the properties of decomposition (C2) and contraction (C4) from \Cref{properties of conditional orthogonality} yield
\begin{align*}
\mathcal{L}_{Y_{S\setminus A}}(t+k+1) \vee \mathcal{L}_{Y_{S\setminus A}}(t,t+1) \perp \mathcal{L}_{Y_A}(t) \: \vert \: \mathcal{L}_{Y_{S \setminus A}}(t) \quad \forall \: t \in \R.
\end{align*}
Finally, the property of decomposition (C2) gives the induction step
\begin{align*}
\mathcal{L}_{Y_{S\setminus A}}(t+k+1) &\perp \mathcal{L}_{Y_A}(t) \: \vert \: \mathcal{L}_{Y_{S \setminus A}}(t) \quad \forall \: t \in \R.
\end{align*}
To bring the proof to an end, let $\lceil \cdot \rceil$ be the ceiling function. Then $\mathcal{L}_{Y_{S\setminus A}}(t+h) \subseteq \mathcal{L}_{Y_{S\setminus A}}(t+\lceil h \rceil)$. Now it follows from \eqref{IV} and the decomposition property (C2) that

\begin{align*}
\mathcal{L}_{Y_{S\setminus A}}(t+h) &\perp \mathcal{L}_{Y_A}(t) \: \vert \: \mathcal{L}_{Y_{S \setminus A}}(t) \quad \forall \: t \in \R,\,h\geq 0.
\end{align*}
(c) This follows directly due to (b), the decomposition property (C2), and $B\subseteq S\setminus A$.\\
%(d)
%Let $\CY_A  \nrarrow \CY_B \: \vert \: \CY_S$. Then by \Cref{Charakterisierung als Gleichheit der linearen Vorhersage} for all $h \in [0,1]$, $t\in \R$ and $b\in B$,
%\begin{align*}
%P_{\mathcal{L}_{Y_S}(t)}Y_b(t+h) = P_{\mathcal{L}_{Y_{S \setminus A}}(t)}Y_b(t+h) \quad \mathbb{P}\text{-a.s.}
%\end{align*}
%In particular, due to $Y_b(t) \in \mathcal{L}_{Y_{S \setminus A}}(t)$, we have for all $h \in [0,1]$, $t\in \R$ and $b\in B\subseteq S\backslash A$,
%\begin{align*}
%P_{\mathcal{L}_{Y_S}(t)} \frac{Y_b(t+h)- Y_b(t)}{h}
%= P_{\mathcal{L}_{Y_{S \setminus A}}(t)} \frac{Y_b(t+h)- Y_b(t)}{h} \quad %\mathbb{P}\text{-a.s.}
%\end{align*}
%For $h\rightarrow 0$ we receive \LS{Hier past es nicht mehr, wie soll Ableitung an der Stelle t+h rauskommen.}
%\begin{align*}
%P_{\mathcal{L}_{Y_S}(t)} D^{(1)}Y_b(t+h)
%= P_{\mathcal{L}_{Y_{S \setminus A}}(t)} D^{(1)}Y_b(t+h).
%\end{align*}
% Since  $D^{(i)}Y_b(t)\in \mathcal{L}_{Y_{S\setminus A}}(t)$ for $i=1,\ldots,j_b$,  %due to \Cref{Remark 3.6}, we can show in the same way by induction that
%\begin{align*}
%P_{\mathcal{L}_{Y_S}(t)} D^{(i)}Y_b(t+h)
%= P_{\mathcal{L}_{Y_{S \setminus A}}(t)} D^{(i)}Y_b(t+h),
%\quad i=1,\ldots,j_b.
%\end{align*}
%But then finally, using \Cref{Remark 3.6} again,
%\begin{align*}
%P_{\mathcal{L}_{Y_S}(t)} \frac{D^{(j_b)}Y_b(t+h)-D^{(j_b)}Y_b(t)}{h}
%= P_{\mathcal{L}_{Y_{S \setminus A}}(t)}
%\frac{D^{(j_b)}Y_b(t+h)-D^{(j_b)}Y_b(t)}{h}.
%\end{align*}
%Then, letting $h\to 0$ we receive the statement.

(d)\: Let $\CY_A  \nrarrow \CY_B \: \vert \: \CY_S$, i.e., $\mathcal{L}_{Y_B}(t+1) \perp \mathcal{L}_{Y_A}(t) \: \vert \: \mathcal{L}_{Y_{S\setminus A}}(t)$ for all $t\in \R$ due to \Cref{Charakterisisierung linear granger non-causal} (b). Then, as in the proof of \Cref{Charakterisierung als Gleichheit der linearen Vorhersage} (cf.~Proposition 2.4.2 in \citealp{LI15}), we have
\begin{align*}
P_{\mathcal{L}_{Y_S}(t)}Y^B = P_{\mathcal{L}_{Y_{S \setminus A}}(t)}Y^B \quad \mathbb{P}\text{-a.s.}
\end{align*}
for all $Y^B\in \mathcal{L}_{Y_B}(t+1)$ and $t\in \R$. Furthermore, \Cref{Remark 3.6} provides that, for $b\in B$ and $h \in [0,1]$, we have $D^{(j_b)} Y_b(t+h) \in \mathcal{L}_{Y_{B}}(t+h) \subseteq \mathcal{L}_{Y_{B}}(t+1)$. All together result in
\begin{align*}
P_{\mathcal{L}_{Y_S}(t)} D^{(j_b)} Y_b(t+h) = P_{\mathcal{L}_{Y_{S \setminus A}}(t)} D^{(j_b)} Y_b(t+h) \quad \mathbb{P}\text{-a.s.}
\end{align*}
Since, in addition, $D^{(j_b)}Y_b(t)\in \mathcal{L}_{Y_{S\setminus A}}(t) \subseteq \mathcal{L}_{Y_{S}}(t) $ by \Cref{Remark 3.6} again, we have
\begin{align*}
P_{\mathcal{L}_{Y_S}(t)} \left( \frac{D^{(j_b)}Y_b(t+h)-D^{(j_b)}Y_b(t)}{h} \right)
= P_{\mathcal{L}_{Y_{S \setminus A}}(t)} \left( \frac{D^{(j_b)}Y_b(t+h)-D^{(j_b)}Y_b(t)}{h} \right).
\end{align*}
Letting $h\to 0$, we receive the statement.
\end{proof}

%\subsection{Proofs of Section~\ref{sec:undirected_influences}} %\label{sec:Proof:undirected_influence:proj}

\section{Proofs of Section~\ref{sec:path_diagrams}}

\subsection{Proofs of Subsection~\ref{subsec:condorth}} \label{Sec:Proofs:conditional orthogonality}

\begin{proof}[Proof of \Cref{Eigenschaften der linearen Räume}]
Let $A,B\subseteq V$ be disjoint with $\# A = \alpha$, $ \# B  = \beta$. First, according to \Cref{Assumption an Dichte}, there exists an $0<\varepsilon<1$ such that
\begin{align*}
f_{Y_AY_A}(\lambda)^{-1/2}f_{Y_AY_B}(\lambda)f_{Y_BY_B}(\lambda)^{-1}f_{Y_BY_A}(\lambda)f_{Y_AY_A}(\lambda)^{-1/2} \leq_{L} (1 - \varepsilon)I_{\alpha},
\end{align*}
for almost all $\lambda \in \R$ %Multiplication with $f_{Y_AY_A}(\lambda)^{1/2}$ from both sides and Bernstein
%\cite{BE09}, Proposition 8.1.2. xii), yields to
and hence,
\begin{align*}
(1 - \varepsilon) f_{Y_AY_A}(\lambda) - f_{Y_AY_B}(\lambda)f_{Y_BY_B}(\lambda)^{-1}f_{Y_BY_A}(\lambda) \geq 0,
\end{align*}
for almost all $\lambda \in \R$. If we choose $0< \widetilde{\varepsilon} <1$, such that $(1 - \widetilde{\varepsilon})^2=(1 - \varepsilon)$,
%and multiply the non-negative definite matrix with $1/(1- \tilde{\varepsilon})$,
we obtain
\begin{align*}
(1 -  \widetilde{\varepsilon}) f_{Y_AY_A}(\lambda) - f_{Y_AY_B}(\lambda)\left( (1- \tilde{\varepsilon}) f_{Y_BY_B}(\lambda)\right)^{-1}f_{Y_BY_A}(\lambda) \geq 0,
\end{align*}
for almost all $\lambda \in \R$. Since $(1-\widetilde{\varepsilon})f_{Y_BY_B}(\lambda)  \geq 0$, \cite{BE09}, Proposition 8.2.4., provides
\begin{align*}
\left( \begin{matrix}
(1 -  \widetilde{\varepsilon}) f_{Y_AY_A}(\lambda)  &  f_{Y_AY_B}(\lambda) \\
f_{Y_BY_A}(\lambda) & (1 -  \widetilde{\varepsilon}) f_{Y_BY_B}(\lambda)
\end{matrix} \right) \geq 0,
\end{align*}
respectively
\begin{align} \label{eqB1}
\left( \begin{matrix}
f_{Y_AY_A}(\lambda) &  f_{Y_AY_B}(\lambda) \\
f_{Y_BY_A}(\lambda) & f_{Y_BY_B}(\lambda)
\end{matrix} \right)
\geq_{L}
\widetilde{\varepsilon}
\left( \begin{matrix}
f_{Y_AY_A}(\lambda)  &  0_{\alpha \times \beta} \\
0_{\beta \times \alpha} & f_{Y_BY_B}(\lambda)
\end{matrix} \right),
\end{align}
for almost all $\lambda\in \R$. With this preliminary work in mind, we can now provide the actual proof of the assertion. Let $\HY^A \in \mathcal{L}_{Y_A}(t)$ and $\HY^B \in \mathcal{L}_{Y_B}(t)$, $t\in \R$. Then $\HY^A \in \mathcal{L}_{Y_A}$ and $\HY^B \in \mathcal{L}_{Y_B}$. Due to \cite{RO67}, I, (7.2), the spectral representation
\begin{align*}
\HY^A = \uint \varphi(\lambda) \Phi_A(d\lambda) \quad \text{ and } \quad \quad \HY^B = \uint \psi(\lambda) \Phi_B(d\lambda)
\quad \mathbb{P}\text{-a.s.}
\end{align*}
holds,
 where $\Phi_A(\cdot)$ and $\Phi_B(\cdot)$ are the random spectral measures form  the subprocesses $\CY_A$ and $\CY_B$ from \eqref{spectral representation of stationary process}. Furthermore, $\varphi(\cdot) \in C^{1 \times \alpha}$ and $\psi(\cdot) \in C^{1 \times \beta}$ are measurable vector functions that satisfy

\begin{align*}
\uint \varphi(\lambda) f_{Y_A Y_A}(\lambda) \overline{\varphi(\lambda)}^\top  d\lambda < \infty \quad \text{and}\quad
\uint \psi(\lambda) f_{Y_B Y_B}(\lambda) \overline{\psi(\lambda)}^\top  d\lambda < \infty.
\end{align*}
Using \eqref{eqB1} and the monotonicity of the integral in the inequality, we obtain
\begin{align*}
\Vert \HY^A + \HY^B \Vert_{L^2}^2
%= &\: \uint \varphi(\lambda) f_{Y_A Y_A}(\lambda) \overline{\varphi(\lambda)}^\top  d\lambda
%+ \uint \varphi(\lambda) f_{Y_A Y_B}(\lambda) \overline{\psi(\lambda)}^\top  d\lambda \\
 %  &\: + \uint \psi(\lambda) f_{Y_B Y_A}(\lambda) \overline{\varphi(\lambda)}^\top  d\lambda
%+ \uint \psi(\lambda) f_{Y_B Y_B}(\lambda) \overline{\psi(\lambda)}^\top  d\lambda \\
=  &\: \uint \left( \varphi(\lambda) \: \: \psi(\lambda) \right) \left( \begin{matrix}
f_{Y_AY_A}(\lambda) &  f_{Y_AY_B}(\lambda) \\
f_{Y_BY_A}(\lambda) & f_{Y_BY_B}(\lambda)
\end{matrix} \right) \overline{\left( \varphi(\lambda) \: \: \psi(\lambda) \right)}^\top  d\lambda \\
\geq &\: \widetilde{\varepsilon} \uint \left( \varphi(\lambda) \: \: \psi(\lambda) \right)
\left( \begin{matrix}
f_{Y_AY_A}(\lambda)  &  0_{\alpha \times \beta} \\
0_{\beta \times \alpha} & f_{Y_BY_B}(\lambda)
\end{matrix} \right) \overline{\left( \varphi(\lambda) \: \: \psi(\lambda) \right)}^\top  d\lambda \\
%= &\: \tilde{\varepsilon} \left( \uint \varphi(\lambda) f_{Y_A Y_A}(\lambda) \overline{\varphi(\lambda)}^\top  d\lambda + \uint \psi(\lambda) f_{Y_B Y_B}(\lambda) \overline{\psi(\lambda)}^\top  d\lambda \right) \\
= &\: \widetilde{\varepsilon} \left( \Vert \HY^A \Vert^2 + \Vert \HY^B \Vert_{L^2}^2  \right).
\end{align*}
%Since $\varepsilon$ and hence $\tilde{\varepsilon}$ do not depend on $A$ and $B$ on assumption, the assertion stays valid for any $\HY^A \in \mathcal{L}_{Y_A}(t)$, $\HY^B \in \mathcal{L}_{Y_B}(t)$, $t\in \R$, and disjoint subsets $A,B \subseteq V$.
Then \cite{FE12}, Proposition 2.3, provides that for $t\in\R$,
\begin{align*}
\mathcal{L}_{Y_{A}}(t) \cap \mathcal{L}_{Y_{B }}(t) =  \{0\} \quad \text{and}\quad
\mathcal{L}_{Y_{A}}(t) + \mathcal{L}_{Y_{B}}(t) = \mathcal{L}_{Y_{A}}(t) \vee \mathcal{L}_{Y_{B}}(t)
\quad \mathbb{P}\text{-a.s.}
\end{align*}
 Thus, \Cref{Lemma lineare Separabilität} yields the final statement
$\mathcal{L}_{Y_{A\cup C}}(t) \cap \mathcal{L}_{Y_{B \cup C}}(t) =  \mathcal{L}_{Y_C}(t) $
$\mathbb{P}$-a.s.
\end{proof}

\begin{proof}[Proof of \Cref{Proposition 5.6}] $\mbox{}$ \\
(a) The direction $\Rightarrow$ is already given in \eqref{eq3b}. Thus, let us prove $\Leftarrow$ and assume  that $\CY_{a} \nrarrow \CY_b\: \vert \: \CY_S$ for all $a\in A$, $b\in B$. Then we receive due to \Cref{Charakterisierung als Gleichheit der linearen Vorhersage} that
\begin{align*}
P_{\mathcal{L}_{Y_S}(t)}Y_b(t+h) = P_{\mathcal{L}_{Y_{S \setminus \{a\}}}(t)}Y_b(t+h) \quad \mathbb{P}\text{-a.s.}
\end{align*}
for all $h \in [0,1]$, $t\in\R$, $a\in A$, $b\in B$. 
This implies that
\begin{align*}
    P_{\mathcal{L}_{Y_S}(t)}Y_b(t+h) \in \mathcal{L}_{Y_{S \setminus \{a\}}}(t)\quad \forall\, a\in A.
\end{align*}
Now, from \Cref{Eigenschaften der linearen Räume}, which requires \Cref{Assumption an Dichte}, we conclude that
\begin{align*}
    P_{\mathcal{L}_{Y_S}(t)}Y_b(t+h) \in \bigcap_{a\in A}\mathcal{L}_{Y_{S \setminus \{a\}}}(t)=
    \mathcal{L}_{Y_{S \setminus A}}(t),
\end{align*}
implying due to \cite{BR91}, Proposition 2.3.2. (vii) that
\begin{align*}
P_{\mathcal{L}_{Y_S}(t)}Y_b(t+h)
= P_{\mathcal{L}_{Y_{S \setminus A}(t)}} P_{\mathcal{L}_{Y_S}(t)} Y_b(t+h)
= P_{\mathcal{L}_{Y_{S \setminus A}(t)}}Y_b(t+h) \quad \mathbb{P}\text{-a.s.}
\end{align*}
for all $b\in B$, $t\in \R$, and $h \in [0,1]$. We apply \Cref{Charakterisierung als Gleichheit der linearen Vorhersage} again and obtain $\CY_{A} \nrarrow \CY_B\: \vert \: \CY_S$.\\
(b) The direction $\Rightarrow$ is already given in \eqref{eq3.4} and we just prove $\Leftarrow$.
Thus assume  that $\CY_{a} \nrarrownull \CY_b\: \vert \: \CY_S$ for all $a\in A$, $b\in B$. By \Cref{definition (linear) local Granger non-causal} that is
\begin{align*}
&\limh P_{\mathcal{L}_{Y_S}(t)} \left(\frac{D^{(j_b)} Y_b(t+h)- D^{(j_b)} Y_b(t)}{h}\right) \\
&\quad =\limh P_{\mathcal{L}_{Y_{S \setminus \{a\}}}(t)} \left(\frac{D^{(j_b)} Y_b(t+h)- D^{(j_b)} Y_b(t)}{h}\right) \quad \mathbb{P}\text{-a.s.}
\end{align*}
for all $t\in \R$, $a\in A$, $b\in B$. Since $\mathcal{L}_{Y_{S \setminus \{a\}}}(t)$ is closed in the mean-square sense, we obtain
\begin{align*}
\limh P_{\mathcal{L}_{Y_S}(t)} \left(\frac{D^{(j_b)} Y_b(t+h)- D^{(j_b)} Y_b(t)}{h}\right) \in
\mathcal{L}_{Y_{S \setminus \{a\}}}(t) \quad \forall\, a\in A.
\end{align*}
As in (a), \Cref{Eigenschaften der linearen Räume}, which requires \Cref{Assumption an Dichte}, yields

\begin{align*}
\limh P_{\mathcal{L}_{Y_S}(t)} \left(\frac{D^{(j_b)} Y_b(t+h)- D^{(j_b)} Y_b(t)}{h}\right) \in
\mathcal{L}_{Y_{S \setminus A}}(t).
\end{align*}
Due to \cite{BR91}, Proposition 2.3.2. (iv) and (vii), it follows
\begin{align*}
&\limh P_{\mathcal{L}_{Y_S}(t)} \left(\frac{D^{(j_b)} Y_b(t+h)- D^{(j_b)} Y_b(t)}{h}\right) \\
&\quad = P_{\mathcal{L}_{Y_{S\setminus A}}(t)} \limh P_{\mathcal{L}_{Y_S}(t)} \left(\frac{D^{(j_b)} Y_b(t+h)- D^{(j_b)} Y_b(t)}{h}\right) \\
&\quad = \limh P_{\mathcal{L}_{Y_{S\setminus A}}(t)} P_{\mathcal{L}_{Y_S}(t)} \left(\frac{D^{(j_b)} Y_b(t+h)- D^{(j_b)} Y_b(t)}{h}\right) \\
&\quad = \limh P_{\mathcal{L}_{Y_{S\setminus A}}(t)} \left(\frac{D^{(j_b)} Y_b(t+h)- D^{(j_b)} Y_b(t)}{h}\right) \quad \mathbb{P}\text{-a.s.}
\end{align*}
for all $b\in B$, $t\in \R$. By \Cref{definition (linear) local Granger non-causal} that is $\CY_{A} \nrarrownull \CY_B\: \vert \: \CY_S$.\\
(c) The proof is the same as in (a).
\end{proof}

\subsection{Proof of Theorem~\ref{global markov property}}
\label{subsec:proofAMP}
The proof of the global AMP Markov property is structured in three auxiliary lemmata and is based on the ideas of \cite{EI07} and  \cite{EI10}.
%we structure the proof in
%the evidence by showing
%three auxiliary lemmatas that build on each other and are based on the ideas of  \cite{EI07} and  \cite{EI10}. % in four corresponding lemmas.
At the end, we present the proof of \Cref{global markov property}.

\begin{comment}
\begin{lemma}\label{Hilfslemma B1}
Let $G=(V, E)$ be a mixed graph. Then
\begin{align*}
A \msep B \: \vert \: V \setminus (A \cup B)  \:\: [G] \quad \Rightarrow \quad
\dis\left(A \cup \ch(A) \right) \cap \dis \left( B\cup \ch(B) \right) = \emptyset.
\end{align*}

Here
\begin{align*}
\ch(a)=\{v\in V \vert a \rarrow v \in E \},
\quad
\dis(a)= \{ v\in V \vert v \inst \cdots \inst a \text{ or } v=a \},
\end{align*}
denote the set of {children} respectively the district of $a \in V$. For $A \subseteq V$ let
\begin{align*}
\ch(A) = \bigcup_{a \in A} \ch(a),
\quad\dis(A) = \bigcup_{a\in A} \dis(a).
\end{align*}
%be the set of all children of vertices in the subset $A$ respectively the district of $A$.
% that is the set of all vertices that are connected to the vertex $a$ by a path that consists only of undirected edges complemented by the vertex $a$ itself.
\end{lemma}
\begin{proof}
The proof %of this assertion
does not depend on the specific definition of the edges in the mixed graph. Thus, the proof from \cite{EI07}, Lemma B.1, is directly applicable.
%A more detailed proof can be found in \cite{EI11}, Lemma 3.2.
\end{proof}
\end{comment}

\begin{lemma}\label{Hilfslemma B2}
Let $G_{OG}=(V,E_{OG})$ be the orthogonality graph for $\CY_V$. Suppose $A,B \subseteq V$ are disjoint subsets, $t\in \R$, and $k\in \N$. Then
\begin{align*}
A \msep B \: \vert \: V\setminus (A \cup B)  \:\: [G_{OG}] \quad \Rightarrow \quad \mathcal{L}_{Y_A}(t) \perp \mathcal{L}_{Y_B}(t) \: \vert \: \mathcal{L}_{Y_{ V\setminus (A \cup B)}}(t) \vee \mathcal{L}_{Y_{A\cup B}}(t-k).
\end{align*}
\end{lemma}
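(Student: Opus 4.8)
\textbf{Proof plan for Lemma~\ref{Hilfslemma B2}.}

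The plan is to fix disjoint $A, B \subseteq V$, write $S \coloneqq V \setminus (A \cup B)$, and assume $A \msep B \mid S \;[G_{OG}]$. Since $A$ and $B$ are $m$-separated given $S$ in the whole graph, the pairwise, local and block-recursive Markov properties (Propositions~\ref{pairwise Markov property}--\ref{block-recursive Markov property}) should first be used to record the elementary conditional orthogonality facts among the spaces at time $t$: e.g.\ $\mathcal{L}_{Y_A}(t,t+1) \perp \mathcal{L}_{Y_{V\setminus(\pa(A)\cup A)}}(t) \mid \mathcal{L}_{Y_{\pa(A)\cup A}}(t)$ and the corresponding undirected statement. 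The key structural input from $m$-separation is that there is no $m$-connecting path between $A$ and $B$ given $S$; since $S = V \setminus (A\cup B)$ contains every possible intermediate vertex, every such path has \emph{all} its intermediate vertices in the conditioning set, so the path must be $m$-blocked by a collider, meaning it passes through a non-collider in $S$ — wait, more precisely: with conditioning set $S$ equal to all intermediate vertices, a path is $m$-connecting iff every intermediate vertex is a collider; so $A \msep B \mid S$ forces that on every $A$--$B$ path some intermediate vertex is a non-collider, hence (being in $S$) blocks the path. Translating this into graph-combinatorial language, one concludes that $A \cup \ch(A)$ and $B \cup \ch(B)$ lie in different districts, or at least that there is no directed edge from $A$ to $B$, none from $B$ to $A$, no undirected $A$--$B$ edge, and more. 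This is exactly the kind of reduction carried out in \cite{EI07}, Lemma~B.2 and \cite{EI10}, so I would quote those combinatorial consequences.

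Next, the heart of the argument is an induction on $k$. The base case $k = 0$ (or $k=1$) is essentially the content of the block-recursive Markov property combined with the properties (C1)--(C5) of the conditional orthogonality relation from Lemma~\ref{properties of conditional orthogonality}: from the directed-edge information one gets $\mathcal{L}_{Y_A}(t) \perp \mathcal{L}_{Y_B}(t) \mid \mathcal{L}_{Y_S}(t) \vee \mathcal{L}_{Y_{A\cup B}}(t-1)$ by a one-step decomposition, using $\mathcal{L}_{Y_A}(t) = \mathcal{L}_{Y_A}(t-1) \vee \mathcal{L}_{Y_A}(t-1,t)$ (Lemma~\ref{additivity in time domain}(a)) and the fact that $\mathcal{L}_{Y_A}(t-1,t)$ carries the Granger-causal dependence controlled by the block-recursive property, while $\mathcal{L}_{Y_A}(t-1)$ is pushed into the conditioning set. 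For the inductive step, assume the claim for $k$; replace $t$ by $t$ throughout but peel off one more time-layer: using $\mathcal{L}_{Y_{A\cup B}}(t-k) = \mathcal{L}_{Y_{A\cup B}}(t-k-1) \vee \mathcal{L}_{Y_{A\cup B}}(t-k-1,t-k)$ and the block-recursive / Granger-causality facts applied at time $t-k-1$ (pushed forward), together with the contraction (C4), weak union (C3), and decomposition (C2) properties, one absorbs $\mathcal{L}_{Y_{A\cup B}}(t-k-1,t-k)$ and the relevant increments and re-expresses the conditional orthogonality with the conditioning set now containing $\mathcal{L}_{Y_{A\cup B}}(t-k-1)$. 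This is precisely the induction-into-the-past technique the paper refers to in the remark after Lemma~\ref{Leftdecomposition}.

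I expect the main obstacle to be the careful bookkeeping in the inductive step: one must simultaneously track the directed (Granger) and undirected (contemporaneous) edge information, and at each backward time-step the block-recursive Markov property only gives orthogonality with respect to $\mathcal{L}_{Y_V}(t-k-1)$ and increments $\mathcal{L}_{Y_\bullet}(t-k-1,t-k)$, so one has to glue these together using the graphoid axioms in exactly the right order so that (i) the new increment gets absorbed into the conditioning set and (ii) no spurious terms appear. The delicate point is that $m$-separation in the \emph{full} graph — rather than just local structure around $A$ and $B$ — is what guarantees the absorbed increments do not reintroduce a connection; this is where the combinatorial consequences of $A\msep B \mid S$ extracted in the first paragraph (no directed $A\to B$ or $B\to A$ edges, disjoint districts of $A\cup\ch(A)$ and $B\cup\ch(B)$) must be invoked at each step. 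Since the structure mirrors \cite{EI07}, Lemma~B.2 and \cite{EI10}, Lemma~4.3 almost line for line — the only change being linear spaces and conditional orthogonality in place of $\sigma$-fields and conditional independence, which is harmless because we have the full graphoid property (C1)--(C5) under conditional linear separation (Proposition~\ref{Eigenschaften der linearen Räume}) — the remaining verification is routine and I would carry it out by citing those references for the combinatorial core and supplying the graphoid-axiom manipulations explicitly.
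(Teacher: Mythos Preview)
Your overall plan is correct and matches the paper: induction on $k$, the combinatorial consequence $\dis(A\cup\ch(A))\cap\dis(B\cup\ch(B))=\emptyset$ extracted from $A\msep B\mid V\setminus(A\cup B)$, the block-recursive Markov property for both directed and undirected edges, and the semi-graphoid manipulations, all following \cite{EI07}, Lemma~B.2 and \cite{EI10}, Lemma~4.1 (not 4.3). Since you ultimately defer to those references for the bookkeeping, the proposal is sound.

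One point does need correcting, because your own description of the inductive step would not work as written. You propose to decompose the conditioning set via $\mathcal{L}_{Y_{A\cup B}}(t-k)=\mathcal{L}_{Y_{A\cup B}}(t-k-1)\vee\mathcal{L}_{Y_{A\cup B}}(t-k-1,t-k)$ and then ``absorb'' the increment out of the conditioning set using contraction. But contraction (C4) removes a space from the conditioning set only if that space is conditionally orthogonal to one of the two outer spaces, and here $\mathcal{L}_{Y_A}(t-k-1,t-k)\subseteq\mathcal{L}_{Y_A}(t)$, so no such orthogonality is available. The actual mechanism in \cite{EI10} (and in the paper) runs \emph{forward}: starting from the induction hypothesis at time $t$, one uses the Granger non-causality $\CY_B\nrarrow\CY_{V\setminus(B\cup\ch(B))}\mid\CY_V$ and the contemporaneous uncorrelation $\CY_{\dis(B\cup\ch(B))}\nsim\CY_{V\setminus\dis(B\cup\ch(B))}\mid\CY_V$ to build a statement involving $\mathcal{L}_{Y_\bullet}(t+1)$ while the conditioning set still reaches back only to $t-k$; then one substitutes $t\mapsto t-1$ and simplifies to obtain the claim at $t$ with conditioning back to $t-(k+1)$. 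The forward-then-shift direction is essential. Finally, only (C1)--(C4) are needed here; intersection (C5) and conditional linear separation enter elsewhere in the paper but not in this lemma.
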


\begin{proof}
The proof can be done step by step as in \cite{EI10}, proof of Lemma 4.1, by induction over $k$, using the properties of a semi-graphoid given in our \Cref{properties of conditional orthogonality}.
\end{proof}

\begin{lemma}\label{Hilfslemma B3}
Let $G_{OG}=(V,E_{OG})$ be the orthogonality graph for $\CY_V$. Suppose $A, B \subseteq V$ are disjoint subsets and $t\in\R$. Then
\begin{align*}
A \msep B \: \vert \: V\setminus (A \cup B)  \:\: [G_{OG}] \quad \Rightarrow \quad \mathcal{L}_{Y_A}(t) \perp \mathcal{L}_{Y_B}(t) \: \vert \: \mathcal{L}_{Y_{ V\setminus (A \cup B)}}(t).
\end{align*}
\end{lemma}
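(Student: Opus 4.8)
The plan is to pass to the limit $k\to\infty$ in the conclusion of \Cref{Hilfslemma B2}. Write $C:=V\setminus(A\cup B)$ and set $\mathcal{M}_k:=\mathcal{L}_{Y_C}(t)\vee\mathcal{L}_{Y_{A\cup B}}(t-k)$ for $k\in\N$. Since $\mathcal{L}_{Y_{A\cup B}}(t-k-1)\subseteq\mathcal{L}_{Y_{A\cup B}}(t-k)$, the $\mathcal{M}_k$ form a decreasing sequence of closed subspaces of $L^2$. Because the orthogonality graph $G_{OG}$ is only defined under Assumptions~\ref{Assumption an Dichte} and~\ref{Assumption purely nondeterministic of full rank}, \Cref{Eigenschaft für Hilfslemma B3} is available, and applying it with $A\cup B$ in place of $A$ gives
\[
\bigcap_{k\in\N}\mathcal{M}_k=\mathcal{L}_{Y_C}(t)\quad\mathbb{P}\text{-a.s.}
\]
Moreover, by \Cref{Hilfslemma B2} the hypothesis $A\msep B\:\vert\:C\:[G_{OG}]$ yields $\mathcal{L}_{Y_A}(t)\perp\mathcal{L}_{Y_B}(t)\:\vert\:\mathcal{M}_k$ for every $k$, i.e.\ by the definition of conditional orthogonality
\[
\langle X-P_{\mathcal{M}_k}X,\ Y-P_{\mathcal{M}_k}Y\rangle_{L^2}=0\qquad\text{for all }X\in\mathcal{L}_{Y_A}(t),\ Y\in\mathcal{L}_{Y_B}(t),\ k\in\N.
\]

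Next I would use the standard fact that for a decreasing sequence of closed subspaces the orthogonal projections converge strongly to the projection onto the intersection, i.e.\ $P_{\mathcal{M}_k}Z\to P_{\bigcap_j\mathcal{M}_j}Z$ in $L^2$ for every $Z\in L^2$. A quick argument: for $l\ge k$ one has $\mathcal{M}_l\subseteq\mathcal{M}_k$, hence $P_{\mathcal{M}_l}P_{\mathcal{M}_k}=P_{\mathcal{M}_l}$, so $P_{\mathcal{M}_k}Z-P_{\mathcal{M}_l}Z\perp\mathcal{M}_l$ and $\|P_{\mathcal{M}_k}Z\|_{L^2}^2=\|P_{\mathcal{M}_k}Z-P_{\mathcal{M}_l}Z\|_{L^2}^2+\|P_{\mathcal{M}_l}Z\|_{L^2}^2$; the sequence $\|P_{\mathcal{M}_k}Z\|_{L^2}$ is non-increasing and bounded, hence convergent, so $(P_{\mathcal{M}_k}Z)_k$ is Cauchy; its limit $W$ lies in every $\mathcal{M}_j$, hence in $\bigcap_j\mathcal{M}_j$, and since $Z-P_{\mathcal{M}_k}Z\perp\mathcal{M}_j$ for $k\ge j$ one gets $Z-W\perp\bigcap_j\mathcal{M}_j$, so $W=P_{\bigcap_j\mathcal{M}_j}Z$. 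Alternatively this can simply be cited from the projection references already listed in the preliminaries (e.g.\ \citealp{BR91, LI15}).

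Applying this with $\bigcap_k\mathcal{M}_k=\mathcal{L}_{Y_C}(t)$ and using the continuity of the $L^2$-inner product (cf.\ \eqref{limit E}, via Cauchy--Schwarz), I let $k\to\infty$ in the displayed identity to obtain
\[
\langle X-P_{\mathcal{L}_{Y_C}(t)}X,\ Y-P_{\mathcal{L}_{Y_C}(t)}Y\rangle_{L^2}=0\qquad\text{for all }X\in\mathcal{L}_{Y_A}(t),\ Y\in\mathcal{L}_{Y_B}(t),
\]
which is precisely $\mathcal{L}_{Y_A}(t)\perp\mathcal{L}_{Y_B}(t)\:\vert\:\mathcal{L}_{Y_{V\setminus(A\cup B)}}(t)$, the assertion. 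The only genuinely delicate point is the strong convergence of the projections along the nested chain $(\mathcal{M}_k)$ together with the identification of the intersection through \Cref{Eigenschaft für Hilfslemma B3}; once these are in place the rest is a routine limit passage in an inner product. I therefore expect the projection-convergence step (and, upstream of it, having \eqref{eqasd} at hand) to be the main obstacle, while everything else is mechanical.
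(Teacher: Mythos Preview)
Your proof is correct and follows essentially the same approach as the paper: both use \Cref{Eigenschaft für Hilfslemma B3} to identify $\bigcap_k\mathcal{M}_k=\mathcal{L}_{Y_{V\setminus(A\cup B)}}(t)$, invoke strong convergence of projections along the decreasing chain (the paper cites \cite{WE80}, Theorems 4.31(b) and 4.32, while you supply a direct argument), and then pass to the limit in the inner product via \eqref{limit E} to upgrade \Cref{Hilfslemma B2}.
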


\begin{proof}
 First, $\mathcal{L}_{Y_{A \cup B}}(t-k) \vee \mathcal{L}_{Y_{V\setminus (A \cup B)}}(t) \supseteq \mathcal{L}_{Y_{A \cup B}}(t-k-1) \vee \mathcal{L}_{Y_{V\setminus (A \cup B)}}(t)$ \mbox{for $k\in \N$ and}
\begin{align*}
\bigcap_{k \in \N} \left( \mathcal{L}_{Y_{A \cup B}}(t-k) \vee \mathcal{L}_{Y_{V\setminus (A \cup B)}}(t) \right) =\mathcal{L}_{Y_{V\setminus (A \cup B)}}(t),
\end{align*}
due to \Cref{Eigenschaft für Hilfslemma B3}. Theorems 4.31 (b) and 4.32 in \cite{WE80} provide
%a monotonically decreasing sequence of orthogonal projections
% \begin{align*}
% \left( P_{\mathcal{L}_{Y_{A \cup B}}(t-k) \vee \mathcal{L}_{Y_{V\setminus (A \cup B)}}(t)} \right)_{k\in \N}
% \end{align*}
%with corresponding limiting orthogonal projection
%\begin{align*}
%P_{\bigcap_{k\in \N} \mathcal{L}_{Y_{A \cup B}}(t-k) \vee %\mathcal{L}_{Y_{V\setminus (A \cup B)}}(t)}
%=P_{\mathcal{L}_{Y_{V\setminus (A \cup B)}}(t)},
%\end{align*}
%where we apply \Cref{Assumption purely nondeterministic of full rank}, respectively \Cref{Eigenschaft für Hilfslemma B3}. % to obtain the equality.
%This means that for $\HY^A \in \mathcal{L}_{Y_A}(t)$ and $\HY^B \in \mathcal{L}_{Y_B}(t)$ in particular the following convergences in mean square are valid
\begin{align*}
\limk P_{\mathcal{L}_{Y_{A \cup B}}(t-k) \vee \mathcal{L}_{Y_{V\setminus (A \cup B)}}(t)}Y &=
P_{\mathcal{L}_{Y_{V\setminus (A \cup B)}}(t)}Y, \quad Y\in L^2.
%\\
%\limk P_{\mathcal{L}_{Y_{A \cup B}}(t-k) \vee \mathcal{L}_{Y_{V\setminus (A \cup B)}}(t)}\HY^B &=
%P_{\mathcal{L}_{Y_{V\setminus (A \cup B)}}(t)}\HY^B.
\end{align*}
 Let $\HY^A \in \mathcal{L}_{Y_A}(t)$ and $\HY^B \in \mathcal{L}_{Y_B}(t)$.
Then, using \eqref{limit E},
\begin{align*}
&\: \BE \left[ \left(\HY^A -P_{\mathcal{L}_{Y_{V\setminus (A \cup B)}}(t)}\HY^A \right) \overline{\left(\HY^B -P_{\mathcal{L}_{Y_{V\setminus (A \cup B)}}(t)}\HY^B \right)} \right] \\
&\quad = \: \lim_{k\rightarrow \infty} \BE \left[ \left(\HY^A - P_{\mathcal{L}_{Y_{A \cup B}}(t-k) \vee \mathcal{L}_{Y_{V\setminus (A \cup B)}}(t)}\HY^A  \right) \right. \\
& \quad\quad \left. \phantom{\limk \BE}
 \times \overline{\left(\HY^B - \:P_{\mathcal{L}_{Y_{A \cup B}}(t-k) \vee \mathcal{L}_{Y_{V\setminus (A \cup B)}}(t)}\HY^B \right)} \right].
\end{align*}
The expression on the right-hand side is zero since, due to \Cref{Hilfslemma B2}, $\mathcal{L}_{Y_A}(t) \perp \mathcal{L}_{Y_B}(t) \: \vert \: \mathcal{L}_{Y_{ V\setminus (A \cup B)}}(t) \vee \mathcal{L}_{Y_{A\cup B}}(t-k)$ for $t\in \R$, $k\in \N$. Thus, the expression on the left-hand side is also zero and $\mathcal{L}_{Y_A}(t) \perp \mathcal{L}_{Y_B}(t) \: \vert \: \mathcal{L}_{Y_{ V\setminus (A \cup B)}}(t)$.
\end{proof}

\begin{lemma}\label{Hilfslemma B4}
Let $G_{OG}=(V, E_{OG})$ be the orthogonality graph for $\CY_V$ and suppose $A, B \subseteq V$ are disjoint subsets. Then
\begin{align*}
A \msep B \: \vert \: V\setminus (A \cup B)  \:\: [G_{OG}] \quad \Rightarrow \quad \mathcal{L}_{Y_A} \perp \mathcal{L}_{Y_B} \: \vert \: \mathcal{L}_{Y_{ V\setminus (A \cup B)}}.
\end{align*}
\end{lemma}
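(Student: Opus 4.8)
\textbf{Proof plan for Lemma~\ref{Hilfslemma B4}.}

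The statement upgrades the conditional orthogonality from the finite-past spaces $\mathcal{L}_{Y_A}(t)$, $\mathcal{L}_{Y_B}(t)$, $\mathcal{L}_{Y_{V\setminus(A\cup B)}}(t)$ obtained in \Cref{Hilfslemma B3} to the full spaces $\mathcal{L}_{Y_A}$, $\mathcal{L}_{Y_B}$, $\mathcal{L}_{Y_{V\setminus(A\cup B)}}$. The plan is to pass to the limit $t\to\infty$ along integers, exactly mirroring the passage to the limit $k\to\infty$ carried out in the proof of \Cref{Hilfslemma B3}, but now letting the \emph{upper} endpoint grow rather than the lower one. First I would fix generic elements $\HY^A\in\mathcal{L}_{Y_A}$ and $\HY^B\in\mathcal{L}_{Y_B}$ and recall from \Cref{additivity in time domain}~(d) that $\overline{\bigcup_{n\in\N}\mathcal{L}_{Y_A}(n)}=\mathcal{L}_{Y_A}$ and similarly for $B$ and for $V\setminus(A\cup B)$; hence it suffices to treat $\HY^A\in\mathcal{L}_{Y_A}(n)$, $\HY^B\in\mathcal{L}_{Y_B}(n)$ for a common large $n$, since conditional orthogonality for all such approximants plus $L^2$-continuity of the inner product and of projections (via \eqref{limit E}) will give it for the closures.

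Second, for fixed $\HY^A\in\mathcal{L}_{Y_A}(n)$, $\HY^B\in\mathcal{L}_{Y_B}(n)$ I would use that $\mathcal{L}_{Y_{V\setminus(A\cup B)}}(m)$ is an increasing sequence of closed subspaces as $m\to\infty$ with $\overline{\bigcup_{m\in\N}\mathcal{L}_{Y_{V\setminus(A\cup B)}}(m)}=\mathcal{L}_{Y_{V\setminus(A\cup B)}}$, again by \Cref{additivity in time domain}~(d). By Theorem 4.31~(a) in \cite{WE80} (the monotone increasing counterpart of the statement quoted in \Cref{Hilfslemma B3}) one has $P_{\mathcal{L}_{Y_{V\setminus(A\cup B)}}(m)}Y\to P_{\mathcal{L}_{Y_{V\setminus(A\cup B)}}}Y$ in $L^2$ for every $Y\in L^2$. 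Therefore, using \eqref{limit E},
\begin{align*}
&\BE\left[\left(\HY^A-P_{\mathcal{L}_{Y_{V\setminus(A\cup B)}}}\HY^A\right)\overline{\left(\HY^B-P_{\mathcal{L}_{Y_{V\setminus(A\cup B)}}}\HY^B\right)}\right]\\
&\quad=\lim_{m\to\infty}\BE\left[\left(\HY^A-P_{\mathcal{L}_{Y_{V\setminus(A\cup B)}}(m)}\HY^A\right)\overline{\left(\HY^B-P_{\mathcal{L}_{Y_{V\setminus(A\cup B)}}(m)}\HY^B\right)}\right].
\end{align*}
For $m\geq n$ we have $\HY^A\in\mathcal{L}_{Y_A}(n)\subseteq\mathcal{L}_{Y_A}(m)$ and $\HY^B\in\mathcal{L}_{Y_B}(m)$, and \Cref{Hilfslemma B3} applied with $t=m$ gives $\mathcal{L}_{Y_A}(m)\perp\mathcal{L}_{Y_B}(m)\,\vert\,\mathcal{L}_{Y_{V\setminus(A\cup B)}}(m)$ (note the $m$-separation hypothesis $A\msep B\,\vert\,V\setminus(A\cup B)\,[G_{OG}]$ does not depend on $t$, so it is available for every $m$). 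Hence every term in the limit on the right is zero, so the left-hand side is zero.

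Finally, since $\HY^A\in\mathcal{L}_{Y_A}(n)$ and $\HY^B\in\mathcal{L}_{Y_B}(n)$ were arbitrary and $n$ was arbitrary, the $L^2$-density of $\bigcup_n\mathcal{L}_{Y_A}(n)$ in $\mathcal{L}_{Y_A}$ (and likewise for $B$), together with continuity of projections (Proposition 2.3.2 in \cite{BR91}) and \eqref{limit E}, extends the identity to all $\HY^A\in\mathcal{L}_{Y_A}$, $\HY^B\in\mathcal{L}_{Y_B}$; that is precisely $\mathcal{L}_{Y_A}\perp\mathcal{L}_{Y_B}\,\vert\,\mathcal{L}_{Y_{V\setminus(A\cup B)}}$. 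The only mild subtlety — and the step I would be most careful with — is the double limiting argument: one must make sure the approximation in the upper endpoint $n$ (to replace $\HY^A,\HY^B$ by elements of finite-past spaces) is carried out \emph{before} taking $m\to\infty$, which is legitimate because for each fixed approximant the chain above is exact; a diagonal argument is not needed, just the order $n$ first, then $m\to\infty$, then $n\to\infty$.
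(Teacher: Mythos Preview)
Your proof is correct and follows essentially the same approach as the paper: both use \Cref{Hilfslemma B3} at each integer time together with the monotone convergence of projections $P_{\mathcal{L}_{Y_{V\setminus(A\cup B)}}(n)}\to P_{\mathcal{L}_{Y_{V\setminus(A\cup B)}}}$ and \eqref{limit E}. The only cosmetic difference is that the paper runs a single limit by taking $P_{\mathcal{L}_{Y_A}(n)}\HY^A$ and $P_{\mathcal{L}_{Y_B}(n)}\HY^B$ as the approximants (so all three spaces are indexed by the same $n$), whereas you decouple into ``first restrict $\HY^A,\HY^B$ to finite-past spaces, then let $m\to\infty$, then extend by density''; both are valid and equivalent.
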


\begin{proof}
First, note from \Cref{additivity in time domain} that
$\overline{\bigcup_{n\in \N} \mathcal{L}_{Y_S}(n)} = \mathcal{L}_{Y_S}$
$\mathbb{P}$-a.s. for any $S\subseteq V$.
%For the first direction, we note that %by definition of the linear spaces
%$\mathcal{L}_{Y_A} (n) \subseteq \mathcal{L}_{Y_A}$ for $n \in \N$. Then $\bigcup_{n\in \N} \mathcal{L}_{Y_A}(n) \subseteq \mathcal{L}_{Y_A}$ and since $\mathcal{L}_{Y_A}$ is closed, %we receive the assertion
%$\overline{\bigcup_{n\in \N} \mathcal{L}_{Y_A}(n)} \subseteq \mathcal{L}_{Y_A}$.
\begin{comment}
The first direction $\overline{\bigcup_{n\in \N} \mathcal{L}_{Y_A}(n)} \subseteq \mathcal{L}_{Y_A}$ is immediately clear. For the second direction let $Y_A \in \ell_{Y_A}$. Then there are %time points
$-\infty < t_1 \leq \ldots \leq t_\ell <\infty$, $\ell \in \N$, and %coefficients
$\gamma( \cdot, \cdot)$, such that $\mathbb{P}$-a.s.
\begin{align*}
Y_A = \sum_{i=1}^\ell \sum_{a\in A} \gamma_{a,i} Y_a(t_i).
\end{align*}
Therefore $Y_A \in \mathcal{L}_{Y_A}(t_\ell) \subseteq \mathcal{L}_{Y_A}(\lceil t_\ell \rceil)$, % since the linear spaces increase in time.
where $\lceil \cdot \rceil$ is the upper Gaussian bracket. Thus $Y_A \in \overline{\bigcup_{n\in \N} \mathcal{L}_{Y_A}(n)}$ and $\ell_{Y_A} \subseteq \overline{\bigcup_{n\in \N} \mathcal{L}_{Y_A}(n)}$. Forming the closure $\mathcal{L}_{Y_A} \subseteq \overline{\bigcup_{n\in \N} \mathcal{L}_{Y_A}(n)}$.

%Since we consider the closure of the unification space on the right-hand side, we obtain $\mathcal{L}_{Y_A} \subseteq \overline{\bigcup_{n\in \N} \mathcal{L}_{Y_A}(n)}$.

Now we can perform the actual proof. First,

\begin{align*}
\mathcal{L}_{Y_A}(n) \subseteq \mathcal{L}_{Y_A}(n+1), \quad
\mathcal{L}_{Y_B}(n) \subseteq \mathcal{L}_{Y_B}(n+1), \quad
\mathcal{L}_{Y_{V \setminus (A \cup B)}}(n) \subseteq \mathcal{L}_{Y_{V \setminus (A \cup B)}}(n+1),
\end{align*}
for $n \in \N$. %since the linear spaces increase in time again.
\end{comment}
Let $\HY^A \in \mathcal{L}_{Y_A}$ and $\HY^B \in \mathcal{L}_{Y_B}$.
Then analogue arguments as in the proof of \Cref{Hilfslemma B3} give
\begin{align*}
\HY^A - P_{\mathcal{L}_{Y_{V \setminus (A \cup B)}}} \HY^A
&= \limm \: P_{\mathcal{L}_{Y_A}(n)} \HY^A -  P_{\mathcal{L}_{Y_{V \setminus (A \cup B)}}(n)} P_{\mathcal{L}_{Y_A}(n)} \HY^A , \\
\HY^B - P_{\mathcal{L}_{Y_{V \setminus (A \cup B)}}} \HY^B
&= \limm \: P_{\mathcal{L}_{Y_B}(n)} \HY^B -  P_{\mathcal{L}_{Y_{V \setminus (A \cup B)}}(n)} P_{\mathcal{L}_{Y_B}(n)} \HY^B.
\end{align*}
Further, \eqref{limit E} yields
%with the same argument as in the proof of \Cref{Hilfslemma B3} that
\begin{align*}
& \: \BE \left[
\left( \HY^A - P_{\mathcal{L}_{Y_{V \setminus (A \cup B)}}} \HY^A \right)
\overline{\left( \HY^B - P_{\mathcal{L}_{Y_{V \setminus (A \cup B)}}} \HY^B \right)}
\right] \\
& \quad=  \:  \lim_{n\rightarrow \infty} \: \BE \left[
\left( P_{\mathcal{L}_{Y_A}(n)} \HY^A - P_{\mathcal{L}_{Y_{V \setminus (A \cup B)}}(n)} P_{\mathcal{L}_{Y_A}(n)} \HY^A \right) \right. \\
&  \left. \phantom{\limm \: \BE} \quad \quad\times\overline{\left( P_{\mathcal{L}_{Y_B}(n)} \HY^B - P_{\mathcal{L}_{Y_{V \setminus (A \cup B)}}(n)} P_{\mathcal{L}_{Y_B}(n)} \HY^B \right)}
\right].
\end{align*}
The expression on the right-hand side is zero, since $\mathcal{L}_{Y_A}(t) \perp \mathcal{L}_{Y_B}(t) \: \vert \: \mathcal{L}_{Y_{V \setminus (A \cup B)}}(t)$ for $t\in \R$ due to \Cref{Hilfslemma B3}. Thus, the left-hand side is zero and  $\mathcal{L}_{Y_A} \perp \mathcal{L}_{Y_B} \: \vert \: \mathcal{L}_{Y_{ V\setminus (A \cup B)}}$.
\end{proof}

\begin{proof}[Proof of \Cref{global markov property}]
For the proof of \Cref{global markov property}, we refer to the proof of Theorem 3.1 in \cite{EI07}, since it is based only on \Cref{Hilfslemma B4}, properties of mixed graphs, and \Cref{properties of conditional orthogonality}.
\end{proof}

\subsection{Proofs of Subsection~\ref{Global Markov properties for the local orthogonality graph}}\label{proofslocalglobalMarkovproperty}

\begin{proof}[Proof of \Cref{HilfslemmaABCistV}]
For a graph $G=(V,E)$ let
\begin{align*}
\ch(a)=\{v\in V \vert a \rarrow v \in E \}
\quad \text{ and } \quad
\dis(a)= \{ v\in V \vert v \inst \cdots \inst a \text{ or } v=a \},
\end{align*}
denote the set of children and the district of $a \in V$, respectively. For $A \subseteq V$ let \linebreak
$
\ch(A) = \bigcup_{a \in A} \ch(a)$ and
$\dis(A) = \bigcup_{a\in A} \dis(a).$
Due to \cite{EI07}, Lemma B.1, $A \msep B \: \vert \: V \setminus (A \cup B) \: [G_{OG}^0]$ yields  $$\dis\left(A \cup \ch(A) \right) \cap \dis \left( B \cup \ch(B) \right) = \emptyset.$$  In particular, $\ch(A) \cap B=\emptyset$, $A \cap \ch(B) = \emptyset$, and $\ne(A) \cap B=\emptyset$. Thus, as claimed, $\CY_A \nrarrownull \CY_B \: \vert \: \CY_{V}$, $\CY_B \nrarrownull \CY_A \: \vert \: \CY_{V}$, and $\CY_A \nsimnull \CY_B \: \vert \: \CY_{V}$.
\end{proof}

\begin{proof}[Proof of \Cref{Leftdecomposition}]
The assumption $\CY_{A \cup B} \nrarrownull \CY_C \: \vert \: \CY_{A \cup B \cup C \cup D}$ states that   for all $t\in \R$ and $c \in C$,
\begin{align*}
&\limh P_{\mathcal{L}_{Y_{A \cup B \cup C \cup D}}(t)} \left(\frac{D^{(j_c)} Y_c(t+h)- D^{(j_c)} Y_c(t)}{h}\right)\\
&\quad=\limh P_{\mathcal{L}_{Y_{C \cup D}}(t)} \left(\frac{D^{(j_c)} Y_c(t+h)- D^{(j_c)} Y_c(t)}{h}\right) \quad \mathbb{P}\text{-a.s.}
\end{align*}
An application of  $ P_{\mathcal{L}_{Y_{A  \cup C \cup D}}(t)}$  on the left and the right hand side, \cite{BR91}, Proposition 2.3.2. (iv) and (vii), and 

\begin{align*}
    P_{\mathcal{L}_{Y_{A \cup C \cup D}}(t)}  P_{\mathcal{L}_{Y_{A \cup B \cup C \cup D}}(t)}=P_{\mathcal{L}_{Y_{A \cup C \cup D}}(t)} \quad \text{and} \quad P_{\mathcal{L}_{Y_{A \cup C \cup D}}(t)}  P_{\mathcal{L}_{Y_{C \cup D}}(t)} = P_{\mathcal{L}_{Y_{C \cup D}}(t)},
\end{align*}
respectively, give for $t\in \R$ and $c \in C$,
\begin{comment}
\LS{
Betrachte
\begin{align*}
\limh P_{\mathcal{L}_{Y_{A \cup B \cup C \cup D}}(t)} \left(\frac{D^{(j_c)} Y_c(t+h)- D^{(j_c)} Y_c(t)}{h}\right)
= X
\end{align*}
dann hat man eine Folge, die gegen X konviergiert. Dann konvergiert mit Brockwell
\begin{align*}
\limh P_{\mathcal{L}_{Y_{A \cup C \cup D}}(t)}  P_{\mathcal{L}_{Y_{A \cup B \cup C \cup D}}(t)} \left(\frac{D^{(j_c)} Y_c(t+h)- D^{(j_c)} Y_c(t)}{h}\right)
=  \limh P_{\mathcal{L}_{Y_{A \cup C \cup D}}(t)}  X
\end{align*}
und das ist wegen den aufsteigenden Räumen
\begin{align*}
\limh P_{\mathcal{L}_{Y_{A \cup C \cup D}}(t)} \left(\frac{D^{(j_c)} Y_c(t+h)- D^{(j_c)} Y_c(t)}{h}\right)
=  \limh P_{\mathcal{L}_{Y_{A \cup C \cup D}}(t)}  X
\end{align*}
Analog wegen der Gleichheit
\begin{align*}
\limh P_{\mathcal{L}_{Y_{C \cup D}}(t)} \left(\frac{D^{(j_c)} Y_c(t+h)- D^{(j_c)} Y_c(t)}{h}\right)
= X
\end{align*}
dann hat man eine Folge, die gegen X konviergiert. Dann konvergiert wieder mit Brockwell
\begin{align*}
\limh P_{\mathcal{L}_{Y_{A \cup C \cup D}}(t)}  P_{\mathcal{L}_{Y_{C \cup D}}(t)} \left(\frac{D^{(j_c)} Y_c(t+h)- D^{(j_c)} Y_c(t)}{h}\right)
=  \limh P_{\mathcal{L}_{Y_{A \cup C \cup D}}(t)}  X
\end{align*}
und das ist wegen den aufsteigenden Räumen
\begin{align*}
\limh P_{\mathcal{L}_{Y_{C \cup D}}(t)} \left(\frac{D^{(j_c)} Y_c(t+h)- D^{(j_c)} Y_c(t)}{h}\right)
=  \limh P_{\mathcal{L}_{Y_{A \cup C \cup D}}(t)}  X
\end{align*}
Gleichsetzen liefert das Resultat}
\end{comment}
\begin{align*}
&\limh P_{\mathcal{L}_{Y_{A \cup C \cup D}}(t)} \left(\frac{D^{(j_c)} Y_c(t+h)- D^{(j_c)} Y_c(t)}{h}\right)\\
&\quad =\limh P_{\mathcal{L}_{Y_{C \cup D}}(t)} \left(\frac{D^{(j_c)} Y_c(t+h)- D^{(j_c)} Y_c(t)}{h}\right)
\quad \mathbb{P}\text{-a.s.}
\end{align*}
By definition that is $\CY_{A} \nrarrownull \CY_C \: \vert \: \CY_{A \cup C \cup D}$.
\end{proof}

\begin{proof}[Proof of \Cref{LemmapaApaBinABC}]
The block-recursive Markov property (\Cref{block-recursive Markov property}) says that
$
\CY_{V\setminus (B \cup \pa(B))} \nrarrownull \CY_B \: \vert \: \CY_{V}.
$
By assumption, $B \cup \pa(B) \subseteq A \cup B \cup C$. However, $A \cap \pa(B)=\emptyset$. Otherwise, there are vertices $a \in A$ and $b\in B$ such that $a \rarrow b\in E_{OG}^0$ is a $m$-connecting path between $A$ and $B$ given $C$ which is a contradiction to $A \msep B \: \vert \: C \: [G_{OG}^0]$. Thus, $B \cup \pa(B) \subseteq B \cup C$ and \Cref{Proposition 5.6} yields
%\begin{align*}
$\CY_{V\setminus (B \cup C)} \nrarrownull \CY_B \: \vert \: \CY_{V}.$
%\end{align*}
The property of left decomposition (\Cref{Leftdecomposition}) gives
%\begin{align*}
$\CY_{A} \nrarrownull \CY_B \: \vert \: \CY_{A \cup B \cup C}.$
%\end{align*}
By symmetry of $m$-separation $\CY_{B} \nrarrownull \CY_A \: \vert \: \CY_{A \cup B \cup C}$ follows.

It remains to show that $\CY_A \nsimnull \CY_B \: \vert \: \CY_{A \cup B \cup C}$.
\Cref{block-recursive Markov property} provides
%\begin{align*}
$\CY_{V\setminus (B \cup \ne(B))} \nsimnull \CY_B \: \vert \: \CY_{V}.$
%\end{align*}
Here, $A \cap \ne(B)=\emptyset$. Else there are vertices $a \in A$ and $b\in B$ such that $a \inst b\in E_{OG}^0$ is a $m$-connecting path between $A$ and $B$ given $C$ which is again a contradiction to $A \msep B \: \vert \: C \: [G_{OG}^0]$. So \Cref{eq:correspondenceXAXa local} yields
%\begin{align*}
$\CY_A \nsimnull \CY_B \: \vert \: \CY_{V}.$
%\end{align*}
By definition and  $D^{(j_a)} Y_a(t), D^{(j_b)} Y_b(t) \in \mathcal{L}_{Y_{A \cup B \cup C}}(t)\subseteq \mathcal{L}_{Y_{V}}(t)$
we get
\begin{align}\label{eq:hilf1}
0= \limhh \frac{1}{h} \: \BE & \left[\left( D^{(j_a)} Y_a(t+h)- P_{\mathcal{L}_{Y_{V}}(t)} D^{(j_a)} Y_a(t+h) \right) \right. \nonumber \\
		  &\times \left. \overline{\left( D^{(j_b)} Y_b(t+h)- P_{\mathcal{L}_{Y_{V}}(t)} D^{(j_b)} Y_b(t+h) \right)} \right] \nonumber \\
= \limhh h  \: \BE & \left[\left( \frac{D^{(j_a)} Y_a(t+h)- D^{(j_a)} Y_a(t)}{h}- P_{\mathcal{L}_{Y_{V}}(t)} \frac{D^{(j_a)} Y_a(t+h)- D^{(j_a)} Y_a(t)}{h} \right) \right. \nonumber \\
		  &\times \left. \overline{\left( \frac{D^{(j_b)} Y_b(t+h)- D^{(j_b)} Y_b(t)}{h}- P_{\mathcal{L}_{Y_{V}}(t)} \frac{D^{(j_b)} Y_b(t+h)- D^{(j_b)} Y_b(t)}{h} \right)} \right] ,
\end{align}
for $t\in \R$, $a\in A$, $b\in B$. Due to \Cref{block-recursive Markov property} and $\pa(A) \cup \pa(B) \subseteq A \cup B \cup C$ we receive, as in the first part of this proof,
\begin{align*}
\CY_{V\setminus (A \cup B \cup C)} \nrarrownull \CY_B \: \vert \: \CY_{V} \quad \text{and} \quad
\CY_{V\setminus (A \cup B \cup C)} \nrarrownull \CY_A \: \vert \: \CY_{V},
\end{align*}
which means that $\mathbb{P}$-a.s.
\begin{align*}
&\limh P_{\mathcal{L}_{Y_{V}}(t)} \left(\frac{D^{(j_b)} Y_b(t+h)- D^{(j_b)} Y_b(t)}{h}\right)\\
&\quad =\limh P_{\mathcal{L}_{Y_{A\cup B \cup C}}(t)} \left(\frac{D^{(j_b)} Y_b(t+h)- D^{(j_b)} Y_b(t)}{h}\right) \quad \text{and} \\
&\limh P_{\mathcal{L}_{Y_{V}}(t)} \left(\frac{D^{(j_a)} Y_a(t+h)- D^{(j_a)} Y_a(t)}{h}\right)\\
&\quad =\limh P_{\mathcal{L}_{Y_{A\cup B \cup C}}(t)} \left(\frac{D^{(j_a)} Y_a(t+h)- D^{(j_a)} Y_a(t)}{h}\right),
\end{align*}
for $t\in \R$, $a\in A$, $b\in B$. Similar arguments as in the proof of
\Cref{Charakterisierung local Granger non-causal} and \eqref{eq:hilf1} yield 

%\eqref{limit E}  we can replace now the  projections in \eqref{eq:hilf1} and obtain
\begin{align*}
0= \limhh h \: \BE & \left[\left( \frac{D^{(j_a)} Y_a(t+h)- D^{(j_a)} Y_a(t)}{h}- P_{\mathcal{L}_{Y_{A \cup B \cup C}}(t)}  \frac{D^{(j_a)} Y_a(t+h)- D^{(j_a)} Y_a(t)}{h} \right) \right.\\
		  &\times \left. \overline{\left( \frac{D^{(j_b)} Y_b(t+h)- D^{(j_b)} Y_b(t)}{h}- P_{\mathcal{L}_{Y_{A \cup B \cup C}}(t)} \frac{D^{(j_b)} Y_b(t+h)- D^{(j_b)} Y_b(t)}{h} \right)} \right] \\
= \limhh \frac{1}{h} \: \BE & \left[\left( D^{(j_a)} Y_a(t+h)- P_{\mathcal{L}_{Y_{A \cup B \cup C}}(t)} D^{(j_a)} Y_a(t+h) \right) \right. \nonumber \\
  &\times \left. \overline{\left( D^{(j_b)} Y_b(t+h)- P_{\mathcal{L}_{Y_{A \cup B \cup C}}(t)} D^{(j_b)} Y_b(t+h) \right)} \right] \nonumber
\end{align*}
for $t\in \R$, $a\in A$, $b\in B$, which says that $\CY_A \nsimnull \CY_B \: \vert \: \CY_{A \cup B \cup C}$.
\end{proof}

\section{Proofs of Section~\ref{sec:CGMCAR}} \label{Appendix C}

\subsection{Proofs of Subsection~\ref{subsec:predMCAR}} \label{subsec:proofpredMCAR}

\begin{proof}[Proof of \Cref{Yb für MCAR}]%[Proof of Proposition~\ref{Yb für MCAR}]
Let $t\in \R$, $h\geq 0$, and $v\in V$. First of all, due to \Cref{Lemma 5.2},
\begin{align*}
Y_v(t+h)
= e_v^\top  \BFC X(t+h)
= e_v^\top  \BFC \left(  e^{\BA h} X(t) + \int_t^{t+h} e^{\BA (t+h-u)} \BB dL(u)  \right).
\end{align*}
With the definition of the $j$-th $k$-block $X^{(j)}$ of $\CX$ as in \eqref{block} and with
 \eqref{derivatives Y} it follows
\begin{align*}
Y_v(t+h)
%&= e_v^\top  \BFC \left(  e^{\BA h} X(t) + \int_t^{t+h} e^{\BA (t+h-u)} \BB dL(u)  \right) \\
&= e_v^\top  \BFC e^{\BA h}  \sum_{j=1}^p \BFE_j X^{(j)}(t) + e_v^\top  \BFC \int_t^{t+h} e^{\BA (t+h-u)} \BB dL(u)\\
&= e_v^\top  \BFC e^{\BA h}  \sum_{j=1}^p \BFE_j D^{(j-1)}Y_V(t) + e_v^\top  \BFC \int_t^{t+h} e^{\BA (t+h-u)} \BB dL(u). \qedhere
\end{align*}
%$\mathbb{P}$-a.s. due to \cite{BR12}, Lemma 3.3. Finally, $Y_V(t) = \BFC X(t) = X^{(1)}(t)$ provides
%\begin{align*}
%Y_v(t+h)
%&= e_v^\top  \BFC e^{\BA h} \sum_{j=1}^p \BFE_j D^{(j-1)} Y_V(t) + e_v^\top  \BFC \int_t^{t+h} e^{\BA (t+h-u)} \BB dL(u),
%\end{align*}
%$\mathbb{P}$-a.s. for $t\in \R$, $h\geq 0$, and $v\in V$.
\end{proof}

\begin{proof}[Proof of \Cref{Projektionen mit lim}]
%\underline{Proof of \eqref{CG1}:}
For the proof of the first equation note that the MCAR$(p)$ process $\CY_V$ is $(p-1)$-times differentiable with $D^{(p-1)} Y_V(t) =  X^{(p)}(t)=\BFE_p^\top X(t)$, see \Cref{Remark 5.3}. Then, as in the proof of \Cref{Yb für MCAR},
\begin{align*}
&D^{(p-1)} Y_v(t+h)- D^{(p-1)} Y_v(t) \\
%&= e_v^\top  \left(X^{(p)}(t+h)- X^{(p)}(t) \right)\\
%&\quad =  e_{v}^\top  \BFE_p^\top  \left( X(t+h)- X(t)\right)\\
%&=  e_{b}^\top  \BFE_p^\top   \left( e^{\BA h} X(t) + \int_t^{t+h} e^{\BA (t+h-u)} \BB dL(u)- X(t) \right) \\
&\quad=  e_{v}^\top  \BFE_p^\top  \left( \left( e^{\BA h} -I_{kp} \right) X(t) + \int_t^{t+h} e^{\BA (t+h-u)} \BB dL(u)\right) \\
&\quad=  e_{v}^\top  \BFE_p^\top  \left( e^{\BA h} -I_{kp} \right) \sum_{j=1}^p \BFE_j    D^{(j-1)} Y_V(t)
+ e_{v}^\top  \BFE_p^\top  \int_t^{t+h} e^{\BA (t+h-u)} \BB dL(u).
\end{align*}
\Cref{Remark 3.6} states that $Y_s(t)$ and its derivatives are already in $\mathcal{L}_{Y_S}(t)$ and are therefore projected onto themselves. Additionally, $\sigma(Y_S(t'), t'\leq t)$ and $\sigma(L(t+h)-L(t'), t \leq t' \leq t+h)$ are independent and thus, $e_v^\top  \BFE_p^\top  \int_t^{t+h} e^{\BA (t+h-u)} \BB dL(u)$ is projected on zero. It follows
\begin{align*}
& P_{\mathcal{L}_{Y_{S}}(t)}\left(D^{(p-1)} Y_v(t+h)- D^{(p-1)} Y_v(t)\right) \\
%&\quad = \: P_{\mathcal{L}_{Y_{S}}(t)}  e_{b}^\top  \BFE_p^\top  \left( \left( e^{\BA h} -I_{kp} \right) X(t) + \int_t^{t+h} e^{\BA (t+h-u)} \BB dL(u)\right)\\
%&\quad = \: P_{\mathcal{L}_{Y_{S}}(t)}\left( e_{v}^\top  \BFE_p^\top \left(  e^{\BA h} -I_{kp} \right) X(t)\right)\\
%&\quad= \: P_{\mathcal{L}_{Y_{S}}(t)}\left( e_{v}^\top  \BFE_p^\top \left( e^{\BA h} -I_{kp}\right) \sum_{j=1}^p\BFE_j      X^{(j)}(t)\right).
%&\quad= \:  e_{b}^\top  \BFE_p^\top \sum_{j=1}^p \left( e^{\BA h} -I_{kp} \right)\BFE_j P_{\mathcal{L}_{Y_{S}}(t)}
%   D^{(j-1)} X^{(1)}(t)\\
%\intertext{An application of \eqref{derivatives Y}  and \Cref{Remark 3.6}  give}
%& P_{\mathcal{L}_{Y_{S}}(t)}\left(D^{(p-1)} Y_v(t+h)- D^{(p-1)} Y_v(t)\right)\\
%&\quad =\:  P_{\mathcal{L}_{Y_{S}}(t)}\left(e_{v}^\top  \BFE_p^\top  \left(e^{\BA h} -I_{kp} \right)  \sum_{j=1}^p \BFE_j    D^{(j-1)} Y_V(t)\right)\\
%&\quad =\: P_{\mathcal{L}_{Y_{S}}(t)}
% \sum_{j=1}^p \sum_{s=1}^k  e_{b}^\top  \BFE_p^\top  \left( e^{\BA h} -I_{kp} \right) \BFE_j e_s D^{(j-1)} Y_s(t)\\
&\quad =\: e_{v}^\top  \BFE_p^\top  \left( e^{\BA h} -I_{kp} \right) \sum_{s \in S} \sum_{j=1}^p  \BFE_j e_s  D^{(j-1)} Y_s(t) \\
&\quad\quad\: + e_{v}^\top  \BFE_p^\top  \left( e^{\BA h} -I_{kp} \right)  \sum_{s \in V\setminus S} \sum_{j=1}^p   \BFE_j e_s  P_{\mathcal{L}_{Y_{S}}(t)} \left( D^{(j-1)} Y_s(t)  \right)\quad \mathbb{P}\text{-a.s.}
\end{align*}
%\underline{Proof of \eqref{CG2}:}
For the proof of the second equation, we apply the same arguments to receive
%\LS{Hä?}
%\begin{align*}
%&\: D^{(j)} Y_v(t+h)- P_{\mathcal{L}_{Y_V}(t)} D^{(j)} Y_v(t+h) \\
%&\quad =  \:e_v^\top  \BFE_p^\top  \left( e^{\BA h} X(t) + %\int_t^{t+h} e^{\BA (t+h-u)} \BB dL(u)\right)  \\
%   &\quad\quad\:- P_{\mathcal{L}_{Y_V}(t)}\left(  e_v^\top  \BFE_p^\top  \left( e^{\BA h} X(t) + \int_t^{t+h} e^{\BA (t+h-u)} \BB dL(u)\right)\right) \\
%&\quad = \: e_v^\top  \BFE_p^\top  \left( e^{\BA h} X(t) + \int_t^{t+h} e^{\BA (t+h-u)} \BB dL(u)\right) - e_v^\top  \BFE_p^\top  e^{\BA h} X(t) \\
%& \quad = \: e_v^\top  \BFE_p^\top  \int_t^{t+h} e^{\BA (t+h-u)} \BB dL(u).
%\end{align*}
\begin{align*}
&\: D^{(p-1)} Y_v(t+h)- P_{\mathcal{L}_{Y_V}(t)} D^{(p-1)} Y_v(t+h) \\
&\quad =  \:e_v^\top  \BFE_p^\top  \left( e^{\BA h} X(t) + \int_t^{t+h} e^{\BA (t+h-u)} \BB dL(u)\right)  \\
   &\quad\quad\:- P_{\mathcal{L}_{Y_V}(t)}\left(  e_v^\top  \BFE_p^\top  \left( e^{\BA h} X(t) + \int_t^{t+h} e^{\BA (t+h-u)} \BB dL(u)\right)\right) \\
%&\quad = \: e_v^\top  \BFE_p^\top  \left( e^{\BA h} X(t) + \int_t^{t+h} e^{\BA (t+h-u)} \BB dL(u)\right) - e_v^\top  \BFE_p^\top  e^{\BA h} X(t) \\
& \quad = \: e_v^\top  \BFE_p^\top  \int_t^{t+h} e^{\BA (t+h-u)} \BB dL(u) \quad \mathbb{P}\text{-a.s.}  \qedhere
\end{align*}
\end{proof}

\subsection{Proofs of Subsection~\ref{sec:charMCAR}} \label{subsec:proofcharMCAR}

\begin{proof}[Proof of \Cref{Erste Charakterisierung gerichtete Kante für CAR}] $\mbox{}$\\
%We perform this proof based on the characterisation of the directed edges in \Cref{Charakterisierung als Gleichheit der linearen Vorhersage}.
(a) \, Recall that, due to \Cref{Charakterisierung als Gleichheit der linearen Vorhersage}, $\CY_a \nrarrow \CY_b \: \vert \: \CY_V$ if and only if,
%$\CY_a$ is (linear) Granger non-causal for $\CY_b$ with respect to $\CY_V$. That is
\begin{align*}
&P_{\mathcal{L}_{Y_V}(t)}Y_b(t+h) = P_{\mathcal{L}_{Y_{V\setminus\{a\}}}(t)}Y_b(t+h)  \quad \text{$\mathbb{P}$-a.s.  $\forall\,h \in [0,1]$, $t\in \R$.}
\end{align*}
From \Cref{Projektionen CAR berechnet} we know that
%know what the two orthogonal projections look like. We have
\begin{align*}
P_{\mathcal{L}_{Y_V}(t)}Y_b(t+h) = & \sum_{j=1}^p \sum_{s \in V} e_b^\top  \BFC e^{\BA h}  \BFE_j e_{s} D^{(j-1)}Y_s(t),\\
P_{\mathcal{L}_{Y_{V\setminus\{a\}}}(t)}Y_b(t+h) = & \sum_{j=1}^p \sum_{s \in V\setminus \{a\}} e_b^\top  \BFC e^{\BA h}  \BFE_j e_{s} D^{(j-1)}Y_s(t) \\
  & + \sum_{j=1}^p e_b^\top  \BFC e^{\BA h}  \BFE_j e_{a} P_{\mathcal{L}_{Y_{ V\setminus \{a\}}}(t)} D^{(j-1)}Y_a(t) \quad \forall\, h \in [0,1],\, t\in \R.
\end{align*}
 We equate the two orthogonal projections and remove the coinciding terms. Then we receive $\CY_a \nrarrow \CY_b \: \vert \: \CY_V$ if and only if
\begin{align*}
 \sum_{j=1}^p  e_b^\top  \BFC e^{\BA h} \BFE_j e_{a} D^{(j-1)} Y_a(t)
= \sum_{j=1}^p e_b^\top  \BFC e^{\BA h} \BFE_j e_{a} P_{\mathcal{L}_{Y_ {V\setminus \{a\}}}(t)} D^{(j-1)} Y_a(t) \quad \mathbb{P}\text{-a.s.}
\end{align*}
for $h \in [0,1]$, $t\in \R$.  The expression on the right side is in $\mathcal{L}_{Y_ {V\setminus \{a\}}}(t)$ and the expression on the left side is in $\mathcal{L}_{Y_a}(t)$. Due to their equality, they are in $\mathcal{L}_{Y_ {V\setminus \{a\}}}(t) \cap \mathcal{L}_{Y_a}(t) = \{0 \}$, making use of \Cref{Eigenschaften der linearen Räume}. Thus, $\CY_a \nrarrow \CY_b \: \vert \: \CY_V$ if and only if
\begin{align} \label{eqaaa}
\sum_{j=1}^p e_b^\top  \BFC e^{\BA h} \BFE_j e_{a} D^{(j-1)}Y_a(t)  =0 \quad \mathbb{P}\text{-a.s.} \quad \forall\, h \in [0,1],\, t\in \R.
\end{align}
In the following, we show that \eqref{eqaaa} %of the directed edges
is  equivalent to
\begin{align} \label{equi1}
e_b^\top  \BFC e^{\BA h} \BFE_j e_{a} =0 \quad \forall\, h \in [0,1], \: j=1,\ldots,p.
\end{align}
Clearly, \eqref{equi1} implies \eqref{eqaaa}.
%For the first direction of \eqref{equi1}, let $e_b^\top  \BFC e^{\BA h} \BFE_j e_{a}=0$ for $h \in [0,1]$, $j=1,\ldots,p$. Of course,
%\begin{align*}
%\sum_{j=1}^p e_b^\top  \BFC e^{\BA h} \BFE_j e_{a} D^{(j-1)}Y_a(t)  =0, \quad  h \in [0,1],\, t\in \R,
%\end{align*}
%as claimed.
For the opposite direction, suppose \eqref{eqaaa} holds.
%\begin{align*}
%\sum_{j=1}^p e_b^\top  \BFC e^{\BA h} \BFE_j e_{a} D^{(j-1)}Y_a(t)  =0, \quad  h \in [0,1],\, t\in \R.
%\end{align*}
 Define the $kp$-dimensional vector $\by=(y_1,\ldots,y_{kp})$  with entries
\begin{align*}
y_i=
\begin{cases}
e_b^\top  \BFC e^{\BA h} \BFE_j e_{a} &\mbox{if } i=(j-1)k+a, \: j=1,\ldots,p, \\
0 & \mbox{else}. \end{cases}
\end{align*}
 Then \eqref{eqaaa} implies $\mathbb{P}$-a.s.
\begin{align*}
0
= \sum_{j=1}^p e_b^\top  \BFC e^{\BA h} \BFE_j e_{a} D^{(j-1)}Y_a(t)
= \sum_{j=1}^p e_b^\top  \BFC e^{\BA h} \BFE_j e_{a} X_{(j-1)k+a}(t)
= \by^\top  X(t)
\end{align*}
and, in particular,
\begin{align*}
0 =  \BE \left[ \left(\by^\top  X(t)\right)^2 \right] = \by^\top   c_{XX}(0) \by.
\end{align*}
But $c_{XX}(0)>0$ (cf.~\Cref{Remark 5.4} (a)) such that $\by$ is the zero vector and  \eqref{equi1} is valid. \\
%as claimed
%\begin{align*}
%e_b^\top  \BFC e^{\BA h} \BFE_j e_{a} = 0 , \quad h \in [0,1], \: j=1,\ldots,p.
%\end{align*}
(b) \, Let $S\subseteq V$, $v\in V$, $t\in\R$, and $h\geq 0$. From \Cref{Projektionen mit lim} we already know that
\begin{align*}
& \frac{1}{h}P_{\mathcal{L}_{Y_{S}}(t)} \left(D^{(p-1)} Y_v(t+h)- D^{(p-1)} Y_v(t)\right) \\
&\quad =\:  \sum_{j=1}^p \sum_{s \in S} e_{v}^\top  \BFE_p^\top  \frac{\left( e^{\BA h} -I_{kp} \right)}{h} \BFE_j e_s  D^{(j-1)} Y_s(t) \\
&\quad\quad\: +  \sum_{j=1}^p \sum_{s \in V\setminus S}  e_{v}^\top  \BFE_p^\top  \frac{\left( e^{\BA h} -I_{kp} \right)}{h} \BFE_j e_s  P_{\mathcal{L}_{Y_{S}}(t)}( D^{(j-1)} Y_s(t))
\quad\mathbb{P}\text{-a.s.}
\end{align*}
But
$
\lim_{h \rightarrow 0} \left( e^{\BA h} -I_{kp} \right)/h =\BA
$
implies that
\begin{align*}
& \: \limh P_{\mathcal{L}_{Y_{S}}(t)}\left( \frac{D^{(p-1)} Y_v(t+h)- D^{(p-1)} Y_v(t)}{h}\right)\\
&\quad=\: \sum_{j=1}^p \sum_{s \in S} e_{v}^\top  \BFE_p^\top  \BA \BFE_j e_s  D^{(j-1)} Y_s(t)  + \sum_{j=1}^p \sum_{s \in V\setminus S}  e_{v}^\top  \BFE_p^\top  \BA \BFE_j e_s  P_{\mathcal{L}_{Y_{S}}(t)} D^{(j-1)} Y_s(t).
\end{align*}
Then the remaining proof is similar to the proof of (a).
\end{proof}

\begin{proof}[Proof of \Cref{Einfache Charakterisierung ungerichtete Kante für CAR}]
$\mbox{}$ \\
(a) \, % From \Cref{Projektionen CAR berechnet} we already know that for $v\in V$, $t\in\R$, $h,\tilde h \geq0$,
%\begin{align*}
%P_{\mathcal{L}_{Y_V}(t)} Y_v(t+h) = e_v^\top  \BFC e^{\BA h} X(t), % \quad
%P_{\mathcal{L}_{Y_V}(t)} Y_b(t+\tilde{h}) = %e_b^\top  \BFC e^{\BA \tilde{h}} X(t).
%\end{align*}
A combination of \Cref{Projektion für MCAR für S=V}  and \Cref{Lemma 5.2} (a) results in
\begin{align*}
Y_v(t+h) - P_{\mathcal{L}_{Y_V}(t)} Y_v(t+h)
%&= e_a^\top  \BFC  \left( e^{\BA h} X(t) + \int_t^{t+h} e^{\BA (t+h-u)} \BB dL(u) \right) - e_a^\top  \BFC e^{\BA h} X(t) \\
&= e_v^\top  \BFC \int_t^{t+h} e^{\BA (t+h-u)} \BB  dL(u). %, \\
%Y_b(t+\tide{h}) - P_{\mathcal{L}_{Y_V}(t)} Y_b(t+\tilde{h})
%&&= e_b^\top  \BFC \int_t^{t+\tilde{h}} e^{\BA %(t+\tilde{h}-u)} \BB dL(u),
\end{align*}
Thus, %due to \Cref{Charakterisisierung contemporaneously uncorrelated} (d),
$\CY_a \nsim \CY_b \: \vert \: \CY_V$  if and only if
\begin{align*}
 0= &\: \BE \left[\left(Y_a(t+h) - P_{\mathcal{L}_{Y_V}(t)} Y_a(t+h) \right)
		  \left(Y_b(t+\tilde{h}) - P_{\mathcal{L}_{Y_V}(t)} Y_b(t+\tilde{h}) \right) \right] \\
  =&\:	\BE \left[\left(e_a^\top  \BFC \int_t^{t+h} e^{\BA (t+h-u)} \BB dL(u) \right)
		  \left(e_b^\top  \BFC \int_t^{t+\tilde{h}} e^{\BA (t+\tilde{h}-u)} \BB  dL(u) \right) \right] \\
%  =&\: e_a^\top  \BFC \int_t^{\min(t+h,t+\tilde{h})} e^{\BA (t+h-s)} \BB  \BS_L \BB^\top  e^{\BA^\top (t+\tilde{h}-s)} ds \: \BFC^\top  e_b \\
  =&\: e_a^\top  \BFC \int_0^{\min(h,\tilde{h})} e^{\BA (h-u)} \BB \BS_L \BB^\top  e^{\BA^\top (\tilde{h}-u)} du \: \BFC^\top  e_b
\end{align*}
%where we substitute $t-s$ by $-s$ in the last step.
for $h, \tilde{h} \in [0,1]$, $t\in \R$. \\
(b) \, Let $a,b,v\in V$, $t\in\R$, and $h\geq 0$. An application of \Cref{Projektionen mit lim} gives that
\begin{align*}
D^{(p-1)} Y_v(t+h)- P_{\mathcal{L}_{Y_{V}}(t)} D^{(p-1)} Y_v(t+h)
= e_v^\top  \BFE_p^\top  \int_t^{t+h} e^{\BA (t+h-u)} \BB dL(u) \quad\mathbb{P}\text{-a.s.}
\end{align*}
Thus,
\begin{align*}
&\: \BE \left[\left( D^{(p-1)} Y_a(t+h)- P_{\mathcal{L}_{Y_{V}}(t)} D^{(p-1)} Y_a(t+h) \right)\right. \\
	&\quad\quad\quad \: \times   \left.\overline{\left( D^{(p-1)} Y_b(t+h)- P_{\mathcal{L}_{Y_{V}}(t)} D^{(p-1)} Y_b(t+h) \right)} \right] \\
		% &\quad = \: e_a^\top  \BFE_p^\top  \int_0^{h}	e^{\BA (h-u)} \BB \BS_L \BB^\top  e^{\BA^\top (h-u)} du \: \BFE_p e_b \\
		  &\quad = \: e_a^\top  \BFE_p^\top    \int_0^{h}	e^{\BA u} \BB \BS_L \BB^\top  e^{\BA^\top  u} du \:\BFE_p e_b.
\end{align*}
Setting $f(u)=e^{\BA u} \BB \BS_L \BB^\top  e^{-\BA^\top  u}$ and $F(\cdot)$ as its primitive function, we obtain
\begin{align*}
&\:\limhh \frac{1}{h} \BE \left[\left( D^{(p-1)} Y_a(t+h)- P_{\mathcal{L}_{Y_{V}}(t)} D^{(p-1)} Y_a(t+h) \right) \right. \\
		&\quad\quad\quad \: \times  \left. \overline{\left( D^{(p-1)} Y_b(t+h)- P_{\mathcal{L}_{Y_{V}}(t)} D^{(p-1)} Y_b(t+h) \right)} \right] \\
%&\quad =\:\limhh \frac{1}{h} e_a^\top  \BFE_p^\top  e^{\BA h}  \int_0^{h}	e^{-\BA s} \BB \BS_L \BB^\top  e^{-\BA^\top  s} ds \:e^{\BA^\top h} \BFE_p e_b \\
&\quad =\:e_a^\top  \BFE_p^\top  \left[\limhh \frac{F(h) -F(0)}{h} \:\right] \BFE_p e_b\\
%&\quad =\:\lim_{h \rightarrow 0} e_a^\top  \BFE_p^\top  e^{\BA h} \frac{F(h) -F(0)}{h} e^{\BA^\top h} \BFE_p e_b \\
%&\quad =\: e_a^\top  \BFE_p^\top  f(0) \BFE_p e_b \\
%&\quad =\: e_a^\top  \BFE_p^\top  \BB \BS_L \BB^\top  \BFE_p e_b \\
&\quad =\: e_a^\top  \BS_L e_b. \qedhere
\end{align*}
\end{proof}

\begin{proof}[Proof of \Cref{Zweite Charakterisierung gerichtete Kante für CAR}] $\mbox{}$\\
(a) \, $\Leftarrow$: \,
Suppose $e_b^\top  \BFC \BA^\alpha \BFE_j e_a=0$ for $\alpha=1,\ldots,kp-1$ and $j=1,\ldots,p$. \cite{BE09}, (11.2.1) provides %arepresentation of the matrix exponential in terms of the first $kp-1$ powers of $\BA$. For $h \in \R$
\begin{align} \label{eqababa}
e^{\BA h} = \sum_{\alpha=0}^{kp-1} \psi_\alpha(h) \BA^\alpha,
\quad h \in \R,
\end{align}
 where
\begin{align*}
\psi_\alpha(h) = \frac{1}{2 \pi i} \oint_{\mathcal{C}} \frac{\chi_\BA^{[\alpha+1]}(z)}{\chi_\BA(z)}e^{tz}dz,
\end{align*}
$\chi_\BA^{[1]}(\cdot),\ldots,\chi_\BA^{[kp]}(\cdot)$ are polynomials defined by recursion and $\mathcal{C}$ is a simple, closed contour in the complex plane enclosing $\sigma(\BA)$. With $e_b^\top  \BFC \BA^\alpha \BFE_j e_a=0$ we can conclude then that
\begin{align*}
e_b^\top  \BFC e^{\BA h} \BFE_j e_a = \sum_{\alpha=0}^{kp-1} \psi_\alpha(h) e_b^\top  \BFC \BA^\alpha \BFE_je_a=0 \quad  \forall\, 
h \in [0,1],
\end{align*}
such that \Cref{Erste Charakterisierung gerichtete Kante für CAR} results in $\CY_a \nrarrow \CY_b \: \vert \: \CY_V$.

$\Rightarrow$:
Assume $\CY_a \nrarrow \CY_b \: \vert \: \CY_V$. Thus, $e_b^\top  \BFC e^{\BA h} \BFE_j e_a=0$ for $h \in [0,1]$ and $j=1,\ldots,p$ by \Cref{Erste Charakterisierung gerichtete Kante für CAR}. Define
\begin{align*}
f(h)
= e_b^\top  \BFC e^{\BA h} \BFE_j e_{a}, \quad h\in\R,
\end{align*}
 and differentiate this function using \cite{BE09}, Proposition 11.1.4. Then
\begin{align*}
f^{(\alpha)}(h)
= e_b^\top  \BFC \BA^\alpha e^{\BA h} \BFE_j e_{a}, \quad h\in\R,
\,\alpha = 1,\ldots,kp-1.
\end{align*}
 Since $f(h)=0$ for $h \in [0,1]$ and $f^{(\alpha)}(\cdot)$ is continuous, we obtain $f^{(\alpha)}(h)=0$ for \mbox{$h \in [0,1]$.} Putting $h=0$, we get as claimed
\begin{equation*}
0
%=  e_b^\top  \BA^\alpha e_{k(j-1)+a}
= e_b^\top  \BFC \BA^\alpha \BFE_j e_a, \quad \alpha=1,\ldots,kp-1, \: j=1,\ldots,p.
\end{equation*}
(b) \, $\Leftarrow$: Let $e_a^\top  \BFC \BA^\alpha \BB \BS_L \BB^\top  (\BA^\top )^\beta\BFC^\top  e_b=0$ for $\alpha,\beta=0,\ldots,kp-1$. We apply the representation of the matrix exponential \eqref{eqababa} and  obtain
\begin{align*}
 &\: e_a^\top  \BFC \int_0^{\min(h,\tilde{h})}	e^{\BA (h-s)} \BB \BS_L \BB^\top  e^{\BA^\top (\tilde{h}-s)} ds \: \BFC^\top  e_b \\
%&\quad =\: e_a^\top  \BFC \int_0^{\min(h,\tilde{h})} \left(\sum_{\alpha=0}^{kp-1} \psi_\alpha(h-s) \BA^\alpha \right) \BB \BS_L \BB^\top 
%  \left(\sum_{\beta=0}^{kp-1} \varphi_\beta(\tilde{h}-s) \left(\BA^\top  \right)^\beta \right)ds \:\BFC^\top  e_b \\	
&\quad =\: \sum_{\alpha=0}^{kp-1} \sum_{\beta=0}^{kp-1} \int_0^{\min(h,\tilde{h})} \psi_\alpha(h-s) \varphi_\beta(\tilde{h}-  s) e_a^\top   \BFC \BA^\alpha \BB \BS_L \BB^\top  \left(\BA^\top  \right)^\beta \BFC^\top  e_b \: ds
=0,
\end{align*}
for $h, \tilde{h} \in [0,1]$, $t\in \R$, by assumption. \Cref{Einfache Charakterisierung ungerichtete Kante für CAR} yields then $\CY_a \nsim \CY_b \: \vert \: \CY_V$.

$\Rightarrow$: Assume $\CY_a \nsim \CY_b \: \vert \: \CY_V$. Due to \Cref{Charakterisierung als Gleichheit der linearen Vorhersage 2} we have for $h \in [0,1]$ and $t \in \R$,
\begin{align*}
P_{\mathcal{L}_{Y_{V}}(t)\vee \mathcal{L}_{Y_{b}}(t,t+1)} Y_a(t+h)  = P_{\mathcal{L}_{Y_{V}}(t)} Y_a(t+h)\quad \mathbb{P}\text{-a.s.}
\end{align*}
  In addition, we know from \Cref{Projektionen CAR  berechnet} that
$
P_{\mathcal{L}_{Y_{V}}(t)} Y_a(t+h)  = e_a^\top  \BFC e^{\BA h} X(t).
$
Both together provide
\begin{align} \label{eq5.1}
P_{\mathcal{L}_{Y_{V}}(t)\vee \mathcal{L}_{Y_{b}}(t,t+1)} Y_a(t+h)  =  e_a^\top  \BFC e^{\BA h} X(t)\quad \mathbb{P}\text{-a.s.}
\end{align}
for $h \in [0,1]$ and $t \in \R$. Since $Y_b(t+\tilde{h}) \in \mathcal{L}_{Y_{V}}(t)\vee \mathcal{L}_{Y_{b}}(t,t+1)$ for $\tilde{h} \in [0,1]$ as well as $Y_a(t+h) - P_{\mathcal{L}_{Y_{V}}(t)\vee \mathcal{L}_{Y_{b}}(t,t+1)} Y_a(t+h) \in ( \mathcal{L}_{Y_{V}}(t)\vee \mathcal{L}_{Y_{b}}(t,t+1))^{\bot}$, we obtain
\begin{align*}
0 & = \BE \left[ \left(Y_a(t+h) - P_{\mathcal{L}_{Y_{V}}(t) \vee \mathcal{L}_{Y_{b}}(t,t+1)} Y_a(t+h) \right)Y_b(t+\tilde{h}) \right].
%\end{align*}
%for $0 \leq h, \tilde{h} \leq 1$.
\intertext{Plugging in \eqref{eq5.1} gives}
%\begin{align*}
%0 &= \BE \left[ \left(Y_a(t+h) - P_{\mathcal{L}_{Y_{V}}(t) \vee \mathcal{L}_{Y_{b}}(t,t+1)} Y_a(t+h) \right)Y_b(t+\tilde{h}) \right] \\
  0&= \BE \left[ \left(Y_a(t+h) - e_a^\top  \BFC e^{\BA h} X(t) \right)Y_b(t+\tilde{h}) \right] \\
  &= e_a^\top  \BFC \BE \left[ \left(X(t+h) - e^{\BA h} X(t) \right)X(t+\tilde{h}) \right] \BFC^\top  e_b \\
  &= e_a^\top  \BFC \left( c_{XX}(h-\tilde{h}) - e^{\BA h} c_{XX}(-\tilde{h}) \right) \BFC^\top  e_b,
%\end{align*}
\intertext{for $h, \tilde{h} \in [0,1]$. If we only consider the case {$0 \leq \tilde{h} \leq h \leq 1$}  then \eqref{alternative representation of covariance for MCARMA} provides}
%   &= e_a^\top  \BFC \left( c_{XX}(h-\tilde{h}) - e^{\BA h} c_{XX}(-\tilde{h}) \right) \BFC^\top  e_b \\
   0 &= e_a^\top  \BFC \left( e^{\BA (h -\tilde{h})} c_{XX}(0) - e^{\BA h} c_{XX}(0) e^{\BA^\top  \tilde{h}} \right) \BFC^\top  e_b \\
    &= e_a^\top  \BFC e^{\BA h} \left( e^{-\BA \tilde{h}} c_{XX}(0) - c_{XX}(0) e^{\BA^\top  \tilde{h}} \right) \BFC^\top  e_b,
\end{align*}
using \cite{BE09}, Corollary 11.1.6. Now, we define
\begin{align*}
\gamma (h, \tilde{h}) = e_a^\top  \BFC e^{\BA h} \left( e^{-\BA \tilde{h}} c_{XX}(0) - c_{XX}(0) e^{\BA^\top  \tilde{h}} \right) \BFC^\top  e_b, \quad 0 \leq \tilde{h} \leq h \leq 1.
\end{align*}
Differentiating this function several times  (cf.~\citealp{BE09}, Proposition 11.1.4) provides
\begin{align*}
\frac{\partial^m}{\partial h^m} \frac{\partial^n}{\partial \tilde{h}^n}  \gamma(h, \tilde{h})
= e_a^\top  \BFC \BA^m e^{\BA h} \left( \left(-\BA \right)^n e^{-\BA \tilde{h}} c_{XX}(0) - c_{XX}(0) \left(\BA^\top \right)^n e^{\BA^\top  \tilde{h}} \right) \BFC^\top  e_b.
\end{align*}
 Furthermore, since $\gamma (h,\tilde{h})=0$ for $0 \leq \tilde{h} \leq h \leq 1$ and due to the continuity of the function under consideration, we obtain that the derivatives are zero for $0 \leq \tilde{h} \leq h \leq 1$. Now, plugging in  $h= \tilde{h}=0$ yields
\begin{align} \label{eqasa}
 e_a^\top  \BFC \BA^m  c_{XX}(0) \left(\BA^\top \right) ^n  \BFC^\top  e_b=e_a^\top  \BFC \BA^m  \left(-\BA \right)^n c_{XX}(0)  \BFC^\top  e_b, \quad m,n\in \N_0.
\end{align}
 Finally,  \eqref{Dichtezusammenhang} leads to
\begin{align*}
&\: e_a^\top  \BFC \BA^\alpha \BB \BS_L \BB^\top  \left(\BA^\top  \right)^\beta \BFC^\top  e_b \\
&\quad = \: e_a^\top  \BFC \BA^\alpha \left( -\BA c_{XX}(0) - c_{XX}(0) \BA^\top  \right) \left(\BA^\top  \right)^\beta \BFC^\top  e_b \\
&\quad = \: - e_a^\top  \BFC \BA^{\alpha+1}  c_{XX}(0) \left(\BA^\top  \right)^\beta \BFC^\top  e_b
  - e_a^\top  \BFC \BA^\alpha c_{XX}(0) \left(\BA^\top  \right)^{\beta+1} \BFC^\top  e_b.
\intertext{Applying \eqref{eqasa} gives then}
&\: e_a^\top  \BFC \BA^\alpha \BB \BS_L \BB^\top  \left(\BA^\top  \right)^\beta \BFC^\top  e_b \\
&\quad = \: - e_a^\top  \BFC (-1)^{\beta}  \BA^{\alpha + \beta + 1} c_{XX}(0) \BFC^\top  e_b
  - e_a^\top  \BFC (-1)^{\beta+1}  \BA^{\alpha + \beta + 1} c_{XX}(0) \BFC^\top  e_b
= 0,
\end{align*}
for $\alpha, \beta = 0,\ldots,kp-1$, the desired statement.
\end{proof}

\bibliographystyle{imsart-nameyear}
\bibliography{Literatur}

\begin{thebibliography}{77}
% BibTex style file: imsart-nameyear.bst, 2017-11-03
% Default style options (sort=1,type=nameyear).
% Used options (sort=1,type=nameyear).

\bibitem[\protect\citeauthoryear{Aalen}{1987}]{Aalen87}
\begin{barticle}[author]
\bauthor{\bsnm{Aalen},~\bfnm{Odd~O.}\binits{O.~O.}}
(\byear{1987}).
\btitle{Dynamic modelling and causality}.
\bjournal{Scand. Actuar. J.}
\bvolume{1987}
\bpages{177-190}.
\end{barticle}
\endbibitem

\bibitem[\protect\citeauthoryear{Andersson, Madigan and Perlman}{2001}]{AN00}
\begin{barticle}[author]
\bauthor{\bsnm{Andersson},~\bfnm{Steen~A.}\binits{S.~A.}},
  \bauthor{\bsnm{Madigan},~\bfnm{David}\binits{D.}} \AND
  \bauthor{\bsnm{Perlman},~\bfnm{Michael~D.}\binits{M.~D.}}
(\byear{2001}).
\btitle{Alternative {M}arkov properties for chain graphs}.
\bjournal{Scand. Stat. Theory Appl.}
\bvolume{28}
\bpages{33-85}.
\end{barticle}
\endbibitem

\bibitem[\protect\citeauthoryear{Bergmann and Hartwigsen}{2021}]{Bergmann21}
\begin{barticle}[author]
\bauthor{\bsnm{Bergmann},~\bfnm{T.~O.}\binits{T.~O.}} \AND
  \bauthor{\bsnm{Hartwigsen},~\bfnm{G.}\binits{G.}}
(\byear{2021}).
\btitle{Inferring causality from noninvasive brain stimulation in cognitive
  neuroscience}.
\bjournal{J. Cogn. Neurosci.}
\bvolume{33}
\bpages{195-224}.
\end{barticle}
\endbibitem

\bibitem[\protect\citeauthoryear{Bergstrom}{1997}]{Bergstrom:1997}
\begin{barticle}[author]
\bauthor{\bsnm{Bergstrom},~\bfnm{A.~R.}\binits{A.~R.}}
(\byear{1997}).
\btitle{Gaussian estimation of mixed-order continuous-time dynamic models with
  unobservable stochastic trends from mixed stock and flow data}.
\bjournal{Econom. Theory}
\bvolume{13}
\bpages{467-505}.
\end{barticle}
\endbibitem

\bibitem[\protect\citeauthoryear{Bernstein}{2009}]{BE09}
\begin{bbook}[author]
\bauthor{\bsnm{Bernstein},~\bfnm{Dennis~S.}\binits{D.~S.}}
(\byear{2009}).
\btitle{Matrix Mathematics: Theory, Facts, and Formulas},
\bedition{2.} ed.
\bpublisher{Princeton University Press}, \baddress{Princeton}.
\end{bbook}
\endbibitem

\bibitem[\protect\citeauthoryear{Bhatia}{1997}]{BH97}
\begin{bbook}[author]
\bauthor{\bsnm{Bhatia},~\bfnm{Rajendra}\binits{R.}}
(\byear{1997}).
\btitle{Matrix Analysis}.
\bpublisher{Springer}, \baddress{New York}.
\end{bbook}
\endbibitem

\bibitem[\protect\citeauthoryear{Brillinger}{2001}]{BR01}
\begin{bbook}[author]
\bauthor{\bsnm{Brillinger},~\bfnm{David~R.}\binits{D.~R.}}
(\byear{2001}).
\btitle{Time Series: Data Analysis and Theory}.
\bpublisher{Society for Industrial and Applied Mathematics},
  \baddress{Philadelphia}.
\end{bbook}
\endbibitem

\bibitem[\protect\citeauthoryear{Brockwell}{2014}]{BR14}
\begin{barticle}[author]
\bauthor{\bsnm{Brockwell},~\bfnm{Peter}\binits{P.}}
(\byear{2014}).
\btitle{Recent results in the theory and applications of CARMA processes}.
\bjournal{Ann. Inst. Stat. Math.}
\bvolume{66}
\bpages{647–685}.
\end{barticle}
\endbibitem

\bibitem[\protect\citeauthoryear{Brockwell and Davis}{1991}]{BR91}
\begin{bbook}[author]
\bauthor{\bsnm{Brockwell},~\bfnm{Peter~J.}\binits{P.~J.}} \AND
  \bauthor{\bsnm{Davis},~\bfnm{Richard~A.}\binits{R.~A.}}
(\byear{1991}).
\btitle{Time Series: Theory and Methods},
\bedition{2.} ed.
\bpublisher{Springer}, \baddress{New York}.
\end{bbook}
\endbibitem

\bibitem[\protect\citeauthoryear{Brockwell and Lindner}{2015}]{BR15}
\begin{barticle}[author]
\bauthor{\bsnm{Brockwell},~\bfnm{P.}\binits{P.}} \AND
  \bauthor{\bsnm{Lindner},~\bfnm{A.}\binits{A.}}
(\byear{2015}).
\btitle{Prediction of L{é}vy-driven {CARMA} processes}.
\bjournal{J. Econom.}
\bvolume{189}
\bpages{263-271}.
\end{barticle}
\endbibitem

\bibitem[\protect\citeauthoryear{Brockwell and
  Lindner}{2024}]{BrockwellLindner2024}
\begin{bbook}[author]
\bauthor{\bsnm{Brockwell},~\bfnm{Peter~J.}\binits{P.~J.}} \AND
  \bauthor{\bsnm{Lindner},~\bfnm{Alexander~M.}\binits{A.~M.}}
(\byear{2024}).
\btitle{Continuous-Parameter Time Series}.
\bpublisher{De Gruyter}.
\end{bbook}
\endbibitem

\bibitem[\protect\citeauthoryear{Bühler and Salamon}{2018}]{BU18}
\begin{bbook}[author]
\bauthor{\bsnm{Bühler},~\bfnm{Theo}\binits{T.}} \AND
  \bauthor{\bsnm{Salamon},~\bfnm{Dietmar~A.}\binits{D.~A.}}
(\byear{2018}).
\btitle{Functional Analysis}.
\bpublisher{American Mathematical Society}, \baddress{Providence}.
\end{bbook}
\endbibitem

\bibitem[\protect\citeauthoryear{Chamberlain}{1982}]{CH82}
\begin{barticle}[author]
\bauthor{\bsnm{Chamberlain},~\bfnm{Gary}\binits{G.}}
(\byear{1982}).
\btitle{The general equivalence of Granger and Sims causality}.
\bjournal{Econometrica}
\bvolume{50}
\bpages{569-581}.
\end{barticle}
\endbibitem

\bibitem[\protect\citeauthoryear{Commenges and Gégout-Petit}{2009}]{CO07}
\begin{barticle}[author]
\bauthor{\bsnm{Commenges},~\bfnm{Daniel}\binits{D.}} \AND
  \bauthor{\bsnm{Gégout-Petit},~\bfnm{Anne}\binits{A.}}
(\byear{2009}).
\btitle{A general dynamical statistical model with causal interpretation}.
\bjournal{J. R. Stat. Soc. Ser. B.}
\bvolume{71}
\bpages{719-736}.
\end{barticle}
\endbibitem

\bibitem[\protect\citeauthoryear{Comte and Renault}{1996}]{CO96}
\begin{barticle}[author]
\bauthor{\bsnm{Comte},~\bfnm{F.}\binits{F.}} \AND
  \bauthor{\bsnm{Renault},~\bfnm{E.}\binits{E.}}
(\byear{1996}).
\btitle{Noncausality in continuous time models}.
\bjournal{Econom. Theory}
\bvolume{12}
\bpages{215-256}.
\end{barticle}
\endbibitem

\bibitem[\protect\citeauthoryear{Cox and Popken}{2015}]{Cox:Popken}
\begin{barticle}[author]
\bauthor{\bsnm{Cox},~\bfnm{L.}\binits{L.}} \AND
  \bauthor{\bsnm{Popken},~\bfnm{D.}\binits{D.}}
(\byear{2015}).
\btitle{Has reducing fine particulate matter and ozone caused reduced mortality
  rates in the United States?}
\bjournal{Ann. Epidemiol.}
\bvolume{25}
\bpages{162-73}.
\end{barticle}
\endbibitem

\bibitem[\protect\citeauthoryear{Cram{\'e}r}{1940}]{CR39}
\begin{barticle}[author]
\bauthor{\bsnm{Cram{\'e}r},~\bfnm{H.}\binits{H.}}
(\byear{1940}).
\btitle{On the theory of stationary random processes}.
\bjournal{Ann. Math.}
\bvolume{41}
\bpages{215-230}.
\end{barticle}
\endbibitem

\bibitem[\protect\citeauthoryear{Cram{\'e}r}{1961}]{CR61}
\begin{barticle}[author]
\bauthor{\bsnm{Cram{\'e}r},~\bfnm{Harald}\binits{H.}}
(\byear{1961}).
\btitle{On the structure of purely non-deterministic stochastic processes}.
\bjournal{Ark. Mat.}
\bvolume{4}
\bpages{249-266}.
\end{barticle}
\endbibitem

\bibitem[\protect\citeauthoryear{Cram{\'e}r}{1964}]{CR64}
\begin{barticle}[author]
\bauthor{\bsnm{Cram{\'e}r},~\bfnm{Harald}\binits{H.}}
(\byear{1964}).
\btitle{Stochastic processes as curves in Hilbert space}.
\bjournal{Theory Probab. Appl.}
\bvolume{9}
\bpages{169-179}.
\end{barticle}
\endbibitem

\bibitem[\protect\citeauthoryear{Cram{\'e}r}{1971}]{CR71}
\begin{bmisc}[author]
\bauthor{\bsnm{Cram{\'e}r},~\bfnm{H.}\binits{H.}}
(\byear{1971}).
\btitle{Structural and Statistical Problems for a Class of Stochastic
  Processes: The First Samuel Stanley Wilks Lecture at Princeton University,
  March 7, 1970}.
\end{bmisc}
\endbibitem

\bibitem[\protect\citeauthoryear{Didelez}{2006}]{DI06}
\begin{binproceedings}[author]
\bauthor{\bsnm{Didelez},~\bfnm{V.}\binits{V.}}
(\byear{2006}).
\btitle{Asymmetric separation for local independence graphs}.
In \bbooktitle{Proceedings of the Twenty-Second Conference on Uncertainty in
  Artificial Intelligence}
(\beditor{\bfnm{R.}\binits{R.}~\bsnm{Dechter}} \AND
  \beditor{\bfnm{T.}\binits{T.}~\bsnm{Richardson}}, eds.)
\bpages{130–137}.
\bpublisher{AUAI Press}, \baddress{Arlington}.
\end{binproceedings}
\endbibitem

\bibitem[\protect\citeauthoryear{Didelez}{2007}]{DI07}
\begin{barticle}[author]
\bauthor{\bsnm{Didelez},~\bfnm{Vanessa}\binits{V.}}
(\byear{2007}).
\btitle{Graphical models for composable finite Markov processes}.
\bjournal{Scand. J. Stat.}
\bvolume{34}
\bpages{169-185}.
\end{barticle}
\endbibitem

\bibitem[\protect\citeauthoryear{Didelez}{2008}]{DI08}
\begin{barticle}[author]
\bauthor{\bsnm{Didelez},~\bfnm{Vanessa}\binits{V.}}
(\byear{2008}).
\btitle{Graphical models for marked point processes based on local
  independence}.
\bjournal{J. R. Stat. Soc. Ser. B.}
\bvolume{70}
\bpages{245-264}.
\end{barticle}
\endbibitem

\bibitem[\protect\citeauthoryear{Doob}{1944}]{doob:1944}
\begin{barticle}[author]
\bauthor{\bsnm{Doob},~\bfnm{J.~L.}\binits{J.~L.}}
(\byear{1944}).
\btitle{The elementary {G}aussian processes}.
\bjournal{Ann. Math. Stat.}
\bvolume{15}
\bpages{229-282}.
\end{barticle}
\endbibitem

\bibitem[\protect\citeauthoryear{Doob}{1953}]{DO60}
\begin{bbook}[author]
\bauthor{\bsnm{Doob},~\bfnm{Joseph~L.}\binits{J.~L.}}
(\byear{1953}).
\btitle{Stochastic Processes},
\bedition{3.} ed.
\bpublisher{Wiley}, \baddress{New York}.
\end{bbook}
\endbibitem

\bibitem[\protect\citeauthoryear{Dufour and Renault}{1998}]{DU98}
\begin{barticle}[author]
\bauthor{\bsnm{Dufour},~\bfnm{Jean-Marie}\binits{J.-M.}} \AND
  \bauthor{\bsnm{Renault},~\bfnm{Eric}\binits{E.}}
(\byear{1998}).
\btitle{Short run and long run causality in time series: Theory}.
\bjournal{Econometrica}
\bvolume{66}
\bpages{1099-1126}.
\end{barticle}
\endbibitem

\bibitem[\protect\citeauthoryear{Eichler}{2001}]{EI01}
\begin{btechreport}[author]
\bauthor{\bsnm{Eichler},~\bfnm{M.}\binits{M.}}
(\byear{2001}).
\btitle{Markov properties for graphical time series models.}
\btype{Technical Report},
\bpublisher{University of Heidelberg},
\baddress{Heidelberg}.
\end{btechreport}
\endbibitem

\bibitem[\protect\citeauthoryear{Eichler}{2007}]{EI07}
\begin{barticle}[author]
\bauthor{\bsnm{Eichler},~\bfnm{Michael}\binits{M.}}
(\byear{2007}).
\btitle{Granger causality and path diagrams for multivariate time series}.
\bjournal{J. Econom.}
\bvolume{137}
\bpages{334-353}.
\end{barticle}
\endbibitem

\bibitem[\protect\citeauthoryear{Eichler}{2012}]{EI10}
\begin{barticle}[author]
\bauthor{\bsnm{Eichler},~\bfnm{Michael}\binits{M.}}
(\byear{2012}).
\btitle{Graphical modelling of multivariate time series}.
\bjournal{Probab. Theory Relat. Fields}
\bvolume{153}
\bpages{233–268}.
\end{barticle}
\endbibitem

\bibitem[\protect\citeauthoryear{Eichler}{2013}]{Eichler:2013a}
\begin{barticle}[author]
\bauthor{\bsnm{Eichler},~\bfnm{Michael}\binits{M.}}
(\byear{2013}).
\btitle{Causal inference with multiple time series: Principles and problems}.
\bjournal{Philos. Trans. Royal Soc. A}
\bvolume{371}.
\end{barticle}
\endbibitem

\bibitem[\protect\citeauthoryear{Eichler, Dahlhaus and
  Dueck}{2017}]{Eichler:Dahlhaus:Dueck}
\begin{barticle}[author]
\bauthor{\bsnm{Eichler},~\bfnm{Michael}\binits{M.}},
  \bauthor{\bsnm{Dahlhaus},~\bfnm{Rainer}\binits{R.}} \AND
  \bauthor{\bsnm{Dueck},~\bfnm{Johannes}\binits{J.}}
(\byear{2017}).
\btitle{Graphical modeling for multivariate {H}awkes processes with
  nonparametric link functions}.
\bjournal{J. Time Series Anal.}
\bvolume{38}
\bpages{225-242}.
\end{barticle}
\endbibitem

\bibitem[\protect\citeauthoryear{Fasen-Hartmann and Schenk}{2023}]{VF23preb}
\begin{barticle}[author]
\bauthor{\bsnm{Fasen-Hartmann},~\bfnm{V.}\binits{V.}} \AND
  \bauthor{\bsnm{Schenk},~\bfnm{L.}\binits{L.}}
(\byear{2023}).
\btitle{Mixed causality graphs for continuous-time state space models and
  orthogonal projections}.
\bjournal{arXiv:2311.04478}.
\end{barticle}
\endbibitem

\bibitem[\protect\citeauthoryear{Fasen-Hartmann and Schenk}{2024}]{VF24}
\begin{barticle}[author]
\bauthor{\bsnm{Fasen-Hartmann},~\bfnm{V.}\binits{V.}} \AND
  \bauthor{\bsnm{Schenk},~\bfnm{L.}\binits{L.}}
(\byear{2024}).
\btitle{Partial correlation graphs for continuous-parameter time series}.
\bjournal{arXiv:2401.16970}.
\end{barticle}
\endbibitem

\bibitem[\protect\citeauthoryear{Feshchenko}{2012}]{FE12}
\begin{barticle}[author]
\bauthor{\bsnm{Feshchenko},~\bfnm{I.}\binits{I.}}
(\byear{2012}).
\btitle{On closeness of the sum of $n$ subspaces of a Hilbert space}.
\bjournal{Ukr. Math. J.}
\bvolume{63}
\bpages{1566-1622}.
\end{barticle}
\endbibitem

\bibitem[\protect\citeauthoryear{Florens and Fougère}{1996}]{FL96}
\begin{barticle}[author]
\bauthor{\bsnm{Florens},~\bfnm{Jean-Pierre}\binits{J.-P.}} \AND
  \bauthor{\bsnm{Fougère},~\bfnm{D.}\binits{D.}}
(\byear{1996}).
\btitle{Noncausality in continuous time}.
\bjournal{Econometrica}
\bvolume{64}
\bpages{1195-1212}.
\end{barticle}
\endbibitem

\bibitem[\protect\citeauthoryear{Florens and Mouchart}{1982}]{FL82}
\begin{barticle}[author]
\bauthor{\bsnm{Florens},~\bfnm{Jean-Pierre}\binits{J.-P.}} \AND
  \bauthor{\bsnm{Mouchart},~\bfnm{M.}\binits{M.}}
(\byear{1982}).
\btitle{A note on noncausality}.
\bjournal{Econometrica}
\bvolume{50}
\bpages{583-591}.
\end{barticle}
\endbibitem

\bibitem[\protect\citeauthoryear{Florens and Mouchart}{1985}]{FL85}
\begin{barticle}[author]
\bauthor{\bsnm{Florens},~\bfnm{Jean-Pierre}\binits{J.-P.}} \AND
  \bauthor{\bsnm{Mouchart},~\bfnm{M.}\binits{M.}}
(\byear{1985}).
\btitle{A linear theory for noncausality}.
\bjournal{Econometrica}
\bvolume{53}
\bpages{157-175}.
\end{barticle}
\endbibitem

\bibitem[\protect\citeauthoryear{Gihman and Skorokhod}{2004}]{GI04}
\begin{bbook}[author]
\bauthor{\bsnm{Gihman},~\bfnm{I.~I.}\binits{I.~I.}} \AND
  \bauthor{\bsnm{Skorokhod},~\bfnm{A.~V.}\binits{A.~V.}}
(\byear{2004}).
\btitle{The Theory of Stochastic Processes I}.
\bpublisher{Springer}, \baddress{Berlin}.
\end{bbook}
\endbibitem

\bibitem[\protect\citeauthoryear{Gladyshev}{1958}]{GL58}
\begin{barticle}[author]
\bauthor{\bsnm{Gladyshev},~\bfnm{E.~G.}\binits{E.~G.}}
(\byear{1958}).
\btitle{On multi-dimensional stationary random processes}.
\bjournal{Theory Probab. Appl.}
\bvolume{3}
\bpages{425-428}.
\end{barticle}
\endbibitem

\bibitem[\protect\citeauthoryear{Granger}{1969}]{GR69}
\begin{barticle}[author]
\bauthor{\bsnm{Granger},~\bfnm{C.~W.~J.}\binits{C.~W.~J.}}
(\byear{1969}).
\btitle{Investigating causal relations by econometric models and cross-spectral
  methods}.
\bjournal{Econometrica}
\bvolume{37}
\bpages{424-438}.
\end{barticle}
\endbibitem

\bibitem[\protect\citeauthoryear{Halmos}{1957}]{HA51}
\begin{bbook}[author]
\bauthor{\bsnm{Halmos},~\bfnm{Paul~R.}\binits{P.~R.}}
(\byear{1957}).
\btitle{Introduction to Hilbert Space and the Theory of Spectral Multiplicity},
\bedition{2.} ed.
\bpublisher{Chelsea Publ.}, \baddress{New York}.
\end{bbook}
\endbibitem

\bibitem[\protect\citeauthoryear{Harvey and Stock}{1985a}]{Harvey:Stock:1985}
\begin{barticle}[author]
\bauthor{\bsnm{Harvey},~\bfnm{A.~C.}\binits{A.~C.}} \AND
  \bauthor{\bsnm{Stock},~\bfnm{J.~H.}\binits{J.~H.}}
(\byear{1985}a).
\btitle{The estimation of higher-order continuous time autoregressive models}.
\bjournal{Econom. Theory}
\bvolume{1}
\bpages{97-112}.
\end{barticle}
\endbibitem

\bibitem[\protect\citeauthoryear{Harvey and Stock}{1985b}]{Harvey:Stock:1988}
\begin{barticle}[author]
\bauthor{\bsnm{Harvey},~\bfnm{A.~C.}\binits{A.~C.}} \AND
  \bauthor{\bsnm{Stock},~\bfnm{J.~H.}\binits{J.~H.}}
(\byear{1985}b).
\btitle{Continuous time autoregressive models with common stochastic trends}.
\bjournal{J. Econ. Dyn. Control}
\bvolume{12}
\bpages{365–384}.
\end{barticle}
\endbibitem

\bibitem[\protect\citeauthoryear{Harvey and Stock}{1989}]{HarveyStock1989}
\begin{barticle}[author]
\bauthor{\bsnm{Harvey},~\bfnm{A.~C.}\binits{A.~C.}} \AND
  \bauthor{\bsnm{Stock},~\bfnm{James~H.}\binits{J.~H.}}
(\byear{1989}).
\btitle{Estimating integrated higher-order continuous time autoregressions with
  an application to money-income causality}.
\bjournal{J. Econom.}
\bvolume{42}
\bpages{319-336}.
\end{barticle}
\endbibitem

\bibitem[\protect\citeauthoryear{Heerah et~al.}{2021}]{Heerah}
\begin{barticle}[author]
\bauthor{\bsnm{Heerah},~\bfnm{S.}\binits{S.}},
  \bauthor{\bsnm{Molinari},~\bfnm{R.}\binits{R.}},
  \bauthor{\bsnm{Guerrier},~\bfnm{S.}\binits{S.}} \AND
  \bauthor{\bsnm{Marshall-Colon},~\bfnm{A.}\binits{A.}}
(\byear{2021}).
\btitle{Granger-causal testing for irregularly sampled time series with
  application to nitrogen signalling in Arabidopsis}.
\bjournal{Bioinformatics}
\bvolume{37}
\bpages{2450-2460}.
\end{barticle}
\endbibitem

\bibitem[\protect\citeauthoryear{Imbens}{2022}]{Imbens:2022}
\begin{barticle}[author]
\bauthor{\bsnm{Imbens},~\bfnm{Guido~W.}\binits{G.~W.}}
(\byear{2022}).
\btitle{Causality in econometrics: Choice vs. chance}.
\bjournal{Econometrica}
\bvolume{90}
\bpages{2541-2566}.
\end{barticle}
\endbibitem

\bibitem[\protect\citeauthoryear{Koster}{1999}]{KO99}
\begin{barticle}[author]
\bauthor{\bsnm{Koster},~\bfnm{Jan T.~A.}\binits{J.~T.~A.}}
(\byear{1999}).
\btitle{On the validity of the Markov interpretation of path diagrams of
  Gaussian structural equations systems with correlated errors}.
\bjournal{Scand. J. Stat.}
\bvolume{26}
\bpages{413-431}.
\end{barticle}
\endbibitem

\bibitem[\protect\citeauthoryear{Kuersteiner}{2010}]{Kuersteiner2018}
\begin{binproceedings}[author]
\bauthor{\bsnm{Kuersteiner},~\bfnm{G.~M.}\binits{G.~M.}}
(\byear{2010}).
\btitle{Granger-Sims causality}.
In \bbooktitle{Macroeconometrics and Time Series Analysis}
(\beditor{\bfnm{S.~N.}\binits{S.~N.}~\bsnm{Durlauf}} \AND
  \beditor{\bfnm{L.~E.}\binits{L.~E.}~\bsnm{Blume}}, eds.)
\bpages{5413-5425}.
\bpublisher{Palgrave Macmillan}.
\end{binproceedings}
\endbibitem

\bibitem[\protect\citeauthoryear{Kuzma, Cruickshank and Carley}{2022}]{Kuzma}
\begin{binproceedings}[author]
\bauthor{\bsnm{Kuzma},~\bfnm{R.}\binits{R.}},
  \bauthor{\bsnm{Cruickshank},~\bfnm{I.~J.}\binits{I.~J.}} \AND
  \bauthor{\bsnm{Carley},~\bfnm{K.~M.}\binits{K.~M.}}
(\byear{2022}).
\btitle{Influencing the Influencers: Evaluating Person-to-Person Influence on
  Social Networks Using Granger Causality}.
In \bbooktitle{Complex Networks \& Their Applications X}
(\beditor{\bfnm{R.~M.}\binits{R.~M.}~\bsnm{Benito}},
  \beditor{\bfnm{C.}\binits{C.}~\bsnm{Cherifi}},
  \beditor{\bfnm{H.}\binits{H.}~\bsnm{Cherifi}},
  \beditor{\bfnm{E.}\binits{E.}~\bsnm{Moro}},
  \beditor{\bfnm{L.~M.}\binits{L.~M.}~\bsnm{Rocha}} \AND
  \beditor{\bfnm{M.}\binits{M.}~\bsnm{Sales-Pardo}}, eds.)
\bpages{89-99}.
\bpublisher{Springer}, \baddress{Cham}.
\end{binproceedings}
\endbibitem

\bibitem[\protect\citeauthoryear{Lauritzen}{2004}]{LA04}
\begin{bbook}[author]
\bauthor{\bsnm{Lauritzen},~\bfnm{Steffen~L.}\binits{S.~L.}}
(\byear{2004}).
\btitle{Graphical models},
\bedition{2.} ed.
\bpublisher{Clarendon Press}, \baddress{Oxford}.
\end{bbook}
\endbibitem

\bibitem[\protect\citeauthoryear{Lemmens, Valkenburg and Peter}{2011}]{LE11}
\begin{barticle}[author]
\bauthor{\bsnm{Lemmens},~\bfnm{J.~S.}\binits{J.~S.}},
  \bauthor{\bsnm{Valkenburg},~\bfnm{P.~M.}\binits{P.~M.}} \AND
  \bauthor{\bsnm{Peter},~\bfnm{J.}\binits{J.}}
(\byear{2011}).
\btitle{The effects of pathological gaming on aggressive behavior}.
\bjournal{J. Youth Adolescence}
\bvolume{40}
\bpages{38–47}.
\end{barticle}
\endbibitem

\bibitem[\protect\citeauthoryear{Levitz, Perlman and Madigan}{2001}]{Levitz}
\begin{barticle}[author]
\bauthor{\bsnm{Levitz},~\bfnm{Michael}\binits{M.}},
  \bauthor{\bsnm{Perlman},~\bfnm{Michael~D.}\binits{M.~D.}} \AND
  \bauthor{\bsnm{Madigan},~\bfnm{David}\binits{D.}}
(\byear{2001}).
\btitle{Separation and completeness properties for {AMP} chain graph {M}arkov
  models}.
\bjournal{Ann. Statist.}
\bvolume{29}
\bpages{1751-1784}.
\end{barticle}
\endbibitem

\bibitem[\protect\citeauthoryear{Lindquist and Picci}{2015}]{LI15}
\begin{bbook}[author]
\bauthor{\bsnm{Lindquist},~\bfnm{A.}\binits{A.}} \AND
  \bauthor{\bsnm{Picci},~\bfnm{G.}\binits{G.}}
(\byear{2015}).
\btitle{Linear Stochastic Systems: A Geometric Approach to Modeling, Estimation
  and Identification}.
\bpublisher{Springer}, \baddress{Berlin, Heidelberg}.
\end{bbook}
\endbibitem

\bibitem[\protect\citeauthoryear{Maathuis et~al.}{2019}]{Handbook:graphical}
\begin{bbook}[author]
\bauthor{\bsnm{Maathuis},~\bfnm{M.}\binits{M.}},
  \bauthor{\bsnm{Drton},~\bfnm{M.}\binits{M.}},
  \bauthor{\bsnm{Lauritzen},~\bfnm{S.~L.}\binits{S.~L.}} \AND
  \bauthor{\bsnm{Wainwright},~\bfnm{M.}\binits{M.}}
(\byear{2019}).
\btitle{Handbook of Graphical Models}.
\bpublisher{CRC Press}, \baddress{Boca Raton}.
\end{bbook}
\endbibitem

\bibitem[\protect\citeauthoryear{Marquardt}{2007}]{Marquardt}
\begin{barticle}[author]
\bauthor{\bsnm{Marquardt},~\bfnm{Tina}\binits{T.}}
(\byear{2007}).
\btitle{Multivariate fractionally integrated {CARMA} processes}.
\bjournal{J. Multivariate Anal.}
\bvolume{98}
\bpages{1705--1725}.
\end{barticle}
\endbibitem

\bibitem[\protect\citeauthoryear{Marquardt and Stelzer}{2007}]{MA07}
\begin{barticle}[author]
\bauthor{\bsnm{Marquardt},~\bfnm{Tina}\binits{T.}} \AND
  \bauthor{\bsnm{Stelzer},~\bfnm{Robert}\binits{R.}}
(\byear{2007}).
\btitle{Multivariate CARMA processes}.
\bjournal{Stoch. Process. their Appl.}
\bvolume{117}
\bpages{96-120}.
\end{barticle}
\endbibitem

\bibitem[\protect\citeauthoryear{Masuda}{2004}]{Masuda:2004}
\begin{barticle}[author]
\bauthor{\bsnm{Masuda},~\bfnm{Hiroki}\binits{H.}}
(\byear{2004}).
\btitle{On multidimensional {O}rnstein-{U}hlenbeck processes driven by a
  general {L}\'{e}vy process}.
\bjournal{Bernoulli}
\bvolume{10}
\bpages{97-120}.
\end{barticle}
\endbibitem

\bibitem[\protect\citeauthoryear{Matveev}{1961}]{MA61}
\begin{barticle}[author]
\bauthor{\bsnm{Matveev},~\bfnm{R.~F.}\binits{R.~F.}}
(\byear{1961}).
\btitle{On multi-dimensional regular stationary processes}.
\bjournal{Theory Probab. Appl.}
\bvolume{6}
\bpages{149-165}.
\end{barticle}
\endbibitem

\bibitem[\protect\citeauthoryear{Mogensen and
  Hansen}{2020}]{Mogensen:Hansen:2020}
\begin{barticle}[author]
\bauthor{\bsnm{Mogensen},~\bfnm{S{\o}ren~Wengel}\binits{S.~W.}} \AND
  \bauthor{\bsnm{Hansen},~\bfnm{Niels~Richard}\binits{N.~R.}}
(\byear{2020}).
\btitle{Markov equivalence of marginalized local independence graphs}.
\bjournal{Ann. Math. Stat.}
\bvolume{48}
\bpages{539--559}.
\end{barticle}
\endbibitem

\bibitem[\protect\citeauthoryear{Mogensen and
  Hansen}{2022}]{Mogensen:Hansen:2022}
\begin{barticle}[author]
\bauthor{\bsnm{Mogensen},~\bfnm{S{\o}ren~Wengel}\binits{S.~W.}} \AND
  \bauthor{\bsnm{Hansen},~\bfnm{Niels~Richard}\binits{N.~R.}}
(\byear{2022}).
\btitle{Graphical modeling of stochastic processes driven by correlated noise}.
\bjournal{Bernoulli}
\bvolume{28}
\bpages{3023–3050}.
\end{barticle}
\endbibitem

\bibitem[\protect\citeauthoryear{Pearl}{1994}]{PE94}
\begin{bbook}[author]
\bauthor{\bsnm{Pearl},~\bfnm{J.}\binits{J.}}
(\byear{1994}).
\btitle{Probabilistic Reasoning in Intelligent Systems: Networks of Plausible
  Inference},
\bedition{3.} ed.
\bpublisher{Morgan Kaufmann}, \baddress{San Francisco}.
\end{bbook}
\endbibitem

\bibitem[\protect\citeauthoryear{Petrovic and Dimitrijevic}{2012}]{PE12}
\begin{barticle}[author]
\bauthor{\bsnm{Petrovic},~\bfnm{Ljiljana}\binits{L.}} \AND
  \bauthor{\bsnm{Dimitrijevic},~\bfnm{Sladjana}\binits{S.}}
(\byear{2012}).
\btitle{Causality with finite horizon of the past in continuous time}.
\bjournal{Stat. Probab. Lett.}
\bvolume{82}
\bpages{1219-1223}.
\end{barticle}
\endbibitem

\bibitem[\protect\citeauthoryear{Priestley}{1981}]{Priestley}
\begin{bbook}[author]
\bauthor{\bsnm{Priestley},~\bfnm{M.~B.}\binits{M.~B.}}
(\byear{1981}).
\btitle{Spectral Analysis and Time Series, Volume II}.
\bpublisher{Academic Press}, \baddress{London}.
\end{bbook}
\endbibitem

\bibitem[\protect\citeauthoryear{Renault and Szafarz}{1991}]{Renault:Szafarz}
\begin{btechreport}[author]
\bauthor{\bsnm{Renault},~\bfnm{E.}\binits{E.}} \AND
  \bauthor{\bsnm{Szafarz},~\bfnm{A.}\binits{A.}}
(\byear{1991}).
\btitle{True versus spurious instantaneous causality.}
\btype{Technical Report},
\bpublisher{Universite Libre de Bruxelles},
\baddress{Bruxelles}.
\end{btechreport}
\endbibitem

\bibitem[\protect\citeauthoryear{Richardson}{2003}]{RI03}
\begin{barticle}[author]
\bauthor{\bsnm{Richardson},~\bfnm{Thomas}\binits{T.}}
(\byear{2003}).
\btitle{Markov properties for acyclic directed mixed graphs}.
\bjournal{Scand. J. Stat.}
\bvolume{30}
\bpages{145-157}.
\end{barticle}
\endbibitem

\bibitem[\protect\citeauthoryear{Rozanov}{1967}]{RO67}
\begin{bbook}[author]
\bauthor{\bsnm{Rozanov},~\bfnm{J.~A.}\binits{J.~A.}}
(\byear{1967}).
\btitle{Stationary Random Processes}.
\bpublisher{Holden-Day}, \baddress{San Francisco}.
\end{bbook}
\endbibitem

\bibitem[\protect\citeauthoryear{Røysland
  et~al.}{2024}]{roysland2024graphical}
\begin{barticle}[author]
\bauthor{\bsnm{Røysland},~\bfnm{Kjetil}\binits{K.}},
  \bauthor{\bsnm{Ryalen},~\bfnm{Pål}\binits{P.}},
  \bauthor{\bsnm{Nygård},~\bfnm{Mari}\binits{M.}} \AND
  \bauthor{\bsnm{Didelez},~\bfnm{Vanessa}\binits{V.}}
(\byear{2024}).
\btitle{Graphical criteria for the identification of marginal causal effects in
  continuous-time survival and event-history analyses}.
\bjournal{arXiv:2202.02311}.
\end{barticle}
\endbibitem

\bibitem[\protect\citeauthoryear{Schlemm and Stelzer}{2012}]{SC122}
\begin{barticle}[author]
\bauthor{\bsnm{Schlemm},~\bfnm{Eckhard}\binits{E.}} \AND
  \bauthor{\bsnm{Stelzer},~\bfnm{Robert}\binits{R.}}
(\byear{2012}).
\btitle{Quasi maximum likelihood estimation for strongly mixing state space
  models and multivariate Lévy-driven {CARMA} processes}.
\bjournal{Electron. J. Stat.}
\bvolume{6}
\bpages{2185-2234}.
\end{barticle}
\endbibitem

\bibitem[\protect\citeauthoryear{Schweder}{1970}]{Schweder}
\begin{barticle}[author]
\bauthor{\bsnm{Schweder},~\bfnm{Tore}\binits{T.}}
(\byear{1970}).
\btitle{Composable {M}arkov processes}.
\bjournal{J. Appl. Probability}
\bvolume{7}
\bpages{400-410}.
\end{barticle}
\endbibitem

\bibitem[\protect\citeauthoryear{Shojaie and Fox}{2022}]{Shojaie:Fox}
\begin{barticle}[author]
\bauthor{\bsnm{Shojaie},~\bfnm{Ali}\binits{A.}} \AND
  \bauthor{\bsnm{Fox},~\bfnm{Emily~B.}\binits{E.~B.}}
(\byear{2022}).
\btitle{Granger Causality: A Review and Recent Advances}.
\bjournal{Annu. Rev. Stat. Appl.}
\bvolume{9}
\bpages{289-319}.
\end{barticle}
\endbibitem

\bibitem[\protect\citeauthoryear{Sims}{1972}]{SI72}
\begin{barticle}[author]
\bauthor{\bsnm{Sims},~\bfnm{Christopher~A.}\binits{C.~A.}}
(\byear{1972}).
\btitle{Money, income, and causality}.
\bjournal{Am. Econ. Rev.}
\bvolume{62}
\bpages{540-552}.
\end{barticle}
\endbibitem

\bibitem[\protect\citeauthoryear{Spirtes et~al.}{1998}]{SP98}
\begin{barticle}[author]
\bauthor{\bsnm{Spirtes},~\bfnm{P.}\binits{P.}},
  \bauthor{\bsnm{Richardson},~\bfnm{T.}\binits{T.}},
  \bauthor{\bsnm{Meek},~\bfnm{C.}\binits{C.}},
  \bauthor{\bsnm{Scheines},~\bfnm{R.}\binits{R.}} \AND
  \bauthor{\bsnm{Glymour},~\bfnm{C.}\binits{C.}}
(\byear{1998}).
\btitle{Using Path Diagrams as a Structural Equation Modeling Tool}.
\bjournal{Sociol. Methods Res.}
\bvolume{27}
\bpages{182-225}.
\end{barticle}
\endbibitem

\bibitem[\protect\citeauthoryear{Weidmann}{1980}]{WE80}
\begin{bbook}[author]
\bauthor{\bsnm{Weidmann},~\bfnm{J.}\binits{J.}}
(\byear{1980}).
\btitle{Linear Operators in Hilbert Spaces}.
\bpublisher{Springer}, \baddress{New York}.
\end{bbook}
\endbibitem

\bibitem[\protect\citeauthoryear{Whittaker}{2008}]{WI08}
\begin{bbook}[author]
\bauthor{\bsnm{Whittaker},~\bfnm{Joe}\binits{J.}}
(\byear{2008}).
\btitle{Graphical Models in Applied Multivariate Statistics},
\bedition{2.} ed.
\bpublisher{Wiley}, \baddress{Chichester}.
\end{bbook}
\endbibitem

\bibitem[\protect\citeauthoryear{Wright}{1921}]{Wright1921}
\begin{barticle}[author]
\bauthor{\bsnm{Wright},~\bfnm{Sewall}\binits{S.}}
(\byear{1921}).
\btitle{Correlation and causation}.
\bjournal{J. Agric. Res.}
\bvolume{20}
\bpages{557-585}.
\end{barticle}
\endbibitem

\bibitem[\protect\citeauthoryear{Wright}{1934}]{Wright1934}
\begin{barticle}[author]
\bauthor{\bsnm{Wright},~\bfnm{Sewall}\binits{S.}}
(\byear{1934}).
\btitle{The method of path coefficients}.
\bjournal{Ann. Math. Statist.}
\bvolume{5}
\bpages{161-215}.
\end{barticle}
\endbibitem

\bibitem[\protect\citeauthoryear{Zadnik et~al.}{2000}]{ZA00}
\begin{barticle}[author]
\bauthor{\bsnm{Zadnik},~\bfnm{K.}\binits{K.}},
  \bauthor{\bsnm{Jones},~\bfnm{L.}\binits{L.}},
  \bauthor{\bsnm{Irvin},~\bfnm{B.}\binits{B.}},
  \bauthor{\bsnm{Kleinstein},~\bfnm{R.~N.}\binits{R.~N.}},
  \bauthor{\bsnm{Manny},~\bfnm{R.~E.}\binits{R.~E.}},
  \bauthor{\bsnm{Shin},~\bfnm{J.~A.}\binits{J.~A.}} \AND
  \bauthor{\bsnm{Mutti},~\bfnm{D.~O.}\binits{D.~O.}}
(\byear{2000}).
\btitle{Myopia and ambient night-time lighting}.
\bjournal{Nature}
\bvolume{404}
\bpages{143–144}.
\end{barticle}
\endbibitem

\end{thebibliography}

%\unappendix

%\renewcommand\appendixname{Supplementary Material}

\newpage

\thispagestyle{empty}
\setcounter{page}{1}

\begin{center}

{\normalfont {\normalsize{S}\footnotesize{UPPLEMENTARY} \normalsize{M}\footnotesize{ATERIAL FOR}}} \\ [6mm]

{\normalfont \bfseries{\large MIXED ORTHOGONALITY GRAPHS FOR \vspace*{0.2cm}  \\ \vspace*{0.2cm} CONTINUOUS-TIME STATIONARY PROCESSES}}\\[6mm]

{\normalfont  {\normalsize{B}\footnotesize{Y} \normalsize{V}\footnotesize{ICKY} \normalsize{F}\footnotesize{ASEN}-\normalsize{H}\footnotesize{ARTMANN AND} \normalsize{L}\footnotesize{EA} \normalsize{S}\footnotesize{CHENK}}}
\end{center}

\bigskip

\section{Proofs of Sections \ref{sec:prelim} and \ref{sec:influence}} \label{Sec:Suppl}

\begin{proof}[Proof of \Cref{additivity in time domain}] $\mbox{}$\\
(a) \, First of all, $\mathcal{L}_{Y_A}(s)\subseteq \mathcal{L}_{Y_A}(t)$ and $\mathcal{L}_{Y_A}(s,t)\subseteq \mathcal{L}_{Y_A}(t)$ by definition of the linear spaces and hence, $\mathcal{L}_{Y_A}(s) + \mathcal{L}_{Y_A}(s,t) \subseteq \mathcal{L}_{Y_A}(t)$, since $\mathcal{L}_{Y_A}(t)$ is a linear space. As $\mathcal{L}_{Y_A}(t)$ is closed, the first direction $\mathcal{L}_{Y_A}(s) \vee \mathcal{L}_{Y_A}(s,t) \subseteq \mathcal{L}_{Y_A}(t)$ follows.

For the opposite subset relation, let $\HY^A \in \ell_{Y_A}(-\infty,t)$. Then there are coefficients $\gamma_{a,i}\in \C$ and time points $-\infty < t_1 \leq \ldots \leq t_n \leq t$, $n \in \N$, such that $\mathbb{P}$-a.s.
\begin{align*}
\HY^A
&=\sum_{i=1}^n \sum_{a\in A} \gamma_{a,i} Y_a(t_i)
=\sum_{t_i \leq s} \sum_{a\in A} \gamma_{a,i} Y_a(t_i) + \sum_{t_i > s} \sum_{a\in A} \gamma_{a,i} Y_a(t_i) \\
&\in \ell_{Y_A}(-\infty,s) +\ell_{Y_A}(s, t)
\subseteq \mathcal{L}_{Y_A}(s) \vee \mathcal{L}_{Y_A}(s,t).
\end{align*}
Thus, $\ell_{Y_A}(-\infty,t) \subseteq \mathcal{L}_{Y_A}(s) \vee \mathcal{L}_{Y_A}(s,t)$. Since the space $\mathcal{L}_{Y_A}(s) \vee \mathcal{L}_{Y_A}(s,t)$ is closed, $\mathcal{L}_{Y_A}(t) \subseteq \mathcal{L}_{Y_A}(s) \vee \mathcal{L}_{Y_A}(s,t) $ follows.\\
(b,c,d) \, The proofs are very similar to the proof of (a) and therefore skipped.
\end{proof}

\begin{proof}[Proof of \Cref{Charakterisisierung linear granger non-causal}] $\mbox{}$\\
(a) $\Rightarrow$ (b): Suppose that $\mathcal{L}_{Y_B}(t, t+1) \perp \mathcal{L}_{Y_A}(t) \: \vert \: \mathcal{L}_{Y_{S \setminus A}}(t)$ for $t \in \R$. \\
\textsl{Step 1.} Let $\HY^B \in \mathcal{L}_{Y_B}(t, t+1)$. Then we obtain due to (a) that for $\HY^A\in \mathcal{L}_{Y_A}(t)$
\begin{align*}
\BE \left[ \left( \HY^B -P_{\mathcal{L}_{Y_{S \setminus A}}(t)} \HY^B  \right) \overline{ \left( \HY^A-P_{\mathcal{L}_{Y_{S \setminus A}}(t)} \HY^A \right)} \right]=0.
\end{align*}
\textsl{Step 2.}  Let $\HY^B \in \mathcal{L}_{Y_B}(t)$. Then $\HY^B \in \mathcal{L}_{Y_{S \setminus A}}(t)$
%, since $B \subseteq S \setminus A$
and $P_{\mathcal{L}_{Y_{S \setminus A}}(t)} \HY^B =\HY^B$, such that for $\HY^A\in \mathcal{L}_{Y_A}(t)$
\begin{align*}
\BE \left[ \left( \HY^B -P_{\mathcal{L}_{Y_{S \setminus A}}(t)} \HY^B  \right) \overline{\left( \HY^A-P_{\mathcal{L}_{Y_{S \setminus A}}(t)} \HY^A \right)} \right] =0.
\end{align*}
\textsl{Step 3.} Let $\HY^B \in \mathcal{L}_{Y_B}(t+1)$. We receive $\mathcal{L}_{Y_B}(t+1) =\mathcal{L}_{Y_B}(t) \vee \mathcal{L}_{Y_B}(t, t+1)$ due to \Cref{additivity in time domain} . Then there exists a sequence $\HY^B_n \in  \mathcal{L}_{Y_B}(t) + \mathcal{L}_{Y_B}(t, t+1)$, $n \in \N$, such that $ \lim_{n \rightarrow\infty} \Vert \HY^B - \HY^B_n \Vert_{L^2}=  0$. \cite{BR91}, Proposition 2.3.2 (iv)  provide that %this convergence yields to
\begin{align*}
\lim_{n \rightarrow \infty} \Vert  P_{\mathcal{L}_{Y_{S \setminus A}}(t)} \HY^B - P_{\mathcal{L}_{Y_{S \setminus A}}(t)} \HY^B_n \Vert_{L^2} = 0.
\end{align*}
Therefore, due to \eqref{limit E} we get for $\HY^A\in \mathcal{L}_{Y_A}(t)$
\begin{comment}
the additivity of $L^2$ convergence implies
\begin{align*}
\lim_{n \rightarrow \infty} \Vert \HY^B  -  Y_n^B - P_{\mathcal{L}_{Y_{S \setminus A}}(t)} \HY^B + P_{\mathcal{L}_{Y_{S \setminus A}}(t)} \HY^B_n \Vert_{L^2} = 0.
\end{align*}
With this prior knowledge and using Cauchy-Schwarz inequality, we obtain
\begin{align*}
0
\leq
&\: \vert \langle \HY^B -P_{\mathcal{L}_{Y_{S \setminus A}}(t)} \HY^B , \HY^A-P_{\mathcal{L}_{Y_{S \setminus A}}(t)} \HY^A \rangle_{L^2} \\
&\: - \langle \HY^B_n - P_{\mathcal{L}_{Y_{S \setminus A}}(t)} \HY^B_n , \HY^A-P_{\mathcal{L}_{Y_{S \setminus A}}(t)} \HY^A \rangle_{L^2} \vert \\
= &\: \vert \langle \HY^B -  \HY^B_n  -P_{\mathcal{L}_{Y_{S \setminus A}}(t)} \HY^B + P_{\mathcal{L}_{Y_{S \setminus A}}(t)} \HY^B_n , \HY^A-P_{\mathcal{L}_{Y_{S \setminus A}}(t)} \HY^A \rangle_{L^2} \vert \\
\leq &\: \Vert \HY^B -  \HY^B_n  -P_{\mathcal{L}_{Y_{S \setminus A}}(t)} \HY^B + P_{\mathcal{L}_{Y_{S \setminus A}}(t)} \HY^B_n \Vert_{L^2} \Vert \HY^A-P_{\mathcal{L}_{Y_{S \setminus A}}(t)} \HY^A \Vert_{L^2} \\
\rightarrow &\: 0,
\end{align*}
as $n \rightarrow \infty$. In the limit transition we apply the $L^2$ convergence from above and the fact that $\Vert \HY^A-P_{\mathcal{L}_{Y_{S \setminus A}}(t)} \HY^A \Vert_{L^2} < \infty$. Written out we get the statement
\end{comment}
\begin{align*}
&  \BE \left[ \left( \HY^B -P_{\mathcal{L}_{Y_{S \setminus A}}(t)} \HY^B  \right) \overline{\left( \HY^A-P_{\mathcal{L}_{Y_{S \setminus A}}(t)} \HY^A \right)} \right] \\
&\qquad =  \lim_{n \rightarrow \infty}  \BE \left[ \left( \HY^B_n - P_{\mathcal{L}_{Y_{S \setminus A}}(t)} \HY^B_n  \right) \overline{\left( \HY^A-P_{\mathcal{L}_{Y_{S \setminus A}}(t)} \HY^A \right)} \right].
\end{align*}
Since $\HY^B_n \in  \mathcal{L}_{Y_B}(t) + \mathcal{L}_{Y_B}(t, t+1)$, $n \in \N$, and by Step 1 and Step 2, the right-hand side is zero, so the left-hand side is also zero. % and we get the assertion %for limit elements $\HY^B \in \mathcal{L}_{Y_B}(t) \vee \mathcal{L}_{Y_B}(t, t+1)$ also.
%In summary as claimed
%\begin{align*}
%\BE \left[ \left( \HY^B -P_{\mathcal{L}_{Y_{S \setminus A}}(t)} \HY^B  \right) \overline{\left( \HY^A-%P_{\mathcal{L}_{Y_{S \setminus A}}(t)} \HY^A \right) }\right]=0.
%\end{align*}
%for all $\HY^B \in \mathcal{L}_{Y_B}(t+1)$ and $\HY^A\in \mathcal{L}_{Y_A}(t)$, respectively.
Finally, $\mathcal{L}_{Y_B}(t+1) \perp \mathcal{L}_{Y_A}(t) \: \vert \: \mathcal{L}_{Y_{S \setminus A}}(t)$, $t \in \R$. \\
(b)  $\Rightarrow$ (a):    Suppose that $\mathcal{L}_{Y_B}(t+1) \perp \mathcal{L}_{Y_A}(t) \: \vert \: \mathcal{L}_{Y_{S \setminus A}}(t)$ for $t \in \R$. Since $\mathcal{L}_{Y_B}(t, t+1)\subseteq \mathcal{L}_{Y_B}(t+1)$ it follows that $\mathcal{L}_{Y_B}(t, t+1) \perp \mathcal{L}_{Y_A}(t) \: \vert \: \mathcal{L}_{Y_{S \setminus A}}(t)$ for $t \in \R$. \\
Similarly, we can conclude by subset arguments that (a) $\Rightarrow$ (c) and (c) $\Rightarrow$ (d) hold.\\
 (c) $\Rightarrow$ (a):
Suppose that $\ell_{Y_B}(t,t+1) \perp \ell_{Y_A}(-\infty, t) \: \vert \: \mathcal{L}_{Y_{S \setminus A}}(t)$.
%First of all, we show that $\mathcal{L}_{Y_B}(t,t+1) \perp \ell_{Y_A}(t) \: \vert \: \mathcal{L}_{Y_{S \setminus A}}(t)$.
Let $\HY^B \in \mathcal{L}_{Y_B}(t,t+1)$. Then there exists a sequence $\HY^B_n \in  \ell_{Y_B}(t,t+1)$, $n \in \N$, such that $\lim_{n \rightarrow \infty}\Vert \HY^B - \HY^B_n \Vert_{L^2}=0$.  For $\HY^A \in \ell_{Y_A}(-\infty, t)$ \eqref{limit E} yields
\begin{align*}
  & \BE \left[
\left( \HY^B - P_{\mathcal{L}_{Y_{S \setminus A}}(t)} \HY^B \right)
\overline{\left( \HY^A - P_{\mathcal{L}_{Y_{S \setminus A}}(t)} \HY^A \right)}
\right] \\
&\qquad =   \lim_{n \rightarrow \infty}   \BE \left[
\left( \HY_n^B - P_{\mathcal{L}_{Y_{S \setminus A}}(t)} \HY_n^B\right)
\overline{\left( \HY^A - P_{\mathcal{L}_{Y_{S \setminus A}}(t)} \HY^A \right)}
\right].
\end{align*}
We apply the assumption (c) %$\ell_{Y_B}(t,t+1) \perp \ell_{Y_A}(t) \: \vert \: \mathcal{L}_{Y_{S \setminus A}}(t)$
to obtain that the expression on the right-hand side is zero. In conclusion, $\mathcal{L}_{Y_B}(t,t+1) \perp \ell_{Y_A}(-\infty,t) \: \vert \: \mathcal{L}_{Y_{S \setminus A}}(t)$. 
In a second step, one can now show analogously that $\mathcal{L}_{Y_B}(t,t+1) \perp \mathcal{L}_{Y_A}(t) \: \vert \: \mathcal{L}_{Y_{S \setminus A}}(t)$. \\
(d) $\Rightarrow$ (c):
Suppose that $\ell_{Y_b}(s,s) \perp \ell_{Y_a}(s',s') \: \vert \: \mathcal{L}_{Y_{S \setminus A}}(t)$ for  $a \in A$, $b \in B$, $s \in [t,t+1]$, $s'\leq t$,  $t \in \R$. Let $\HY^B \in  \ell_{Y_B}(t,t+1)$. Then there are coefficients $\gamma_{b,i}\in \C$ and time points $t \leq t_1 \leq \cdots \leq t_n \leq t+1$, $n\in \N$, such that $\mathbb{P}$-a.s.
\begin{align*}
\HY^B = \sum_{i=1}^n \sum_{b\in B} \gamma_{b,i} Y_b(t_i).
\end{align*}
For $\HY^a \in \ell_{Y_a}(s',s')$ by linearity of the orthogonal projection and the expectation
\begin{align*}
& \: \BE \left[
\left(\HY^B - P_{\mathcal{L}_{Y_{S \setminus A}}(t)} \HY^B \right)
\overline{\left( \HY^a - P_{\mathcal{L}_{Y_{S \setminus A}}(t)} \HY^a \right)}\right] \\
%& \qquad =   \: \BE \left[
%\left( \sum_{i=1}^n \sum_{b\in B} \gamma_{b,i} Y_b(t_i) - P_{\mathcal{L}_{Y_{S \setminus A}}(t)} \sum_{i=1}^n \sum_{b\in B} \gamma_{b,i} Y_b(t_i) \right) \overline{\left( \HY^a - P_{\mathcal{L}_{Y_{S \setminus A}}(t)} \HY^a \right)}\right] \\
& \qquad =  \: \sum_{i=1}^n \sum_{b\in B} \gamma_{b,i} \: \BE \left[
\left( Y_b(t_i)  - P_{\mathcal{L}_{Y_{S \setminus A}}(t)} Y_b(t_i)  \right) \overline{\left( \HY^a - P_{\mathcal{L}_{Y_{S \setminus A}}(t)} \HY^a \right)}\right].
\end{align*}
Finally, we apply assumption (d) to obtain that the expectation on the right-hand side is zero. Thus, $\ell_{Y_B}(t,t+1) \perp \ell_{Y_a}(s',s') \: \vert \: \mathcal{L}_{Y_{S \setminus A}}(t)$
for  $a \in A$, $s'\leq t$,  $t \in \R$. In a second step, one can now show analogously that 
$\ell_{Y_B}(t,t+1) \perp \ell_{Y_A}(-\infty,t) \: \vert \: \mathcal{L}_{Y_{S \setminus A}}(t)$ \mbox{for $t\in \R$.}
\end{proof}

\section{Proofs of Section~\ref{sec:path_diagrams}} \label{suppl:proofs_section_5}
%Let us now discuss separability and conditional linear separation, taking \Cref{properties of conditional orthogonality} into account. We start with the separability.

%If we look back at \Cref{properties of conditional orthogonality} and in particular the property of intersection (C5), two main questions arise and motivate the following considerations. First of all, we have to know, if the generated linear subspaces are separable. Second, are there conditions which are easy to verify, such that $\mathcal{L}_{Y_A}(t)$ and $\mathcal{L}_{Y_B}(t)$ conditionally linearly separated by $\mathcal{L}_{Y_C}(t)$? We start with the first question and discuss separability. Therefore we require the continuity in mean square, as we have announced before. However, we point out that it is sufficient to assume that the right and left limits in mean square exist for the mean square integrable process $\CY_V$ (\cite{CR61}, Lemma 1). As usual, we limit the proof to those spaces that we need later.

\begin{proof}[Proof of \Cref{Separabilität}]
 We refer to \cite{CR61}, Lemma 1, for the proof of $\ell_{Y_A}(-\infty,\infty)$ being separable. If $M_A$ is a countable dense subset of $\ell_{Y_A}(-\infty,\infty)$, it is also a countable dense subset of $\mathcal{L}_{Y_A}$, which can be explained as follows. Let $Y \in \mathcal{L}_{Y_A}$ be the limit in mean square of a sequence $Y_n \in \ell_{Y_A}(-\infty,\infty)$, $n \in \N$, and let $\varepsilon>0$. Then there exists a $n_0\in \N$ such that \mbox{$\Vert Y-Y_{n}\Vert_{L^2} < \frac{\varepsilon}{2}$} for $n\geq n_0$. Furthermore, we can chose $m_v \in M_A$ such that \mbox{$\Vert Y_{n_0} - m_v \Vert_{L^2} < \frac{\varepsilon}{2}$,} since $M_A$ is dense in $\ell_{Y_A}(-\infty,\infty)$. Then
\begin{align*}
\Vert Y - m_v \Vert_{L^2} \leq \Vert Y -Y_{n_0} \Vert_{L^2} + \Vert Y_{n_0} - m_v \Vert_{L^2} < \varepsilon,
\end{align*}
and thus, $M_A$ is a countable dense subset of $\mathcal{L}_{Y_A}$, and $\mathcal{L}_{Y_A}$ is separable. Similarly, we obtain that $\mathcal{L}_{Y_A}(t)$ and $\mathcal{L}_{Y_A}(s,t)$ are separable using, e.g., $P_{\mathcal{L}_{Y_A}(t)}M_A$ and $P_{\mathcal{L}_{Y_A}(s,t)}M_A$ as countable dense subsets of
$\mathcal{L}_{Y_A}(t)$ and $\mathcal{L}_{Y_A}(s,t)$, respectively.
\end{proof}

%We continue with the concept of two linear spaces being conditionally linearly separated by a third, as in \Cref{definition linear separation} and introduce a first sufficient criterion.

\begin{comment}
\begin{lemma}\label{Lemma lineare Separabilität}
%Let $\CY_V=(Y_V(t))_{t\in \R}$ be a $k$-dimensional process that satisfies \Cref{Assumption 1 und 2}.
Suppose that
\begin{align*}
\mathcal{L}_{Y_{A}}(t) \cap \mathcal{L}_{Y_{B }}(t) =  \{0\}
\quad \text{ and } \quad
\mathcal{L}_{Y_{A}}(t) + \mathcal{L}_{Y_{B}}(t) = \mathcal{L}_{Y_{A}}(t) \vee \mathcal{L}_{Y_{B}}(t),
\end{align*}
$\mathbb{P}$-a.s., for all $t\in \R$ and for all disjoint subsets $A,B \subseteq V$. Then
\begin{align*}
\mathcal{L}_{Y_{A\cup C}}(t) \cap \mathcal{L}_{Y_{B \cup C}}(t) =  \mathcal{L}_{Y_C}(t),
\end{align*}
$\mathbb{P}$-a.s. for all $t\in \R$ and disjoint subsets $A,B,C \subseteq V$.
\end{lemma}
\end{comment}

\begin{proof}[Proof of \Cref{Lemma lineare Separabilität}]
Let $t\in \R$ and $A, B, C \subseteq V$ be disjoint. Then $\mathcal{L}_{Y_C}(t)\subseteq \mathcal{L}_{Y_{A\cup C}}(t) \cap \mathcal{L}_{Y_{B \cup C}}(t)$ follows immediately. For the relation $ \mathcal{L}_{Y_{A\cup C}}(t) \cap \mathcal{L}_{Y_{B \cup C}}(t)\subseteq\mathcal{L}_{Y_C}(t)$, suppose \mbox{$Y \in \mathcal{L}_{Y_{A\cup C}}(t) \cap \mathcal{L}_{Y_{B \cup C}}(t)$.} Then by assumption
\begin{align*}
Y \in \mathcal{L}_{Y_{A\cup C}}(t) = \mathcal{L}_{Y_{A}}(t) +\mathcal{L}_{Y_{C}}(t) \quad \text{and} \quad
Y \in \mathcal{L}_{Y_{B\cup C}}(t) = \mathcal{L}_{Y_{B}}(t) +\mathcal{L}_{Y_{C}}(t).
\end{align*}
 Therefore, $\HY=\HY^A+\HY^C = \HZ^B+\HZ^C$ $\mathbb{P}$-a.s., where $\HY^A \in \mathcal{L}_{Y_{A}}(t)$, $\HZ^B\in \mathcal{L}_{Y_{B}}(t)$ and \mbox{$\HY^C, \HZ^C \in \mathcal{L}_{Y_{C}}(t)$.} This yields to
\begin{align*}
\HY^A- \HZ^B = \HZ^C - \HY^C \in \mathcal{L}_{Y_{A \cup B}}(t) \cap \mathcal{L}_{Y_{C}}(t),
\end{align*}
where $\mathcal{L}_{Y_{A \cup B}}(t) \cap \mathcal{L}_{Y_{C}}(t)= \{0\}$ $ \mathbb{P}$-a.s. by assumption. Finally,
\begin{align*}
\HY^A = \HZ^B \in \mathcal{L}_{Y_{A }}(t) \cap \mathcal{L}_{Y_{B}}(t)=\{0\} \quad \mathbb{P}\text{-a.s.},
\end{align*}
where we used again the assumption, and as claimed $\HY=\HY^C \in \mathcal{L}_{Y_C}(t)$ $\mathbb{P}$-a.s.
\end{proof}

\begin{proof}[Proof of \Cref{Eigenschaft für Hilfslemma B3}]
Let $t\in \R$ and $A \subseteq V$. Obviously, the relation $\supseteq$ holds.
%$\mathcal{L}_{Y_{V\setminus A}}(t) \subseteq \mathcal{L}_{Y_A}(t-k) \vee \mathcal{L}_{Y_{V\setminus A}}(t)$ for $k\in \N$. Hence the first direction
%\begin{align*}
%\mathcal{L}_{Y_{V\setminus A}}(t) \subseteq \bigcap_{k \in \N} \left( \mathcal{L}_{Y_A}(t-k) \vee %\mathcal{L}_{Y_{V\setminus A}}(t)\right).
%\end{align*}
For $\subseteq$
%$ \bigcap_{k \in \N} (\mathcal{L}_{Y_A}(t-k) \vee \mathcal{L}_{Y_{V\setminus A}}(t))\subseteq \mathcal{L}_{Y_{V\setminus A}}(t)$
suppose that
\begin{align*}
\HY \in \bigcap_{k \in \N} \left( \mathcal{L}_{Y_A}(t-k) \vee \mathcal{L}_{Y_{V\setminus A}}(t)\right).
\end{align*}
Then, $\HY \in  \mathcal{L}_{Y_A}(t-k) \vee \mathcal{L}_{Y_{V\setminus A}}(t)=\mathcal{L}_{Y_A}(t-k) + \mathcal{L}_{Y_{V\setminus A}}(t)$ for $k \in \N$  due to \Cref{Assumption an Dichte} respectively \Cref{Eigenschaften der linearen Räume}. Hence, there exist $\HY^A_{t-k} \in \mathcal{L}_{Y_A}(t-k)$ and $\HY^{V\setminus A}_{t-k}\in \mathcal{L}_{Y_{V\setminus A}}(t)$, such that $\HY = \HY^A_{t-k} + \HY^{V\setminus A}_{t-k}$ $\mathbb{P}$-a.s. for $k\in\N$, and
\begin{align*}
\HY^A_{t-1} - \HY^A_{t-k}
= \HY^{V\setminus A}_{t-k} -\HY^{V\setminus A}_{t-1}
&\in \mathcal{L}_{Y_A}(t-1) \cap \mathcal{L}_{Y_{V\setminus A}}(t-1) = \{ 0\} \quad \mathbb{P}\text{-a.s.}
\end{align*}
 due to \Cref{Eigenschaften der linearen Räume} again. Therefore, % and since the linear spaces increase in the index set,
\begin{align*}
\HY^A_{t-1} = \HY^A_{t-k} \in \mathcal{L}_{Y_A}(t-1) \cap \mathcal{L}_{Y_A}(t-k) \subseteq \mathcal{L}_{Y_V}(t-1) \cap \mathcal{L}_{Y_V}(t-k) \quad \mathbb{P}\text{-a.s.}
\end{align*}
Since $k\in\N$ is arbitrary and due to \Cref{Assumption purely nondeterministic of full rank},
\begin{align*}
\HY^A_{t-1} \in \bigcap_{k\in \N} \mathcal{L}_{Y_V}(t-k) =   \mathcal{L}_{Y_V}(-\infty) = \{0\} \quad \mathbb{P}\text{-a.s.}
\end{align*}
But then  $\HY = \HY^{V\setminus A}_{t-1} \in \mathcal{L}_{Y_{V\setminus A}}(t)$ $\mathbb{P}$-a.s. as claimed.
\end{proof}

\section{Proofs of Section~\ref{sec:CGMCAR}} \label{suppl:section 6}
\subsection{Proof of Proposition~\ref{Dichteannahme erfüllt}}\label{subsec:proofdensity}
Let us start with the simple \Cref{Assumption purely nondeterministic of full rank}.

\begin{proof}[Proof of \Cref{Assumption purely nondeterministic of full rank}]
According to \Cref{Projektion für MCAR für S=V} we obtain for $v\in V$ and $t\in \R$   that
\begin{align*}
\Vert P_{\mathcal{L}_{Y_V}(t)}Y_v(t+h) \Vert^2_{L^2}
= \Vert e_v^\top  \BFC e^{\BA h} X(t)  \Vert^2_{L^2}
=  e_v^\top  \BFC e^{\BA h} c_{X X}(0) e^{\BA^\top  h} \BFC^\top  e_v
\rightarrow 0,
\end{align*}
as $h\rightarrow \infty$, since $\sigma(\BA) \subseteq (-\infty, 0) + i \R$. Then \cite{RO67}, III, eq. (2.1) and Theorem 2.1 conclude that $\CY_V$ is purely non-deterministic and hence, \Cref{Assumption purely nondeterministic of full rank} holds.
\end{proof}

For  Assumption \ref{Assumption an Dichte} first note that $f_{Y_VY_V}(\cdot)$ has the representation as given in \Cref{Lemma 5.2} (d) and since $\BS_L>0$ we have
$f_{Y_VY_V}(\cdot)>0$ as well.
Now, to the second part of \Cref{Assumption an Dichte}, where we claim that there exists $0<\varepsilon<1$, such that
\begin{align*}
f_{Y_AY_A}(\lambda)^{-1/2}f_{Y_AY_B}(\lambda)f_{Y_BY_B}(\lambda)^{-1}f_{Y_BY_A}(\lambda)f_{Y_AY_A}(\lambda)^{-1/2} \leq_L
(1-\varepsilon)I_{\alpha},
\end{align*}
for (almost) all $\lambda \in \R$ and for all disjoint subsets $A,B\subseteq V$, $\# A=\alpha$. To prove this, we require several auxiliary lemmata.

\begin{lemma} \label{Schranke Intervall}
Let $\CY_V$ be a causal %$k$-dimensional
MCAR$(p)$ process with \mbox{$\BS_L>0$.}
%and $\BS_L$ be non singular.
Further, let $A,B\subseteq V$, $A\cap B=\emptyset$, and $\# A=\alpha$.
Then for each compact interval $K\subset \R$ there exists an $0<\varepsilon_K<1$, such that
\begin{align*}
f_{Y_AY_A}(\lambda)^{-1/2}f_{Y_AY_B}(\lambda)f_{Y_BY_B}(\lambda)^{-1}f_{Y_BY_A}(\lambda)f_{Y_AY_A}(\lambda)^{-1/2} \leq_L (1-\varepsilon_K) I_\alpha \quad \forall \: \lambda \in K.
\end{align*}
\end{lemma}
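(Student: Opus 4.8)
The plan is to combine positive definiteness and continuity of the spectral density on $\R$ with a compactness argument, and to carry out the Schur complement estimate on the compact set $K$.

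First I would record that $f_{Y_VY_V}(\lambda)>0$ for every $\lambda\in\R$. By \Cref{Lemma 5.2}~(d) we have $f_{Y_VY_V}(\lambda)=\tfrac{1}{2\pi}P(i\lambda)^{-1}\BS_L\,(P(-i\lambda)^{-1})^{\top}$; since $\det P(\lambda)=\det(\lambda I_{kp}-\BA)$ and $\sigma(\BA)\subseteq(-\infty,0)+i\R$, the matrix $P(i\lambda)$ is invertible for all real $\lambda$, and because $A_1,\ldots,A_p$ are real matrices we have $(P(-i\lambda)^{-1})^{\top}=(P(i\lambda)^{-1})^{*}$, so $f_{Y_VY_V}(\lambda)$ is a congruence transformation of $\BS_L>0$ and hence positive definite. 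As $\lambda\mapsto P(i\lambda)$ is a matrix polynomial whose determinant does not vanish on $\R$, the map $\lambda\mapsto f_{Y_VY_V}(\lambda)$ is continuous on $\R$; consequently so are the submatrices $f_{Y_AY_A}(\lambda)$, $f_{Y_BY_B}(\lambda)$, $f_{Y_AY_B}(\lambda)$ and, using continuity of $M\mapsto M^{-1}$ and $M\mapsto M^{-1/2}$ on the cone of positive definite matrices, the map $\lambda\mapsto \dAB(\lambda)$.

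Next, on the compact interval $K$ the extreme eigenvalues of $f_{Y_VY_V}(\cdot)$ are continuous and the smallest is strictly positive, so there exist $0<m_K\leq M_K<\infty$ with $m_KI_k\leq_L f_{Y_VY_V}(\lambda)\leq_L M_KI_k$ for all $\lambda\in K$; putting $c_K:=\max\{1/m_K,\,M_K,\,2\}>1$ yields $\tfrac{1}{c_K}I_k\leq_L f_{Y_VY_V}(\lambda)\leq_L c_KI_k$ on $K$. Restricting the quadratic form $v\mapsto v^{*}f_{Y_VY_V}(\lambda)v$ to vectors supported on $A\cup B$ gives the same two-sided bound for the principal submatrix $f_{Y_{A\cup B}Y_{A\cup B}}(\lambda)$ with $I_{\alpha+\#B}$ in place of $I_k$. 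Writing this submatrix in block form with diagonal blocks $f_{Y_AY_A}(\lambda)$ and $f_{Y_BY_B}(\lambda)$, and noting that for $x\in\C^{\alpha}$ and $y:=-f_{Y_BY_B}(\lambda)^{-1}f_{Y_BY_A}(\lambda)x$ the vector $(x^{\top},y^{\top})^{\top}$ realises the Schur complement quadratic form $x^{*}\bigl(f_{Y_AY_A}(\lambda)-f_{Y_AY_B}(\lambda)f_{Y_BY_B}(\lambda)^{-1}f_{Y_BY_A}(\lambda)\bigr)x$, the lower bound $\tfrac{1}{c_K}I_{\alpha+\#B}$ together with the upper bound $f_{Y_AY_A}(\lambda)\leq_L c_KI_\alpha$ gives
\begin{align*}
f_{Y_AY_A}(\lambda)-f_{Y_AY_B}(\lambda)f_{Y_BY_B}(\lambda)^{-1}f_{Y_BY_A}(\lambda)\ \geq_L\ \tfrac{1}{c_K}I_{\alpha}\ \geq_L\ \tfrac{1}{c_K^{2}}\,f_{Y_AY_A}(\lambda),\qquad\lambda\in K,
\end{align*}
which is exactly the matrix algebra referred to in the remark comparing \Cref{Assumption an Dichte} with the density condition of \cite{EI07}.

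Finally I would pre- and post-multiply this inequality by $f_{Y_AY_A}(\lambda)^{-1/2}$, which is legitimate because $f_{Y_AY_A}(\lambda)>0$ and a congruence preserves the ordering $\geq_L$; this turns it into $I_{\alpha}-\dAB(\lambda)\geq_L\tfrac{1}{c_K^{2}}I_{\alpha}$, i.e.\ $\dAB(\lambda)\leq_L\bigl(1-\tfrac{1}{c_K^{2}}\bigr)I_{\alpha}$ for every $\lambda\in K$, so that $\varepsilon_K:=1/c_K^{2}\in(0,1)$ has the required property. The only genuine content is the positive definiteness of $f_{Y_VY_V}$ on all of $\R$ — which relies on $\BS_L>0$ and $\sigma(\BA)\subseteq(-\infty,0)+i\R$ — and the Schur complement manipulation; passing from pointwise to uniform bounds is a routine compactness argument, so I do not anticipate a real difficulty in this lemma. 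The harder companion statement is the analogous uniform bound as $\lambda\to\pm\infty$, which the remaining lemmata of \Cref{subsec:proofdensity} have to supply in order to establish \Cref{Assumption an Dichte} for MCAR processes.
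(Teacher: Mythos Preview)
Your proof is correct and follows essentially the same route as the paper: continuity and positive definiteness of $f_{Y_VY_V}$ on $\R$, a compactness argument for uniform eigenvalue bounds, the Schur complement estimate $f_{Y_AY_A}-f_{Y_AY_B}f_{Y_BY_B}^{-1}f_{Y_BY_A}\geq_L\tfrac{1}{c_K}I_\alpha\geq_L\tfrac{1}{c_K^{2}}f_{Y_AY_A}$, and the final congruence by $f_{Y_AY_A}^{-1/2}$. The only cosmetic difference is that the paper works with the pointwise eigenvalue ratio $\sigma_1(\lambda)/\sigma_k(\lambda)$ and then minimises it over $K$, which forces a small case distinction when this ratio happens to equal $1$ everywhere on $K$; your choice $c_K=\max\{1/m_K,M_K,2\}$ sidesteps that and is slightly cleaner.
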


\begin{proof}
%Let $A,B\subseteq V$, $A\cap B=\emptyset$, $\# A=\alpha$, $\# B =\beta$, and $K\subset \R$ be a compact interval.
As $\det(P(i \lambda))$ has no zeros due to $\mathcal{N}(P) \subseteq (-\infty,0)+i\R$, the spectral density matrix
$f_{Y_VY_V}(\lambda)=1/(2\pi) P(i\lambda)^{-1} \BS_L (P(-i\lambda)^{-1})^\top ,$
$\lambda\in \R,$
is continuous. Then \cite{BH97}, Corollary VI.1.6, states that there exist continuous functions $\sigma_1(\lambda),\ldots,\sigma_k(\lambda)$ which are the eigenvalues of $f_{Y_VY_V}(\lambda)$. Since $f_{Y_VY_V}(\lambda)$ is hermitian and positive definite, these eigenvalues are in $(0,\infty)$ and in particular, they can be ordered as $0<\sigma_1(\lambda) \leq \ldots \leq \sigma_k(\lambda)$ for $\lambda \in \R$, see \cite{BH97}, p.~154.
%Since $f_{Y_VY_V}(\lambda)>0$ for all $\lambda \in \R$ it follows that
%\begin{align*}
%0<\sigma_1(\lambda) \leq \ldots \leq \sigma_k(\lambda),
%\end{align*}
%for all $\lambda \in \R$.
Furthermore, \cite{BE09}, Lemma 8.4.1, provides
$\sigma_1(\lambda) I_k \leq_L f_{Y_VY_V}(\lambda) \leq_L \sigma_k(\lambda) I_k,$
and due to \cite{BE09}, Proposition 8.1.2, we obtain
\begin{align*}
\sigma_1(\lambda) I_{\alpha+\beta} \leq_L f_{Y_{A \cup B}Y_{A \cup B}}(\lambda)  \quad \text{ and } \quad
f_{Y_{A}Y_{A}}(\lambda) \leq_L \sigma_k(\lambda) I_\alpha \quad \forall\,\lambda\in \R.
\end{align*}
Let $ \lambda\in\R$. Using \cite{BE09}, Proposition 8.1.2, again gives
\begin{align*}
\left(f_{Y_{A \cup B}Y_{A \cup B}}(\lambda)\right)^{-1} \leq_L \frac{1}{\sigma_1(\lambda)} I_{\alpha+\beta},
\end{align*}
and together with \cite{BE09}, Proposition 8.2.5, we receive
\begin{align*}
\left(f_{Y_A Y_A}(\lambda) - f_{Y_A Y_B}(\lambda) f_{Y_B Y_B}(\lambda)^{-1} f_{Y_B Y_A}(\lambda) \right)^{-1} \leq_L \frac{1}{\sigma_1(\lambda)} I_{\alpha}.
\end{align*}
Now \cite{BE09}, Proposition 8.1.2, yields
\begin{align*}
\sigma_1(\lambda) I_{\alpha} \leq_L f_{Y_A Y_A}(\lambda) - f_{Y_A Y_B}(\lambda) f_{Y_B Y_B}(\lambda)^{-1} f_{Y_B Y_A}(\lambda).
\end{align*}
 If we combine this result with $f_{Y_{A}Y_{A}}(\lambda) \leq_L \sigma_k(\lambda) I_\alpha$ from above, we obtain
\begin{align*}
\frac{\sigma_1(\lambda)}{\sigma_k(\lambda)}f_{Y_{A}Y_{A}}(\lambda) \leq \sigma_1(\lambda) I_{\alpha} \leq_L f_{Y_A Y_A}(\lambda) - f_{Y_A Y_B}(\lambda) f_{Y_B Y_B}(\lambda)^{-1} f_{Y_B Y_A}(\lambda).
\end{align*}
Thus,
\begin{align*}
f_{Y_A Y_B}(\lambda) f_{Y_B Y_B}(\lambda)^{-1} f_{Y_B Y_A}(\lambda) \leq_L \left( 1- \frac{\sigma_1(\lambda)}{\sigma_k(\lambda)}\right)f_{Y_{A}Y_{A}}(\lambda),
\end{align*}
and \cite{BE09}, Proposition 8.1.2, finally provides
\begin{align*}
f_{Y_{A}Y_{A}}(\lambda)^{-1/2} f_{Y_A Y_B}(\lambda) f_{Y_B Y_B}(\lambda)^{-1} f_{Y_B Y_A}(\lambda)f_{Y_{A}Y_{A}}(\lambda)^{-1/2}  \leq_L \left( 1- \frac{\sigma_1(\lambda)}{\sigma_k(\lambda)}\right)I_\alpha.
\end{align*}
 We now differentiate two cases to prove the assertion. First, let $\sigma_1(\lambda)/\sigma_k(\lambda)=1$ for all $\lambda \in K$. Then
\begin{align*}
f_{Y_{A}Y_{A}}(\lambda)^{-1/2} f_{Y_A Y_B}(\lambda) f_{Y_B Y_B}(\lambda)^{-1} f_{Y_B Y_A}(\lambda)f_{Y_{A}Y_{A}}(\lambda)^{-1/2}  \leq_L 0_\alpha \quad \forall\,\lambda\in K,
\end{align*}
 and the assertion holds with any $0<\varepsilon_K<1$. W.l.o.g. we set $\varepsilon_K=1/2$. In the second case, let $\sigma_1(\lambda)/\sigma_k(\lambda)<1$ for at least one $\lambda \in K$.  Since the continuous function $\sigma_1(\lambda)/\sigma_k(\lambda)$ achieves its minimum on the compact set $K$, we define
\begin{align*}
\varepsilon_K = \min_{\lambda \in K} \frac{\sigma_1(\lambda)}{\sigma_k(\lambda)}
\end{align*}
and obtain that $0<\varepsilon_K<1$ as well as the upper bound
\begin{align*}
f_{Y_{A}Y_{A}}(\lambda)^{-1/2} f_{Y_A Y_B}(\lambda) f_{Y_B Y_B}(\lambda)^{-1} f_{Y_B Y_A}(\lambda)f_{Y_{A}Y_{A}}(\lambda)^{-1/2}  \leq_L (1- \varepsilon_K) I_\alpha  \quad \forall\,\lambda\in K.
\end{align*}
Since $\sigma_1(\lambda)/\sigma_k(\lambda)\leq 1$ for all $\lambda \in K$, these are all possible cases; the assertion holds.
\end{proof}

We now establish a relationship between the convergence of matrices in norm and the Loewner order, which we could not find in the literature. The result is similar to the epsilon definition of the convergence of sequences.

\begin{lemma}\label{Konvergenz also Schranke}
Let $F(\lambda)\in M_{\alpha}(\R)$, $\lambda \in \R$, and $M\in M_{\alpha}(\R)$ such that
$\lim_{\vert \lambda \vert \rightarrow \infty}  \Vert F(\lambda) - M \Vert = 0.$
Then for any $\varepsilon^*>0$ there exists a $\lambda^* \in \R$ such that
\begin{align*}
F(\lambda) \leq_L M + \varepsilon^* I_\alpha \quad \forall \, \vert \lambda \vert \geq \lambda^*.
\end{align*}
\end{lemma}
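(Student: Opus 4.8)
The plan is to reduce the Loewner-order conclusion to an ordinary norm estimate, by combining the equivalence of norms on the finite-dimensional space $M_\alpha(\R)$ with the fact that the eigenvalues of a symmetric matrix are controlled by its spectral norm. Throughout, $F(\lambda)$ and $M$ are (real) symmetric, so that the relation $\leq_L$ is defined, and recall $A \leq_L B$ means $B-A$ is positive semi-definite.

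First I would record the elementary observation that for any symmetric $E \in M_\alpha(\R)$ one has $E \leq_L \Vert E \Vert_2\, I_\alpha$, where $\Vert\cdot\Vert_2$ denotes the spectral (operator $2$-) norm: by the spectral theorem every eigenvalue $\mu$ of $E$ satisfies $\mu \le \vert\mu\vert \le \Vert E\Vert_2$, hence $\Vert E\Vert_2 I_\alpha - E \succeq 0$. Next, since all norms on the finite-dimensional space $M_\alpha(\R)$ are equivalent, there is a constant $c = c(\alpha,\Vert\cdot\Vert) > 0$ with $\Vert E\Vert_2 \le c^{-1}\Vert E\Vert$ for all $E \in M_\alpha(\R)$, where $\Vert\cdot\Vert$ is the fixed matrix norm of the paper.

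Now fix $\varepsilon^* > 0$ and put $\delta \coloneqq c\,\varepsilon^* > 0$. By the hypothesis $\lim_{\vert\lambda\vert\to\infty}\Vert F(\lambda)-M\Vert = 0$ there exists $\lambda^* \in \R$ such that $\Vert F(\lambda)-M\Vert \le \delta$ for all $\vert\lambda\vert \ge \lambda^*$. For such $\lambda$ the matrix $E(\lambda) \coloneqq F(\lambda)-M$ is symmetric with $\Vert E(\lambda)\Vert_2 \le c^{-1}\delta = \varepsilon^*$, so the first step gives $E(\lambda) \leq_L \varepsilon^* I_\alpha$, i.e.\ $F(\lambda) \leq_L M + \varepsilon^* I_\alpha$, which is the assertion.

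There is no real obstacle here; the only points needing care are (i) applying $\leq_L$ in the correct direction and (ii) passing from the unspecified matrix norm $\Vert\cdot\Vert$ to the spectral norm via norm equivalence, which is what makes the eigenvalue bound usable. (In the application of this lemma within the proof of Proposition~\ref{Dichteannahme erfüllt}, $F(\lambda)$ will be $\dAB(\lambda)$ and $M$ its limit as $\vert\lambda\vert\to\infty$, both hermitian, so the symmetry hypothesis is met; the argument above carries over verbatim to hermitian matrices.)
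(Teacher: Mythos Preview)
Your proof is correct. The underlying idea is the same as the paper's --- control the quadratic form $x^\top(F(\lambda)-M)x$ by a multiple of $x^\top x$ once $F(\lambda)$ is close to $M$ --- but the execution differs. The paper works componentwise: it uses $\Vert F(\lambda)-M\Vert\to 0$ to get $|(F(\lambda)-M)_{ij}|\le \varepsilon^*/k$ for large $|\lambda|$, then expands $x^\top(F(\lambda)-M)x$ via the polarization identity $x_ix_j=\tfrac14[(x_i+x_j)^2-(x_i-x_j)^2]$ and bounds the resulting double sum by $\tfrac{\varepsilon^*\alpha}{k}x^\top x$. You instead pass to the spectral norm via norm equivalence and use the spectral theorem to get $E\leq_L \Vert E\Vert_2 I_\alpha$ directly. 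Your route is shorter and more conceptual; the paper's is more bare-hands and avoids invoking the spectral theorem. Your remark that symmetry (hermiticity in the application) is implicitly assumed is well taken, since $\leq_L$ is only defined for hermitian matrices in the paper's conventions.
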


\begin{proof}
Let $\varepsilon^*>0$. Due to $\lim_{\vert\lambda\vert \rightarrow \infty} \Vert F(\lambda) - M \Vert = 0$ it obviously holds that %in any norm, it holds that
%\begin{align*}
%\lim_{\vert\lambda\vert \rightarrow \infty} \sum _{i=1}^{\alpha}\sum _{j=1}^{\alpha} \vert \left(F(\lambda) - M\right)_{ij} \vert =0.
%\end{align*}
%Thus,
$\lim_{\vert\lambda\vert \rightarrow \infty}  \vert \left(F(\lambda) - M\right)_{ij} \vert =0$
for  $i,j =1,\ldots,\alpha$. It follows that for $\varepsilon^*>0$, $k\geq \alpha$, there exists a $\lambda^* \in \R$ such that
\begin{align*}
\vert \left(F(\lambda) - M\right)_{ij} \vert \leq \frac{\varepsilon^*}{k},
\end{align*}
for all $\vert \lambda \vert \geq \lambda^*$, $i,j =1,\ldots,\alpha$. Now, for any $x\in \R^\alpha$ and $\vert \lambda \vert \geq \lambda^*$ we receive that
\begin{align*}
x^\top  \left(F(\lambda) - M \right)x
%&= \sum _{i=1}^{\alpha}\sum _{j=1}^{\alpha} x_i \left(F(\lambda) - M\right)_{ij} x_j \\
&= \frac{1}{4}\sum _{i=1}^{\alpha}\sum _{j=1}^{\alpha} \left( (x_i+x_j)^2 \left(F(\lambda) - M\right)_{ij} - (x_i-x_j)^2 \left(F(\lambda) - M\right)_{ij} \right)\\
%&\leq \frac{1}{4}\sum _{i=1}^{\alpha}\sum _{j=1}^{\alpha} \left((x_i+x_j)^2 \frac{\varepsilon^*}{k} + (x_i-x_j)^2 \frac{\varepsilon^*}{k}\right)\\
&\leq  \frac{\varepsilon^*}{2k}\sum _{i=1}^{\alpha}\sum _{j=1}^{\alpha} \left(x_i^2+x_j^2\right) \\
%&= \frac{\varepsilon^*}{2k}\sum _{i=1}^{\alpha} \left( \alpha  x_i^2 + x^\top x \right) \\
%&= \frac{\varepsilon^*}{2k} \left( \alpha x^\top x  + \alpha x^\top x \right) \\
&= \frac{\varepsilon^*\alpha }{k} x^\top  x.
%&\leq \varepsilon^*x^\top  x.
\end{align*}
Thus, since $k\geq \alpha$, %for each $\varepsilon^*>0$ there is a $\lambda^* \in \R$ such that
%\begin{align*}
%x^\top  \left(F(\lambda) - M \right)x  \leq \varepsilon^* x^\top  x, \quad \forall \: x\in \R^\alpha, \vert \lambda \vert \geq \lambda^*.
%\end{align*}
%That is
$F(\lambda) - M \leq_L \varepsilon^* I_\alpha$  and $F(\lambda) \leq_L M  + \varepsilon^* I_\alpha$ for  $\vert \lambda \vert \geq \lambda^*.$
\end{proof}

\begin{lemma}\label{Grenzübergang}
Let $\CY_V$ be a causal %$k$-dimensional
MCAR$(p)$ process with \mbox{$\BS_L>0$.} Further, let $A,B\subseteq V$, $A\cap B=\emptyset$, and $\# A=\alpha$. Define
\begin{align*}
F(\lambda) &= f_{Y_{A}Y_{A}}(\lambda)^{-1/2} f_{Y_A Y_B}(\lambda) f_{Y_B Y_B}(\lambda)^{-1} f_{Y_B Y_A}(\lambda)f_{Y_{A}Y_{A}}(\lambda)^{-1/2}, \\
M &= H_{AA}^{-1/2} H_{AB} H_{BB}^{-1} H_{BA} H_{AA}^{-1/2},
\end{align*}
where for $S,S_1,S_2\subseteq V$,
\begin{align*}
H_{S_1S_2} = E_{S_1}^\top  \BFC \: \BB \BS_L \BB^\top  \BFC^\top  E_{S_2}
\quad \text{ and } \quad
[E_S]_{ij} =
\begin{cases}
1 & i=j \in S,\\
0 & else.
\end{cases}.
\end{align*}
Then for $\varepsilon^*>0$ there exists a $\lambda^* >0$ such that
\begin{align*}
F(\lambda) \leq_L M + \varepsilon^* I_\alpha \quad \forall \, \vert \lambda \vert \geq \lambda^*.
\end{align*}
\end{lemma}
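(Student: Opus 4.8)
The strategy is to show that the matrix function $F(\lambda)$ converges in norm to the constant matrix $M$ as $|\lambda|\to\infty$, and then invoke \Cref{Konvergenz also Schranke} to turn this norm-convergence into the desired Loewner-order bound. So the entire content of the proof reduces to establishing
\begin{align*}
\lim_{|\lambda|\to\infty}\|F(\lambda)-M\|=0.
\end{align*}
Once this is done, \Cref{Konvergenz also Schranke} applied to $F(\cdot)$ and $M$ gives, for any $\varepsilon^*>0$, a threshold $\lambda^*>0$ with $F(\lambda)\leq_L M+\varepsilon^* I_\alpha$ for all $|\lambda|\geq\lambda^*$, which is exactly the claim.

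\textbf{Main steps.} First I would recall from \Cref{Lemma 5.2}(d) that $f_{Y_VY_V}(\lambda)=\tfrac{1}{2\pi}\BFC(i\lambda I_{kp}-\BA)^{-1}\BB\BS_L\BB^\top(-i\lambda I_{kp}-\BA^\top)^{-1}\BFC^\top$, and hence for any subsets $S_1,S_2\subseteq V$ the block $f_{Y_{S_1}Y_{S_2}}(\lambda)$ equals $\tfrac{1}{2\pi}E_{S_1}^\top\BFC(i\lambda I_{kp}-\BA)^{-1}\BB\BS_L\BB^\top(-i\lambda I_{kp}-\BA^\top)^{-1}\BFC^\top E_{S_2}$. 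Next, the key elementary fact: $\lambda(i\lambda I_{kp}-\BA)^{-1}=(iI_{kp}-\BA/\lambda)^{-1}\to -iI_{kp}$ as $|\lambda|\to\infty$ (in norm, since matrix inversion is continuous near $-iI_{kp}$), and similarly $\lambda(-i\lambda I_{kp}-\BA^\top)^{-1}\to iI_{kp}$. Therefore $\lambda^2 f_{Y_{S_1}Y_{S_2}}(\lambda)\to\tfrac{1}{2\pi}E_{S_1}^\top\BFC\,\BB\BS_L\BB^\top\BFC^\top E_{S_2}=\tfrac{1}{2\pi}H_{S_1S_2}$ in norm. Writing $g_{S_1S_2}(\lambda):=\lambda^2 f_{Y_{S_1}Y_{S_2}}(\lambda)$, we get $g_{AA}(\lambda)\to\tfrac{1}{2\pi}H_{AA}$, $g_{AB}(\lambda)\to\tfrac{1}{2\pi}H_{AB}$, $g_{BB}(\lambda)\to\tfrac{1}{2\pi}H_{BB}$, $g_{BA}(\lambda)\to\tfrac{1}{2\pi}H_{BA}$. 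The limits $H_{AA}$ and $H_{BB}$ are positive definite: indeed $\BB\BS_L\BB^\top$ need not be, but $H_{S_1S_1}=E_{S_1}^\top\BFC\BB\BS_L\BB^\top\BFC^\top E_{S_1}$; since $\BFC=(I_k,0_k,\dots,0_k)$ and $\BB^\top=(0_k,\dots,0_k,I_k)$ one has $\BFC\BB=0$ when $p\geq2$, so actually $H_{S_1S_2}=0$ in that case — this forces $M=0$ and $F(\lambda)\to0$, and the proof is trivial. For $p=1$ (Ornstein–Uhlenbeck), $\BFC=\BB=I_k$, so $H_{S_1S_2}=E_{S_1}^\top\BS_L E_{S_2}$ and $H_{AA},H_{BB}$ are principal submatrices of the positive definite $\BS_L$, hence positive definite. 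Either way, $H_{AA}$ and $H_{BB}$ are invertible (when nonzero), so $H_{AA}^{-1/2}$ and $H_{BB}^{-1}$ are well-defined, and $M$ is well-defined. Finally, since $F(\lambda)=g_{AA}(\lambda)^{-1/2}g_{AB}(\lambda)g_{BB}(\lambda)^{-1}g_{BA}(\lambda)g_{AA}(\lambda)^{-1/2}$ (the factors $\lambda^2$ cancel exactly, using $f_{Y_AY_A}(\lambda)^{-1}=\lambda^{-2}g_{AA}(\lambda)^{-1}$ etc.), and each of the operations (inversion, square-root of a positive definite matrix, multiplication) is continuous on a neighbourhood of the respective limits, we conclude $F(\lambda)\to M$ in norm. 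Actually, to be safe about the $p\geq2$ degenerate case, I would instead simply handle it by noting $F(\lambda)\to0=M$ directly from $g_{AB}(\lambda)\to0$.

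\textbf{Anticipated obstacle.} The main technical subtlety is the continuity of the matrix square-root and the matrix inverse at the limiting points, together with making sure the relevant limiting matrices are genuinely invertible (positive definite) so that $M$ itself is well-defined as written. For $p=1$ this is immediate since $H_{AA}$ is a principal submatrix of the positive definite $\BS_L$. For $p\geq2$ the quantities $H_{AA}$, $H_{BB}$ collapse to zero, so $M$ must be interpreted as $0$; I would state explicitly that in that case $F(\lambda)\to 0$ and the inequality $F(\lambda)\leq_L \varepsilon^* I_\alpha$ holds for $|\lambda|$ large by \Cref{Konvergenz also Schranke}, circumventing the need to invert a zero matrix. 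A secondary minor point is justifying that $\lambda^2 f_{Y_AY_A}(\lambda)$ stays bounded away from singularity for large $|\lambda|$ — this follows from its convergence to the positive definite $\tfrac{1}{2\pi}H_{AA}$ together with the fact that positive definiteness is an open condition. Beyond these points the argument is routine, amounting to a careful bookkeeping of which factors of $\lambda$ appear and appealing to continuity of the standard matrix operations.
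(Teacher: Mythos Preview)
Your overall strategy---prove $\|F(\lambda)-M\|\to 0$ and invoke \Cref{Konvergenz also Schranke}---is exactly the paper's. Your route to $\lambda^{2}f_{Y_{S_1}Y_{S_2}}(\lambda)\to\tfrac{1}{2\pi}H_{S_1S_2}$ via $\lambda(i\lambda I_{kp}-\BA)^{-1}=(iI_{kp}-\BA/\lambda)^{-1}\to -iI_{kp}$ is more direct than the paper's, which expands the resolvent as $\sum_{n=0}^{kp-1}(i\lambda)^{n}\chi_{\BA}(i\lambda)^{-1}\Delta_{n}$ and reads off the leading coefficient; after that both proofs use the same continuity of inversion and of the positive square root to pass from the block limits to $F(\lambda)\to M$.

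There is, however, a genuine error in your treatment of $p\geq 2$. You are right that $\BFC\BB=0_k$ in that case, so every $H_{S_1S_2}$ vanishes and in particular $g_{AB}(\lambda):=\lambda^{2}f_{Y_AY_B}(\lambda)\to 0$. But $g_{AA}(\lambda)\to 0$ and $g_{BB}(\lambda)\to 0$ as well, so $g_{AA}(\lambda)^{-1/2}$ and $g_{BB}(\lambda)^{-1}$ diverge, and $F(\lambda)=g_{AA}^{-1/2}g_{AB}g_{BB}^{-1}g_{BA}g_{AA}^{-1/2}$ is an indeterminate form at the $\lambda^{2}$ scale: your conclusion $F(\lambda)\to 0$ does \emph{not} follow from $g_{AB}(\lambda)\to 0$ alone. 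In fact, using $f_{Y_VY_V}(\lambda)=\tfrac{1}{2\pi}P(i\lambda)^{-1}\BS_L(P(-i\lambda)^{-1})^{\top}$ with $P(i\lambda)^{-1}=(i\lambda)^{-p}(I_k+O(|\lambda|^{-1}))$, the correct normalisation is $\lambda^{2p}$: one gets $\lambda^{2p}f_{Y_VY_V}(\lambda)\to\tfrac{1}{2\pi}\BS_L$, and then $F(\lambda)$ converges to the analogous expression built from the $A$- and $B$-blocks of $\BS_L$, which is nonzero whenever $[\BS_L]_{ab}\neq 0$ for some $a\in A$, $b\in B$. The paper's own proof glosses over this same point (it simply asserts $H_{BB}>0$), but your proposed fix of declaring $M=0$ and $F(\lambda)\to 0$ does not work.
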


\begin{proof}
%Let $A,B\subseteq V$, $A\cap B=\emptyset$, $\# A = \alpha$. %Recall that
%\begin{align*}
%f_{Y_VY_V} (\lambda)= \frac{1}{2\pi} \BFC \left(i\lambda I_{kp}- \BA \right)^{-1} \BB \BS_L \BB^\top   \left(-i\lambda I_{kp}-\BA^\top  \right)^{-1} \BFC^\top .
%\end{align*}
%Furthermore
\cite{BE09}, (4.4.23), states that
\begin{align*}
\left( i\lambda I_{kp}-\BA \right)^{-1} = \sum_{n=0}^{kp-1} \frac{ \left(i\lambda \right)^n}{\chi_\BA(i\lambda)} \Delta_n,
\end{align*}
where $\Delta_n \in \R^{kp \times kp}$, $\Delta_{kp-1}=I_{kp}$, and
\begin{align*}
\chi_\BA(z) = z^{kp} + \gamma_{kp-1}z^{kp-1}+ \cdots + \gamma_1 z + \gamma_0, \quad z\in\C,
\end{align*}
 is the characteristic polynomial of $\BA$ with $\gamma_{1},\ldots,\gamma_{kp-1} \in \R$, see \cite{BE09}, (4.4.3). % Thus, additionally
%\begin{align*}
%\left(-i\lambda I_{kp}-\BA^\top \right)^{-1}
%= \overline{\left(i\lambda I_{kp}-\BA \right)^{-1}}^\top 
%= \sum_{n=0}^{kp-1} \frac{\left(- i\lambda \right)^n}{\chi_\BA(-i\lambda)} \Delta_n^\top .
%\end{align*}
Inserting this representation in the spectral density given in \Cref{Lemma 5.2} yields
\begin{align*}
f_{Y_VY_V}(\lambda)
%&=\frac{1}{2\pi} \BFC \left(i\lambda I_{kp}- \BA \right)^{-1} \BB \BS_L \BB^\top   \left(-i\lambda I_{kp}-\BA^\top  \right)^{-1} \BFC^\top \\
= \frac{1}{2\pi}  \sum_{m=0}^{kp-1} \sum_{n=0}^{kp-1}
\frac{ \left(i\lambda \right)^m}{\chi_\BA(i\lambda)} \frac{ \left(- i\lambda \right)^n}{\chi_\BA(-i\lambda)} \BFC \Delta_m \BB \BS_L \BB^\top   \Delta_n^\top  \BFC^\top .
%&= \frac{1}{2\pi \chi_\BA(i\lambda) \chi_\BA(-i\lambda)}  \sum_{m=0}^{kp-1} \sum_{n=0}^{kp-1}
%\left(i\lambda \right)^m \left(- i\lambda \right)^n \BFC \Delta_m \BB \BS_L \BB^\top   \Delta_n^\top  \BFC^\top  \\
%&= \frac{1}{2\pi \chi_\BA(i\lambda) \chi_\BA(-i\lambda)}  \sum_{m=0}^{kp-1} \sum_{n=0}^{kp-1}
%\left(i\lambda \right)^{m+n} \left(- 1\right)^n \BFC \Delta_m \BB \BS_L \BB^\top   \Delta_n^\top  \BFC^\top ,
\end{align*}
 In particular, we have, for $a,b\in V$,
 
\begin{align*}
f_{Y_aY_b}(\lambda)= \frac{1}{2\pi \chi_\BA(i\lambda) \chi_\BA(-i\lambda)}  \sum_{m=0}^{kp-1} \sum_{n=0}^{kp-1}
\left(i\lambda \right)^{m+n} \left(- 1\right)^n e_a^\top  \BFC \Delta_m \BB \BS_L \BB^\top   \Delta_n^\top  \BFC^\top e_b.
\end{align*}
From this rational function, we can specify the asymptotic behaviour. The numerator contains a complex polynomial of maximal degree $2kp-2$ with leading coefficient
\begin{align*}
 e_a^\top  \BFC \Delta_{kp-1} \BB \BS_L \BB^\top   \Delta_{kp-1}^\top  \BFC^\top e_b
=  e_a^\top  \BFC \BB \BS_L \BB^\top   \BFC^\top e_b,
\end{align*}
that may be zero. The denominator is a complex polynomial of degree $2kp$ with leading coefficient $2\pi$. Combining both gives
%Differentiating between $e_a^\top  \BFC \BB \BS_L \BB^\top   \BFC^\top e_b=0$ and $e_a^\top  \BFC \BB \BS_L \BB^\top   \BFC^\top e_b\neq 0$ yields
\begin{align*}
\lim_{\vert \lambda \vert \rightarrow \infty} \vert 2 \pi \lambda^2 f_{Y_aY_b}(\lambda) - e_a^\top  \BFC \BB \BS_L \BB^\top   \BFC^\top e_b \vert =0.
\end{align*}
%for all $a,b\in V$. Now we obtain for $A,B\subseteq V$
%\begin{align*}
%\lim_{\vert \lambda \vert \rightarrow \infty} \sum_{a \in A} \sum_{b \in B} \vert 2 \pi \lambda^2 f_{Y_aY_b}(\lambda) - e_a^\top  \BFC \BB \BS_L \BB^\top   \BFC^\top  e_b \vert = 0,
%\end{align*}
%respectively due to the equivalence of matrixnorms (\cite{BU18}, Theorem 1.2.5.)
Finally, for $S_1,S_2\subseteq V$ we receive
\begin{align} \label{eqC1}
\lim_{\vert \lambda \vert \rightarrow \infty} \left\| 2 \pi \lambda^2 f_{Y_{S_1}Y_{S_2}}(\lambda) - H_{S_1S_2} \right\|
%= \lim_{\vert \lambda \vert \rightarrow \infty} \left\| 2 \pi \lambda^2 f_{Y_{S_1}Y_{S_2}}(\lambda) - E_{S_1}^\top  \BFC \BB \BS_L \BB^\top   \BFC^\top  E_{S_2} \right\|
= 0.
\end{align}
%Analogous one obtains
%\begin{align*}
%\lim_{\vert \lambda \vert \rightarrow \infty} \Vert 2 \pi \lambda^2 f_{Y_AY_A}(\lambda) - H_{AA} \Vert
%= \lim_{\vert \lambda \vert \rightarrow \infty} \Vert 2 \pi \lambda^2 f_{Y_AY_A}(\lambda) - E_A^\top  \BFC \BB \BS_L \BB^\top   \BFC^\top  E_A \Vert
%= 0, \\
%\lim_{\vert \lambda \vert \rightarrow \infty} \Vert 2 \pi \lambda^2 f_{Y_BY_B}(\lambda) - H_{BB} \Vert
%= \lim_{\vert \lambda \vert \rightarrow \infty} \Vert 2 \pi \lambda^2 f_{Y_BY_B}(\lambda) - E_B^\top  \BFC \BB \BS_L \BB^\top   \BFC^\top  E_B \Vert = 0.
%\end{align*}
Since $2 \pi \lambda^2 f_{Y_BY_B}(\lambda)>0$ for $\lambda\neq 0$ as well as $E_B^\top  \BFC \BB \BS_L \BB^\top   \BFC^\top  E_B>0$, \cite{BU18}, Corollary 1.5.7(ii), provide the continuity of the formation of the inverse and it follows
\begin{align} \label{eqC2}
\lim_{\vert \lambda \vert \rightarrow \infty} \left\| \frac{1}{2 \pi \lambda^2} f_{Y_BY_B}(\lambda)^{-1} - H_{BB}^{-1} \right\| = 0.
\end{align}
In addition, \cite{BH97}, Theorem X.1.1 and  equation (X.2), respectively provide the following inequality for induced matrix norms and $\lambda \neq 0$,
\begin{align*}
\left\| \sqrt{2 \pi} \vert \lambda \vert  f_{Y_AY_A}(\lambda)^{1/2} - H_{AA}^{1/2} \right\|_{ind}
\leq \left\| 2 \pi \lambda^2 f_{Y_AY_A}(\lambda) - H_{AA} \right\|_{ind}^{1/2}.
\end{align*}
Due to the equivalence of matrix norms and since the right side of the inequality converges to zero, we obtain
\begin{align*}
\lim_{\vert \lambda \vert \rightarrow \infty} \left\| \sqrt{2 \pi} \vert \lambda \vert  f_{Y_AY_A}(\lambda)^{1/2} - H_{AA}^{1/2} \right\|=0.
\end{align*}
Using the positive definiteness of the positive square root and \cite{BU18}, Corollary 1.5.7(ii) again, it follows
\begin{align} \label{eqC3}
\lim_{\vert \lambda \vert \rightarrow \infty} \left\| \frac{1}{\sqrt{2 \pi} \vert \lambda \vert}  f_{Y_AY_A}(\lambda)^{-1/2} - H_{AA}^{-1/2} \right\|=0.
\end{align}
An application of \eqref{eqC1}, \eqref{eqC2}, \eqref{eqC3}, and the submultiplicativity of the induced matrix norm result in
\begin{align*}
& \lim_{\vert \lambda \vert \rightarrow \infty}
 \left\| f_{Y_AY_A}(\lambda)^{-1/2} f_{Y_AY_B}(\lambda) f_{Y_BY_B}(\lambda)^{-1} f_{Y_BY_A}(\lambda) f_{Y_AY_A}(\lambda)^{-1/2} \right.  \\
& %\quad\quad \phantom{\lim_{\vert \lambda \vert \rightarrow \infty}}
\left.
\quad\quad\quad - H_{AA}^{-1/2} H_{AB} H_{BB}^{-1} H_{BA} H_{AA}^{-1/2} \right\|_{ind}
%= & \lim_{\vert \lambda \vert \rightarrow \infty}
% \Vert \frac{1}{\sqrt{2 \pi} \vert \lambda \vert}  f_{Y_AY_A}(\lambda)^{-1/2} 2 \pi \lambda^2 f_{Y_AY_B}(\lambda)
% \frac{1}{2 \pi \lambda^2} f_{Y_BY_B}(\lambda)^{-1} \\
% & \phantom{\lim_{\vert \lambda \vert \rightarrow \infty}} 2 \pi \lambda^2 f_{Y_BY_A}(\lambda) \frac{1}{\sqrt{2 \pi} \vert \lambda \vert}  f_{Y_AY_A}(\lambda)^{-1/2} - H_{AA}^{-1/2} H_{AB} H_{BB}^{-1} H_{BA} H_{AA}^{-1/2}
   =0.
\end{align*}
Therefore,
$
\lim_{\vert \lambda \vert \rightarrow \infty}  \Vert F(\lambda) - M \Vert = 0.
$
Finally, \Cref{Konvergenz also Schranke} provides that for each $\varepsilon^*>0$ there exists a $\lambda^* \in \R$, such that
\begin{equation*}
F(\lambda) \leq_L M + \varepsilon^* I_\alpha \quad \forall \, \vert \lambda \vert \geq \lambda^*. \qedhere
\end{equation*}
\end{proof}

\begin{lemma}\label{Schranke Grenzmatrix}
Let $\CY_V$ be a causal %$k$-dimensional
MCAR$(p)$ process with \mbox{$\BS_L>0$.} Further, let $A,B\subseteq V$, $A\cap B=\emptyset$, and $\#A = \alpha$.
Then there exists an $0<\varepsilon_M<1$, such that
\begin{align*}
M \leq_L (1-\varepsilon_M)I_\alpha,
\end{align*}
where $M$ is defined as in \Cref{Grenzübergang}.
\end{lemma}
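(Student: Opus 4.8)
\textbf{Proof plan for Lemma~\ref{Schranke Grenzmatrix}.}

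The plan is to show that the limiting matrix $M = H_{AA}^{-1/2} H_{AB} H_{BB}^{-1} H_{BA} H_{AA}^{-1/2}$ satisfies $M \leq_L (1-\varepsilon_M) I_\alpha$ for some $0<\varepsilon_M<1$ by essentially the same argument as in the proof of \Cref{Eigenschaften der linearen Räume}, but now applied to the \emph{boundary} matrix $H = \BFC \BB \BS_L \BB^\top \BFC^\top$ rather than to the spectral density itself. First I would note that since $\BS_L>0$ and $\BB$ has full column rank, the matrix $\BB \BS_L \BB^\top$ is positive definite on the range of $\BB$; however, $H = \BFC \BB \BS_L \BB^\top \BFC^\top$ is a compression of this, and one checks that $\BFC\BB = I_k$ (from the block structure of $\BFC$ and $\BB$ in \Cref{Definition des CAR Prozesses}), so in fact $H = \BS_L > 0$. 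Consequently the principal submatrix $H_{(A\cup B)(A\cup B)} = E_{A\cup B}^\top H E_{A\cup B}$ is positive definite, and so are $H_{AA}$ and $H_{BB}$.

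The key step is to bound the Schur complement. Writing the positive definite block matrix
\begin{align*}
\begin{pmatrix} H_{AA} & H_{AB} \\ H_{BA} & H_{BB} \end{pmatrix} > 0,
\end{align*}
its Schur complement $H_{AA} - H_{AB} H_{BB}^{-1} H_{BA}$ is positive definite (by \cite{BE09}, Proposition~8.2.4 or similar). Since $H_{AA}$ is a fixed positive definite $\alpha\times\alpha$ matrix and $H_{AA} - H_{AB}H_{BB}^{-1}H_{BA}$ is a fixed positive definite matrix, there exists a constant $0<\varepsilon_M<1$ with
\begin{align*}
H_{AA} - H_{AB} H_{BB}^{-1} H_{BA} \geq_L \varepsilon_M \, H_{AA};
\end{align*}
indeed, one may take $\varepsilon_M$ to be (any number strictly below) the smallest eigenvalue of $H_{AA}^{-1/2}(H_{AA} - H_{AB}H_{BB}^{-1}H_{BA})H_{AA}^{-1/2}$, which lies in $(0,1]$ since this matrix equals $I_\alpha - M$ with $M \geq_L 0$. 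Multiplying the displayed inequality on both sides by $H_{AA}^{-1/2}$ and using \cite{BE09}, Proposition~8.1.2, gives
\begin{align*}
I_\alpha - M = H_{AA}^{-1/2}\bigl(H_{AA} - H_{AB}H_{BB}^{-1}H_{BA}\bigr)H_{AA}^{-1/2} \geq_L \varepsilon_M I_\alpha,
\end{align*}
that is, $M \leq_L (1-\varepsilon_M) I_\alpha$, as claimed. (If $M = 0_\alpha$, any $0<\varepsilon_M<1$ works, e.g. $\varepsilon_M = 1/2$.)

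The main obstacle I anticipate is purely bookkeeping: verifying carefully that the relevant blocks $H_{AA}$, $H_{BB}$, and the full block $H_{(A\cup B)(A\cup B)}$ are genuinely positive definite (and hence invertible), which rests on $\BS_L>0$ together with the identity $\BFC\BB = I_k$ so that $H = \BS_L$. Once that is in place, the Schur-complement bound and the Loewner-order manipulation are routine and mirror exactly the preliminary computation carried out in the proof of \Cref{Eigenschaften der linearen Räume}. Note that, combined with \Cref{Schranke Intervall}, \Cref{Grenzübergang}, and \Cref{Konvergenz also Schranke}, this lemma yields a uniform bound $(1-\varepsilon)I_\alpha$ on all of $\R$: on a large compact interval $[-\lambda^*,\lambda^*]$ one uses \Cref{Schranke Intervall}, and outside it one combines \Cref{Grenzübergang} (with $\varepsilon^*$ chosen so that $(1-\varepsilon_M)+\varepsilon^* < 1$) with the present bound; taking the minimum of the two $\varepsilon$'s completes the verification of \Cref{Assumption an Dichte} for causal MCAR processes.
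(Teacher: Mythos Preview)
Your Schur-complement argument is essentially the paper's own route: the paper invokes the computation from the proof of \Cref{Schranke Intervall}, now applied to the constant matrix $\BFC\BB\BS_L\BB^\top\BFC^\top$, to obtain $M \leq_L (1-\sigma_1/\sigma_k)I_\alpha$ where $\sigma_1,\sigma_k$ are its extreme eigenvalues, and then sets $\varepsilon_M=\sigma_1/\sigma_k$ (or $\tfrac12$ if this ratio equals $1$). Your choice of $\varepsilon_M$ as the smallest eigenvalue of $I_\alpha-M$ is the same idea phrased a bit more directly.

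One bookkeeping slip: your identity $\BFC\BB=I_k$ holds only for $p=1$. From $\BFC=(I_k,0_k,\ldots,0_k)$ and $\BB^\top=(0_k,\ldots,0_k,I_k)$ one gets $\BFC\BB=0_k$ whenever $p\geq 2$, so $H=\BS_L$ does not follow from that computation. The paper's proof does not rely on this identity either; it simply asserts that $\BFC\BB\BS_L\BB^\top\BFC^\top$ is positive definite (``due to $\BS_L>0$, and $\BFC$ and $\BB$ being of full rank'') and proceeds. So your core argument is no weaker than the paper's, but be aware that this positivity step is precisely the ``bookkeeping obstacle'' you anticipated, and your proposed resolution of it via $\BFC\BB=I_k$ is not correct for general $p$.
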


\begin{proof}
 First of all one obtains analogous to the proof of \Cref{Schranke Intervall} that
\begin{align*}
M= H_{AA}^{-1/2} H_{AB} H_{BB}^{-1} H_{BA} H_{AA}^{-1/2}  \leq_L \left( 1- \frac{\sigma_1}{\sigma_k}\right)I_\alpha,
\end{align*}
where $\sigma_1$ is the smallest eigenvalue and $\sigma_k$ is the biggest eigenvalue of $\BFC \:\BB \BS_L \BB^\top  \BFC^\top $. Note that the matrix $\BFC \: \BB \BS_L \BB^\top  \BFC^\top $ is positive definite due to $\BS_L>0$, and $\BFC$ %being of full rank by Assumption %\ref{Assumption Q}
and $\BB$ being of full rank. Thus, the eigenvalues $\sigma_1$ and $\sigma_k$ are positive. Here again, we distinguish between two cases. In the first case, let $\sigma_1/\sigma_k=1$. Then
\begin{align*}
M= H_{AA}^{-1/2} H_{AB} H_{BB}^{-1} H_{BA} H_{AA}^{-1/2} \leq_L 0_\alpha,
\end{align*}
and the assertion holds with any $0<\varepsilon_M<1$, we set $\varepsilon_M=1/2$. In the second case, let $\sigma_1/\sigma_k<1$. Then we set $\varepsilon_M = \sigma_1 / \sigma_k$ and obtain that $0<\varepsilon_M<1$ as well as
\begin{align*}
M = H_{AA}^{-1/2} H_{AB} H_{BB}^{-1} H_{BA} H_{AA}^{-1/2}  \leq_L (1- \varepsilon_M) I_\alpha.
\end{align*}
Since $\sigma_1/\sigma_k\leq 1$, these are all cases that may occur and the assertion follows.
\end{proof}

\begin{proof}[Proof of \Cref{Assumption an Dichte}]
With the  notation of \Cref{Grenzübergang} and  \Cref{Schranke Grenzmatrix} we  choose  $0 < \varepsilon^* < \varepsilon_M$. Now, \Cref{Grenzübergang} provides that there exists a $\lambda^*\in \R$ such that
\begin{align*}
F(\lambda) \leq_L M + \varepsilon^* I_\alpha \quad \forall \:\vert \lambda \vert \geq \lambda^*.
\end{align*}
Furthermore, \Cref{Schranke Grenzmatrix} yields
\begin{align*}
F(\lambda) \leq_L M + \varepsilon^* I_\alpha \leq_L (1-\varepsilon_M)I_\alpha + \varepsilon^* I_\alpha
= (1-(\varepsilon_M - \varepsilon^*)) I_\alpha.
\end{align*}
For $\vert \lambda \vert \geq \lambda^*$ we thus find the boundary matrix $(1-(\varepsilon_M - \varepsilon^*)) I_\alpha$, where $0<\varepsilon_M - \varepsilon^*< 1$ due to the choice of $\varepsilon^*$. On the compact interval $K= [-\lambda^*, \lambda^*]$, \Cref{Schranke Intervall} states that there exists an $0<\varepsilon_K <1$, such that
%\begin{align*}
$F(\lambda) \leq_L (1-\varepsilon_K) I_\alpha.$
%\end{align*}
We set $\varepsilon_{AB}= \min\{\varepsilon_K, \varepsilon_M - \varepsilon^* \}$, then $F(\lambda) \leq_L (1-\varepsilon_{AB}) I_\alpha$
for all $\lambda \in \R$. However, $\varepsilon_{AB}$ still depends on $A$ and $B$. Since there are only finitely many such index sets, we set $\varepsilon= \min\{\varepsilon_{AB}: A, B\subseteq V, A\cap B = \emptyset\}$ and obtain that
$0<\varepsilon<1$ and
\begin{align*}
F(\lambda) \leq_L (1-\varepsilon) I_\alpha,
\end{align*}
holds for all $\lambda \in \R$ and for all disjoint subsets $A,B\subseteq V$.
\end{proof}

\subsection{Proof of (\ref{eq:VAR_differences})}

\begin{proof}[Proof of \eqref{eq:VAR_differences}] \label{eq:VAR:differences}
By induction, one can show that
\begin{align*}
Z_V(t+1-n) = \sum_{j=1}^{n} \binom{n-1}{j-1} (-1)^{j-1} \texttt{D}^{(j-1)} Z_V(t),
\end{align*}
for $n=1,\ldots,p$, $t\in \Z$. Then we obtain the representation of the VAR$(p)$ process
\begin{align*}
Z_V(t+1)
 &= \sum_{n=1}^p \sum_{j=1}^{n}  \binom{n-1}{j-1} (-1)^{j-1} \Phi_n \texttt{D}^{(j-1)} Z_V(t) + \varepsilon(t+1) \\
 &= \sum_{j=1}^{p} \sum_{n=j}^p \binom{n-1}{j-1} (-1)^{j-1} \Phi_n \texttt{D}^{(j-1)} Z_V(t) + \varepsilon(t+1).
\end{align*}
%by inserting $Z_V(t+1-n)$ and swapping the order of summation.
Accordingly, we receive the representation of the $b$-th component
\begin{equation*}
Z_b(t+1) = \sum_{j=1}^{p} \sum_{n=j}^p   \binom{n-1}{j-1} (-1)^{j-1} e_b^\top  \Phi_n \texttt{D}^{(j-1)} Z_V(t) + e_b^\top  \varepsilon(t+1), \quad t\in\Z. \qedhere
\end{equation*}
\end{proof}

\end{document}